\documentclass[article, 11pt]{amsart}
\usepackage{amsmath, amssymb, fontenc, amsthm, array, tikz, hyperref, amsfonts, enumitem, mathtools, stmaryrd, centernot}
\usepackage[english]{babel}
\usepackage[toc]{appendix}
\usepackage{bigfoot}
\usepackage{doi}
\usepackage[margin=1in]{geometry}
\usetikzlibrary{arrows}

\tikzset{degil/.style={
            decoration={markings,
            mark= at position 0.5 with {
                  \node[transform shape] (tempnode) {$\subseteq$};
                  }
              },
              postaction={decorate}
}
}

\renewcommand{\restriction}{\mathord{\upharpoonright}}

\makeatletter
\newcommand{\preceqdot}{\mathrel{\mathpalette\pr@ceqd@t\relax}}
\newcommand{\pr@ceqd@t}[2]{%
  \begingroup
  \sbox\z@{$#1\prec$}\sbox\tw@{$#1\preceq$}%
  \dimen@=\dimexpr\ht\tw@-\ht\z@\relax
  {\preceq}%
  \mkern-5mu
  \raisebox{\dimen@}{$\m@th#1\cdot$}%
  \endgroup
}
\makeatother

\providecommand{\keywords}[1]{  \small  \textbf{\textit{Keywords:}} #1 \normalsize}

\hypersetup{
    colorlinks = true,
    citecolor = blue,
    urlcolor = blue,
    linkcolor = red
}

\theoremstyle{plain}
\newtheorem{theorem}{Theorem}[section]
\newtheorem{lemma}[theorem]{Lemma}
\newtheorem{corollary}[theorem]{Corollary}
\newtheorem{proposition}[theorem]{Proposition}
\theoremstyle{definition}
\newtheorem{definition}[theorem]{Definition}

\newtheorem{observation}[theorem]{Observation}

\subjclass{03C20, 03C98, 03C95, 03C68.}

\begin{document}

\title{Direct powers, ultrapowers and cumulative powers}
\author{Pedro Teixeira Yago}
\address{Orcid: \href{}{0000-0001-7993-4516}, Classe di Lettere e Filosofia, Scuola Normale Superiore di Pisa, Italy}
\email{pedro.tyago@outlook.com}
\date{}

\begin{abstract}
In this paper we investigate cumulative direct powers of functions on structures, or \emph{cumulative powers}, and study their properties. Particularly, we show how they extend the preservation phenomena of reduced powers, direct powers and ultrapowers by offering a characterization of the fragment of first-order theory it preserves, and elucidate the connections between the three sorts of constructions. More precisely, we show how both direct powers and ultrapowers may be obtained from cumulative powers as quotients by appropriate equivalence relations. We further offer a construction of Conway's surreal field by using direct limits of hierarchies of ultrapowers.
\end{abstract}

\maketitle

\keywords{\textbf{Keywords:} direct power; ultrapower; hierarchy of functions; preservation; cumulative power}

\section{Introduction}\label{introduction}

Functional constructions on mathematical structures play a major role in model theory in the form of direct powers and ultrapowers. These constructions are essentially non-cumulative. Despite there always being a trivial embedding from the generating structure into its direct power or ultrapower, there is nevertheless no direct interaction between the elements from the generating structure and the resulting ones. From an algebraic point of view, the alternative, power structures which retain the objects of their generating structure, is a natural generalization of those ubiquitous model theoretic tools. Nevertheless, so far, it has not received much attention. In this paper, we seek to start to remedy that.

Consider the collection of all functions from a given index set $I$ into the naturals, $\mathbb{N}^I$. There is a straightforward manner of extending any operation $\mathrm{F}$ on $\mathbb{N}$ to an $\mathrm{F}'$ on $\mathbb{N}^I$ that behaves similarly to the original operation, as in the corresponding direct power -- that is, pointwisely, so that $\mathrm{F}'(f_1, ..., f_n) = g$ iff for all $j \in I$, $\mathrm{F} \big{(} f_1(j), ..., f_n(j) \big{)} = g(j)$. Likewise, we might define the constant function $\overline{0}$ as the identity element of the pointwise addition, and its image under the extended successor function is precisely the identity element of the extended multiplication. We may extrapolate this construction, and given a family of index sets $\{I_n\}_{n \in \omega}$, by inductively defining $\mathbb{N}_n$ by setting $\mathbb{N}_0 = \mathbb{N}$ and $\mathbb{N}_{n+1} = (\mathbb{N}_n)^{I_n}$, we may keep extending those operations and identity elements accordingly. Since there is a natural embedding between any two structures defined in this way, we may even take $\mathbb{N}_\omega$ to be the direct limit, and given a proper class sized family of index sets $\{I_\alpha\}_{\alpha \in \mathbf{On}}$, keep transfinitely extending this hierarchy. Each of the $\mathbb{N}_\beta$ inherit nice properties of $\mathbb{N}$, and given the preservation results of direct powers by Alfred Horn \cite{horn} and Joseph Weinstein \cite{weinstein}, it may be checked that they are indeed semirings. We may do the same thing with $\mathbb{Z}$, and similarly get a hierarchy of rings $\mathbb{Z}_\beta$.

Suppose now we take a cumulative counterpart of the above structures. Starting with $\mathbb{N}_0 = \mathbb{N}$, we take $\mathbb{N}_{\beta^+} = (\mathbb{N}_\beta)^{I_\beta} \cup \mathbb{N}_\beta$. Then, the operations and constants may no longer be pointwisely defined. Call such a construction a \emph{cumulative (direct) power}. Is there a natural way of extending addition and multiplication to those structures? If so, what first-order properties are retained by cumulative powers? 

The present paper has three different goals, interrelated within a broad study of cumulative powers and hierarchical power constructions. The first is to answer the above questions. To do so, in Section 2, we define a framework for working with hierarchies of cumulative powers of arbitrary structures. The main feature of the construction, offered in Definition \ref{def hierarchy}, is a hereditary definition of the extended operations and relations of higher levels, so that they are reduced to the lower levels at which they are originally defined. The resulting structures have elements from different levels the hierarchy, such as higher-order functions and non-functional elements, which nevertheless interact in a natural manner. We then show how a substantial fragment of first-order theory is preserved by the constructions by offering the precise fragment of first-order theory preserved by cumulative powers. To present it, let us call a sentence preserved by direct powers a \emph{direct power sentence} (a characterization of which is offered in \cite{weinstein}). Let a formula be \emph{non-collapsible} if its positive equalities do not force the collapse of different levels of the hierarchy of cumulative powers -- a property which, as given in Definition \ref{def non collapsible 2}, may be syntactically defined. Then:

\setcounter{section}{2}
\setcounter{theorem}{25}

\begin{corollary}
Let $\varphi \in \mathcal{L}^\sigma$ be a sentence. Then:

\begin{itemize}
\item[\emph{(1)}] $\varphi$ is preserved by cumulative powers iff it is equivalent to a non-collapsible direct power sentence; 
\item[\emph{(2)}] $\varphi$ is preserved by arbitrary cumulative powers iff it is equivalent to a constant-free non-collapsible direct power sentence.
\end{itemize}
\end{corollary}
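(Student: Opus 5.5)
Since the statement is labelled a corollary, the plan is to derive it from the main preservation theorem of Section 2 together with Weinstein's characterization \cite{weinstein} of direct power sentences. Two directions are needed for each item.

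\emph{Soundness.} I would first use the earlier result that cumulative powers preserve non-collapsible direct power sentences. The mechanism I expect is the hereditary definition of operations and relations in Definition \ref{def hierarchy}. An element of a level $\beta^+$ either lies in $\mathcal{M}_\beta$ or is a function $I_\beta\to\mathcal{M}_\beta$, and atomic facts about mixed tuples reduce, coordinatewise, to atomic facts at lower levels. Non-collapsibility, in the syntactic sense of Definition \ref{def non collapsible 2}, guarantees that a positive equality in $\varphi$ never has to identify an element of one level with an element of another. On the witnesses it forces, a cumulative power therefore behaves like a direct power. Weinstein's argument then runs by induction on the normal form: existential witnesses are chosen coordinatewise, universal instances are checked coordinatewise, and the Horn-like disjunction conditions are handled as in the direct power case. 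Preservation then lifts from single levels to the whole hierarchy via direct limits.

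\emph{Completeness.} Let $\varphi$ be preserved by cumulative powers. A direct power is the image of the "pure function" part of a cumulative power, and the paper shows direct powers arise as quotients of cumulative powers. Using this, $\varphi$ should be preserved by direct powers, so by Weinstein it is equivalent to a direct power sentence $\psi$. The remaining task is to show $\psi$ can be chosen non-collapsible, and I expect this to be the main obstacle. I would argue by contraposition. Put $\psi$ in Weinstein's normal form and suppose every equivalent form contains a collapsing positive equality. Then build a model $\mathcal{M}\models\psi$ and a cumulative power of $\mathcal{M}$ in which the forced equality would require some constant function $\overline{a}$ to coincide with $a$. Since these are distinct in the cumulative power, $\psi$ fails there, contradicting preservation. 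The delicate part is making "every equivalent form collapses" effective. I would try to rewrite each collapsing conjunct into a non-collapsing one whenever it is not semantically forced to collapse, and to produce the countermodel in the remaining case, presumably by compactness over the finitely many conjunct patterns.

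\emph{Item (2).} "Arbitrary" cumulative powers presumably allow interpretations of constants at higher levels that need not be the lifted ones, e.g.\ leaving a constant at level $0$ while functions proliferate. The soundness direction for constant-free sentences is immediate from (1), since constants play no role. For the converse, I would take a sentence that mentions a constant essentially, meaning it is not equivalent to any constant-free one. I would then exhibit an arbitrary cumulative power in which the constant's level mismatches the witnesses required. This forces a cross-level equality, which fails just as in the collapsing case, so the sentence is not preserved. Hence it must be equivalent to a constant-free non-collapsible direct power sentence.
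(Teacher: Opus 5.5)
The decisive gap is in your completeness half. The inference ``$\varphi$ is preserved by cumulative powers, hence by direct powers'' does not follow from the facts you cite. $\mathcal{A}^I$ sits inside $\mathfrak{C}(\mathcal{A},I)$ only as a substructure, and $\mathfrak{C}(\mathcal{A},I)/\!\equiv_1\,\cong\mathcal{A}^I$ (Theorem~\ref{theorem isomorphism non cum hierarchy}) is only a homomorphic image. These relations transfer universal, respectively positive, sentences, not arbitrary ones.

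The step also cannot be repaired, because the implication is false. Let $\chi$ be $\exists x\,\exists y\,\bigl(x\neq y\wedge\forall z\,((Rxz\leftrightarrow Ryz)\wedge(Rzx\leftrightarrow Rzy))\bigr)$. Since $\mathrm{R}^+$ is computed through $\vartheta_j$, and $\vartheta_j(a)=a=\vartheta_j(\overline{a})$ for $a\in A$, every cumulative power satisfies $\chi$ with witnesses $a\neq\overline{a}$. Hence $\chi\vee\psi$ is preserved by cumulative powers for every $\psi$. Now let $\psi$ say ``there are exactly two elements'' and let $\mathcal{A}=(\{0,1\},R^{\mathcal{A}})$ with $R^{\mathcal{A}}$ the identity. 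Then $\mathcal{A}\models\psi$. But $\mathcal{A}^I$ (with $|I|\geq 2$) interprets $R$ as identity and has at least four elements, so $\mathcal{A}^I\not\models\chi\vee\psi$. Thus $\chi\vee\psi$ is equivalent to no direct power sentence. The paper's proof of this direction only invokes Theorem~\ref{theorem non collapsible}, which at best produces a non-collapsible equivalent and never addresses the direct-power part. So you have located a step the paper skips, but it cannot be filled.

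Your soundness half follows the paper's route: Weinstein gives $\mathcal{A}^I\models\varphi$, and Theorem~\ref{theorem preservation direct cumulative} transfers non-collapsible sentences to $\mathfrak{C}(\mathcal{A},I)$. However, the mechanism you rely on (that non-collapsibility forbids every cross-level identification) fails when a constant occurs on one side under clause (ii) of Definition~\ref{def non collapsible}. The sentence $\forall y\,(y+0=y)$ is non-collapsible, since its only universal variable occurs on both sides. It is also Horn, hence a direct power sentence. Yet in $\mathfrak{C}(\mathbb{Z},I)$ the constant is $\overline{0}$, of rank $1$, and clause (i) of Definition~\ref{def cumulative hierarchy} forces $3+\overline{0}=\overline{3}\neq 3$. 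So the ``if'' direction of (1) also needs an additional restriction on constants.

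Finally, you misread (2). In the paper it concerns the strongly cumulative hierarchy of Definition~\ref{def hierarchy}, where constants receive no interpretation (they become predicates $\mathbf{c}_{i,\beta}$). Constant-freeness is therefore imposed by the setting, and the paper argues via Corollary~\ref{corollary preservation direct cumulative 2}, not via a level-mismatch countermodel. Moreover, $\chi\vee\psi$ is constant-free, and $a,\overline{a}$ remain distinct and $R$-indiscernible at every level $\beta\geq 1$ of that hierarchy. So the ``only if'' half of (2) fails for the same reason.
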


\setcounter{section}{1}
\setcounter{theorem}{0}

\noindent
This extends the well known results of preservation of first-order properties by reduced products by \cite{changmorel}, \cite{keisler1965}, and \cite{galvin1970}, of direct products by \cite{weinstein}, and the classical  \L o\'{s}' theorem for ultraproducts \cite{los}. These preservation results may be thus summarized as follows. Let $\mathcal{L}_\mathcal{F}$ be the fragment of first-order logic preserved by cumulative powers, $\mathcal{L}_{\Pi / F}$ the fragment preserved by reduced powers, $\mathcal{L}_\Pi$ the fragment preserved by direct powers, and $\mathcal{L}_{\Pi / \mathcal{U}}$ the fragment preserved by ultrapowers. Then, we have the following graph of containment of preserved first-order formulas:

\begin{center}
$\mathcal{L}_\mathcal{F}, \mathcal{L}_{\Pi / F} \subseteq \mathcal{L}_{\Pi} \subseteq \mathcal{L}_{\Pi / \mathcal{U}}$
\end{center}

Given the construction of cumulative powers, the second aim of this paper naturally arises, which is how the new construction relates to direct powers and ultrapowers. As we argue in Section 3, the main difference between cumulative powers and direct powers is that in the latter equality is also defined hereditarily, whereas in the former, equality is defined as identity. This asymmetry ultimately accounts for the difference between the preservation results for direct powers and the one presented in Corollary \ref{corollary characterization cumulative 2}. This fact is consolidated by showing, in Theorem \ref{theorem isomorphism non cum hierarchy}, how taking a quotient of a cumulative power of a structure by an equivalence relation, defined by an equality modulo hereditary identity, results in a structure which is isomorphic to its relative direct power.

In Section 4, we show a cumulative power may be further refined by taking a quotient of it by yet another equivalence relation, given by an adequately defined ultrafilter, so that the resulting structure is isomorphic to a relative ultrapower, and therefore preserves any first-order property of a finite language. In this way, we may see how cumulative powers serve as a generalisation of both ultrapower and direct power constructions, as the choice of the equivalence relation which quotients it defines the first-order faithfulness to the original structure. We may appreciate the connection between the different structures in Figure 1.

\Small

\begin{figure}

\begin{tikzpicture}[scale=2, label distance=2pt, shorten >=1pt, shorten <=1pt]

 	\node (1) at (0,0) [circle]{$\mathfrak{C}(\mathcal{A}, I)$};
 	\node (2) at (3.5,0) [circle]{$\mathcal{A}^I$};
	\node (3) at (3.5,-3.5) [circle]{$\mathcal{A}^I / \mathcal{U}$};
  
	\draw [->] (1) -- (2) node[midway,above, sloped] {$\text{equivalence modulo hereditary identity}$};
	\draw [->] (2) -- (3) node[midway,above, sloped] {$\text{equivalence modulo}\ \mathcal{U}$};
	\draw [->] (1) -- (3) node[midway,above, sloped]  {$\text{equivalence modulo}\ \mathcal{U}\ \text{of hereditarily identified elements}$};

\end{tikzpicture}

\caption{The connection between the power structures, for $I$ an index set, $\mathcal{U}$ an ultrafilter over $I$, $\mathcal{A}$ a structure, and $\mathfrak{C}(\mathcal{A}, I)$ the cumulative power generated by them.}
\end{figure}

\normalsize

As the last goal, we offer an application of the hierarchical constructions to real closed fields. We show that, at each ordinal stage of either a hierarchy generated by ultrapowers or quotiented cumulative powers, a suitable choice of ultrafilters produces hyperreal fields, whereas the hierarchy as a whole, under the assumption of Global Choice -- following results by Conway \cite{conway2000} and Ehrlich \cite{ehrlich1988} --, forms a real closed field isomorphic to the surreal numbers.

This paper offers cumulative power hierarchies as independent objects of study, with systematic analyses of their logical behavior and their relations with more usual functional constructions. As we shall see, the cumulative power of a given structure produces structures which are not, in general, isomorphic to ultrapowers or direct powers of that structure. Given their model theoretic invariances, as such, the subject should be of algebraic interest. We believe the results offer a novel perspective on classical power constructions and open several avenues for further research in model theory and its applications.

The background theory in which the paper is developed may be taken to be ZFC, as many proofs make use of Choice. However, sometimes we might consider proper-class sized structures, and some results make the explicit assumption of Global Choice, at which point they may be interpreted as having NBG as their background.

\subsection{Conventions}

Before we begin, we start with a few remarks on conventions we shall use. For $n \in \omega$, a set $A$ and $a \in A$, we write $\langle x_1, ..., a, ..., x_{n-1} \rangle$ for an $n$-tuple of $A^n$ in which $a$ occurs in some position. We shall also follow the convention of denoting structures by a calligraphic font, and their corresponding domains, by an italicized font -- for example, $\mathcal{A}$ and $A$, respectively -- although, in some cases -- when the notation becomes heavier --, we may use the structure's name to denote its domain -- for example, for a structure $\mathfrak{C}(\mathcal{A}, \mathbf{I})$, we shall write $a \in \mathfrak{C}(\mathcal{A}, \mathbf{I})$ to mean $a$ is a member of its domain. Let $\mathcal{A}$ be a structure and $I$ be a set. We denote by $\mathcal{A}^I$ the direct power of $A$ with index set $I$, and by $A^I$ its domain, that is, the set of all functions from $I$ into $A$. If $\mathcal{U}$ is an ultrafilter over $I$, we denote by $\mathcal{A}^I / \mathcal{U}$ the resulting ultrapower. By the \emph{natural embedding} of a set or structure into its direct power or ultrapower, we mean the mapping which takes each element of the domain of the set and maps it to either the constant function which assigns to every element of the index set the element of the domain in question, in the case of direct powers, or its corresponding equivalence class modulo the ultrafilter, in the case of ultrapowers. We let $\mathbf{On}$ stand for the class of ordinals, $\alpha$, $\beta$, $\delta$, $\lambda$ and so on, stand for ordinals, $\kappa$, $\tau$, $\xi$, $\zeta$, and so on, for cardinals, $\varphi$, $\psi$, $\gamma$ and so on, for formulas, $\Delta$ for an arbitrary string of quantifiers, and $t$ and $s$ for arbitrary terms of a given language. For an ordinal $\beta$ or a cardinal $\kappa$, we denote by $\beta^+$ and $\kappa^+$ their respective successors.

We take the basic first-order language to be composed of individual variables, negation, conjunction, equality, and the universal quantifier (where the other connectives and quantifiers may be defined as usual).  For an ordered set $\mathcal{A}$, we denote by $\mathrm{coi}(\mathcal{A})$ and $\mathrm{cof}(\mathcal{A})$ its coinitiality and cofinality, respectively. For a signature $\sigma$, we denote by $\mathcal{L}^\sigma$ the first-order language induced by it. As usual, we use a calligraphic letter $\mathcal{A}$ to denote a structure, and the respective italicized letter $A$ to denote its corresponding domain. Given a constant $\mathrm{c}$, function symbol $\mathrm{F}$ or relation symbol $\mathrm{R}$ of $\sigma$, and a $\sigma$-structure $\mathcal{A}$, we denote by $\mathrm{c}^\mathcal{A}$, $\mathrm{F}^\mathcal{A}$ and $\mathrm{R}^\mathcal{A}$ their interpretations in $\mathcal{A}$, respectively. Whenever we are working with some model, we shall state things like $\mathcal{A} \models \varphi[a_1, ..., a_n]$, for $a_0, ..., a_n \in A$, to mean $\varphi$ is a formula with at most $n$ free variables $x_1, ..., x_n$, and $\varphi$ is true in $\mathcal{A}$ when the $x_i$ are assigned the respective values $a_i$. Likewise, given a $\sigma$-term $t$, we shall denote by $t^\mathcal{A}(a_1, ..., a_n)$ the element of $\mathcal{A}$ obtained by interpreting $t$ with each of its free variables $x_i$ being assigned the value $a_i$ -- for example, in $\mathbb{N}$, $x[2] + y[3]$ is the element $2 +^\mathbb{N} 3 = 5$.

\section{A hierarchy of cumulative direct powers}\label{sec modelling}

In this section we introduce the main object of investigation: direct power constructions which retain the elements of their generating structures, and resulting hierarchies of functions on algebraic structures. The idea is simple: we iterate the operation of taking all functions from a set to itself, and collect the resulting objects together with the original ones.

\begin{definition}\label{def function hier}
Let $X$ be a designated set, and $\mathbf{I} = \{I_\alpha\}_{\alpha < \gamma}$ be a family of index sets. We then define, for $\beta, \lambda < \gamma$:

\smallskip

\begin{itemize}
\item $\mathcal{F}_0(X, \mathbf{I}) = X$;
\item $\mathcal{F}_{\beta^+}(X, \mathbf{I}) = \big{(} \mathcal{F}_\beta(X, \mathbf{I}) \big{)}^{I_\beta} \cup \mathcal{F}_\beta(X, \mathbf{I})$;
\item $\mathcal{F}_{\lambda}(X, \mathbf{I}) = \bigcup_{\alpha < \lambda} \mathcal{F}_\alpha(X, \mathbf{I})$, for limit $\lambda$.
\end{itemize}

\smallskip

\noindent
If $\mathbf{I}$ is proper class sized, that is, $\mathbf{I} = \{I_\alpha\}_{\alpha \in \mathbf{On}}$, we let $\mathcal{F}_\mathbf{On}(X, \mathbf{I}) = \bigcup_{\alpha \in \mathbf{On}} \mathcal{F}_\alpha(X, \mathbf{I})$.
\end{definition}

\medskip

Notice, at limit levels of the construction, no new objects are created. We can immediately define a notion of level for the elements of the hierarchy.

\begin{definition}[Rank of an element]\label{def level}
Let $a \in \mathcal{F}_\mathbf{On}(X, \mathbf{I})$. Then, define the \emph{rank $\rho$ of $a$} as $\mathrm{min}\{\beta \mid a \in \mathcal{F}_\beta(X, \mathbf{I})\}$.
\end{definition}

In other words, for $a \in \mathcal{F}_\mathbf{On}(X, \mathbf{I})$, $\rho(a)$ is the level at which $a$ is generated. For clarity of our presentation, we shall also define $\mathcal{F}_\beta^*(X, \mathbf{I}) = \mathcal{F}_\beta(X, \mathbf{I}) \setminus \bigcup_{\alpha < \beta} \mathcal{F}_\alpha(X, \mathbf{I})$.\footnote{Notice $\mathcal{F}_\beta^*(X, \mathbf{I}) = \varnothing$ iff either $\beta > 0$ is a limit ordinal or $\beta = 0$ and $X = \varnothing$.} Notice $a \in \mathcal{F}_\beta^*(X, \mathbf{I})$ iff $a \in \mathcal{F}_\beta(X, \mathbf{I})$ and $\rho(a) = \beta$, so that we may use the former to abbreviate the latter. Furthermore, for a successor $\alpha > \rho(a)$, we shall denote the constant function $a : I_{\alpha-1} \to \mathcal{F}_{\alpha-1}(A, \mathbf{I}); j \mapsto a$ by $\overline{a}^\alpha$. If the ordinal $\alpha$ is clear by context, we omit the superscript and write $\overline{a}$.

\begin{definition}[Hereditary function]
Let a functional hierarchy be generated by the set $X$ and family of index sets $\mathbf{I} = \{I_\alpha\}_{\alpha \in \mathbf{On}}$. The \emph{hereditary function} $\vartheta: (\mathbf{On} \times \mathcal{F}_\mathbf{On}(X, \mathbf{I}) \times \bigcup \mathbf{I}) \to \mathcal{F}_\mathbf{On}(A, \mathbf{I})$ is the function such that, for each $a \in \mathcal{F}_\mathbf{On}(X, \mathbf{I})$, $j \in \bigcup \mathbf{I}$ and $\alpha \in \mathbf{On}$:

\begin{center}
$\vartheta^\alpha_j(a) = \begin{cases}
a(j),\ \text{if}\ \rho(a) \geq \alpha\ \&\ j \in \mathrm{dom}(a)\\
a,\ \text{otherwise}
\end{cases}$
\end{center}

\noindent
In case $\mathbf{I} = \{I\}$, we define the simpler function

\begin{center}
$\vartheta_j(a) = \begin{cases}
a(j),\ \text{if}\ \mathrm{dom}(a) = I\\
a,\ \text{otherwise}
\end{cases}$
\end{center}
\end{definition}

\medskip

Notice that, despite the hereditary function $\vartheta$ being defined on proper classes, we do not need proper classes to define the structures above. For each $\beta \in \mathbf{On}$, the function $\vartheta \restriction_{\beta}$ is not a proper class, and suffices for defining the structure $\mathfrak{C}_\beta(\mathcal{A}, \mathbf{I})$. 

When $X$ has a structure (i.e. functions, relations and constants defined on it) we want to extend them to the whole hierarchy generated by it. That motivates the definition of a power structure which is cumulative, that is, where elements of the generating structure coexist with the generated functional elements of a direct power.

\begin{definition}[Cumulative direct power]\label{def cumulative hierarchy}
Let $\sigma = \langle \{F_i\}_{i \in J}, \{R_i\}_{i \in K}, \{c_i\}_{i \in L}, \mathrm{ar} \rangle$ be a signature.\footnote{Where $\mathrm{ar}: (\{F_i\}_{i \in J} \cup \{R_i\}_{i \in K}) \to \mathbb{N}$ is the arity function.} Let also $I$ be an index set, $\mathcal{A} = \langle A, \langle \mathrm{F}_i \rangle_{i \in J}, \langle \mathrm{R}_i \rangle_{i \in K}, \langle \mathrm{c}_i \rangle_{i \in L} \rangle$ be a $\sigma$-structure, where $A$ is the underlying set, $\mathrm{F}_i: A^{\mathrm{ar}(F_i)} \to A$ a $\mathrm{ar}(F_i)$-nary function, $\mathrm{R}_i \subseteq A^{\mathrm{ar}(R_i)}$ a $\mathrm{ar}(R_i)$-ary relation, and $\mathrm{c}_i \in A$ the interpretation of the constant $c_i$. The \emph{cumulative direct power of $\mathcal{A}$ by $I$} (shortly, \emph{cumulative power}), denoted by $\mathfrak{C}(\mathcal{A}, I)$, is the structure $\langle \mathcal{F}_1(A, \{I\}), \langle \mathrm{F}^+_i \rangle_{i \in J}, \langle \mathrm{R}^+_i \rangle_{i \in K}, \langle \mathrm{c}^+_i \rangle_{i \in L} \rangle$, where $\mathrm{F}^+_i : (\mathcal{F}_1(A, \{I\}))^{\mathrm{ar}(F_i)} \to \mathcal{F}_1(A, \{I\})$, $\mathrm{R}^+_i \subseteq (\mathcal{F}_1(A, \{I\}))^{\mathrm{ar}(R_i)}$ and $\mathrm{c}^+_i \in \mathcal{F}_1(A, \{I\})$ are defined in the following way.

\begin{itemize}
\item[(i)] $\mathrm{F}^+_i(a_1, ..., a_{\mathrm{ar}(\mathrm{F}_i)}) = b$ iff $\forall j \in I \Big{(} \mathrm{F}_i \big{(} \vartheta_j(a_1), ..., \vartheta_j(a_{\mathrm{ar}(\mathrm{F}_i)}) \big{)} = \vartheta_j(b) \Big{)}$ and 
$\mathrm{max}\{\rho(a_1), ..., \rho(a_{\mathrm{ar}(\mathrm{F}_i)})\} = \rho(b)$;
\item[(ii)] $\mathrm{R}^+_i a_1 ... a_{ar(R_i)}$ iff $\forall j \in I \big{(} \mathrm{R}_i \vartheta_j(a_1) ... \vartheta_j(a_{\mathrm{ar}(\mathrm{R}_i)}) \big{)}$;
\item[(iii)] $\mathrm{c}^+_i = \overline{\mathrm{c}_i}$.
\end{itemize}
\end{definition}

\medskip

Notice $\mathfrak{C}(\mathcal{A}, I)$ is an extension of $\mathcal{A}$.

There are two ways of extending the cumulative power construction to a hierarchical setting. The first, is by iterating the cumulative power construction to obtain successor levels of the hierarchy, and take the direct limit at limit stages.

\begin{definition}[Cumulative power hierarchy]\label{def hierarchy 1}
Let $\sigma = \langle \{F_i\}_{i \in I}, \{R_i\}_{i \in J}, \{c_i\}_{i \in K}, \mathrm{ar} \rangle$ be a signature. Let also $\{I_\alpha\}_{\alpha \in \mathbf{On}}$ be a family of sets, $\mathcal{A} = \langle A, \langle \mathrm{F}_i \rangle_{i \in J}, \langle \mathrm{R}_i \rangle_{i \in K}, \langle \mathrm{c}_i \rangle_{i \in L} \rangle$ be a $\sigma$-structure. The \emph{cumulative power hierarchy of $\mathcal{A}$ generated by $\mathbf{I} = \{I_\alpha\}_{\alpha \in \mathbf{On}}$} is composed by the following hierarchy of structures, for each $\beta \in \mathbf{On}$:

\begin{itemize}
\item $\mathfrak{C}_0(\mathcal{A}, \mathbf{I}) = \mathcal{A}$;
\item $\mathfrak{C}_{\beta^+}(\mathcal{A}, \mathbf{I}) = \mathfrak{C}(\mathfrak{C}_\beta(\mathcal{A}, \mathbf{I}), I_\beta) = \langle \mathcal{F}_{\beta^+}(A, \mathbf{I}), \langle \mathrm{F}_{i, \beta^+} \rangle_{i \in J}, \langle \mathrm{R}_{i, \beta^+} \rangle_{i \in K}, \langle \mathrm{c}_{i, \beta^+} \rangle_{i \in L} \rangle$;
\item $\mathfrak{C}_\lambda(\mathcal{A}, \mathbf{I}) = \bigcup_{\alpha < \lambda} \mathfrak{C}_\alpha(\mathcal{A}, \mathbf{I}) /\! \approx_\lambda$, for a limit $\lambda$, is the direct limit of the system $\langle \{\mathfrak{C}_\alpha(\mathcal{A}, \mathbf{I})\}_{\alpha < \lambda}, e^\alpha_\beta \rangle$, with the embeddings inductively defined by

\begin{itemize}
\item $e^\alpha_\alpha$ is the identity,
\item $e^\alpha_{\beta^+} : \mathfrak{C}_\alpha(\mathcal{A}, \mathbf{I}) \to \mathfrak{C}_{\beta^+}(\mathcal{A}, \mathbf{I}); x \mapsto \overline{e^\alpha_\beta(x)}$,
\item $e^\alpha_\lambda : \mathfrak{C}_\alpha(\mathcal{A}, \mathbf{I}) \to \mathfrak{C}_\lambda(\mathcal{A}, \mathbf{I}); x \mapsto [x]_\lambda$,
\end{itemize}

\noindent
where $[\cdot]_\lambda$ denotes the equivalence defined by the relation $\approx_\lambda$, given by $x \approx_\lambda y$ iff there are $\alpha \leq \beta < \lambda$ such that either $e^\alpha_\beta(x) = y$ or $e^\alpha_\beta(y) = x$
\end{itemize}
\end{definition}

\medskip

Notice, in the above case, from the level $\omega$ on, the elements of the cumulative power hierarchy are no longer elements of the functional cumulative hierarchy $\mathcal{F}_\mathbf{On}(A, \mathbf{I})$, so that their rank are no longer defined by Definition \ref{def level}. Instead, similarly to before, for an element $a$ of the cumulative power hierarchy, we shall define $\mathrm{l}(a)$ as the level at which it is generated, that is, as $\mathrm{min}\{\beta \mid a \in \mathfrak{C}_\beta(\mathcal{A}, \mathbf{I})\}$.

By iterating the cumulative power construction and mirroring the iteration of other power constructions, the above definition is the more natural one to follow. However, as we have noted, the hierarchy is not cumulative in a strong sense, that is, it is not true in general that a level possesses the elements of all preceding levels coexisting. An alternatively defined hierarchy which is cumulative in that sense may be defined in the following way.

\begin{definition}[Strongly cumulative power hierarchy]\label{def hierarchy}
Let $\sigma = \langle \{F_i\}_{i \in I}, \{R_i\}_{i \in J}, \{c_i\}_{i \in K}, \mathrm{ar} \rangle$ be a signature. Let also $\mathbf{I} = \{I_\alpha\}_{\alpha \in \mathbf{On}}$ be a family of sets, and $\mathcal{A} = \langle A, \langle \mathrm{F}_i \rangle_{i \in I}, \langle \mathrm{R}_i \rangle_{i \in J}, \langle \mathrm{c}_i \rangle_{i \in K} \rangle$ be a $\sigma$-structure. The \emph{strongly cumulative power hierarchy of $\mathcal{A}$ generated by $\mathbf{I}$} is composed by the following structures, for each $\beta \in \mathbf{On}$:

\begin{itemize}
\item $\mathfrak{C}_0(\mathcal{A}, \mathbf{I}) = \mathcal{A}$;
\item $\mathfrak{C}_\beta(\mathcal{A}, \mathbf{I}) = \langle \mathcal{F}_\beta(A, \mathbf{I}), \langle \mathrm{F}_{i, \beta} \rangle_{i \in I}, \langle \mathrm{R}_{i, \beta} \rangle_{i \in J}, \langle \mathbf{c}_{i, \beta} \rangle_{i \in K} \rangle$;
\end{itemize}

\noindent 
where $\mathrm{F}_{i, \beta} : (\mathcal{F}_\beta(A, \mathbf{I}))^{\mathrm{ar}(F_i)} \to \mathcal{F}_\beta(A, \mathbf{I})$, $\mathrm{R}_{i, \beta} \subseteq (\mathcal{F}_\beta(A, \mathbf{I}))^{\mathrm{ar}(R_i)}$ and $\mathbf{c}_{i, \beta} \subseteq \mathcal{F}_\beta(A, \mathbf{I})$ are inductively defined in the following way.

\noindent
Now, for $\beta > 0$, we let:

\begin{itemize}

\item[(i)] \emph{The function $\mathrm{F}_{i, \beta}$}: for $\delta = \mathrm{max}\{\rho(a_i)\}_{1 \leq i \leq \mathrm{ar}(\mathrm{F}_i)}$,

\begin{center}
$\mathrm{F}_{i, \beta}(a_1, ..., a_{\mathrm{ar}(\mathrm{F}_i)}) = b$ iff $\delta = \rho(b)$ and $\forall j \in I_{\delta-1} \Big{(} \mathrm{F}_{i, \delta-1}\big{(} \vartheta_j^\delta(a_1), ..., \vartheta_j^\delta(a_{ar(F_i)}) \big{)} = \vartheta^\delta_j(b) \Big{)}$;
\end{center}
\item[(ii)] \emph{The relation $\mathrm{R}_{i, \beta}$}: for $\delta = \mathrm{max}\{\rho(a_i)\}_{1 \leq i \leq \mathrm{ar}(R_i)}$,

$\mathrm{R}_{i, \beta} a_1 ... a_{ar(R_i)}$ iff $\forall j \in I_{\delta-1} \big{(} \mathrm{R}_{i, \delta-1} \vartheta^\delta_j(a_1) ... \vartheta^\delta_j(a_{\mathrm{ar}(\mathrm{R}_i)}) \big{)}$;
\item[(iii)] \emph{The set $\mathbf{c}_{i, \beta}$}:

\begin{itemize}
\item[•] $a \in \mathbf{c}_{i, 1}$ iff $\forall j \in I_0 \big{(} \vartheta^1_j(a) \in \{\mathrm{c}_i\} \big{)}$;
\item[•] $a \in \mathbf{c}_{i, \beta}$ iff $\forall j \in I_{\rho(a)-1} \big{(} \vartheta^{\rho(a)}_j(a) \in \mathbf{c}_{i, \rho(a)-1} \big{)}$.
\end{itemize}
\end{itemize}
\end{definition}

Despite it not being explicit, each level of the hierarchy defined above is strongly cumulative. In fact, it is implicit that, for a limit $\lambda$, $\mathrm{F}_{i, \lambda}$, $\mathrm{R}_{i, \lambda}$, and $\mathbf{c}_{i, \lambda}$ are just the union of the respective sets of the previous levels of the construction, so that it is implicitly defined that, in a strongly cumulative power hierarchy, $\mathfrak{C}_\lambda(\mathcal{A}, \mathbf{I}) = \bigcup_{\alpha < \lambda} \mathfrak{C}_\alpha(\mathcal{A}, \mathbf{I})$. Unfortunately, as we may notice, there is no natural way of defining an interpretation of the constants of a signature in each level of a hierarchy defined in such a way. Instead, to each constant $c_i$ a predicate $\mathbf{c}_{i, \beta}$ is associated.

Since there is a silver lining to each of the hierarchies defined above, both may be of interest -- for example, the strongly cumulative hierarchy may be of especial interest in the case of signatures without constants. As such, we shall study both hierarchies, although, as shall become clear, either one solely suffices for the results of the following sections.

Notice the general scheme of the definition above: for a given atom $\varphi$ and elements $a_0, ..., a_n \in \mathcal{F}_\beta(\mathcal{A}, \mathbf{I})$, $\varphi(a_0, ..., a_n)$ is the case iff for any $j \in I_{\delta-1}$, $\varphi \big{(} \vartheta^\delta_j(a_0), ..., \vartheta^\delta_j(a_n) \big{)}$ holds, where $\delta = \linebreak \max\{\rho(a_i)\}_{0 \leq i \leq n}$.\footnote{Although, in the case of the extended functions, one more constraint is imposed -- and equality, of course, does not accord with that.} Let us call such sort of definition a \emph{hereditary definition}. We shall shortly see what may be said of structures defined hereditarily.

We note that we have used the same notation for the levels of both hierarchies. We hope it does not hinder the readability of the present work, as the context should disambiguate it.

For simplicity, henceforth we shall consider a signature with one $n$-ary function $F$, one $n$-ary relation $R$, and a single constant $c$, whenever that is harmless. It is straightforward how subsequent results apply to larger signatures.

\begin{observation}\label{observation hierarchy}
It is straightforward to check that:

\begin{itemize}[align=parleft, labelsep=8mm,]
\item[(1)] for $\beta \in \mathbf{On}$, the extended structures of the strongly cumulative hierarchy $\mathfrak{C}_\beta(\mathcal{A}, \mathbf{I})$ are well-defined for two reasons: first, by assumption we have $a \in \mathcal{F}_\beta(A, \mathbf{I})$, and therefore, in each case $\delta \leq \beta$, so that $\vartheta^\delta_j(a) \in \bigcup_{\alpha < \beta} \mathcal{F}_\alpha(A, \mathbf{I})$; second, for any $c \in \mathcal{F}_\mathbf{On}(A, \mathbf{I})$, $\rho(c)$ is not a limit ordinal.
\item[(2)] The hereditary function $\vartheta$ allows us to give a single definition for the extended operations and relations for every successor ordinal in the strongly cumulative power hierarchy, but it can be made more explicit. For example, an equivalent definition of a binary function is (for $\delta = \mathrm{max}\{\rho(a), \rho(b)\}$):

\smallskip

\small

\begin{center}
$\mathrm{F}_\beta(a, b) = c$ iff $\begin{cases}
\rho(a) = \rho(b)\ \&\ \forall j \in I_{\delta-1} \Big{(} \mathrm{F}_{\delta-1} \big{(} a(j), b(j) \big{)} = c(j) \Big{)}\\
\rho(b) < \delta\ \&\ \forall j \in I_{\delta-1} \Big{(} \mathrm{F}_{\delta-1} \big{(} a(j), b \big{)} = c(j) \Big{)}\\
\rho(a) < \delta\ \&\ \forall j \in I_{\delta-1} \Big{(} \mathrm{F}_{\delta-1} \big{(} a, b(j) \big{)} = c(j) \Big{)}\\
\mathrm{F}_\delta(a, b) = c,\ \text{if}\ \beta\ \text{is a limit}\\
\end{cases}$
\end{center}

\normalsize

\smallskip

\noindent
Notice that means the definition of $\mathrm{F}_\beta(a, b)$ when $\rho(a) \neq \rho(b)$ is such that $\mathrm{F}_\beta(a, b) = \mathrm{F}_\gamma(a, b)$, for every $\gamma \in [\delta, \beta]$. The intuitive idea in the definitions, represented by the function $\vartheta$, consists in going back to where the largest element is defined and using the operation there, which by assumption is already defined. In the definition of the relations, an equivalent definition would need to break down into the levels of each member of any $n$-tuple. For example, for a binary $\mathrm{R}$, an equivalent definition is:

\smallskip

\begin{center}
$\mathrm{R}_\beta ab$ iff $\begin{cases}
\rho(a) = \rho(b)\ \&\ \forall j \in I_{\delta-1} \big{(} \mathrm{R}_{\delta-1}a(j)b(j) \big{)}\\
\rho(b) < \delta\ \&\ \forall j \in I_{\delta-1} \big{(} \mathrm{R}_{\delta-1} a(j)b \big{)}\\
\rho(a) < \delta\ \&\ \forall j \in I_{\delta-1} \big{(} \mathrm{R}_{\delta-1} a b(j) \big{)}\\
\mathrm{R}_\delta ab,\ \text{if}\ \beta\ \text{is a limit}\\
\end{cases}$
\end{center}

\smallskip

\noindent
We may see the amount of clauses in a definition like that grows proportionally to the number of permutations of an $n$-tuple, so that using the $\vartheta$ function allows for a more economical definition. Likewise, the definition of the set defined by the constants may be easily calculated to be:

\smallskip

\begin{itemize}[align=parleft, labelsep=8mm,]
\item[•] $\mathbf{c}_1 = \mathrm{c}^{I_0} \cup \{\mathrm{c}\}$;\footnote{That is, the collection of all functions from $I_0$ into $\{\mathrm{c}\}$ plus the singleton itself.}
\item[•] $\mathbf{c}_{\beta^+} = \mathbf{c}_\beta^{I_\beta} \cup \mathbf{c}_\beta$;\footnote{Notice $\mathbf{c}_\beta$, for $\beta > 0$, is a set.}
\item[•] $\mathbf{c}_\lambda = \bigcup_{\beta \in \lambda} \mathbf{c}_\beta$.
\end{itemize}

\smallskip

\item[(3)] in the strongly cumulative power hierarchy, for $\alpha < \beta$, $\mathrm{F}_\beta \restriction_{\mathcal{F}_\alpha(A, \mathbf{I})} = \mathrm{F}_\alpha$. Each structure inherits the elements of the domain of all preceding structures, plus all of the functions with domain and codomain in those elements.
\item[(4)] in both hierarchies, for any $\beta \in \mathbf{On}$, $\rho \big{(} \mathrm{F}_\beta(a_0, ..., a_n) \big{)} = \mathrm{max}\{\rho(a_0), ..., \rho(a_n)\}$.
\item[(5)] for any $a \in \mathcal{F}^*_\beta(A, \mathbf{I})$ and $j \in \mathrm{dom}(a)$, $\rho \big{(} a(j) \big{)} < \rho(a)$.
\item[(6)] for any $\beta \in \mathbf{On}$, $j \in \mathbf{I}$ and $a \in \mathcal{F}_\beta(A, \mathbf{I})$, $\rho \big{(} \vartheta^\beta_j(a) \big{)} \leq \rho(a)$.
\item[(7)] in both hierarchies, if $\mathcal{A}$ is closed under $\mathrm{F}$, then $\mathfrak{C}_\beta(\mathcal{A}, \mathbf{I})$ is closed under $\mathrm{F}_\beta$.\footnote{We may see this by induction on $\beta$. The base case is trivial, and for the inductive step it suffices to notice the definition of $\mathrm{F}_\beta$ works iff the $\mathrm{F}_\alpha$ are defined for all $\alpha < \beta$, so that closure is inherited pointwisely. The limit case is straightforward, as two elements in the limit case must be members of some iteration at which the operation is defined for them.}
\end{itemize}
\end{observation}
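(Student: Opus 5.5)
The plan is to verify the seven items of Observation \ref{observation hierarchy} directly from Definitions \ref{def function hier}, \ref{def level}, \ref{def hierarchy 1} and \ref{def hierarchy}, using transfinite induction on $\beta$ wherever a level-by-level statement is involved. I would first record two elementary facts about the functional hierarchy, since items (1), (5) and (6) reduce to them. First, by Definition \ref{def function hier} no new objects appear at limit stages, so $\mathcal{F}^*_\lambda(A,\mathbf{I})=\varnothing$ for limit $\lambda>0$; hence $\rho(a)$ is always $0$ or a successor. Second, if $\rho(a)=\delta=\alpha^+$ and $a\notin\mathcal{F}_\alpha$, then $a\in(\mathcal{F}_\alpha)^{I_\alpha}$, so every value $a(j)$ lies in $\mathcal{F}_\alpha$ and $\rho(a(j))\le\alpha<\delta$. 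This gives (5) immediately. Item (6) follows by cases on the definition of $\vartheta^\beta_j$: either $\vartheta^\beta_j(a)=a$, or it equals $a(j)$ and (5) applies.

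For (1), fix $\beta>0$ and a tuple with $\delta=\max\rho(a_i)\le\beta$. Because $\rho$ is never a limit, $\delta-1$ makes sense, and by (6) together with the case analysis each $\vartheta^\delta_j(a_i)$ lies in $\mathcal{F}_{\delta-1}$. Thus the clauses refer only to $\mathrm{F}_{i,\delta-1}$, $\mathrm{R}_{i,\delta-1}$ and $\mathbf{c}_{i,\delta-1}$, which are available by the induction hypothesis. For functions, I also need that the $b$ in clause (i) exists uniquely. It is the element with $\rho(b)=\delta$ determined by $j\mapsto \mathrm{F}_{i,\delta-1}(\vartheta^\delta_j(a_1),\dots)$; this function lies in $(\mathcal{F}_{\delta-1})^{I_{\delta-1}}$ and is uniquely determined. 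Here the subtle point, and the one I expect to be the main obstacle, is showing that this function genuinely has rank $\delta$ rather than collapsing into $\mathcal{F}_{\delta-1}$ (and, in case of collision, that $\vartheta^\delta_j$ is consistent). I would handle it by noting that $\mathcal{F}_\alpha$ and $(\mathcal{F}_\alpha)^{I_\alpha}$ can be assumed disjoint, or else by reading $\rho$ as tracking the level at which the element is introduced; the same caveat is implicit in the paper's footnote-level treatment.

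Items (2), (3) and (4) are then bookkeeping. For (2), unfold $\vartheta^\delta_j$ in the cases $\rho(a)=\rho(b)=\delta$, $\rho(b)<\delta$ and $\rho(a)<\delta$, which gives the displayed clauses. The computation of $\mathbf{c}_\beta$ is an induction: $a\in\mathbf{c}_{\beta^+}$ iff either $\rho(a)\le\beta$ and $a\in\mathbf{c}_\beta$, or $a$ is a function $I_\beta\to\mathcal{F}_\beta$ with all values in $\mathbf{c}_\beta$. For limits, take the union. For (3), observe that the definition of $\mathrm{F}_{i,\beta}$ on a tuple depends only on $\delta$ and not on $\beta$, so restricting to $\mathcal{F}_\alpha$ returns $\mathrm{F}_{i,\alpha}$. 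Item (4) is built into clause (i) via $\rho(b)=\delta$. In the iterated hierarchy it comes from the condition $\max\rho(a_i)=\rho(b)$ in Definition \ref{def cumulative hierarchy}, read with $\mathrm{l}$ in place of $\rho$ past $\omega$ and passed through direct limits because the $e^\alpha_\beta$ commute with the operations.

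Finally, (7) is proved by induction on $\beta$. The case $\beta=0$ is the hypothesis. At a successor, the pointwise values $\mathrm{F}_{\delta-1}(\vartheta^\delta_j(\bar a))$ exist by the induction hypothesis, so the required $b$ exists in $\mathcal{F}_\beta$ by the argument in (1). At a limit, any finite tuple already lies in some earlier level, where it is closed by the induction hypothesis; for the direct limit this uses that the embeddings $e^\alpha_\beta$ are homomorphisms.
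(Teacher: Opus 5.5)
Your route is the paper's own: unfold the definitions and induct on $\beta$. The paper offers nothing beyond ``straightforward'' and the footnote to (7). However, the step you single out as the main obstacle is not actually closed.

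Disjointness of $\mathcal{F}_\alpha(A,\mathbf{I})$ and $(\mathcal{F}_\alpha(A,\mathbf{I}))^{I_\alpha}$ cannot be assumed. If $I_\gamma=I_\alpha$ for some $\gamma<\alpha$ (e.g.\ a constant family, which the statement does not exclude), then $(\mathcal{F}_\gamma(A,\mathbf{I}))^{I_\gamma}$ lies in both sets. Re-reading $\rho$ as a level tag does not rescue this, because it changes the hierarchy: such functions would then reappear as new elements at later levels. The missing idea is to prove (4) and (7) by one simultaneous induction and to get $\rho(b)=\delta$ from (4) one level down:
\begin{itemize}
\item Let $\rho(a_k)=\delta$ and put $b(j)=\mathrm{F}_{\delta-1}(\vartheta^\delta_j(a_1),\dots,\vartheta^\delta_j(a_n))$ for $j\in I_{\delta-1}$. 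By (4) at level $\delta-1$, $\rho(b(j))\geq\rho(a_k(j))$.
\item Suppose $b\in\mathcal{F}_{\delta-1}(A,\mathbf{I})$. Since $b\notin A$, we get $\rho(b)=\gamma^+\leq\delta-1$, so $b\in(\mathcal{F}_\gamma(A,\mathbf{I}))^{I_\gamma}$ with $I_\gamma=\mathrm{dom}(b)=I_{\delta-1}$.
\item Then every $a_k(j)$ lies in $\mathcal{F}_\gamma(A,\mathbf{I})$, so $a_k\in\mathcal{F}_{\gamma^+}(A,\mathbf{I})\subseteq\mathcal{F}_{\delta-1}(A,\mathbf{I})$, contradicting $\rho(a_k)=\delta$.
\end{itemize}
This uses only the tacit convention that elements of $A$ are not themselves functions on the index sets, which (5) at rank $0$ presupposes anyway. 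Note also that $\delta$ can be $0$ (all arguments in $A$). Then $\delta-1$ does not exist and clause (i) must be read as $\mathrm{F}_\beta=\mathrm{F}$; your remark that $\rho$ is never a limit does not cover this case.

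Second, for the iterated hierarchy of Definition \ref{def hierarchy 1}, the limit step of (4) does not follow from the $e^\alpha_\beta$ commuting with the operations. The direct limit identifies $x$ with $\overline{x}$, so levels can drop. Suppose $\mathrm{l}$ of a class is the least level of a representative, which is the reading your transport argument needs. Take a group $\mathcal{A}$ and a non-constant $x\in A^{I_0}$. Then $\mathrm{l}([x]_\omega)=1$, whereas $[x]_\omega+[-x]_\omega=[\overline{0}]_\omega=[0]_\omega$ (as $e^0_1(0)=\overline{0}$) has level $0$. So (4) fails at $\omega$ on this reading. Under the literal reading of $\mathrm{l}$, every element of $\mathfrak{C}_\lambda(\mathcal{A},\mathbf{I})$ is a new class of level $\lambda$, and the limit case is trivial instead. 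Either way your justification is not the right one, and this part of the statement needs either a fixed reading of $\mathrm{l}$ or a restriction.
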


\begin{proposition}\label{corollary vartheta and functions}
In the strongly cumulative hierarchy, for $a_0, ..., a_n \in \mathcal{F}_\beta(A, \mathbf{I})$, $\delta = max\{\rho(a_i)\}_{i \leq n}$ and $\alpha \leq \delta \leq \beta$:

\begin{itemize}
\item[\emph{(1)}] if $j \in I_\delta$, then $\vartheta^\alpha_j \big{(} \mathrm{F}_\beta(a_0, ..., a_n) \big{)} = \big{(} \mathrm{F}_\beta(a_0, ..., a_n) \big{)}(j) = \mathrm{F}_\beta \big{(} \vartheta^\alpha_j(a_0), ..., \vartheta^\alpha_j(a_n) \big{)}$;
\item[\emph{(2)}] if $j \not\in I_\delta$, then $\vartheta^\delta_j \big{(} \mathrm{F}_\beta(a_0, ..., a_n) \big{)} = \mathrm{F}_\beta(a_0, ..., a_n)$.
\end{itemize}

\noindent
Similarly, in the strongly cumulative hierarchy, for $a_0, ..., a_n \in \mathfrak{C}_\beta(\mathcal{A}, \mathbf{I})$, $\delta = max\{\mathrm{l}(a_i)\}_{i \leq n}$ and $\alpha \leq \delta \leq \beta$:

\begin{itemize}
\item[\emph{(3)}] if $j \in I_\delta$, then $\vartheta^\alpha_j \big{(} \mathrm{F}_\beta(a_0, ..., a_n) \big{)} = \big{(} \mathrm{F}_\beta(a_0, ..., a_n) \big{)}(j) = \mathrm{F}_\beta \big{(} \vartheta^\alpha_j(a_0), ..., \vartheta^\alpha_j(a_n) \big{)}$;
\item[\emph{(4)}] if $j \not\in I_\delta$, then $\vartheta^\delta_j \big{(} \mathrm{F}_\beta(a_0, ..., a_n) \big{)} = \mathrm{F}_\beta(a_0, ..., a_n)$.
\end{itemize}
\end{proposition}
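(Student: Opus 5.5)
The plan is to unfold the hereditary definition of $\mathrm{F}_\beta$ from Definition \ref{def hierarchy}(i), combined with Observation \ref{observation hierarchy}(3),(4). Write $b = \mathrm{F}_\beta(a_0, ..., a_n)$. By Observation \ref{observation hierarchy}(4), $\rho(b) = \delta$. By Observation \ref{observation hierarchy}(3), $\mathrm{F}_\beta$ agrees with $\mathrm{F}_{\delta}$ on these arguments, and, via the defining clause, with $\mathrm{F}_{\delta-1}$ on the pointwise data. So throughout I may replace $\mathrm{F}_{\delta-1}$ by $\mathrm{F}_\beta$ whenever the arguments lie in $\mathcal{F}_{\delta-1}(A, \mathbf{I})$. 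The hypothesis ``$j \in I_\delta$'' is read as $j$ lying in the common domain of the rank-$\delta$ elements, namely the index set at which those functions were formed; this is the index set over which the clause in Definition \ref{def hierarchy}(i) quantifies.

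For (1), first observe that $\rho(b) = \delta \geq \alpha$ and $j \in \mathrm{dom}(b)$. The definition of $\vartheta$ then gives $\vartheta^\alpha_j(b) = b(j)$, which is the first equality. For the second equality, the defining clause gives
\[ b(j) = \vartheta^\delta_j(b) = \mathrm{F}_{\delta-1}\big(\vartheta^\delta_j(a_0), ..., \vartheta^\delta_j(a_n)\big) = \mathrm{F}_\beta\big(\vartheta^\delta_j(a_0), ..., \vartheta^\delta_j(a_n)\big). \]
It then remains to show $\vartheta^\delta_j(a_i) = \vartheta^\alpha_j(a_i)$ for each $i$, and I split into cases on $\rho(a_i)$:
\begin{itemize}
\item If $\rho(a_i) = \delta$, both sides equal $a_i(j)$, since $\rho(a_i) \geq \delta \geq \alpha$ and $j \in \mathrm{dom}(a_i)$.
\item If $\rho(a_i) < \alpha$, both sides equal $a_i$.
\item If $\alpha \leq \rho(a_i) < \delta$, the left side is $a_i$, while the right side is $a_i(j)$ exactly when $j \in \mathrm{dom}(a_i)$.
\end{itemize}

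The third case is the main obstacle. When $j \notin \mathrm{dom}(a_i)$ it closes immediately. When $j \in \mathrm{dom}(a_i)$, my first attempt is to show by induction on $\delta$ that the two evaluations still agree. The idea is to apply the inductive hypothesis (the statement at the smaller level $\rho(a_i)$) to rewrite $\mathrm{F}_\beta(\dots, a_i(j), \dots)$ in terms of $a_i$, using Observation \ref{observation hierarchy}(5) and that $\mathrm{F}_\beta$ restricted to lower levels is $\mathrm{F}$ of that level. If that rewriting does not go through, I will record that (1) needs precisely the condition that no argument of intermediate rank has $j$ in its domain, and verify whether this condition is automatic under the paper's indexing of the $I_\alpha$.

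For (2): if $j \notin \mathrm{dom}(b)$, then by definition $\vartheta^\delta_j(b) = b$, which is immediate. Parts (3) and (4) are the same argument run in the (iterated) cumulative power hierarchy with $\mathrm{l}$ in place of $\rho$. The rank facts used above must be rechecked for $\mathrm{l}$, and the direct-limit levels need one extra remark: at a limit stage no new elements arise, so $\mathrm{l}(b)$ is a successor and the same unfolding applies to a representative of each equivalence class.
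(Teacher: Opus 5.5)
Your plan is the same as the paper's footnoted proof: unfold the hereditary clause for $\mathrm{F}_\beta$, use $\rho(b)=\delta$, and case-split on the ranks of the arguments. Your case analysis is more careful than the paper's. The footnote asserts that $\rho(b)<\rho(a)$ implies $\vartheta^\alpha_j(b)=b$, and in doing so it silently skips your third case: $\alpha\le\rho(a_i)<\delta$ with $j\in\mathrm{dom}(a_i)$.

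The gap is in how you propose to close that case. The induction you sketch cannot succeed, because the second equality of (1) is false there. Take the paper's own running example, with $\mathrm{F}=+$, $I_0=\mathbb{N}$ and $I_1=\mathcal{F}_1(\mathbb{N},\mathbf{I})\supseteq\mathbb{N}$. Let $c\in\mathbb{N}^{\mathbb{N}}$, so $\rho(c)=1$, and let $a\in\mathcal{F}^*_2(\mathbb{N},\mathbf{I})$ with $a(0)=3$. Set $\alpha=1$, $\delta=\beta=2$, $j=0$.

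Since $\rho(c)<2$, the defining clause gives $(a+_2c)(0)=\mathrm{F}_1(\vartheta^2_0(a),\vartheta^2_0(c))=3+_1c$. This is the rank-$1$ function $k\mapsto 3+c(k)$. On the other hand $\vartheta^1_0(a)=3$ and $\vartheta^1_0(c)=c(0)$, so the right-hand side of (1) is the natural number $3+c(0)$, of rank $0$.

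The mismatch is structural. The true value $\mathrm{F}_{\delta-1}(\dots,a_i,\dots)$ has rank at least $\rho(a_i)$, while replacing $a_i$ by $a_i(j)$ can strictly lower the rank. So no rewriting via an inductive hypothesis can equate the two sides.

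Your fallback is therefore the correct outcome, but the answer to your final question is negative. The condition is not automatic under the paper's indexing: the running example, with nested $I_0\subseteq I_1$, violates it, as does any family in which index sets at different levels overlap. What you can actually prove is:
\begin{itemize}
\item (1) for $\alpha=\delta$, where your third case is empty and your first two cases finish the argument;
\item (1) for general $\alpha\le\delta$, under the extra hypothesis that $j\notin\mathrm{dom}(a_i)$ whenever $\alpha\le\rho(a_i)<\delta$ (for instance, pairwise disjoint index sets).
\end{itemize}

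Your reading of ``$j\in I_\delta$'' as membership in $I_{\delta-1}$, the domain of rank-$\delta$ elements, is necessary rather than cosmetic. Read literally, both the first equality of (1) and the conclusion of (2) can fail when $I_\delta\neq I_{\delta-1}$.

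Finally, transferring the argument to (3) inherits the same problem. In the iterated hierarchy with these index sets, the successor-step clause has $\vartheta_0(c)=c$ because $\mathrm{dom}(c)=\mathbb{N}\neq I_1$. So again $(a+c)(0)=3+_1c\neq 3+c(0)$.
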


\begin{proof}
Straightforward by Definitions \ref{def hierarchy 1} and \ref{def hierarchy}.\footnote{We offer a quick argument for a binary function: by Observation \ref{observation hierarchy}, $\rho \big{(} \mathrm{F}_\beta(a, b) \big{)} = \delta$. Thus, $\vartheta^\alpha_j \big{(} \mathrm{F}_\beta(a, b) \big{)} = \big{(} \mathrm{F}_\beta(a, b) \big{)} (j)$. Without loss of generality, we may suppose $\delta = \rho(a)$. Then, $\vartheta^\alpha_j(a) = a(j)$. If $\rho(b) < \rho(a)$, then $\vartheta^\alpha_j(b) = b$, and so $\vartheta^\alpha_j \big{(} \mathrm{F}_\beta(a, b) \big{)} = \mathrm{F}_\beta \big{(} a(j), b \big{)} = \mathrm{F}_\beta \big{(} \vartheta^\alpha_j(a), \vartheta^\alpha_j(b) \big{)}$. If now $\rho(a) = \rho(b)$, then $\vartheta^\alpha_j(b) = b(j)$, so that $\vartheta^\alpha_j \big{(} \mathrm{F}_\beta(a, b) \big{)} = \big{(} \mathrm{F}_\beta (a, b) \big{)} (j) = \mathrm{F}_\beta \big{(} a(j), b(j) \big{)} = \mathrm{F}^\beta \big{(} \vartheta^\alpha_j(a), \vartheta^\alpha_j(b) \big{)}$.}
\end{proof}

\begin{corollary}
In both hierarchies, for any $\beta$ and $j \in I_\beta$, $\vartheta_j : \mathfrak{C}_{\beta^+}(\mathcal{A}, \mathbf{I}) \to \mathfrak{C}_\beta(\mathcal{A}, \mathbf{I})$ is an endomorphism (but not an automorphism).
\end{corollary}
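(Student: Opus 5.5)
Fix $\beta$ and $j \in I_\beta$, and write $\vartheta_j$ for $\vartheta^{\beta^+}_j$ restricted to $\mathfrak{C}_{\beta^+}(\mathcal{A}, \mathbf{I})$. My plan is to check three things: that $\vartheta_j$ lands in $\mathfrak{C}_\beta(\mathcal{A}, \mathbf{I})$, that it commutes with the extended function symbol, and that it preserves the relation and the constant. I would then give a counterexample to injectivity to settle the parenthetical claim. I will treat both hierarchies in parallel, using $\rho$ for the strongly cumulative one and $\mathrm{l}$ for the other; Proposition \ref{corollary vartheta and functions} is stated for both, so the arguments are identical up to notation.

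\emph{Well-definedness.} Let $a \in \mathfrak{C}_{\beta^+}(\mathcal{A}, \mathbf{I})$. If $\rho(a) = \beta^+$, then $a$ is a function $I_\beta \to \mathcal{F}_\beta(A, \mathbf{I})$, so $\vartheta_j(a) = a(j) \in \mathcal{F}_\beta(A, \mathbf{I})$ by Observation \ref{observation hierarchy}(5). Otherwise $\rho(a) \leq \beta$ and $\vartheta_j(a) = a$ already lies in level $\beta$. So $\vartheta_j$ maps level $\beta^+$ into level $\beta$ and is the identity on level $\beta$.

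\emph{Functions.} Take $a_0, \dots, a_n$ in level $\beta^+$ and let $\delta = \max\{\rho(a_i)\}$. If $\delta = \beta^+$, then $\mathrm{F}_{\beta^+}(a_0, \dots, a_n)$ has rank $\beta^+$ by Observation \ref{observation hierarchy}(4). Proposition \ref{corollary vartheta and functions}(1) (respectively (3)) then gives
\[ \vartheta_j\bigl(\mathrm{F}_{\beta^+}(\bar a)\bigr) = \mathrm{F}_{\beta}\bigl(\vartheta_j(a_0), \dots, \vartheta_j(a_n)\bigr), \]
where I replace $\mathrm{F}_{\beta^+}$ on the right by $\mathrm{F}_\beta$ using Observation \ref{observation hierarchy}(3), since all arguments lie in level $\beta$. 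If $\delta \leq \beta$, then all $a_i$ and their image under $\mathrm{F}$ have rank $\leq \beta$, so $\vartheta_j$ fixes everything and the identity reduces to $\mathrm{F}_{\beta^+}\restriction = \mathrm{F}_\beta$.

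\emph{Relations and constants.} For relations, I only need the forward direction of preservation for a homomorphism. If $\delta = \beta^+$ and $\mathrm{R}_{\beta^+} a_0 \dots a_n$ holds, then by the hereditary definition (ii) $\mathrm{R}_\beta \vartheta_j(a_0) \dots \vartheta_j(a_n)$ holds for every $j$, in particular for ours. If $\delta \leq \beta$, the relation is inherited unchanged. For the constant in the hierarchy of Definition \ref{def hierarchy 1}, $\mathrm{c}_{\beta^+} = \overline{\mathrm{c}_\beta}$, so $\vartheta_j(\mathrm{c}_{\beta^+}) = \mathrm{c}_\beta$. In the strongly cumulative case, the definition in (iii) shows directly that $a \in \mathbf{c}_{\beta^+}$ implies $\vartheta_j(a) \in \mathbf{c}_\beta$.

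\emph{Not an automorphism.} Whenever $\mathcal{F}_\beta$ has at least two elements, I take $b \in \mathcal{F}_\beta$ and any $f \in \mathcal{F}_\beta^{I_\beta}$ with $f(j) = b$. Then $\vartheta_j(f) = \vartheta_j(b) = b$, so $\vartheta_j$ is not injective. There is a further issue that "endomorphism" suggests domain equals codomain, which it does not here; I would read the claim as "homomorphism onto level $\beta$ that is not an isomorphism." I expect the main obstacle to be bookkeeping in the limit-stage hierarchy of Definition \ref{def hierarchy 1}, where elements are equivalence classes and "$\mathrm{dom}(a) = I_\beta$" must be read via representatives. I would handle this by choosing, for each class, the representative of least level and checking that $\vartheta_j$ respects $\approx_\lambda$. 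This works because $e_{\beta^+}^\alpha$ produces constant functions, on which $\vartheta_j$ returns the earlier image.
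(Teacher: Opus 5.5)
Your proposal is correct and follows the route the paper intends: the corollary is read off Proposition~\ref{corollary vartheta and functions} for the function symbol (with the index set there understood as $I_{\delta-1}=I_\beta$), then the hereditary clauses give the relations and constants, and non-injectivity follows from $\vartheta_j(\overline{b})=b=\vartheta_j(b)$. The limit-stage ``obstacle'' you anticipate does not arise: in the hierarchy of Definition~\ref{def hierarchy 1}, $\mathfrak{C}_{\beta^+}(\mathcal{A},\mathbf{I})$ is by definition the one-step power $\mathfrak{C}(\mathfrak{C}_\beta(\mathcal{A},\mathbf{I}),I_\beta)$, and $\vartheta_j$ fixes every element of level $\leq\beta$ (direct-limit classes included), so no representatives are needed --- and the reason you sketch would not work anyway, because a constant function $e^\alpha_{\gamma^+}(x)=\overline{e^\alpha_\gamma(x)}$ with $\gamma<\beta$ has domain $I_\gamma$, not $I_\beta$, so $\vartheta_j$ for $j\in I_\beta$ does not unwind it.
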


\subsection{A concrete example}

In order to make the construction of cumulative powers clearer, we may consider as a concrete case those generated by the natural numbers.

For $\beta < \omega$ and $\{\mathfrak{C}_\alpha(\mathbb{N}, \mathbf{I})\}_{\alpha < \omega}$ itself as the family of index sets, we let

\begin{itemize}
\item[•] $\mathfrak{C}_0(\mathbb{N}, \mathbf{I}) = \mathbb{N}$,
\item[•] $\mathfrak{C}_{\beta^+}(\mathbb{N}, \mathbf{I}) = \mathfrak{C}(\mathfrak{C}_\beta(\mathbb{N}, \mathbf{I}), \mathbf{I}) = \langle \mathcal{F}_\beta(\mathbb{N}, \mathbf{I}), +_\beta, \cdot_\beta, \mathrm{s}_\beta,  \mathbf{0}_\beta \rangle$.
\end{itemize}

Notice there is no circularity in defining the index sets as such, since each $\alpha$th index set, used to construct $\mathfrak{C}_{\alpha^+}(\mathcal{A}, \mathbf{I})$, is taken to be $\mathfrak{C}_\alpha(\mathcal{A}, \mathbf{I})$, and $\mathfrak{C}_0(\mathcal{A}, \mathbf{I})$ is simply $\mathcal{A}$, so that the construction is well-founded.

To illustrate the definition, let us consider a concrete calculation in $\mathfrak{C}_2(\mathbb{N}, \mathbf{I})$, whose domain is

\begin{center}
$\mathcal{F}_2(\mathbb{N}, \mathbf{I}) = \mathcal{F}_1(\mathbb{N}, \mathbf{I})^{\mathcal{F}_1(\mathbb{N}, \mathbf{I})} \cup \mathcal{F}_1(\mathbb{N}, \mathbf{I}) = (\mathbb{N}^\mathbb{N} \cup \mathbb{N})^{\mathbb{N}^\mathbb{N} \cup \mathbb{N}} \cup (\mathbb{N}^\mathbb{N} \cup \mathbb{N})$.
\end{center}

\noindent
For $f \in \mathcal{F}^*_1(\mathbb{N}, \mathbf{I}) = \mathbb{N}^\mathbb{N}$, let $f^+$ be defined as $\forall j \in \mathbb{N} \big{(} f^+(j) = \mathrm{s}(f(j)) \big{)}$. Consider the function $\mathrm{s}_1$, the extension of the successor function $\mathrm{s}$ to $\mathcal{F}_1(\mathbb{N}, \mathbf{I})$:

\begin{center}
$\mathrm{s}_1(j) = 
\begin{cases}
j^+,\ \text{if}\ j \in \mathcal{F}^*_1(\mathbb{N})\\
\mathrm{s}(j) = j + 1,\ \text{if}\ j \in \mathbb{N}
\end{cases}$
\end{center}

\noindent 
In the present construction, notice that $\mathrm{s}_1$ is an element of $\mathcal{F}_2(\mathbb{N}, \mathbf{I})$, since it is a function from $\prescript{\mathbb{N}}{}{\mathbb{N}} \cup \mathbb{N}$ to $\prescript{\mathbb{N}}{}{\mathbb{N}}\cup \mathbb{N}$. Therefore, if we want to calculate the sum of $\mathrm{s}_1$ and $3$, we need to consider the operation $+_2$. The first thing to check is the level of the objects: $\rho(\mathrm{s}_1) = 2$, while $\rho(3) = 0$. Applying Definition \ref{def hierarchy} we have that $\mathrm{s}_1 +_2 3 = a \in \mathcal{F}_2^*(\mathbb{N}, \mathbf{I})$ where $a$ is such that

\smallskip

\begin{center}
$\forall j \in \mathcal{F}_1(\mathbb{N}, \mathbf{I}) \big{(} a(j) = \mathrm{s}_1(j) +_1 3 \big{)}$,
\end{center} 

\smallskip

\noindent 
which means that

\smallskip

\begin{center}
$a(j) = 
\begin{cases}
j^+ +_1 3,\ \text{if}\ j \in \mathbb{N}^\mathbb{N}\\
\mathrm{s}(j) + 3,\ \text{if}\ j \in \mathbb{N}
\end{cases}$
\end{center}

\noindent
Continuing to calculate, we obtain that $j^+ +_1 3 = b \in \mathbb{N}^\mathbb{N}$, where $b$ is such that

\smallskip

\begin{center}
$\forall k \in \mathbb{N} \big{(} b(k) = j^+(k) + 3 \big{)}$.
\end{center}

\smallskip

\noindent
Applying now the definition of $\mathrm{s}_1$ we obtain that, for $k \in \mathbb{N}$, we have that

\smallskip

\begin{center}
$b(k) = j^+(k) + 3 = \mathrm{s} \big{(} j(k) \big{)} + 3$.
\end{center}

\smallskip

In order to have a visual representation of this sum we may take an order of $\mathcal{F}_1(\mathbb{N}, \mathbf{I})$ such that $\mathbb{N}$ is the initial segment, and hence we may represent an element of $\mathcal{F}_2(\mathbb{N}, \mathbf{I})$ by a column vector with the $j$-th coordinate being its value at $j$. Then the first vector below represents $\mathrm{s}_1$, and the sum $\mathrm{s}_1 +_2 3$ can be depicted as follows.

\vspace{2mm}

\small

\begin{center}
$\begin{bmatrix}
1\\
2\\
3\\
\vdots\\
[1, 1, 5, ...]\\
\vdots\\
[1, 3, 7, ...]\\
\vdots
\end{bmatrix} + 3 =$ $
\begin{bmatrix}
1 + 3\\
2 + 3\\
3 + 3\\
\vdots\\
[1, 1, 5, ...] + 3\\
\vdots\\
[1, 3, 7, ...] + 3\\
\vdots
\end{bmatrix} =$ $\begin{bmatrix}
1 + 3\\
2 + 3\\
3 + 3\\
\vdots\\
[1 + 3, 1 + 3, 5 + 3, ...] \\
\vdots\\
[1 + 3, 3 + 3, 7 + 3, ...]\\
\vdots
\end{bmatrix} =$ $\begin{bmatrix}
4\\
5\\
6\\
\vdots\\
[4, 4, 8, ...] \\
\vdots\\
[4, 6, 10, ...]\\
\vdots
\end{bmatrix}
$
\end{center}

\normalsize

\subsection{Preservation and properties of cumulative powers}

If one of the index sets defining a cumulative power hierarchy is either empty or a singleton, in either hierarchy, then the structure generated in its corresponding level is isomorphic to the structure of its preceding level, and therefore uninteresting. Thus, for the rest of the paper, we shall assume any index set to be of cardinality greater than $1$.

The natural question which follows the introduction of cumulative powers amounts to what sentences of the language induced by $\sigma$ are preserved by the construction. For that, some definitions are in order.

As introduced in \cite{horn} (though not called as such there), a \emph{Horn formula} is a formula in prenex conjunctive normal form whose conjuncts are disjunctions of literals of which at most one is a non-negated atom. In other words, a Horn formula is of the form

\begin{center}
$\Delta (\bigwedge_{k \leq m} (\bigvee_{i \leq n_k} \psi_{k, i}) \big{)}$,
\end{center}

\noindent
where each $\psi_{k, i}$ is a literal, and for each $k \leq m$ there is at most one $i \leq n_k$ such that $\psi_{k, i}$ is a non-negated atom. A \emph{Horn sentence} is a closed Horn formula.

In \cite{horn}, Horn showed that any Horn sentence is preserved by direct products. In \cite{changmorel}, Chang and Morel showed Horn sentences are also preserved by reduced products produced by the Fr\'{e}chet filter, and as expressed in \cite{frayne}, Chang further generalized the result, showing that, in fact, any proper (that is, defined by proper filters) reduced product preserves Horn sentences. 

A class of structures defined by a set of sentences is an \emph{elementary class}, and if all those sentences are Horn sentences, call it a \emph{Horn class}. The results mentioned above therefore say that any Horn class is closed by proper reduced (and thus direct) products. Chang and Morel showed in \cite{changmorel} that not every elementary class closed under direct products is a Horn class, but conjectured that any elementary class closed under \emph{reduced} products is indeed a Horn class. In \cite{keisler1965}, Keisler proved the conjecture under the assumption the continuum hypothesis holds for at least one cardinal, whereas in \cite{galvin1970}, Galvin strengthened the result by eliminating that assumption. Briefly:

\begin{proposition}[\cite{changmorel}, \cite{keisler1965}, and \cite{galvin1970}]\label{proposition reduced products}
A sentence is preserved by proper reduced products iff it is equivalent to a Horn sentence.
\end{proposition}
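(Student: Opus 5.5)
The plan is to prove the two directions separately. The ``if'' direction is a direct induction on syntax, in the style of \cite{horn} and \cite{changmorel}. The ``only if'' direction is the deep part due to \cite{keisler1965} and \cite{galvin1970}, and I would reduce it to a statement about elementary equivalence with reduced products.

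For the ``if'' direction, fix structures $\{\mathcal{A}_i\}_{i\in I}$ and a proper filter $F$ over $I$. By induction on Horn formulas $\varphi(\bar x)$, I will prove: if $f_1,\dots,f_n\in\prod_i A_i$ and $\{i\mid \mathcal{A}_i\models\varphi[f_1(i),\dots,f_n(i)]\}\in F$, then $\prod_i\mathcal{A}_i/F\models\varphi[[f_1],\dots,[f_n]]$.
\begin{itemize}
\item \emph{Atoms.} This is the definition of the reduced product.
\item \emph{Basic Horn clause} $\neg\theta_1\vee\dots\vee\neg\theta_k\vee\theta$. Suppose the product satisfies every $\theta_l$. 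Each $\theta_l$ then holds on a set in $F$, and the clause holds on a set in $F$. On the intersection of these sets, which lies in $F$, $\theta$ holds, so $\theta$ holds in the product.
\item \emph{Clause with no positive atom.} The same intersection is in $F$, hence nonempty because $F$ is proper. At any point of it the clause and all the $\theta_l$ hold together, which is a contradiction.
\item \emph{Conjunctions.} Use closure of $F$ under finite intersections.
\item \emph{Universal quantifier.} This is immediate from the inductive hypothesis, since the set of indices where $\forall y\,\psi$ holds is contained in the set where $\psi$ holds for any chosen $g$.
\item \emph{Existential quantifier.} Use choice to pick witnesses pointwise on the set in $F$ and arbitrary values elsewhere.
\end{itemize}
Applying this to sentences shows that every Horn sentence is preserved, and hence so is every sentence equivalent to one.

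For the ``only if'' direction, let $\varphi$ be preserved by proper reduced products, and let $T$ be the set of Horn sentences implied by $\varphi$. It suffices to show $T\models\varphi$, because compactness then gives a single Horn sentence (a finite conjunction of Horn sentences is again Horn up to equivalence) equivalent to $\varphi$. So let $\mathcal{B}\models T$. The key claim is that $\mathcal{B}$ is elementarily equivalent to a reduced product $\prod_i\mathcal{A}_i/F$ of models of $\varphi$. The product satisfies $\varphi$ by hypothesis, and therefore so does $\mathcal{B}$.

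To build such a product, I would use that for every $\psi\in\mathrm{Th}(\mathcal{B})$ the theory $\{\varphi,\psi\}$ is consistent. Otherwise $\varphi\models\neg\psi$, and one needs that the relevant consequence can be taken Horn. This is exactly where the ``Horn part of a theory'' must be shown to control reduced-product equivalence. Following Galvin, I would prove a combinatorial lemma: for a theory $S$, the class of structures elementarily equivalent to reduced products of models of $S$ is axiomatized by the Horn consequences of $S$. This uses a Feferman--Vaught style analysis, in which the truth of a sentence in $\prod_i\mathcal{A}_i/F$ is determined by which sets $\{i\mid\mathcal{A}_i\models\chi\}$ lie in $F$ for finitely many $\chi$, combined with a choice of filter generated by suitable Boolean combinations.

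The main obstacle is this last lemma: producing the index family and the filter so that the reduced product realizes exactly $\mathrm{Th}(\mathcal{B})$ without any set-theoretic hypothesis. Keisler's original argument needed a GCH-type assumption to get saturated ultrapowers. I would instead try Galvin's route, working with countable fragments and Boolean-algebra valued truth sets, and settle the countable-language case first before passing to arbitrary languages by compactness.
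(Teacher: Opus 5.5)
The paper gives no proof of this proposition; it only cites Chang--Morel, Keisler and Galvin. So your proposal has to stand on its own. Your ``if'' direction does. It is the standard Horn/Chang--Morel induction, and you use properness of $F$ exactly where it is needed, namely for clauses without a positive atom. Your compactness reduction of the converse is also correct: it suffices to show that every model $\mathcal{B}$ of the set $T$ of Horn consequences of $\varphi$ is elementarily equivalent to a proper reduced product of models of $\varphi$.

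The gap is that this claim is never proved, and it is not an auxiliary lemma. The statement you defer to says that the structures elementarily equivalent to reduced products of models of $S$ are exactly the models of the Horn consequences of $S$. That is equivalent to the theory form of the very theorem you are proving: an elementary class closed under proper reduced products is a Horn class. Your phrases ``Feferman--Vaught style analysis with a filter generated by suitable Boolean combinations'' and ``countable fragments and Boolean-algebra valued truth sets'' do not specify the factors $\mathcal{A}_i$ or the filter $F$. Nor do they explain why $\prod_i\mathcal{A}_i/F$ would have exactly the theory of $\mathcal{B}$.

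Worse, the one concrete step you offer is circular. $\mathrm{Th}(\mathcal{B})$ is complete and closed under conjunction. So if $\{\varphi,\psi\}$ is consistent for every $\psi\in\mathrm{Th}(\mathcal{B})$, compactness already gives a model of $\mathrm{Th}(\mathcal{B})\cup\{\varphi\}$, and hence $\mathcal{B}\models\varphi$. That step is therefore the desired conclusion restated, not an ingredient toward a reduced product. Your justification for it (if $\varphi\models\neg\psi$, ``the relevant consequence can be taken Horn'') is exactly the content of the theorem. In general $\neg\psi$ is not Horn, and nothing you say produces a Horn consequence of $\varphi$ that fails in $\mathcal{B}$.

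What is missing is an actual mechanism that turns ``$\mathcal{B}$ satisfies every Horn consequence of $\varphi$'' into a reduced product of models of $\varphi$ with the same complete theory as $\mathcal{B}$. In the literature this is the entire difficulty. Keisler's argument needed a continuum-hypothesis assumption at some cardinal to get enough saturation, and removing that assumption is the substance of Galvin's paper. As written, your ``only if'' direction is a plan rather than a proof.
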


\begin{proposition}[\cite{keisler1965}, and \cite{galvin1970}]\label{proposition reduced products 2}
A sentence is preserved by proper reduced powers iff it is equivalent to a disjunction of Horn sentence.
\end{proposition}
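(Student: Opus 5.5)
The forward direction is routine and I would do it first; the substance lies in the converse, which I would reduce to a single transfer lemma and then close with two compactness arguments.

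\emph{Right to left.} Suppose $\varphi$ is equivalent to $\theta_1 \vee \dots \vee \theta_k$, each $\theta_i$ a Horn sentence. If $\mathcal{A} \models \varphi$, then $\mathcal{A} \models \theta_i$ for some $i$. A reduced power $\mathcal{A}^I/F$ is a reduced product all of whose factors are $\mathcal{A}$, so Proposition \ref{proposition reduced products} gives $\mathcal{A}^I/F \models \theta_i$, and hence $\mathcal{A}^I/F \models \varphi$. This step uses that all factors are equal. It is exactly why disjunctions are preserved by powers but not by products: in a product, different factors may satisfy different disjuncts.

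\emph{Left to right.} Assume $\varphi$ is preserved by proper reduced powers. For a model $\mathcal{A} \models \varphi$, let $H_\mathcal{A}$ be the set of Horn sentences true in $\mathcal{A}$. I would first show $H_\mathcal{A} \models \varphi$. By compactness there is then a finite conjunction $\theta_\mathcal{A}$ of members of $H_\mathcal{A}$ with $\mathcal{A} \models \theta_\mathcal{A}$ and $\theta_\mathcal{A} \models \varphi$; this conjunction is again, up to equivalence, a Horn sentence (rename bound variables and prenex). Next, the set $\{\neg\varphi\} \cup \{\neg \theta_\mathcal{A} \mid \mathcal{A} \models \varphi\}$ is unsatisfiable. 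A model of it would either satisfy $\neg\varphi$ together with some $\theta_\mathcal{A}$, which is impossible since $\theta_\mathcal{A} \models \varphi$, or it would be a model $\mathcal{A}$ of $\varphi$ falsifying its own $\theta_\mathcal{A}$, also impossible. Applying compactness once more gives finitely many $\mathcal{A}_1, \dots, \mathcal{A}_k$ with $\varphi$ equivalent to $\theta_{\mathcal{A}_1} \vee \dots \vee \theta_{\mathcal{A}_k}$.

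\emph{The key step.} It remains to prove $H_\mathcal{A} \models \varphi$. Let $\mathcal{B} \models H_\mathcal{A}$, that is, every Horn sentence true in $\mathcal{A}$ is true in $\mathcal{B}$. I would prove the following transfer lemma, in the style of Keisler and Galvin: there exist a proper reduced power $\mathcal{A}^I/F$ and an ultrapower $\mathcal{B}^J/\mathcal{U}$ with $\mathcal{A}^I/F \equiv \mathcal{B}^J/\mathcal{U}$. Granting the lemma, the conclusion is quick. Since $\mathcal{A} \models \varphi$ and $\varphi$ is preserved by reduced powers, $\mathcal{A}^I/F \models \varphi$. Hence $\mathcal{B}^J/\mathcal{U} \models \varphi$, and so $\mathcal{B} \models \varphi$ by \L o\'{s}' theorem.

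This lemma is the main obstacle. My first attempt would be to construct $I$ and $F$ by a diagram-style argument. Take $I$ to be the set of finite sets of formulas, or of assignments, relevant to the elementary diagram of $\mathcal{B}$. The ingredient is the Horn hypothesis in its contrapositive form. Any finite conjunction of atomic facts true in $\mathcal{B}$, together with finitely many negated atomic facts each true in $\mathcal{B}$, is a non-Horn condition only through its negated parts. Each negated atom can therefore be witnessed in a separate coordinate of a power of $\mathcal{A}$, while the positive atoms must hold in all coordinates simultaneously. I would then use a good (countably incomplete, $\kappa^+$-good) filter $F$, so that both sides become saturated to the same degree, and match their theories by a back-and-forth argument on Boolean-valued truth. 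This is essentially Galvin's argument via the Feferman--Vaught reduction. If it proves too unwieldy to reproduce, I would cite it in the form ``$\mathrm{Horn}(\mathcal{A}) \subseteq \mathrm{Th}(\mathcal{B})$ implies some reduced power of $\mathcal{A}$ is elementarily equivalent to an ultrapower of $\mathcal{B}$'' and use it as a black box.
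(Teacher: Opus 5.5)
You have the standard reduction, and the paper itself gives no proof here; it only cites Keisler and Galvin. Your right-to-left direction is fine. So is the first compactness step, where $H_\mathcal{A}\models\varphi$ yields a single Horn sentence $\theta_\mathcal{A}$ with $\mathcal{A}\models\theta_\mathcal{A}\models\varphi$.

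\textbf{The second compactness step fails as written.} Since $\theta_\mathcal{A}\models\varphi$, every model of $\neg\varphi$ satisfies every $\neg\theta_\mathcal{A}$. So $\{\neg\varphi\}\cup\{\neg\theta_\mathcal{A}\mid\mathcal{A}\models\varphi\}$ is satisfiable whenever $\varphi$ is not valid, and compactness gives nothing. Your case analysis does not fit this set either: a model of it satisfies no $\theta_\mathcal{A}$ and is not a model of $\varphi$, so both of your cases are vacuous. The set you want is $\{\varphi\}\cup\{\neg\theta_\mathcal{A}\mid\mathcal{A}\models\varphi\}$. A model $\mathcal{M}$ of it would satisfy $\neg\theta_\mathcal{M}$, which is absurd. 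Compactness then gives $\varphi\models\theta_{\mathcal{A}_1}\vee\dots\vee\theta_{\mathcal{A}_k}$, and the converse entailment is immediate.

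\textbf{The transfer lemma is the real gap.} It carries the entire content of the theorem, and your sketch does not prove it. First, the ultrapower does no work: by \L o\'{s}, $\mathcal{B}^J/\mathcal{U}\equiv\mathcal{B}$, so the lemma just says $\mathcal{B}\equiv\mathcal{A}^I/F$ for some proper $F$.

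Witnessing each negated atom in its own coordinate is Mal'cev's argument. It uses only the universal Horn sentences of $\mathcal{A}$, and it yields at most an embedding of $\mathcal{B}$ into a reduced power of $\mathcal{A}$. That cannot be enough, because $\varphi$ need not be preserved under substructures. For example, take $\mathcal{A}=(\mathbb{N},R)$ with $R=\{(n,n+1)\}$ and let $\mathcal{B}$ be its substructure on $\{0\}$. Then $\mathcal{B}$ satisfies every universal sentence true in $\mathcal{A}$, yet it fails the Horn sentence $\forall x\exists y\,Rxy$.

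Your subsequent ``good filter plus back-and-forth'' step has no invariant to preserve except the elementary equivalence you are trying to establish. As it stands it is not an argument.

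Since the paper cites the whole proposition, citing the lemma is legitimate. You can also derive it from the class form of the Keisler--Galvin theorem recalled in the paper: an elementary class closed under proper reduced products is a Horn class.
\begin{itemize}
\item Let $K$ be the class of structures elementarily equivalent to some proper reduced power of $\mathcal{A}$.
\item A proper reduced product of proper reduced powers of $\mathcal{A}$ is a proper reduced power of $\mathcal{A}$, over the disjoint union of the index sets.
\item Reduced products preserve elementary equivalence (Feferman--Vaught).
\item Hence $K$ is closed under $\equiv$ and under proper reduced products, in particular under ultraproducts. So $K$ is elementary, and $K=\mathrm{Mod}(\Sigma)$ for some set $\Sigma$ of Horn sentences.
\item Since $\mathcal{A}\in K$, we get $\Sigma\subseteq H_\mathcal{A}$. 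So every model of $H_\mathcal{A}$ lies in $K$, which is your lemma.
\end{itemize}
With these two repairs the proof is complete relative to the cited results.
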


Let the occurrence of an atom in a formula be \emph{positive} if in the prenex conjunctive normal form of that formula the atom is preceded by an even number of negations, and let it be \emph{negative} otherwise. As such, one might suspect the fragment preserved by cumulative powers is the class of Horn sentences without positive equalities, for, as we have seen, there is an imbalance in how the interpretation of constants, functions and other relations are defined in cumulative powers (that is, hereditarily), and how equality is defined (that is, as true identity, rather than hereditary identity). In that case, cumulative powers would preserve strictly fewer first-order sentences than reduced products. However, consider the sentence $\forall x \forall y \exists z (x + y = z)$. As one may check on our previous example of a hierarchy given by the natural numbers, that sentence is preserved and has a positive occurrence of equality. Therefore, the matter is not as simple. Furthermore, as we may comparatively see on the definitions of cumulative powers and direct powers, the former are much closer to the latter than to reduced products or reduced powers. Truth of formulas for functional elements of cumulative powers accord with how their truth is defined in direct powers. Therefore, if we want to characterize the fragment of first-order theory preserved by cumulative powers, our route should be to look at what fragment is preserved by direct powers.

In \cite{weinstein}, Weinstein showed it is possible to give a -- although not simple -- characterization of that fragment not in terms of an inductively defined class of formulas, but as formulas arising from a given recursive decision procedure. To expose it, some definitions are needed.

For $\varphi, \psi, \gamma \in \mathcal{L}^\sigma$, we write $\varphi \times \psi \Rightarrow \gamma$ when, for any $\sigma$-structures $\mathcal{A}$ and $\mathcal{B}$, $a_0, ..., a_n \in A$, and $b_0, ..., b_n \in B$, if $\mathcal{A} \models \varphi(a_0, ...,a_n)$ and $\mathcal{B} \models \psi(b_0, ..., b_n)$, then $\mathcal{A} \times \mathcal{B} \models \gamma(\langle a_0, b_0 \rangle , ..., \langle a_n, b_n \rangle)$. For $\Phi \subseteq \mathcal{L}^\sigma$, we let:

\begin{itemize}
\item $\neg \Phi = \{\neg \varphi \mid \varphi \in \Phi\}$;
\item $\exists x \Phi = \{\exists x \varphi \mid \varphi \in \Phi\}$ (for the particular variable $x$);
\item $\forall x \Phi = \{\forall x \varphi \mid \varphi \in \Phi\}$;
\item $\bigvee \Phi = \{\bigvee_{i \leq n} \varphi_i \mid n < \omega\ \&\ \varphi_i \in \Phi\}$ (including the empty disjunction $\top$);
\item $\bigwedge \Phi = \{\bigwedge_{i \leq n} \varphi_i \mid n < \omega\ \&\ \varphi_i \in \Phi\}$ (including the empty conjunction $\bot$);
\item $\bigwedge^* \Phi = \{\bigwedge_{i  \leq n} \varphi_i \mid n < \omega\ \&\ \varphi_i \in \Phi\ \&\ \bigwedge_{i \leq n} \varphi_i\ \text{is consistent}\}$.
\end{itemize}

In the cases of $\bigvee \Phi$ and $\bigwedge \Phi$, we make the assumption that there is a convention for listing the formulas of $\Phi$ to avoid redundancy -- that is, the members of those sets are as defined modulo logical equivalence. In that way, when $\Phi$ is finite, $\bigvee \Phi$ and $\bigwedge \Phi$ should also be finite, and the mappings $\Phi \mapsto \bigvee \Phi$ and $\Phi \mapsto \bigwedge \Phi$ should be computable. Thus, we may let $\bigwedge^* \Phi$ be such that $\bigwedge^* \Phi \subseteq \bigwedge \Phi$.\footnote{Even though there may not be a computable way to determine $\bigwedge^* \Phi$ as a subset of $\bigwedge \Phi$.}

Let $\langle x_n \rangle_{n < \omega}$ be an enumeration of the variables of $\mathcal{L}^\sigma$. Given a finite set of atomic formulas $\Phi$, we define the following finite sets of atomic formulas:

\begin{itemize}
\item $\Phi_0 = \bigvee \bigwedge (\Phi \cup \neg \Phi)$,
\item $\Phi_{n+1} = \bigvee \bigwedge \exists x_n \Phi_n$, when $n$ is even,
\item $\Phi_{n+1} = \bigvee \bigwedge \forall x_n \Phi_n$, when $n$ is odd,
\item $\Phi^*_0 = \bigvee \bigwedge^* (\Phi \cup \neg \Phi)$,
\item $\Phi^*_{n+1} = \bigvee \bigwedge^* \exists x_n \Phi^*_n$, when $n$ is even,
\item $\Phi^*_{n+1} = \bigvee \bigwedge^* \forall x_n \Phi^*_n$, when $n$ is odd.
\end{itemize}

\noindent
Notice $\Phi^*_n \subseteq \Phi_n$ for every $n < \omega$, and that every formula in $\mathcal{L}^\sigma$ is equivalent to a formula in $\Phi^*_n$ for some $\Phi \subseteq \mathcal{L}^\sigma$ and $n < \omega$, equivalent to a formula in $\Phi_n$ for some $\Phi$ and $n$.

\begin{proposition}[\cite{weinstein}]\label{proposition weinstein}
Let $\Phi$ be a finite autonomous set of formulas closed under $\bigwedge$ and $\bigvee$ up to equivalence.\footnote{The definition for autonomous sets of formulas if offered in \cite{changkeisler}, pp. 425--426. It is, however, not of vital importance for the following results, so we omit it.} Let $x$ be any variable and let $\varphi$, $\varphi'$, and $\psi$ be formulas in $\bigvee \bigwedge \exists x F$. Fix $Q \in \{\exists, \forall\}$. Suppose for every disjunct $\varphi'$ of $\varphi$ and every disjunct $\psi'$ of $\psi$, there is a disjunct $\gamma'$ of $\gamma$ such that for every conjunct $Q x \gamma''$ of $\gamma'$, there are conjuncts $Q x \varphi''$ of $\varphi'$ and $Q x \psi''$ of $\psi'$ such that $\varphi'' \times \psi'' \Rightarrow \gamma''$. Then, $\varphi \times \psi \Rightarrow \gamma$. Moreover, if $\varphi$ and $\psi$ are elements of $\bigvee \bigwedge^* Q x \Phi$, then the supposition is necessary and sufficient for $\varphi \times \psi \Rightarrow \gamma$. 
\end{proposition}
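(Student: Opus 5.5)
The sufficiency half is a direct semantic argument. For necessity I would use the finiteness and autonomy of $\Phi$ to reduce truth in $\mathcal{A}\times\mathcal{B}$ to a combinatorial condition on $\Phi$-types.

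\emph{Sufficiency.} Suppose $\mathcal{A}\models\varphi(a_0,\dots,a_n)$ and $\mathcal{B}\models\psi(b_0,\dots,b_n)$. Choose a disjunct $\varphi'$ of $\varphi$ true at $\vec a$ and a disjunct $\psi'$ of $\psi$ true at $\vec b$. The hypothesis then gives a disjunct $\gamma'$ of $\gamma$, and it suffices to show that every conjunct $Qx\gamma''$ of $\gamma'$ holds at $\langle\vec a,\vec b\rangle$ in $\mathcal{A}\times\mathcal{B}$. Take the conjuncts $Qx\varphi''$ of $\varphi'$ and $Qx\psi''$ of $\psi'$ with $\varphi''\times\psi''\Rightarrow\gamma''$.
\begin{itemize}
\item If $Q=\exists$, pick witnesses $a'\in A$ for $\varphi''$ and $b'\in B$ for $\psi''$. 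Then $\langle a',b'\rangle$ witnesses $\gamma''$ in the product.
\item If $Q=\forall$, any element of $A\times B$ has the form $\langle a',b'\rangle$. Here $a'$ satisfies $\varphi''$ and $b'$ satisfies $\psi''$, so $\langle a',b'\rangle$ satisfies $\gamma''$.
\end{itemize}
Hence $\gamma'$, and therefore $\gamma$, holds in $\mathcal{A}\times\mathcal{B}$.

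\emph{Necessity.} Now assume $\varphi,\psi\in\bigvee\bigwedge^*Qx\Phi$ and that the supposition fails. Then there are disjuncts $\varphi'$ of $\varphi$ and $\psi'$ of $\psi$ with the following property: for every disjunct $\gamma'$ of $\gamma$ there is a conjunct $Qx\gamma''_{\gamma'}$ of $\gamma'$ such that $\varphi''\times\psi''\not\Rightarrow\gamma''_{\gamma'}$ for all conjuncts $Qx\varphi''$ of $\varphi'$ and $Qx\psi''$ of $\psi'$. The goal is to build $\mathcal{A}\models\varphi'(\vec a)$ and $\mathcal{B}\models\psi'(\vec b)$ with $\mathcal{A}\times\mathcal{B}\not\models\gamma(\langle\vec a,\vec b\rangle)$.

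The key fact, which autonomy of $\Phi$ is meant to provide, is a Feferman--Vaught reduction. For $\chi\in\Phi$, whether $\mathcal{A}\times\mathcal{B}\models\chi(\langle\vec a,a'\rangle,\langle\vec b,b'\rangle)$ depends only on the $\Phi$-type of $(\vec a,a')$ in $\mathcal{A}$ and the $\Phi$-type of $(\vec b,b')$ in $\mathcal{B}$. Since $\Phi$ is finite and closed under $\bigwedge,\bigvee$, each such type is isolated by a single formula of $\Phi$. So the relation $\varphi''\times\psi''\Rightarrow\gamma''$ between type-isolating formulas becomes a finite, decidable table.

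I would then choose $\mathcal{A}$ and $\mathcal{B}$ so that the $\Phi$-types over $\vec a$ (resp. $\vec b$) realised by elements $x$ are controlled as follows.
\begin{itemize}
\item For $Q=\exists$: only the types required by the conjuncts of $\varphi'$ (resp. $\psi'$) are realised. This is possible because $\varphi'$ is a consistent conjunction; one takes a model realising a chosen $\Phi$-type for each conjunct and omitting the rest.
\item For $Q=\forall$: every type consistent with the conjuncts is realised, which can be arranged by compactness.
\end{itemize}
With these choices, each $Qx\gamma''_{\gamma'}$ fails in the product: a failing pair of types gives a counterexample, or for $\exists$ no pair of realised types yields a witness. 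Hence every disjunct of $\gamma$ fails, and so does $\gamma$.

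The main obstacle is this last construction. A single pair of structures must \emph{simultaneously} exhibit the failure of $\varphi''\times\psi''\Rightarrow\gamma''_{\gamma'}$ for all finitely many $\gamma'$ and all choices of conjuncts, while still satisfying all of $\varphi'$ and $\psi'$. I would first attempt it by taking, for the $\exists$ case, a model of $\varphi'(\vec a)$ that is minimal with respect to realised $\Phi$-types over $\vec a$, using consistency from $\bigwedge^*$ and the finiteness of $\Phi$. If that fails, I would fall back to taking disjoint unions or products of the individual counterexample structures, using autonomy to check that the $\Phi$-types are preserved.
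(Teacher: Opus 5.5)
The paper does not prove this proposition: it is quoted from Weinstein via Chang--Keisler, with the definition of ``autonomous'' omitted. Your argument therefore has to stand on its own.

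Your sufficiency half is correct and complete. The necessity half has a genuine gap, exactly at the step you flag as the main obstacle. The construction of $\mathcal{A}$ and $\mathcal{B}$ is never carried out, and both recipes you propose for it fail. The only things you use about $\varphi'\in\bigwedge^*Qx\Phi$ are its consistency and a Feferman--Vaught reading of autonomy. Neither controls which $\Phi$-types over $\vec a$ a model realises.
\begin{itemize}
\item For $Q=\exists$, some types are realised in \emph{every} model of $\varphi'(\vec a)$, for instance by the parameters themselves or because $\varphi'$ implies them. So ``omitting the rest'' is not available, and such a forced type can pair with a forced type in $\mathcal{B}$ to witness $\exists x\gamma''$.
\item Also for $Q=\exists$: $\Phi$ is closed only under $\bigwedge,\bigvee$, not negation, so a single member of $\Phi$ does not isolate a complete $\Phi$-type. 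Hence $\varphi''\times\psi''\not\Rightarrow\gamma''$ only says that \emph{some} pair of completions of $\varphi'',\psi''$ fails $\gamma''$. That pair can depend on $\gamma'$, and realising all the needed completions at once can produce witnesses for other disjuncts of $\gamma$.
\item For $Q=\forall$, every element of a model of $\bigwedge_k\forall x\varphi_k$ satisfies \emph{all} the $\varphi_k$ at once. What you need is therefore $(\bigwedge_k\varphi_k)\times(\bigwedge_l\psi_l)\not\Rightarrow\gamma''$. The negated supposition only gives $\varphi_k\times\psi_l\not\Rightarrow\gamma''$ for each single pair $(k,l)$.
\end{itemize}

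These are not just missing details: using only what you use, the argument would prove false statements. Two examples show this.
\begin{itemize}
\item Let $\Phi$ be generated by $P(x)$ and $x=y$, with $\varphi=\psi=\exists x\,P(x)$ and $\gamma=\exists x\,(x=y)$. The disjuncts are consistent, and $\varphi\times\psi\Rightarrow\gamma$ because $\gamma$ is valid. Yet $P(x)\times P(x)\not\Rightarrow x=y$, so your construction would have to produce a product in which $\exists x\,(x=y)$ fails.
\item Let $\Phi$ be generated by $P(x)$ and $Q(x)$, with $\varphi=\forall x\,P(x)\wedge\forall x\,Q(x)$ and $\psi=\gamma=\forall x\,(P(x)\wedge Q(x))$. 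Then $\varphi\times\psi\Rightarrow\gamma$. But neither $P(x)\times(P(x)\wedge Q(x))\Rightarrow P(x)\wedge Q(x)$ nor its analogue with $Q(x)$ holds, and every element of a model of $\varphi$ satisfies both conjuncts, so no failing pair can be realised.
\end{itemize}
So necessity depends on more than consistency. It needs a normal form in which each disjunct of $\varphi$ and $\psi$ records everything relevant that it entails: which $\Phi$-types over the parameters are forced, and all $\forall$-conjuncts it implies. Whatever the intended definitions of autonomy and $\bigwedge^*$ supply, your construction must use that hypothesis at exactly the point where you now rely only on consistency, ``minimal'' models, disjoint unions or products.
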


Given the above proposition, Weinstein defines the primitive recursive predicate $R \subseteq (\mathcal{L}^\sigma)^3$ such that $R(\varphi, \psi, \gamma)$ iff there is $\Phi \subseteq \mathcal{L}^\sigma$ and $n < \omega$ such that $\varphi, \psi, \gamma \in \Phi_n$ and:

\begin{itemize}
\item if $n = 0$, then $\varphi \times \psi \Rightarrow \gamma$;
\item if $n$ is odd, then for every disjunct $\varphi'$ of $\varphi$ and every disjunct $\psi'$ of $\psi$, there is a disjunct $\gamma'$ of $\gamma$ such that for every conjunct $\exists x_{n-1} \gamma''$ of $\gamma'$ there are conjuncts $\exists x_{n-1} \varphi''$ of $\varphi'$ and $\exists x_{n-1} \psi''$ of $\psi'$ such that $R(\varphi'', \psi'', \gamma'')$ holds;
\item if $n > 0$ is even, then for every disjunct $\varphi'$ of $\varphi$ and every disjunct $\psi'$ of $\psi$, there is a disjunct $\gamma'$ of $\gamma$ such that for every conjunct $\forall x_{n-1} \gamma''$ of $\gamma'$ there are conjuncts $\forall x_{n-1} \varphi''$ of $\varphi'$ and $\forall x_{n-1} \psi''$ of $\psi'$ such that $R(\varphi'', \psi'', \gamma'')$ holds.
\end{itemize}

Notice that $R$ is a computable predicate, as, given formulas $\varphi$, $\psi$ and $\gamma$, we can take $\Phi$ to be the set of atomic formulas occurring in $\varphi$, $\psi$ and $\gamma$, and it suffices to check for $n$ that is as large as the greatest among the quantifier ranks of those formulas.

Proposition \ref{proposition weinstein} entails the following:

\begin{proposition}[\cite{weinstein}]\label{proposition weinstein 2}
$R(x, y, z)$ satisfies the following:

\begin{itemize}
\item[\emph{(1)}] for any $\varphi$, $\psi$, and $\gamma$ in some $\Phi_n$, if $R(\varphi, \psi, \gamma)$ holds, then $\varphi \times \psi \Rightarrow \gamma$;
\item[\emph{(2)}] for any $\varphi$, $\psi$, and $\gamma$ in some $\Phi^*_n$, if $\varphi \times \psi \Rightarrow \gamma$, then $R(\varphi, \psi, \gamma)$ holds.
\end{itemize}
\end{proposition}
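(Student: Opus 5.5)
The plan is to prove both items simultaneously by induction on $n$. At each stage, the recursive clause defining $R$ is compared with the combinatorial condition of Proposition \ref{proposition weinstein}: the sufficiency part of that proposition gives (1), and its necessity part gives (2). Throughout, we fix the finite set of atomic formulas $\Phi$ witnessing that $\varphi, \psi, \gamma \in \Phi_n$ (respectively $\Phi^*_n$). We may always take $\Phi$ to be the atoms occurring in the three formulas, so $\Phi$ and $n$ are determined, and the inner formulas $\varphi'', \psi'', \gamma''$ produced by unfolding the clauses lie in $\Phi_{n-1}$ (respectively $\Phi^*_{n-1}$).

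For $n = 0$ there is nothing to do, since $R(\varphi, \psi, \gamma)$ is by definition $\varphi \times \psi \Rightarrow \gamma$, and both items hold trivially. For the inductive step, suppose (1) and (2) hold at level $n-1$. Let $Q = \exists$ if $n$ is odd and $Q = \forall$ if $n > 0$ is even, so that $\Phi_n = \bigvee \bigwedge Q x_{n-1} \Phi_{n-1}$.

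For (1), assume $R(\varphi, \psi, \gamma)$. The clause of $R$ gives, for every disjunct $\varphi'$ of $\varphi$ and $\psi'$ of $\psi$, a disjunct $\gamma'$ of $\gamma$ such that every conjunct $Q x_{n-1}\gamma''$ of $\gamma'$ is matched by conjuncts $Q x_{n-1}\varphi''$ of $\varphi'$ and $Q x_{n-1}\psi''$ of $\psi'$ with $R(\varphi'', \psi'', \gamma'')$. Since these inner formulas lie in $\Phi_{n-1}$, the induction hypothesis (1) turns each such instance into $\varphi'' \times \psi'' \Rightarrow \gamma''$. This is exactly the hypothesis of the first part of Proposition \ref{proposition weinstein} with the quantifier $Q$ and the set $\Phi_{n-1}$, so we conclude $\varphi \times \psi \Rightarrow \gamma$.

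For (2), assume $\varphi, \psi, \gamma \in \Phi^*_n = \bigvee \bigwedge^* Q x_{n-1} \Phi^*_{n-1}$ and $\varphi \times \psi \Rightarrow \gamma$. The "moreover" (necessity) part of Proposition \ref{proposition weinstein} supplies, for each pair of disjuncts, a matching disjunct $\gamma'$ together with conjunct-wise witnesses satisfying $\varphi'' \times \psi'' \Rightarrow \gamma''$. Here $\varphi'', \psi'', \gamma''$ lie in $\Phi^*_{n-1}$, so the induction hypothesis (2) yields $R(\varphi'', \psi'', \gamma'')$, and the clause defining $R$ at level $n$ is satisfied.

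The main obstacle is checking that Proposition \ref{proposition weinstein} genuinely applies at each level. This requires that the relevant sets $\Phi_{n-1}$ and $\Phi^*_{n-1}$ meet its hypotheses, namely being finite, autonomous, and closed under $\bigwedge$ and $\bigvee$ up to equivalence. It also requires that the consistency requirement built into $\bigwedge^*$ is what the necessity direction needs; this is why (2) is stated only for $\Phi^*_n$. I would first verify by a side induction that each $\Phi_n$ is finite modulo the fixed listing convention and closed under the Boolean operations up to equivalence. I would then take autonomy from \cite{changkeisler}, where it is established for exactly these sets.
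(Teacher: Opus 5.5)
Your proposal is correct and is essentially the paper's argument. The paper gives no separate proof; it only states that Proposition \ref{proposition weinstein} entails the result, which is your induction on $n$ unfolding the clauses of $R$, with the sufficiency part yielding (1) and the ``moreover'' (necessity) part, applied to formulas in $\Phi^*_n$, yielding (2). Your explicit checks go somewhat beyond what the paper spells out: the hypotheses of that proposition (finiteness, closure under $\bigwedge$ and $\bigvee$, autonomy of the sets $\Phi_{n-1}$), and the fact that the level witnessing $R$ is fixed by the formulas themselves.
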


Finally, it was thus proved that:

\begin{corollary}[\cite{weinstein}]\label{corollary weinstein}
A sentence $\varphi \in \mathcal{L}^\sigma$ is preserved by direct products iff it is logically equivalent to a sentence $\psi \in \mathcal{L}^\sigma$ such that, for some $\Phi \subseteq \mathcal{L}^\sigma$ and $n < \omega$, $\psi \in \Phi_n$ and $R(\psi, \psi, \psi)$.
\end{corollary}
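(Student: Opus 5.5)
The statement to be proved is the last one displayed, Corollary \ref{corollary weinstein}: a sentence is preserved by direct products iff it is equivalent to some $\psi \in \Phi_n$ with $R(\psi,\psi,\psi)$. I would derive it from Proposition \ref{proposition weinstein 2}, together with the normal-form remark that every formula is equivalent to one in some $\Phi^*_n$.

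\emph{Sufficiency.} Suppose $\varphi$ is equivalent to $\psi \in \Phi_n$ with $R(\psi,\psi,\psi)$. Proposition \ref{proposition weinstein 2}(1) gives $\psi \times \psi \Rightarrow \psi$. Since $\psi$ is a sentence, this means that whenever $\mathcal{A} \models \psi$ and $\mathcal{B} \models \psi$, also $\mathcal{A} \times \mathcal{B} \models \psi$, so $\psi$ is preserved by binary products.

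To pass from binary to arbitrary products, I would strengthen the induction behind Proposition \ref{proposition weinstein}. The idea is to show by induction on $n$ that $R(\gamma,\gamma,\gamma)$ for $\gamma \in \Phi_n$ implies that $\gamma$ transfers to $\prod_{i\in I}\mathcal{A}_i$ for arbitrary $I$.
\begin{itemize}
\item At the atomic level $n=0$, the property $\gamma\times\gamma\Rightarrow\gamma$ for a disjunction of conjunctions of literals reduces, componentwise, to the same combinatorial condition for any index set. Indeed, an atom holds in the product iff it holds in every coordinate.
\item At quantifier levels, the witnesses are chosen coordinatewise (this uses Choice), exactly as in the binary case.
\end{itemize}
Alternatively, one can simply cite Weinstein's argument that the fixed-point condition is index-independent. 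I expect this passage to arbitrary index sets to be the main obstacle, because the binary relation $\Rightarrow$ does not by itself encode infinite products.

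\emph{Necessity.} Suppose $\varphi$ is preserved by direct products. Choose $\Phi$ and $n$ with $\varphi$ equivalent to some $\psi \in \Phi^*_n$. Preservation gives $\psi\times\psi\Rightarrow\psi$, and Proposition \ref{proposition weinstein 2}(2) then yields $R(\psi,\psi,\psi)$. Since $\Phi^*_n\subseteq\Phi_n$, the sentence $\psi$ is of the required form.

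One subtlety needs checking here. The relation $\Rightarrow$ quantifies over all pairs of structures, including those where free variables are assigned. For sentences this is harmless, but I must make sure the normal form $\psi$ is taken with all variables bound, i.e.\ that $n$ is large enough to exhaust the variables of $\Phi$.
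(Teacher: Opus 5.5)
The necessity direction, and sufficiency for two factors, are exactly how the statement follows from Proposition \ref{proposition weinstein 2} plus the $\Phi^*_n$ normal form. The paper gives no argument beyond this: it simply cites Weinstein, and it never addresses the step from two factors to arbitrary families. You are right that this step is needed, but the repair you sketch does not go through. Unfolding $R(\gamma,\gamma,\gamma)$ yields instances $R(\varphi'',\psi'',\gamma'')$ with three different formulas. So an induction whose hypothesis is ``$R(\gamma,\gamma,\gamma)$ implies preservation under arbitrary products'' has nothing to apply to one level down. Reformulating it for the ternary relation does not help, because $R$ only says how to combine \emph{two} factors. Iterating it over the finitely many distinct disjuncts that occur does pick a candidate disjunct of $\gamma$. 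But verifying that disjunct's conjuncts in $\prod_{i\in I}\mathcal{A}_i$ requires an infinite-product statement one level down, and no instance of the binary relation supplies it. An infinite product is not a limit of its finite subproducts. Choosing witnesses coordinatewise gives you a tuple, but not the formula it satisfies in the infinite product. Your argument does work at level $0$, and there the real reason is that there are finitely many atoms: the infinite intersection of the sets of atoms true in the factors equals a finite sub-intersection.

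A clean fix is semantic, in two steps.
\begin{itemize}
\item \emph{Two factors to finitely many.} Preservation under two factors gives preservation under all finite products, since $\prod_{i\le n}\mathcal{A}_i\cong(\prod_{i<n}\mathcal{A}_i)\times\mathcal{A}_n$.
\item \emph{Finitely many to arbitrary.} Use Feferman--Vaught. For $\varphi$ there are sentences $\theta_1,\dots,\theta_m$ and a Boolean-algebra formula $\Psi$, independent of the family, such that $\prod_{i\in I}\mathcal{A}_i\models\varphi$ iff $\mathcal{P}(I)\models\Psi[X_1,\dots,X_m]$, where $X_k=\{i\in I \mid \mathcal{A}_i\models\theta_k\}$. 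Let $q$ be the quantifier rank of $\Psi$. An Ehrenfeucht--Fra\"{\i}ss\'{e} argument in atomic Boolean algebras shows that the truth of $\Psi$ depends only on the sizes of the cells of the partition of $I$ generated by the $X_k$, truncated at $2^q$. (Duplicator keeps corresponding pieces either of equal size or both of size at least $2^{q-r}$ after $r$ rounds.) Now take an infinite family of models of $\varphi$. Keep every index of each cell of size $<2^q$, and exactly $2^q$ indices of each larger cell. The resulting finite subproduct satisfies $\varphi$, hence so does the whole product.
\end{itemize}
With this lemma in place, or cited explicitly as the part of Weinstein's result the paper takes on trust, your proof is complete. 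Your closing remark, that $n$ must be large enough for $\psi$ to be a sentence, is correct and harmless.
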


Notice that we may replace \emph{products} with \emph{powers} in the above corollary, that is:

\begin{corollary}\label{corollary weinstein 2}
A sentence $\varphi \in \mathcal{L}^\sigma$ is preserved by direct powers iff it is logically equivalent to a sentence $\psi \in \mathcal{L}^\sigma$ such that, for some $\Phi \subseteq \mathcal{L}^\sigma$ and $n < \omega$, $\psi \in \Phi_n$ and $R(\psi, \psi, \psi)$ (iff it is preserved by direct products).
\end{corollary}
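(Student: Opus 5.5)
We need to show that a sentence is preserved by direct powers iff it is equivalent to some $\psi \in \Phi_n$ with $R(\psi,\psi,\psi)$, and that this happens iff it is preserved by direct products. The plan is to run through the cycle: preserved by products $\Rightarrow$ preserved by powers $\Rightarrow$ preserved by products. Once the cycle is closed, Corollary \ref{corollary weinstein} supplies the syntactic characterization.

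The implication from products to powers is immediate, since a direct power $\mathcal{A}^I$ is the direct product $\prod_{i\in I}\mathcal{A}$ of copies of a single structure. So the real work is the converse.

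\emph{From powers to products.} Suppose $\varphi$ is preserved by direct powers, but, for contradiction, not by direct products. Then there is a family $\{\mathcal{A}_i\}_{i \in I}$ with each $\mathcal{A}_i \models \varphi$ and $\prod_i \mathcal{A}_i \not\models \varphi$.

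\begin{itemize}
\item Reduce to the binary case. Weinstein's analysis in Propositions \ref{proposition weinstein} and \ref{proposition weinstein 2} characterizes preservation through the relation $\varphi \times \psi \Rightarrow \gamma$, and preservation under binary products already forces $R(\psi,\psi,\psi)$. It therefore suffices to show that $\mathcal{A}\models\varphi$ and $\mathcal{B}\models\varphi$ imply $\mathcal{A}\times\mathcal{B}\models\varphi$.
\item Form $\mathcal{C} = \mathcal{A}\times\mathcal{B}$ and pass to a power. By hypothesis $\mathcal{A}^{\omega}$ and $\mathcal{B}^{\omega}$ satisfy $\varphi$. The idea is to find a single structure $\mathcal{D}\models\varphi$ such that $\mathcal{D}^I$ contains $\mathcal{A}\times\mathcal{B}$ in a way that transfers $\varphi$.
\item Take the disjoint-sum trick. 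Let $\mathcal{D} = \mathcal{A}\times\mathcal{B}$ itself. Then $(\mathcal{A}\times\mathcal{B})^2 \cong \mathcal{A}^2\times\mathcal{B}^2$, which only uses powers of the product, so it does not help directly.
\item Instead use the Feferman–Vaught theorem for products. The truth of $\varphi$ in $\prod_i \mathcal{A}_i$ is determined by a Boolean combination of the sets $\{i : \mathcal{A}_i\models\theta_k\}$ for finitely many sentences $\theta_k$. Each $\mathcal{A}_i$ realizes some complete $\theta$-type $t_i$. Choose a finite set $T$ of types realized, one witness structure $\mathcal{A}_t$ per type, and compare with the power $\mathcal{A}_t^{J}$.
\end{itemize}

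\emph{Main obstacle.} The hard step is turning the power-only hypothesis into information about mixed products. The statement $\prod_i \mathcal{A}_i \not\models \varphi$ depends on the pattern of types occurring, and powers only exhibit single-type patterns. I would try to show that for direct products the Feferman–Vaught condition is monotone under type multiplicities $\geq 1$. For the full product, the power set algebra of $I$ makes only the presence of each type (and counts up to the finite thresholds of the theorem) relevant. One then finds a structure, namely $\mathcal{A}_{t_1}\times\cdots\times\mathcal{A}_{t_m}$ raised to a power, whose product pattern matches that of $\prod_i\mathcal{A}_i$.

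This still requires knowing that the finite product $\mathcal{A}_{t_1}\times\cdots\times\mathcal{A}_{t_m}$ satisfies $\varphi$, which is exactly what is in question. This is where I expect the argument to be delicate. Failing a clean reduction, I would fall back on the route the paper hints at: observe that Weinstein's proof of Corollary \ref{corollary weinstein} only ever uses the square $\mathcal{A}\times\mathcal{A}$ together with products of elementarily equivalent types via the consistency clause in $\Phi^*_n$, and adapt it verbatim to powers.
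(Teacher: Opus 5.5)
\textbf{The step you flag as the main obstacle cannot be carried out, because it is false.} That step is getting preservation under binary products $\mathcal{A}\times\mathcal{B}$ from preservation under powers. Take $\sigma$ with two unary predicates $P,Q$ and $\varphi=\exists x P(x)\vee\exists x Q(x)$.

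\emph{Preserved by direct powers.} If $a$ witnesses a disjunct in $\mathcal{A}$, then $\overline{a}$ witnesses the same disjunct in $\mathcal{A}^I$. For $I=\varnothing$ the one-point power satisfies every atom.

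\emph{Not preserved by products.} Let $\mathcal{A}$, $\mathcal{B}$ be one-point structures with $P^{\mathcal{A}}=A$, $Q^{\mathcal{A}}=\varnothing$, $P^{\mathcal{B}}=\varnothing$, $Q^{\mathcal{B}}=B$. Both satisfy $\varphi$, yet $\mathcal{A}\times\mathcal{B}\models\neg\varphi$.

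\emph{Not equivalent to a Weinstein sentence.} By Proposition \ref{proposition weinstein 2}(1), any $\psi\in\Phi_n$ with $R(\psi,\psi,\psi)$ satisfies $\psi\times\psi\Rightarrow\psi$. That means $\psi$ is preserved by the product of two \emph{arbitrary} models of it, so $\varphi$ is equivalent to no such $\psi$.

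So only the right-to-left part of the statement holds: Weinstein sentence $\Leftrightarrow$ preserved by products $\Rightarrow$ preserved by powers. Your reduction to binary products and your products-to-powers step are fine, but the target of your main argument does not exist. This is the same split as in Propositions \ref{proposition reduced products} and \ref{proposition reduced products 2} (Horn sentences versus disjunctions of Horn sentences). Indeed $\varphi$ is a disjunction of two Horn sentences.

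\textbf{Where your plan breaks.} In the Feferman--Vaught approach, each pure type pattern satisfies $\varphi$, and pure patterns are all that powers can see. The mixed pattern does not satisfy $\varphi$. So no monotonicity in type multiplicities is available. The circularity you noticed, needing $\mathcal{A}_{t_1}\times\cdots\times\mathcal{A}_{t_m}\models\varphi$, is a genuine failure, not a technicality.

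Your fallback fails for the same reason. The relation $\varphi\times\psi\Rightarrow\gamma$, and hence Proposition \ref{proposition weinstein 2}(2), concerns $\mathcal{A}\times\mathcal{B}$ for arbitrary pairs, not squares $\mathcal{A}\times\mathcal{A}$.

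The paper's own proof has exactly this hole. It claims that preservation by direct powers gives ``in particular'' $\varphi\times\varphi\Rightarrow\varphi$, which only follows for squares. No repair of your argument is possible. The statement itself has to be weakened to its right-to-left direction; the left-to-right direction needs a different characterization.
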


\begin{proof}
The left to right direction is straightforward. For the other, if $\varphi$ is preserved by direct powers, then particularly $\varphi \times \varphi \Rightarrow \varphi$. But since, as we have seen, every sentence of $\mathcal{L}^\sigma$ is equivalent to some sentence in $\Phi^*_n$ for some $\Phi \subseteq \mathcal{L}^\sigma$ and $n < \omega$, there is $\psi$ which is logically equivalent to $\varphi$ and $\psi \in \Phi_n$ for some $\Phi \subseteq \mathcal{L}^\sigma$ and $n < \omega$. That also means $\psi \times \psi \Rightarrow \psi$. By Proposition \ref{proposition weinstein 2} (2), we thus obtain $R(\psi, \psi, \psi)$.
\end{proof}

Therefore, Weinstein also proved a characterization of the class of sentences preserved by direct powers. Henceforth, we shall call a sentence which is preserved by direct powers a \emph{direct power sentence}. In the same vein, call a sentence $\psi$ such that, for some $\Phi \subseteq \mathcal{L}^\sigma$ and $n < \omega$, $\psi \in \Phi_n$ and $R(\psi, \psi, \psi)$, a \emph{Weinstein sentence}. Then, Weinstein's result amounts to the fact a sentence is preserved by direct products iff it is a Weinstein sentence. Similarly, the above result says direct power sentences are precisely those equivalent to a Weinstein sentence.

For the upcoming results, we need the following notion. For each index set $I$ and $\sigma$-structure $\mathcal{A}$, we define the mapping $\vartheta^{-1}: \mathfrak{C}(A, I) \to \mathcal{A}^I$ such that, for each $a \in \mathfrak{C}(\mathcal{A}, I)$,

\begin{center}
$\vartheta^{-1}(a) = \begin{cases}
a,\ \text{if}\ a \in \mathfrak{C}(\mathcal{A}, I) \setminus A\\
\overline{a},\ \text{otherwise (that is, if}\ a \in A\text{)}
\end{cases}$
\end{center}

Consider the following result.

\begin{lemma}\label{lemma inverse vartheta}
For any literal $\varphi \in \mathcal{L}^\sigma$ which is not a positive equality and $a_0, ..., a_n \in \mathfrak{C}(\mathcal{A}, I)$,

\begin{center}
$\mathfrak{C}(\mathcal{A}, I) \models \varphi[a_0, ..., a_n]$ iff $\mathcal{A}^I \models \varphi[\vartheta^{-1}(a_0), ..., \vartheta^{-1}(a_n)]$.
\end{center}
\end{lemma}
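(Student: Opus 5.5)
The plan is to reduce everything to one commutation property of $\vartheta^{-1}$ with term evaluation, and then read off the literals. Throughout I would use the observation that for every $a \in \mathfrak{C}(\mathcal{A}, I)$ and $j \in I$ we have $\vartheta_j(a) = \big(\vartheta^{-1}(a)\big)(j)$. Indeed, if $a \in A$ then $\vartheta_j(a) = a = \overline{a}(j)$. If $a \in \mathfrak{C}(\mathcal{A}, I)\setminus A$ then $\mathrm{dom}(a) = I$, so $\vartheta_j(a) = a(j)$. (For the record, $A \cap A^I = \varnothing$ is implicitly assumed, as in Definition \ref{def function hier}.)

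\textbf{Step 1 (terms).} By induction on a term $t$, I would prove
\[
\vartheta^{-1}\big(t^{\mathfrak{C}(\mathcal{A}, I)}(a_0, \ldots, a_n)\big) = t^{\mathcal{A}^I}\big(\vartheta^{-1}(a_0), \ldots, \vartheta^{-1}(a_n)\big).
\]
\begin{itemize}
\item Variables are immediate.
\item For a constant, $\mathrm{c}^+ = \overline{\mathrm{c}} \notin A$, so $\vartheta^{-1}(\mathrm{c}^+) = \overline{\mathrm{c}} = \mathrm{c}^{\mathcal{A}^I}$.
\item For $\mathrm{F}^+(b_1, \ldots, b_k) = b$, clause (i) of Definition \ref{def cumulative hierarchy} gives $\mathrm{F}(\vartheta_j(b_1), \ldots, \vartheta_j(b_k)) = \vartheta_j(b)$ for all $j$. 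Rewriting each $\vartheta_j$ as evaluation of $\vartheta^{-1}$ at $j$, this says exactly that $\vartheta^{-1}(b)$ is the pointwise value $\mathrm{F}^{\mathcal{A}^I}(\vartheta^{-1}(b_1), \ldots, \vartheta^{-1}(b_k))$. The rank clause only decides whether $b$ is the constant function or the element of $A$, and in either case $\vartheta^{-1}(b)$ is the same.
\end{itemize}
Then apply the induction hypothesis to the subterms.

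\textbf{Step 2 (relational literals).} For an atom $\mathrm{R}t_1\ldots t_k$, clause (ii) gives that $\mathfrak{C}(\mathcal{A}, I) \models \mathrm{R}t_1\ldots t_k$ iff $\mathrm{R}\,\vartheta_j(t_1^{\mathfrak{C}})\ldots\vartheta_j(t_k^{\mathfrak{C}})$ holds for all $j$. By Step 1 and the observation above, this is iff $\mathcal{A}^I \models \mathrm{R}t_1\ldots t_k[\vartheta^{-1}(\vec a)]$, by the definition of the direct power. Negated relational atoms then follow by taking negations on both sides.

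\textbf{Step 3 (negated equalities).} This is where I expect the main obstacle. From Step 1 the right-to-left direction is fine: if $\vartheta^{-1}(t^{\mathfrak{C}}) \neq \vartheta^{-1}(s^{\mathfrak{C}})$, then $t^{\mathfrak{C}} \neq s^{\mathfrak{C}}$. The converse needs $\vartheta^{-1}$ to be injective on the relevant values, and it is not: for $a \in A$ both $a$ and $\overline{a}$ are sent to $\overline{a}$. For instance, $x \neq y$ evaluated at $a$ and $\overline{a}$ holds in $\mathfrak{C}(\mathcal{A}, I)$ but fails in $\mathcal{A}^I$. Unless the lemma is read with some restriction, I would prove only the implication $\mathcal{A}^I \models \neg t = s \Rightarrow \mathfrak{C}(\mathcal{A}, I) \models \neg t = s$ for negated equalities. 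Possible restrictions are: parameters none of which is of the form $\overline{a}$ with $a\in A$ alongside $a$ itself, or terms whose values avoid this collision. Alternatively, one could treat this as the point where hereditary identity replaces identity, anticipating Section 3. I would first check whether the intended use in the preservation theorem only needs this one direction for negative equalities, which seems plausible since such literals sit in antecedent position of Horn-like clauses.
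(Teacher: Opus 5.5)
Your proposal is correct. On the part of the lemma that is actually true, it is the same definitional unwinding that the paper compresses into its one-line proof (``straightforward from the definitions''). Concretely, that unwinding consists of the identity $\vartheta_j(a) = \vartheta^{-1}(a)(j)$, the commutation of $\vartheta^{-1}$ with term evaluation (where the rank clause is irrelevant because the pointwise clause already determines $\vartheta^{-1}(b)$), and clause (ii) for relation symbols.

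Your Step 3 counterexample is also valid, and you need not hedge on it. For $a \in A$ we have $a \neq \overline{a}$ in $\mathfrak{C}(\mathcal{A}, I)$, but $\vartheta^{-1}(a) = \vartheta^{-1}(\overline{a}) = \overline{a}$. So the literal $x \neq y$ holds at $a, \overline{a}$ in $\mathfrak{C}(\mathcal{A}, I)$ and fails at their images in $\mathcal{A}^I$. The lemma as stated is therefore false, in the left-to-right direction, for negated equalities, and the paper's proof does not notice this. What is true is exactly what you prove:
\begin{itemize}
\item the biconditional holds for relational literals and their negations;
\item for every literal that is not a positive equality, $\mathcal{A}^I \models \varphi[\vartheta^{-1}(a_0), \ldots, \vartheta^{-1}(a_n)]$ implies $\mathfrak{C}(\mathcal{A}, I) \models \varphi[a_0, \ldots, a_n]$.
\end{itemize}

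Your guess about the downstream use is right, though the reason is slightly different from the one you give. In Lemma \ref{lemma preservation direct cumulative} the present lemma is only invoked to move a true literal of a clause from $\mathcal{A}^I$ to $\mathfrak{C}(\mathcal{A}, I)$. If every literal transfers in that direction, then every clause of a conjunctive-normal-form matrix that is true in $\mathcal{A}^I$ is true in $\mathfrak{C}(\mathcal{A}, I)$. The quantifier steps also go only in that direction. So the one-directional version suffices. It is not that negated equalities sit in antecedent position; it is that the transfer in that lemma is one-way. Accordingly, the paper's claims of ``both directions'' in the proof of Lemma \ref{lemma preservation direct cumulative}, and of ``iff'' in the proof of Theorem \ref{theorem preservation direct cumulative}, should be read as one-directional.
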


\begin{proof}
Straightforward from the definitions of cumulative power, direct power, and the $\vartheta^{-1}$ mapping.
\end{proof}

However, unlike other literals -- and as we shall shortly see --, positive equalities are in general not preserved by cumulative powers. To frame those that are (modulo their prenex conjunctive normal form), we need some definitions.

\begin{definition}[Non-collapsible positive equality]\label{def non collapsible}
We call the occurrence of a positive equality $\varphi$ in a formula in prenex conjunctive normal form \emph{non-collapsible} if either:

\begin{itemize}
\item[(i)] $\varphi$ has no variables on one of its sides;
\item[(ii)] or each and every universally quantified or free variable occurring on one side of $\varphi$ also occurs on the other;
\item[(iii)] or each of the sides of $\varphi$ either has variables bound by existential quantifiers, or has the occurrence of a constant.
\end{itemize}

\noindent
If a positive equality in a formula in prenex conjunctive normal form is not non-collapsible, we call it \emph{collapsible}.
\end{definition}

Notice, with the exception of when condition (i) above is satisfied, a positive equality's being non-collapsible is determined by the whole context of the formula in which it occurs (that is, by the quantifiers binding it), rather than just depending on its form alone.

\begin{definition}[Non-collapsible formula]\label{def non collapsible 2}
We call a formula \emph{non-collapsible} if it is in prenex conjunctive normal form $\varphi = \Delta \bigwedge_{i \leq k} \bigvee_{l \leq p_i} \psi_{i, l}$ and either:

\begin{itemize}
\item[(i)] $\varphi$ has no positive occurrences of equality;
\item[(ii)] or each of the positive equalities in $\varphi$ is non-collapsible.
\end{itemize}
\end{definition}

We may loosen our terminology and call an arbitrary formula \emph{non-collapsible} if its prenex conjunctive normal form is non-collapsible. We call a formula as such because they do not force the \emph{collapse} of different levels of the hierarchy, that is, they are formulas whose truth do not require the identification of elements of different levels of the hierarchy. A more detailed explanation of the above definitions is in order. To offer it, we shall consider the context of a simple cumulative power, that is, $\mathfrak{C}(\mathcal{A}, I)$. As one may easily see, a literal which is not a positive equality is preserved in both directions between direct powers and their relative cumulative powers. Thus, the satisfaction of condition (i) of Definition \ref{def non collapsible 2} by a formula entails its preservation. On the other hand, if a sentence satisfies condition (ii) of Definition \ref{def non collapsible 2}, there are three possible cases. If it satisfies condition (iii) of Definition \ref{def non collapsible}, the presence of an existential quantifier on each side of the equality allows us to choose, for whatever choice of elements of $\mathfrak{C}(\mathcal{A}, I)$ on the other side, a suitable element of the same level as those, so that identity may be reached. A constant, on the other hand, does a similar job, since it forces the term in which it occurs to be of the largest possible rank inside the cumulative power structure in question. If it satisfies condition (ii) of Definition \ref{def non collapsible}, since if there is a universally bound variable on one side, then it also occurs on the other side, the ranks of the interpreted terms will always match, for it is possible to choose suitable elements of $\mathfrak{C}(\mathcal{A}, I)$ given any elements picked by the universal quantifiers. The constants, in turn, play a similar role, since in non-strongly cumulative power hierarchies -- those in which they have interpretations -- they are of the highest rank possible in the relevant model, and similarly their values are of the highest possible rank which can be assumed. Thus, if constants are on one side of a positive equality and there are variables bound by existential quantifiers on the other side, we may match the ranks as required.

We may see that cumulative powers do not, in general, preserve formulas containing positive occurrences of equality such that the existential quantifiers occur before the universal quantifiers bounding its variables. In fact, if universal quantifiers bound all of the variables of one side of the equality, then such a sentence is not in general satisfied, because the levels of the chosen elements may not match. Consider for instance $\exists x_1 ... x_n \forall y_1 ... y_m \big{(} t(x_1, ..., x_n) = s(y_1, ..., y_m) \big{)}$, a formula which is collapsible. Since $\rho \big{(} t^{\mathfrak{C}(\mathcal{A}, I)}(x_1, ..., x_n) \big{)} = \mathrm{max}\{\rho(x_i)\}_{1 \leq i \leq n}$ and $\rho \big{(} s^{\mathfrak{C}(\mathcal{A}, I)}(y_1, ..., y_m) \big{)} = \mathrm{max}\{\rho(y_i)\}_{1 \leq i \leq m}$, and since the universal quantifiers may pick elements in either $A$ or $\mathcal{F}^*_1(A, \mathbf{I})$, for a choice $b_1, ..., b_m \in A$ we would have $\rho \big{(} s^{\mathfrak{C}(\mathcal{A}, I)}(b_1, ..., b_m) \big{)} = 0$, which would require $\mathrm{max}\{\rho(x_i)\}_{1 \leq i \leq n} = 0$. However, if $b_1 \in \mathcal{F}^*_1(A, \mathbf{I})$, then $\rho \big{(} s^{\mathfrak{C}(\mathcal{A}, I)}(b_1, ..., b_m) \big{)} = 1$, which would mean $\mathrm{max}\{\rho(x_i)\}_{1 \leq i \leq n} = 1$. Thus, there are no elements $a_1, ..., a_n \in \mathfrak{C}(\mathcal{A}, I)$ satisfying $\forall y_1 ... y_m \big{(} t(a_1, ..., a_n) = s(y_1, ..., y_m) \big{)}$ in $\mathfrak{C}(\mathcal{A}, I)$.

\begin{lemma}\label{lemma non-collapsible preserve}
Let $\varphi \in \mathcal{L}^\sigma$ be in prenex conjunctive normal form. Then:

\begin{itemize}
\item[\emph{(1)}] if $\exists x \varphi$ satisfies condition \emph{(ii)} of \emph{Definition \ref{def non collapsible 2}} because it satisfies either condition \emph{(i)} or \emph{(ii)} of \emph{Definition \ref{def non collapsible}}, or if it satisfies condition \emph{(i)} of \emph{Definition \ref{def non collapsible 2}}, then $\varphi$ is non-collapsible;
\item[\emph{(2)}] if $\forall x \varphi$ is non-collapsible, then $\varphi$ is non-collapsible;
\item[\emph{(3)}] if $\exists x \varphi$ is collapsible, then $\varphi$ is collapsible;
\item[\emph{(4)}] if $\forall x \varphi$ is collapsible, then $\varphi$ is collapsible.
\end{itemize}
\end{lemma}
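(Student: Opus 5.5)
This is a syntactic lemma, so the plan is to argue directly from Definitions \ref{def non collapsible} and \ref{def non collapsible 2}. The key observation is that removing the outermost quantifier $Qx$ from $Qx\varphi$ leaves the matrix $\bigwedge\bigvee\psi_{i,l}$ unchanged. So the set of positive equalities is the same, as are conditions (i) of both definitions, which depend only on the matrix. The only effect is that the variable $x$ changes status: in $Qx\varphi$ it is bound by $Q$, while in $\varphi$ it is free. Free variables are treated like universally quantified ones in conditions (ii) and (iii) of Definition \ref{def non collapsible}. Hence passing from $\forall x\varphi$ to $\varphi$ changes nothing relevant, whereas passing from $\exists x\varphi$ to $\varphi$ turns an existential variable into a "universal/free" one. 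The proof will go case by case, checking for each positive equality $t=s$ in the matrix how each condition transfers.

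For (2) and (4): since universally bound and free variables play identical roles, each of conditions (i)–(iii) of Definition \ref{def non collapsible} holds for an occurrence in $\forall x\varphi$ iff it holds in $\varphi$. Condition (iii) is unaffected because $x$ is not existential in either formula. Therefore $\forall x\varphi$ is non-collapsible iff $\varphi$ is, which yields both (2) and, by contraposition, (4).

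For (1): if $\exists x\varphi$ satisfies (i) of Definition \ref{def non collapsible 2}, then so does $\varphi$, since the matrix is the same. If every positive equality satisfies (i) of Definition \ref{def non collapsible}, this is again purely syntactic in the matrix. If an equality satisfies (ii), i.e.\ every universal/free variable on one side occurs on the other, then in $\varphi$ the set of universal/free variables grows by $x$. Here I expect the main obstacle: (ii) may fail if $x$ occurs on only one side. I would handle it by reading the hypothesis as requiring each equality to be non-collapsible via (i)/(ii) in a way robust to $x$. Concretely, I would argue that if $x$ appears on one side only, then by the case distinction in the hypothesis (the equality was not relying on (iii)) the other side already contains a variable or a constant, and I would check whether (i) or (iii) still holds. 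If this does not go through, I would interpret (ii) as quantifying over all variables and note that $x$ then already had to appear on both sides.

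For (3): take the contrapositive, i.e.\ show that if $\varphi$ is non-collapsible then $\exists x\varphi$ is. Each positive equality in $\varphi$ satisfies (i), (ii) or (iii). Condition (i) transfers trivially. Condition (iii) transfers because existential variables and constants are preserved, and $x$ only becomes existential. For (ii), the set of universal/free variables in $\exists x\varphi$ is that of $\varphi$ minus $x$, so the condition "every such variable on one side occurs on the other" only becomes weaker and still holds. Hence every positive equality stays non-collapsible, which proves (3).
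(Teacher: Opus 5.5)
Your treatment of (2), (3) and (4) is correct and is essentially the paper's argument. Removing the outer quantifier leaves the matrix unchanged, and universally bound and free variables play identical roles. Passing from $\exists x\varphi$ to $\varphi$ only shrinks the set of existentially bound variables and enlarges the set of free ones, which cannot repair a failure of condition (i), (ii) or (iii) of Definition~\ref{def non collapsible}. Your contrapositive form of (3) is equivalent to the paper's direct check.

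The gap is in (1), exactly where you suspected it. It arises when a positive equality of $\exists x\varphi$ is non-collapsible only by virtue of condition (ii), and $x$ occurs on just one of its sides. Neither of your fallbacks works there. Checking whether (i) or (iii) ``still holds'' does not help, since nothing forces them to hold. Reading (ii) as ranging over all variables changes Definition~\ref{def non collapsible}, which explicitly restricts (ii) to universally quantified or free variables.

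In fact that clause of (1) is false, so no argument can close it. Take $\exists x\forall y\,(y=\mathrm{F}(x,y))$ with $\mathrm{F}$ binary. Its only universally bound or free variable is $y$, which occurs on both sides. So its unique positive equality is non-collapsible via (ii), and the hypothesis of (1) holds. In $\varphi=\forall y\,(y=\mathrm{F}(x,y))$, however, $x$ is free and occurs only on the right:
\begin{itemize}
\item (ii) fails, because of $x$;
\item (i) fails, because both sides contain variables;
\item (iii) fails, because the left side contains neither an existentially bound variable nor a constant.
\end{itemize}
Hence $\varphi$ is collapsible. If (ii) is read asymmetrically, as ``some side has all its universal or free variables occurring on the other'', use $\exists x\forall y\forall z\,(\mathrm{F}(x,y)=\mathrm{F}(y,z))$ instead. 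Freeing an existential variable can destroy (ii) just as it can destroy (iii), which (1) already excludes.

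The paper's own proof of (1) only treats the case in which $\exists x\varphi$ has no positive equalities (condition (i) of Definition~\ref{def non collapsible 2}). Since condition (i) of Definition~\ref{def non collapsible} depends only on the matrix, this extends to the case where every positive equality satisfies that condition. It also extends to (ii)-equalities in which $x$ occurs on both sides or on neither. That restricted version is what can actually be proved, and it is what your argument establishes.
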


\begin{proof}
(1) Since $\exists x \varphi$ satisfies condition (i), it has no positive equalities. Taking away a quantifier does not change that fact, and so $\varphi$ still satisfies condition (i).

(2) If condition (i), (ii) or (iii) of Definition \ref{def non collapsible} is satisfied, taking away a universal quantifier does not change their satisfaction. If condition (i) of Definition \ref{def non collapsible 2} is satisfied, the conclusion is also trivial.

(3) and (4) If $\forall x \varphi$ is collapsible, by condition (i), it has positive occurrences of equality. By condition (ii), each side has occurring variables; its sides do not share at least one universally bound or free variable; there are existentially bound variables on at most one of its sides; and the side that does not have existentially bound variables also has no occurrence of constants (for otherwise it would satisfy condition (iii) of Definition \ref{def non collapsible}, and thus condition (ii) of Definition \ref{def non collapsible 2}). Since none of these characteristics are altered by removing a quantifier from $\forall x \varphi$ or $\exists x \varphi$, $\varphi$ is also collapsible.
\end{proof}

\begin{lemma}\label{lemma pga}
Let $a_0, ..., a_n \in \mathfrak{C}(\mathcal{A}, I)$, and $\varphi \in \mathcal{L}^\sigma$ be in prenex disjunctive normal form. Then,

\begin{center}
if $\forall j \in I \big{(} \mathcal{A} \models \varphi \big{[} \vartheta^1_j(a_0), ..., \vartheta^1_j(a_n) \big{]} \big{)}$, then $\mathfrak{C}(\mathcal{A}, I) \models \varphi[a_0, ..., a_n]$.
\end{center}
\end{lemma}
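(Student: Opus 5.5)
The plan is to argue by induction on the length of the quantifier prefix $\Delta$ of $\varphi$. The hypothesis will be carried uniformly in the parameters. That is, the claim I prove is: for every tuple $a_0, \dots, a_n \in \mathfrak{C}(\mathcal{A}, I)$, pointwise truth of $\varphi$ under the $\vartheta^1_j$ gives truth of $\varphi$ in $\mathfrak{C}(\mathcal{A}, I)$. The basic tool is the map $\vartheta^{-1}$ and Lemma \ref{lemma inverse vartheta}. Together with the definition of $\mathrm{F}^+$ (and Proposition \ref{corollary vartheta and functions} for terms), they give the following. For a literal $\psi$ that is not a positive equality, $\mathfrak{C}(\mathcal{A}, I) \models \psi[\vec a]$ iff $\mathcal{A}^I \models \psi[\vartheta^{-1}(\vec a)]$, iff $\mathcal{A} \models \psi[\vartheta^1_j(\vec a)]$ for all $j \in I$. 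The last equivalence uses $\vartheta^{-1}(a)(j) = \vartheta^1_j(a)$ and the fact that terms are evaluated coordinatewise, by Proposition \ref{corollary vartheta and functions}.

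\emph{Quantifier-free case.} Take the matrix $\bigvee_k \bigwedge_i \psi_{k,i}$. If a single disjunct $k$ holds at every coordinate $j$, I transfer each literal of that conjunction by the equivalence above. For a positive equality $t = s$, the terms $t^{\mathfrak{C}}(\vec a)$ and $s^{\mathfrak{C}}(\vec a)$ have ranks $\max$ of the ranks of the parameters occurring in them, by Observation \ref{observation hierarchy} (4). When these ranks coincide, coordinatewise equality gives identity: two elements of $A$ are equal outright, and two functions in $A^I$ that agree at every $j$ are equal.

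\emph{Existential step.} Suppose $\varphi = \exists x\, \varphi'$. For each $j$, choose by AC a witness $b_j \in A$ with $\mathcal{A} \models \varphi'[b_j, \vartheta^1_j(\vec a)]$. Let $b \in A^I$ be the function $j \mapsto b_j$, so $\vartheta^1_j(b) = b_j$, and apply the induction hypothesis to $\varphi'$ with parameters $b, \vec a$.

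\emph{Universal step.} Suppose $\varphi = \forall x\, \varphi'$, and let $b \in \mathfrak{C}(\mathcal{A}, I)$ be arbitrary. For each $j$, $\vartheta^1_j(b) \in A$, so the pointwise hypothesis instantiated at $\vartheta^1_j(b)$ gives $\mathcal{A} \models \varphi'[\vartheta^1_j(b), \vartheta^1_j(\vec a)]$ for all $j$. The induction hypothesis then yields $\mathfrak{C}(\mathcal{A}, I) \models \varphi'[b, \vec a]$.

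\emph{Main obstacle.} I expect the difficulty to lie entirely in the quantifier-free base, at two points. First, positive equalities between terms of different ranks. If $a \in A$ and $b = \overline{a}$, then $\vartheta^1_j(a) = \vartheta^1_j(b)$ for every $j$, yet $a \neq b$ in $\mathfrak{C}(\mathcal{A}, I)$. Second, disjunctions whose true disjunct varies with $j$. This already fails for ordinary direct powers, so the lemma cannot hold for an arbitrary matrix.

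I would therefore first check whether the intended reading restricts the matrix. The natural restrictions are a single conjunction of literals (or a disjunction with a uniformly true disjunct), with positive equalities only between terms whose parameters have matching ranks, in the spirit of Definition \ref{def non collapsible}. Under such a restriction, the base case goes through by the rank argument above. In the existential step, one also needs the glued witness to have the rank demanded by the equalities. Choosing $b \in A^I$ rather than in $A$ forces rank $1$, and when the remaining parameters are all in $A$ I would instead take $b = b_j$ constant in $A$ wherever the $b_j$ can be taken equal. Handling this rank-matching is where I expect the real work.
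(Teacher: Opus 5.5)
Your two obstructions are genuine counterexamples, so the lemma is false as stated and no proof can close it. For $a \in A$, the pair $(a, \overline{a})$ satisfies $x_0 = x_1$ at every coordinate, since $\vartheta^1_j(a) = a = \overline{a}(j) = \vartheta^1_j(\overline{a})$, yet $a \neq \overline{a}$ in $\mathfrak{C}(\mathcal{A}, I)$. Likewise, take $A = I = \{0,1\}$ with $R$ interpreted as $\{1\}$. The identity map and the swap $j \mapsto 1-j$ satisfy $R(x_0) \vee R(x_1)$ at each $j$, while $\mathrm{R}^+$ holds of neither.

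The paper's proof (contrapositive, induction on the number of quantifiers) breaks at exactly these two points:
\begin{itemize}
\item it asserts that an atom false in $\mathfrak{C}(\mathcal{A}, I)$ is false at some coordinate, which fails for equality atoms;
\item it passes from ``each disjunct has a literal falsified at some $j$'' to ``a single $j$ falsifies the whole matrix'', an illegitimate exchange of quantifiers.
\end{itemize}
Your direct induction is the right skeleton for what survives. You glue pointwise witnesses into $b \in A^I$ with $\vartheta^1_j(b) = b_j$, and instantiate universals at $\vartheta^1_j(b) \in A$; this is exactly what the paper's ``the rest follows by induction'' leaves unsaid. For a matrix that is a single conjunction of literals with no positive equality, your argument is a complete proof.

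Two corrections to the salvaged version. First, your chain of equivalences overstates. For a negated atom, $\mathcal{A}^I \models \neg R[\vartheta^{-1}(\vec a)]$ means failure at \emph{some} coordinate, not all. Also, Lemma \ref{lemma inverse vartheta} itself fails from left to right for negated equalities, since $\vartheta^{-1}(a) = \overline{a} = \vartheta^{-1}(\overline{a})$ while $a \neq \overline{a}$. What does hold, for every literal other than a positive equality and using $I \neq \varnothing$, is the single direction ``true at every $j$ $\Rightarrow$ true in $\mathfrak{C}(\mathcal{A}, I)$''. That is all you actually use.

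Second, your proposed handling of positive equalities does not work. The rank a witness needs is dictated by the other side of each equation it enters, not by whether the $b_j$ coincide. For $\exists x\,(x = c)$, all pointwise witnesses are $\mathrm{c}$, but the only witness in $\mathfrak{C}(\mathcal{A}, I)$ is $\mathrm{c}^+ = \overline{\mathrm{c}}$, so choosing $b = \mathrm{c} \in A$ fails. For $\exists x\,(x = y)$ with $y \in A$, it is instead the glued $\overline{y}$ that fails. Nor is a ``matching ranks of parameters'' hypothesis inherited by $\varphi'[b, \vec a]$ in the inductive steps, since a universally instantiated $b$ can have either rank. A correct version with positive equalities has to restrict the quantifier structure of the whole formula, not the final parameter tuple; that is what Definition \ref{def non collapsible} attempts.
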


\begin{proof}
We show the contrapositive by an induction on the number of quantifiers. Let $\varphi[a_0, ..., a_n] = \bigvee_{i \leq k} \bigwedge_{l \leq p_i} \psi_{i, l} [a_0, ..., a_n]$ contain no quantifiers. If $\mathfrak{C}(\mathcal{A}, I) \not\models \varphi[a_0, ..., a_n]$, that is, \linebreak $\mathfrak{C}(\mathcal{A}, I) \models \bigwedge_{i \leq k} \bigvee_{l \leq p_i} \neg \psi_{i, l}[a_0, ..., a_n]$, then for each $i' \leq k$ there is $l' \leq p_{i'}$ such that \linebreak $\mathfrak{C}(\mathcal{A}, I) \not\models \psi_{i', l'} [a_0, ..., a_n]$. If $\psi_{i', l'} [a_0, ..., a_n]$ is a negated atom,  we might observe, by our definitions, that means

\begin{center}
$\forall j \in I \big{(} \mathcal{A} \not\models \psi_{i', l'}[\vartheta^\delta_j(a_0), ..., \vartheta^\delta_j(a_n)] \big{)}$.
\end{center}

\noindent
On the other hand, if $\psi_{i', l'} [a_0, ..., a_n]$ is an atom, we may see, by our definitions, that means

\begin{center}
$\exists j \in I \big{(} \mathcal{A} \not\models \psi_{i', l'}[\vartheta^\delta_j(a_0), ..., \vartheta^\delta_j(a_n)] \big{)}$.
\end{center}

\noindent
By the arbitrariness of $l'$ and $i'$, we have

\begin{center}
$\exists j \in I \big{(} \mathcal{A} \not\models \bigwedge_{i \leq k} \bigvee_{l \leq p_i} \psi_{i, l}[\vartheta^\delta_j(a_0), ..., \vartheta^\delta_j(a_n)] \big{)}$.
\end{center}

\noindent
The rest follows by induction on the number of quantifiers.
\end{proof}

\begin{lemma}\label{lemma preservation direct cumulative}
Let $\varphi \in \mathcal{L}^\sigma$ be in prenex conjunctive normal form. If $\varphi$ is non-collapsible, then for any $a_0, ..., a_n \in \mathfrak{C}(\mathcal{A}, I)$,

\begin{center}
if $\mathcal{A}^I \models \varphi[\vartheta^{-1}(a_0), ..., \vartheta^{-1}(a_n)]$, then $\mathfrak{C}(\mathcal{A}, I) \models \varphi[a_0, ..., a_n]$.
\end{center}
\end{lemma}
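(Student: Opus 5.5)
We argue by induction on the number of quantifiers in the prefix $\Delta$ of $\varphi$, proving the statement simultaneously for all tuples $a_0, ..., a_n \in \mathfrak{C}(\mathcal{A}, I)$. Lemma \ref{lemma non-collapsible preserve} (2) guarantees that stripping a universal quantifier keeps the matrix non-collapsible. For the existential case, we handle separately the situation where clause (iii) of Definition \ref{def non collapsible} is what makes an equality non-collapsible; by Lemma \ref{lemma non-collapsible preserve} (1), this is the only situation in which non-collapsibility can be lost.

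\emph{Base case.} Let $\varphi = \bigwedge_{i \leq k} \bigvee_{l \leq p_i} \psi_{i,l}$ be quantifier-free, and suppose $\mathcal{A}^I \models \varphi[\vartheta^{-1}(\vec a)]$. For each $i$ we pick a disjunct $\psi_{i,l}$ that holds in $\mathcal{A}^I$. If it is not a positive equality, Lemma \ref{lemma inverse vartheta} transfers it to $\mathfrak{C}(\mathcal{A}, I)$.

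If it is a positive equality $t = s$, then $t^{\mathcal{A}^I}(\vartheta^{-1}\vec a) = s^{\mathcal{A}^I}(\vartheta^{-1}\vec a)$, and hence $\vartheta_j(t^{\mathfrak{C}}(\vec a)) = \vartheta_j(s^{\mathfrak{C}}(\vec a))$ for all $j$, using Proposition \ref{corollary vartheta and functions}. Actual identity in $\mathfrak{C}(\mathcal{A}, I)$ then only requires that $\rho(t^{\mathfrak{C}}(\vec a)) = \rho(s^{\mathfrak{C}}(\vec a))$. This follows from Observation \ref{observation hierarchy} (4): the rank of a term is the maximum of the ranks of its variables, or $1$ if it contains a constant, since $\mathrm{c}^+ = \overline{\mathrm{c}}$. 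With no quantifiers, all variables are free, so the conditions become:
\begin{itemize}
\item[] clause (ii) gives equal variable sets, hence equal ranks;
\item[] clause (iii) forces constants on both sides, so both ranks are $1$;
\item[] clause (i) makes one side a closed term.
\end{itemize}
Clause (i) is the delicate subcase. If the closed side contains a constant, its rank is $1$, and the other side must then have rank $1$ too, since a rank-$0$ element and its constant function are distinct. The argument requires checking that equality in $\mathcal{A}^I$ with a non-constant pointwise value is impossible when all the arguments lie in $A$. When this subcase genuinely fails, I would sharpen the argument by choosing witnesses appropriately at the quantifier step.

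\emph{Universal step.} Suppose $\varphi = \forall x\,\varphi'$ and $\mathcal{A}^I \models \varphi[\vartheta^{-1}\vec a]$. Given any $b \in \mathfrak{C}(\mathcal{A}, I)$, we have $\mathcal{A}^I \models \varphi'[\vartheta^{-1}\vec a, \vartheta^{-1} b]$. Since $\varphi'$ is non-collapsible, the inductive hypothesis gives $\mathfrak{C}(\mathcal{A}, I) \models \varphi'[\vec a, b]$.

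\emph{Existential step.} Suppose $\varphi = \exists x\,\varphi'$, with a witness $f \in A^I$ in $\mathcal{A}^I$. Then $f = \vartheta^{-1}(f)$ if $f$ is non-constant. If $f = \overline{a}$ is constant, we have two candidates in $\mathfrak{C}(\mathcal{A}, I)$, namely $a$ and $\overline{a}$, both mapped by $\vartheta^{-1}$ to $f$. The key step, and the one I expect to be the main obstacle, is choosing between them so that every positive equality that was non-collapsible only via clause (iii) because of $x$ gets matching ranks on both sides.

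The plan is to choose the rank of the witness as the maximum rank of the term on the opposite side. That rank is determined by the already fixed earlier parameters $\vec a$ together with the constants, and universal variables appearing later are handled because clause (iii) requires existential variables or constants on \emph{both} sides. To make this work, I would strengthen the induction hypothesis: the claim is proved for formulas in which existential variables may be interpreted "rank-flexibly", meaning any preimage under $\vartheta^{-1}$ may be chosen. The base case then checks that a rank-consistent choice exists for each clause (iii) equality.

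If different equalities demand conflicting ranks for the same $x$, the fallback is Lemma \ref{lemma pga}. In that situation I would try to show that the satisfied disjunct can instead be taken to be a non-equality literal. Otherwise I would use pointwise truth in $\mathcal{A}$ under $\vartheta_j$ to conclude truth in $\mathfrak{C}(\mathcal{A}, I)$ directly.
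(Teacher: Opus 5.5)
\textbf{There is a genuine gap.} The steps you defer are not merely delicate. Under the paper's definitions they are false, so your plan cannot be completed. The failures are these:

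(a) Your base-case claim that clause (ii) gives equal ranks contradicts your own rank rule when a constant occurs on one side only. Take $\varphi(x)\colon x+0=x$, which is non-collapsible by clause (ii), and set $x\mapsto 5$. In $\mathbb{Z}^I$ we have $\overline{5}+\overline{0}=\overline{5}$. In $\mathfrak{C}(\mathbb{Z},I)$, however, $5+\overline{0}$ has rank $1$, so it equals $\overline{5}\neq 5$.

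(b) The clause (i) subcase genuinely fails, and no choice of witnesses can help, because the offending variable may be universal. $\forall x\,(x+(-x)=0)$ holds in $\mathbb{Z}^I$. In $\mathfrak{C}(\mathbb{Z},I)$ at $x=5$, the left side is $0\in\mathbb{Z}$ of rank $0$, while $0^{+}=\overline{0}$ has rank $1$.

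(c) Conflicting rank demands arise even without constants. $\exists x\forall y\,(x\cdot y=x\wedge x+y=y)$ is a Horn sentence; its equalities are non-collapsible by clauses (iii) and (ii) respectively, and it holds in $\mathbb{Z}^I$ via $x=\overline{0}$. In $\mathfrak{C}(\mathbb{Z},I)$, a witness $x\in\mathbb{Z}$ fails the first conjunct at $y=\overline{1}$, since $x\cdot\overline{1}=\overline{x}$. A witness $x\in\mathbb{Z}^I$ fails the second at $y=1$, since $x+1$ has rank $1$.

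Your fallback via Lemma~\ref{lemma pga} is unsound, because that lemma is false for positive equalities: $\vartheta_j(3)=\vartheta_j(\overline{3})$ for every $j$, yet $3\neq\overline{3}$. The paper itself remarks that $\overline{3}+\overline{-1}=\overline{2}\neq 2$.

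Likewise, Lemma~\ref{lemma non-collapsible preserve}~(1) does not show that clause (iii) is the only way non-collapsibility can be lost. Stripping $\exists x$ from $\exists x\forall y\,(x+y=y)$ leaves $x$ free on one side only, which destroys clause (ii). That sentence does hold in $\mathfrak{C}(\mathbb{Z},I)$, but only with the rank-$0$ preimage $0$ of $\overline{0}$, a choice your plan never makes for clause (ii) equalities.

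The paper's own proof does not contain the missing idea either. It disposes of the clause (i) base case with ``one may easily check'' and of the remaining cases ``in the same way''. That is exactly where these counterexamples sit, and they refute Theorem~\ref{theorem preservation direct cumulative} as well.

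Your analysis does correctly establish three things:
\begin{itemize}
\item the needed direction for every literal other than a positive equality, via Lemma~\ref{lemma inverse vartheta};
\item the universal step, via surjectivity of $\vartheta^{-1}$;
\item the passage from $t=s$ in $\mathcal{A}^I$ to $t=s$ in $\mathfrak{C}(\mathcal{A},I)$ whenever both sides have the same rank.
\end{itemize}

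A true version of the lemma needs a hypothesis guaranteeing that existential witnesses can be given ranks making both sides of every positive equality rank-equal, uniformly in the universal and free choices. One sufficient version: take each witness to be the element of $A^I$ itself (always a $\vartheta^{-1}$-preimage). Then require every positive equality either to satisfy clause (iii), or to have no constants, no existential variables, and the same variables on both sides. Your idea of strengthening the induction hypothesis to track which free variables came from existential quantifiers is the right device for proving such a restricted statement, but not the one as stated.
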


\begin{proof}
We shall consider separately each condition of Definition \ref{def non collapsible 2}. 

Suppose $\varphi$ satisfies condition (i). We proceed with an induction on the complexity of $\varphi$. If $\varphi$ is atomic, then it cannot be an equality, so the case is covered in both directions by Lemma \ref{lemma inverse vartheta}. For the remaining cases, the non-trivial one is the existential. If $\mathcal{A}^I \models \forall x \varphi[\vartheta^{-1}(a_0), ..., \vartheta^{-1}(a_n)]$, then for any $b \in A^I$, $\mathcal{A}^I \models \varphi[\vartheta^{-1}(a_0), ..., \vartheta^{-1}(a_n), b]$. Since $\vartheta^{-1}$ is surjective, that means for any $c \in \mathfrak{C}(\mathcal{A}, \mathbf{I})$, $\mathcal{A}^I \models \varphi[\vartheta^{-1}(a_0), ..., \vartheta^{-1}(a_n), \vartheta^{-1}(c)]$. Thus, by Lemma \ref{lemma non-collapsible preserve}, $\varphi(x_1, ..., x_n, y)$ is non-collapsible, so by the inductive hypothesis, $\mathfrak{C}(\mathcal{A}, I) \models \varphi[a_0, ..., a_n, c]$. By the arbitrariness of $c$, $\mathfrak{C}(\mathcal{A}, I) \models \forall x \varphi[a_0, ..., a_n]$.

Suppose $\varphi$ satisfies condition (ii) because it satisfies condition (i) of Definition \ref{def non collapsible}. We proceed with an induction on the number of quantifiers. Let $\mathcal{A}^I \models \bigwedge_{i \leq k} \bigvee_{l \leq p_i} \psi_{i, l}[\vartheta^{-1}(a_0), ..., \vartheta^{-1}(a_n)]$ (which is possible if there are constants on each side of the positive equalities). That means for each $i' \leq k$ there is $l' \leq p_{i'}$ such that $\mathcal{A}^I \models \psi_{i', l'}[\vartheta^{-1}(a_0), ..., \vartheta^{-1}(a_n)]$. If $\psi_{i', l'}$ is not a positive equality, then by Lemma \ref{lemma inverse vartheta}, $\mathcal{A}^I \models \psi_{i', l'}[a_0, ..., a_n]$. Otherwise, by assumption it must be of the form $t(x_0, ..., x_n) = s(\mathrm{c}_0, ..., \mathrm{c}_v)$ for some constants $\mathrm{c}_0, ..., \mathrm{c}_v \in \sigma$. Then, $\mathcal{A}^I \models t[\vartheta^{-1}(a_0), ..., \vartheta^{-1}] = s(\mathrm{c}_0, ..., \mathrm{c}_v)$. One may easily check, by our definitions, that implies $\mathfrak{C}(\mathcal{A}, I) \models t[a_0, ..., a_n] = s(\mathrm{c}_0, ..., \mathrm{c}_v)$. By the arbitrariness of $i'$ and $l'$, we therefore have $\mathfrak{C}(\mathcal{A}, I) \models \bigwedge_{i \leq k} \bigvee_{l \leq p_i} \psi_{i, l}[a_0, ..., a_n]$. The inductive step is straightforward by Lemma \ref{lemma non-collapsible preserve} (1) and (2). If $\varphi$ satisfies condition (ii) of Definition \ref{def non collapsible}, as above, the result may be shown by an induction on the number of quantifiers. The base case may be tackled by the same reasoning presented above for when $\varphi$ satisfies condition (i) (that is, going down to the level of literals and applying Lemma \ref{lemma inverse vartheta}), and the argument for the inductive steps are the same. If now $\varphi$ satisfies condition (iii) of Definition \ref{def non collapsible}, we may also deal with the base case and inductive steps in the same way.
\end{proof}

\begin{theorem}\label{theorem preservation direct cumulative}
For any non-collapsible sentence $\varphi \in \mathcal{L}^\sigma$,

\begin{center}
if $\mathcal{A}^I \models \varphi$, then $\mathfrak{C}(\mathcal{A}, I) \models \varphi$.
\end{center}
\end{theorem}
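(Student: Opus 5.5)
This theorem should be the sentence case of Lemma \ref{lemma preservation direct cumulative}, so I would derive it from that lemma. The first step is normalisation. Since $\varphi$ is non-collapsible, by the convention following Definition \ref{def non collapsible 2} its prenex conjunctive normal form $\varphi'$ is non-collapsible in the strict sense of that definition. Moreover $\varphi'$ is logically equivalent to $\varphi$, so it holds in exactly the same structures. It therefore suffices to prove that $\mathcal{A}^I \models \varphi'$ implies $\mathfrak{C}(\mathcal{A}, I) \models \varphi'$.

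The second step applies Lemma \ref{lemma preservation direct cumulative} to $\varphi'$. As a sentence, $\varphi'$ has no free variables, so the assignment $a_0, \dots, a_n$ in that lemma is empty (or, if one insists on parameters, can be taken arbitrary and is irrelevant). The hypothesis $\mathcal{A}^I \models \varphi'[\vartheta^{-1}(a_0), \dots, \vartheta^{-1}(a_n)]$ then reduces to $\mathcal{A}^I \models \varphi'$, and the lemma yields $\mathfrak{C}(\mathcal{A}, I) \models \varphi'$. Transferring back along the logical equivalence gives $\mathfrak{C}(\mathcal{A}, I) \models \varphi$.

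The step I expect to be the main obstacle is checking that the lemma genuinely applies, specifically its quantifier steps. In the inductive proof of Lemma \ref{lemma preservation direct cumulative}, the universal step uses two facts: that $\vartheta^{-1}$ is surjective onto $A^I$, which holds because every $f \in A^I$ is itself an element of $\mathfrak{C}(\mathcal{A}, I)$, and Lemma \ref{lemma non-collapsible preserve}(2), which keeps non-collapsibility when a universal quantifier is stripped.

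The existential step is where non-collapsibility is essential. Given a witness $b \in A^I$ for $\exists x\, \psi$, I would take $b$ itself as the witness in $\mathfrak{C}(\mathcal{A}, I)$, or, when a rank match is needed, the corresponding element of $A$ if $b$ is constant. Here the rank-matching argument given after Definition \ref{def non collapsible 2} has to be used: condition (iii) of Definition \ref{def non collapsible} makes it possible to choose an existential witness of the right rank so that positive equalities become literal identities. If Lemma \ref{lemma non-collapsible preserve}(1) does not cover a case where the existential matters (case (iii)), I would handle it directly. In that case I would choose the witness of maximal rank $1$, namely $b$ itself, and rely on the constants being interpreted as $\overline{\mathrm{c}}$, which has rank $1$. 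Then both sides of each positive equality have rank $1$, and pointwise equality in $\mathcal{A}^I$ becomes actual identity in $\mathfrak{C}(\mathcal{A}, I)$.
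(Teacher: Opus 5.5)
\textbf{There is a genuine gap, shared with the paper: the lemma you rely on fails, and the statement itself is false as written.}

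Your route is essentially the paper's. The paper also just feeds the sentence into Lemma \ref{lemma preservation direct cumulative}; it peels off the outermost quantifier first, which changes nothing. The gap is that this lemma is false. The failure is in its base case for positive equalities, not in the quantifier steps.

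The map $\vartheta^{-1}$ is a homomorphism $\mathfrak{C}(\mathcal{A},I)\to\mathcal{A}^I$. It is injective on $A$ and on $A^I$ separately, but it sends $a$ and $\overline{a}$ to the same point. So a positive equality $t=s$ that holds at the $\vartheta^{-1}$-images holds in $\mathfrak{C}(\mathcal{A},I)$ exactly when $t$ and $s$ receive the same rank. A term's value has rank $1$ iff the term contains a constant (interpreted as $\overline{\mathrm{c}}$) or a variable assigned an element of $A^I$. Conditions (i) and (ii) of Definition \ref{def non collapsible} do not force equal ranks, and the statement fails:
\begin{itemize}
\item $\forall x\,(x+0=x)$ is non-collapsible by (ii) and true in $\mathbb{Z}^I$. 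Yet in $\mathfrak{C}(\mathbb{Z},I)$ we have $3+\overline{0}=\overline{3}\neq 3$, just as the paper itself notes $\overline{3}+\overline{-1}=\overline{2}\neq 2$.
\item $\forall x\,(x=c)$ is non-collapsible by (i). It holds in $\mathcal{A}^I$ for a one-element $\mathcal{A}=\{a\}$, but fails in $\mathfrak{C}(\mathcal{A},I)=\{a,\overline{a}\}$.
\end{itemize}
The paper's proof breaks at exactly this point. In its case $a\in A$ it asserts $\mathfrak{C}(\mathcal{A},I)\models\Delta\psi[a]$ iff $\mathcal{A}^I\models\Delta\psi[\overline{a}]$, which fails for $a=3$ above.

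These counterexamples contain no existential quantifier, so no choice of witnesses can repair them. Your identification of the existential step as the main obstacle therefore misses where the argument actually breaks. Your fallback there (use the corresponding element of $A$ when $b$ is constant) also fails, even without constants. Take $\forall x\,\exists y\,((x=F(x,y))\wedge\neg R(y)\wedge\neg S(y))$, which is non-collapsible by (ii), with $A=\{0,1\}$, $F(x,y)=x$, $R=\{0\}$, $S=\{1\}$. It holds in $\mathcal{A}^I$: take any $y\in A^I$ that takes both values $0$ and $1$. But for $x\in A$ the equality forces $y\in A$, and no element of $A$ avoids both $R$ and $S$.

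What your last paragraph really proves is a corrected version. Suppose every positive equality either
\begin{itemize}
\item has a constant or an existentially bound variable on each side, or
\item has no constant and no existentially bound variable on either side, and the same universally bound or free variables on both sides.
\end{itemize}
Then taking every existential witness to be the $A^I$-witness itself (rank $1$) equalises the ranks of the two sides of every positive equality. The transfer then goes through because $\vartheta^{-1}$ is injective on each rank. With Definition \ref{def non collapsible} as written, though, the theorem is false, so neither your argument nor the paper's can be completed.
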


\begin{proof}
Let $\mathcal{A}^I \models \varphi$. If $\varphi$ is a $\Sigma_n$ sentence $\exists x \Delta \psi(x)$, then there is $a \in A^I$ such that \linebreak $\mathcal{A}^I \models \Delta \psi[a]$. Since $\vartheta^{-1}$ is surjective, there is $a' \in \mathfrak{C}(\mathcal{A}, \mathbf{I})$ such that $a = \vartheta^{-1}(a')$. Thus, by Lemma \ref{lemma preservation direct cumulative}, $\mathfrak{C}(\mathcal{A}, I) \models \Delta \psi[a']$, and so $\mathfrak{C}(\mathcal{A}, I) \models \exists x \Delta \psi(x)$. If $\varphi$ is a $\Pi_n$ sentence $\forall x \Delta \psi(x)$, let $a \in \mathfrak{C}(\mathcal{A}, \mathbf{I})$. If $a \in A^I$, then $\mathcal{A}^I \models \Delta \psi[a]$, so by the same reasoning as before, by Lemma \ref{lemma preservation direct cumulative} we get $\mathfrak{C}(\mathcal{A}, I) \models \Delta \psi[a]$, since, in that case, $a = \vartheta^{-1}(a)$. If $a \not\in A^I$, that is, if $a \in A$, notice, by Lemma \ref{lemma non-collapsible preserve} (2), $\Delta \psi(x)$ is also non-collapsible. Thus, by Lemma \ref{lemma preservation direct cumulative}, $\mathfrak{C}(\mathcal{A}, I) \models \Delta \psi[a]$ iff $\mathcal{A}^I \models \Delta \psi[\vartheta^{-1}(a)]$, which we know is the case. Therefore, by the arbitrariness of $a$, $\mathfrak{C}(\mathcal{A}, I) \models \forall x \Delta \psi(x)$.
\end{proof}

\begin{corollary}\label{corollary preservation direct cumulative 2}
Let $\beta \in \mathbf{On}$. In a cumulative power hierarchy, for any non-collapsible sentence $\varphi \in \mathcal{L}^\sigma$,

\begin{center}
if $\mathcal{A}^I \models \varphi$, then $\mathfrak{C}_\beta(\mathcal{A}, I) \models \varphi$.
\end{center}

\noindent
Similarly, in a strongly cumulative power hierarchy, for any constant-free non-collapsible sentence $\varphi \in \mathcal{L}^\sigma$,

\begin{center}
if $\mathcal{A}^I \models \varphi$, then $\mathfrak{C}_\beta(\mathcal{A}, I) \models \varphi$.
\end{center}
\end{corollary}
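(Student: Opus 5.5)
I would read the statement with all index sets equal to the fixed $I$, so $\mathbf{I}=\{I\}_{\alpha\in\mathbf{On}}$. The plan is a transfinite induction on $\beta$ with Theorem \ref{theorem preservation direct cumulative} as the engine. The only hypothesis available is $\mathcal{A}^I\models\varphi$, and the bare step lemma will not suffice. For $\beta=1$ the claim is exactly Theorem \ref{theorem preservation direct cumulative}. For $\beta^+$ we have $\mathfrak{C}_{\beta^+}(\mathcal{A},\mathbf{I})=\mathfrak{C}(\mathfrak{C}_\beta(\mathcal{A},\mathbf{I}),I)$, and applying the same theorem with $\mathfrak{C}_\beta$ in place of $\mathcal{A}$ would require $\mathfrak{C}_\beta(\mathcal{A},\mathbf{I})^I\models\varphi$. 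Knowing only $\mathfrak{C}_\beta\models\varphi$ does not give this, since $\varphi$ need not be a direct power sentence. So rather than iterate the theorem naively, I would \emph{flatten} the whole level into a single direct power of $\mathcal{A}$.

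Concretely, for finite $n$ I would define an iterated map $\vartheta^{-1}_n:\mathfrak{C}_n(\mathcal{A},\mathbf{I})\to\mathcal{A}^{I^n}$. It sends $a$ to the function $(j_1,\dots,j_n)\mapsto\vartheta_{j_n}(\cdots\vartheta_{j_1}(a)\cdots)$, read off level by level via the hereditary function, so that elements of lower rank become functions constant in the surplus coordinates. This is exactly how $\vartheta^{-1}$ treats $A\subseteq\mathfrak{C}(\mathcal{A},I)$.

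Two facts about $\vartheta^{-1}_n$ are then needed. The first is the analogue of Lemma \ref{lemma inverse vartheta}: every literal that is not a positive equality transfers in both directions between $\mathfrak{C}_n$ and $\mathcal{A}^{I^n}$. This should follow by iterating Proposition \ref{corollary vartheta and functions}(3),(4), which says the hereditary evaluations commute with $\mathrm{F}_\beta$. Together with the hereditary clause for $\mathrm{R}$ and $\mathrm{c}^+=\overline{\mathrm{c}}$, this gives the literal case. The second fact is surjectivity of $\vartheta^{-1}_n$; here I would simply note that every function $I^n\to A$ is the flattening of its curried version, which lies in $\mathcal{F}^*_n$. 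With both facts in hand, the proofs of Lemma \ref{lemma preservation direct cumulative} and Theorem \ref{theorem preservation direct cumulative} go through verbatim with $\mathcal{A}^{I^n}$ in place of $\mathcal{A}^I$. The key point is that Lemma \ref{lemma non-collapsible preserve} and the rank bookkeeping behind conditions (i)--(iii) of Definition \ref{def non collapsible} only use that terms take the maximal rank of their arguments, and that constants have top rank. That is Observation \ref{observation hierarchy}(4) together with the definition of $\mathrm{c}_{i,\beta^+}$. The upshot is: if $\mathcal{A}^{I^n}\models\varphi$, then $\mathfrak{C}_n\models\varphi$.

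The step I expect to be the main obstacle is getting $\mathcal{A}^{I^n}\models\varphi$ from $\mathcal{A}^I\models\varphi$. When $I$ is infinite, a bijection $I^n\cong I$ induces an isomorphism $\mathcal{A}^{I^n}\cong\mathcal{A}^I$, and we are done at finite stages. When $I$ is finite with $|I|>1$, I would first try to replace the product $I^n$ by a set of the same size as $I$. The idea is to exploit that the flattened function only needs to be read on a set of coordinates of size $|I|$ through a diagonal-type reindexing. If that fails, I would state the finite case under the assumption that $I$ is infinite.

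For a limit $\lambda$, each tuple of elements of $\mathfrak{C}_\lambda$ lives in some $\mathfrak{C}_\alpha$ through the embeddings $e^\alpha_\lambda$. I would check that the flattening maps commute with the $e^\alpha_\beta$, which holds because constant extensions flatten to functions constant in the new coordinates. I would then take $J_\lambda$ to be the direct limit of the $I^\alpha$ under these coherent maps, so that $\vartheta^{-1}_\lambda:\mathfrak{C}_\lambda\to\mathcal{A}^{J_\lambda}$ is defined. The literal and surjectivity arguments then repeat, and the cardinality argument above reduces $\mathcal{A}^{J_\lambda}$ to $\mathcal{A}^I$ when $|J_\lambda|=|I|$; for larger $J_\lambda$ this reduction remains the point to verify.

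For the strongly cumulative hierarchy the same argument applies, except that constants are now predicates $\mathbf{c}_{i,\beta}$ and have no top-rank interpretation. This is exactly why clause (iii) of Definition \ref{def non collapsible} is unavailable there, and hence why the statement restricts to constant-free sentences.
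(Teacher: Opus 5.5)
There is a genuine gap, and it sits at the two points you flag. Neither can be closed, because the statement itself fails there.

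\textbf{The successor step and finite $I$.} Your observation that Theorem~\ref{theorem preservation direct cumulative} cannot simply be iterated is correct. It also applies to the paper's own proof, which is only ``by induction on $\beta$, using Theorem~\ref{theorem preservation direct cumulative}''. The successor step needs $\mathfrak{C}_\beta(\mathcal{A},\mathbf{I})^{I}\models\varphi$, which follows only if $\varphi$ is also preserved by direct powers, as in the later use in Corollary~\ref{corollary characterization cumulative 2}. For finite $I$, no reindexing of $I^n$ will help, as this example shows.
Let $\sigma=\{f\}$ with $f$ unary, and let $\mathcal{A}$ be a permutation with exactly three cycles, of lengths $2,3,5$. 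Let $I_0=I_1=I$ with $|I|=2$, and let $\varphi=\forall x\bigvee_{k\in K}f^k(x)=x$ with $K=\{2,3,5,6,10,15\}$. The universal variable occurs on both sides of every equality, so $\varphi$ is constant-free and non-collapsible. Also $\mathcal{A}^I\models\varphi$, since the period of a pair is the $\mathrm{lcm}$ of two of $2,3,5$.
In $\mathfrak{C}_2$, take $g\colon I\to\mathfrak{C}_1$ sending one index to some $h\in A^I$ of period $6$ and the other to a point of the $5$-cycle. Since $f_2^k(g)(i)=f_1^k(g(i))$, $g$ has period $30$. So $\mathfrak{C}_2\not\models\varphi$, in either hierarchy.

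\textbf{Limit stages.} Here the plan fails structurally. The flattening of $\overline{x}$ is $\hat{x}$ precomposed with the projection that forgets the new top coordinate. So the index sets form an inverse system, not a direct one. Consequently $\mathfrak{C}_\omega$ flattens onto the functions on $\prod_{n<\omega}I_n$ that depend on only finitely many coordinates. That is a substructure of a direct power, but in general not a power $\mathcal{A}^J$ of anything, so there is no $J_\lambda$ and no surjectivity, whatever the cardinalities.
First-order logic sees this. Let $\sigma=\{\le\}$, let $\mathcal{A}$ be the chain $0<1$, and let $\mathrm{Lst}(u):=\forall w\,(u\le w)$. Take $\varphi=\forall x\,[\mathrm{Lst}(x)\vee\exists y\,(y\le x\wedge\neg\mathrm{Lst}(y)\wedge\forall z\,(\neg z\le y\vee\mathrm{Lst}(z)\vee y\le z))]$, an equality-free, constant-free sentence saying every non-least element bounds an atom.
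Every $\mathcal{A}^I\cong(\mathcal{P}(I),\subseteq)$ satisfies $\varphi$. But in $\mathfrak{C}_\omega$ (say, of the strongly cumulative hierarchy), $1\in A$ is not least and nothing is an atom. Indeed, if $y$ has rank $n$ and is not least, let $z$ be the rank-$(n+1)$ function with $z(i_0)=y$ and $z(i)=0$ for $i\neq i_0$. Then $z\le y$, $z$ is not least, and $y\not\le z$.
Conjoining the preorder axioms turns $\varphi$ into a direct-product sentence. So even the paper's Weinstein-restricted induction breaks at limit stages.

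\textbf{What remains.} Your method can plausibly give finite $\beta$ with infinite $I$, but even there the transfer is not verbatim. For $n\ge 2$, equal rank and equal flattening no longer force identity: two rank-$2$ elements may differ only in taking the value $a$ versus $\overline{a}$ at one index. So witnesses must be chosen by their whole hereditary shape, not by rank.
Moreover, clause (i) of Definition~\ref{def non collapsible} already fails at $\beta=1$ when the variable side is universally quantified. With $f\equiv c$, the sentence $\forall x\,(f(x)=c)$ holds in $\mathcal{A}^I$ but not in $\mathfrak{C}(\mathcal{A},I)$. For $a\in A$, $f^+(a)=c$ has rank $0$ while $c^+=\overline{c}$ has rank $1$, so ``constants have top rank'' is precisely what breaks it.
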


\begin{proof}
By induction on $\beta$, using Theorem \ref{theorem preservation direct cumulative}.
\end{proof}

For the next result, for a formula $\varphi \in \mathcal{L}^\sigma$, let us write

\begin{center}
$\mathrm{noncoll}(\varphi) = \{\psi \mid \psi\ \text{is a non-collapsible positive equality of}\ \varphi\}$.
\end{center}

\begin{lemma}\label{lemma non collapsible}
Let $\Delta \bigvee_{i \leq k} \psi_i \in \mathcal{L}^\sigma$ not be equivalent to a non-collapsible sentence, where each $\psi_i$ is a literal (that is, $\Delta \bigvee_{i \leq k} \psi_i$ is a formula in prenex normal form with a single conjunct). Then $\varphi$ is not preserved by cumulative powers.
\end{lemma}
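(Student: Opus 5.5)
The plan is to exhibit, for the given single-clause sentence $\varphi = \Delta \bigvee_{i \leq k} \psi_i$ (not equivalent to any non-collapsible sentence), a structure $\mathcal{A}$ and an index set $I$ with $\mathcal{A} \models \varphi$ but $\mathfrak{C}(\mathcal{A}, I) \not\models \varphi$. First, since $\varphi$ is not equivalent to a non-collapsible sentence, its prenex conjunctive normal form must contain a collapsible positive equality, say $\psi_{i_0}$ of the form $t = s$. By the failure of conditions (i)--(iii) of Definition \ref{def non collapsible}, both sides contain variables, and some universally quantified (or free) variable $y$ occurs on one side, say in $s$, but not in $t$. Moreover, $t$ contains no constants, and either $t$ contains no existentially bound variables or $s$ contains none. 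By Lemma \ref{lemma non-collapsible preserve} (3) and (4), collapsibility persists as we strip quantifiers from $\Delta$. Hence we can evaluate the matrix under any assignment and retain this configuration of $y$ relative to $t$.

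Next, I would choose $\mathcal{A}$ so that $\varphi$ holds in $\mathcal{A}$ only because of $\psi_{i_0}$, i.e.\ so that all other disjuncts $\psi_i$, $i \neq i_0$, fail under the witnessing assignments. The natural first attempt is to take $\mathcal{A}$ minimal and trivial for the other literals: a one-element structure, or, if the other literals force more, a structure in which all relations are empty or full as needed. Since $\varphi$ is a single clause, its truth in $\mathcal{A}$ reduces to a Skolem-style choice for the existential variables making $t = s$ (or another disjunct) true. In $\mathfrak{C}(\mathcal{A}, I)$, I would then play the universal quantifiers adversarially. Whenever an existential witness for the variables of $t$ has been fixed, I would let the universal player choose $y$ of the \emph{other} rank. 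If the witnesses have rank $0$, take $y \in \mathcal{F}^*_1(A, \{I\})$; if they have rank $1$, take $y \in A$, or, when $y$ precedes them, pick $y$ appropriately and use that $t$ contains no constants. By Observation \ref{observation hierarchy} (4), $\rho(t^{\mathfrak{C}}) = \max$ of the ranks of its variables, which is determined independently of $y$. Meanwhile $\rho(s^{\mathfrak{C}}) \geq \rho(y)$, so the ranks differ and $t = s$ fails, since equality is identity. The remaining literals are handled by Lemma \ref{lemma inverse vartheta}: they transfer to $\mathcal{A}^I$, and by the choice of $\mathcal{A}$ they fail there too, for instance because $\mathcal{A}^I \cong$ a power of a structure where they fail.

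The main obstacle is the case analysis on quantifier order, together with guaranteeing that the non-equality disjuncts can be simultaneously falsified without making $\varphi$ false already in $\mathcal{A}$. The order matters for $y$ versus the existential variables of $t$, and for the case where $t$ has only universal/free variables distinct from $y$. In that last case I would pick all of $t$'s universal variables in $A$ and $y$ of rank $1$, so the ranks again mismatch. For the first issue, I would try to use the hypothesis ``not equivalent to a non-collapsible sentence'' more strongly. If every model of $\varphi$ made some other disjunct do the work, then $\varphi$ would be equivalent to the clause with $\psi_{i_0}$ deleted, which has fewer collapsible equalities. Inducting on the number of collapsible equalities, this would eventually yield a non-collapsible equivalent, a contradiction. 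Hence there is a model $\mathcal{A}$ where $\psi_{i_0}$ is essential, and the rank-mismatch argument above finishes the proof.
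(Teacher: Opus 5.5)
Your route is essentially the paper's. A deletion/redundancy argument produces a model $\mathcal{A}$ in which the collapsible equality is essential (the paper gets this from the consistency of $\Sigma$ and compactness). Then a rank mismatch in $\mathfrak{C}(\mathcal{A},I)$ is meant to falsify the equality, while a transfer lemma falsifies the other literals.

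\textbf{The rank-mismatch step fails.} It breaks whenever the universal variable $y$ is quantified \emph{before} the existential variables of the $y$-free side $t$. Then $\rho(t^{\mathfrak{C}(\mathcal{A},I)})$ is not ``determined independently of $y$'': the existential player chooses those variables after seeing $y$ and simply matches its rank, so no way of ``picking $y$ appropriately'' helps. This is not a repairable detail. Take $\sigma=\{f\}$ and $\varphi=\forall y\exists z\,(f(z)=y)$.
\begin{itemize}
\item The equality is collapsible under Definition \ref{def non collapsible}: both sides contain variables, $y$ does not occur in $f(z)$, and the side $y$ has neither an existential variable nor a constant.
\item Yet $\varphi$ is preserved by every cumulative power. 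For $y\in A$ take a preimage $z\in A$. For $y\in A^I$ take $z\in A^I$ with $f(z(j))=y(j)$ for all $j$; then $f^{\mathfrak{C}(\mathcal{A},I)}(z)=y$.
\item And $\varphi$ is not equivalent to any non-collapsible sentence. In this signature a non-collapsible positive equality must be $f^n(u)=f^m(u)$, or $f^n(u)=f^m(v)$ with $u,v$ existentially bound. Copy an existential strategy from $(\mathbb{Z},s)$ into the $\mathbb{Z}$-component of $(\mathbb{Z},s)\sqcup(\mathbb{N},s)$, along the homomorphism that is the identity on $\mathbb{Z}$ and $n\mapsto n$ on $\mathbb{N}$. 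Negated equalities pull back along homomorphisms. The two allowed kinds of positive equalities transfer because there are no cycles and the map is injective on the $\mathbb{Z}$-component. So every non-collapsible sentence true in $(\mathbb{Z},s)$ holds in $(\mathbb{Z},s)\sqcup(\mathbb{N},s)$, where $\varphi$ fails.
\end{itemize}
So the statement, with the definition as written, is false, and no argument along these lines can close it. The paper's proof breaks at the same point. It only computes ranks for a single universal quantifier against a constant term, then delegates the quantifier steps to Lemma \ref{lemma non-collapsible preserve} (3)--(4). That lemma says nothing about existential witnesses depending on earlier universals.

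\textbf{Further unjustified steps.} Even where a rank mismatch is available, two steps would need real arguments.
\begin{itemize}
\item Your treatment of the other literals via Lemma \ref{lemma inverse vartheta} does not work as stated. In $\mathcal{A}^I$ a negated atom is true as soon as the atom fails at a single coordinate. Moreover, the existential player in $\mathfrak{C}(\mathcal{A},I)$ may choose functions other than the witnesses from $\mathcal{A}$. From $\mathcal{A}\models\neg\varphi'$ (with $\varphi'$ the clause without $\psi_{i_0}$) you get a universal strategy in $\mathcal{A}$. You never turn it, together with the rank choices, into a universal strategy in $\mathfrak{C}(\mathcal{A},I)$ that defeats every existential reply.
\item Your normalization that the $y$-free side contains no constants is not always attainable, e.g.\ $\forall x\forall y\,(g(x,y)=h(x,\mathrm{c}))$. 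There the mismatch still comes by taking all universals in $A$, since the constant forces rank $1$.
\end{itemize}
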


\begin{proof}
Since $\Delta \bigvee_{i \leq k} \psi_i$ is collapsible, the conditions of Definition \ref{def non collapsible 2} are falsified. As we have seen in the proof of Lemma \ref{lemma non-collapsible preserve}, $\Delta \bigvee_{i \leq k} \psi_i$ contains a positive equality $t_1 = t_2$ such that: each side has occurring variables; each of its sides do not share at least one universally bound variables; there are existentially bound variables on at most one of its sides; the side which is not existentially bound also has no occurrence of constants. Since $\Delta \bigvee_{i \leq k} \psi_i$ is a sentence, up to renaming variables and constants, we may write

\begin{center}
$\varphi = \Delta \big{(} (\bigvee_{i < k} \psi_i) \vee t_1(x', x_0, ..., x_n) = t_2(x_0, ..., x_n, y_0, ..., y_m, \mathrm{c}_0, ..., \mathrm{c}_r) \big{)}$,
\end{center}

\noindent
where $x_0, ..., x_n, x'$ are universally bound, $y_0, ..., y_m$ are existentially bound, $\mathrm{c}_0, ..., \mathrm{c}_r$ are constants, and no constants occur in $t_1$. We proceed by induction on the number of quantifiers. Now, consider $\Sigma = \big{\{} \neg \gamma \mid \gamma \in \bigvee \big{(} \{\psi_i\}_{i \leq k} \setminus \mathrm{noncoll}(\bigvee_{i \leq k} \psi_i) \big{)} \big{\}} \cup \{t_1 = t_2\}$, and suppose it is not consistent. Then $t_1 = t_2$ logically implies the disjunction of some of those $\gamma$'s. But this would mean that the original equality $t_1 = t_2$ is redundant in the disjunction $\bigvee \big{(} \{\psi_i\}_{i \leq k}$, because whenever the equality holds, one of those $\gamma$'s already holds. Hence the whole disjunction would be logically equivalent to the disjunction obtained by deleting $t_1 = t_2$. Repeating this process for all collapsible equalities would eventually yield a formula that has no collapsible equalities at all -- that is, a non-collapsible sentence -- contradicting the assumption that $\varphi$ is not equivalent to such a sentence. Therefore $\Sigma$ must be consistent. By compactness, that means there is a model $\mathcal{A}$ of $\varphi$ such that (a) $\mathcal{A} \models t_1 = t_2$ and $\mathcal{A} \not\models \gamma$ for every $\gamma \in \bigvee \big{(} \{\psi_i\}_{i \leq k} \setminus \mathrm{noncoll}(\bigvee_{i \leq k} \psi_i) \big{)}$. Suppose

\begin{center}
$\mathfrak{C}(\mathcal{A}, I) \models \forall x \big{(} (\bigvee_{i < k} \psi_i) \vee t_1(x) = t_2(\mathrm{c}_0, ..., \mathrm{c}_r) \big{)} \big{)}$, 
\end{center}

\noindent
Then for any $a \in \mathfrak{C}(\mathcal{A}, I)$, $\mathfrak{C}(\mathcal{A}, I) \models \bigvee_{i < k} \psi_i[a] \vee t_1[a] = t_2(\mathrm{c}_0, ..., \mathrm{c}_r)$. Let $i' < k$. If $\psi_{i'} \in \big{(} \{\psi_i\}_{i \leq k} \setminus \mathrm{noncoll}(\Delta \bigvee_{i \leq k} \psi_i) \big{)}$, then by construction we have $\mathcal{A} \models \neg \psi_{i'}$. By the arbitrariness of $a$, by Lemma \ref{lemma pga}, $\mathfrak{C}(\mathcal{A}, I) \models \neg \psi_{i'}$. Otherwise, $\psi_{i'}$ must be a collapsible positive equality. But then, since there is a single universal quantifier, by the same restrictions exposed at the beginning of our proof, $\psi_{i'}$ must also be of the form $s_1(x) = s_2(\mathrm{c}_0, ..., \mathrm{c}_r)$. Suppose $\mathfrak{C}(\mathcal{A}, I) \models s_1[a] = s_2(\mathrm{c}_0, ..., \mathrm{c}_r)$. By the arbitrariness of $a$, we may let $a \in A$. But then $\rho \big{(} s_1^{\mathfrak{C}(\mathcal{A}, I)}[a] \big{)} = 0$, and $\rho \big{(} s_2^{\mathfrak{C}(\mathcal{A}, I)}(\mathrm{c}_0, ..., \mathrm{c}_r) \big{)} = \rho(\overline{\mathrm{c}_0}) = 1$, which means $\mathfrak{C}(\mathcal{A}, I) \not\models s_1[a] = s_2(\mathrm{c}_0, ..., \mathrm{c}_r)$. Therefore, by the arbitrariness of $i'$, we must have $\mathfrak{C}(\mathcal{A}, I) \models s_1[a] = s_2(\mathrm{c}_0, ..., \mathrm{c}_r)$. But then the same reasoning applies, and we have a contradiction. The inductive steps for the quantifiers are straightforward by Lemma \ref{lemma non-collapsible preserve} (3) and (4).
\end{proof}

\begin{theorem}\label{theorem non collapsible}
Let $\varphi \in \mathcal{L}^\sigma$ not be a equivalent to a non-collapsible sentence. Then $\varphi$ is not preserved by cumulative powers.
\end{theorem}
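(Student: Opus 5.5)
The plan is to reduce Theorem \ref{theorem non collapsible} to Lemma \ref{lemma non collapsible}, which already handles sentences whose matrix is a single disjunction of literals.

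First, put $\varphi$ into prenex conjunctive normal form $\Delta \bigwedge_{i \leq k} \bigvee_{l \leq p_i} \psi_{i,l}$. Since prenex conjunctive normal form preserves logical equivalence, $\varphi$ is preserved by cumulative powers iff this normal form is. So we may assume $\varphi$ has this shape.

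Second, simplify the conjuncts. By the redundancy argument from the proof of Lemma \ref{lemma non collapsible}, delete from each conjunct every collapsible positive equality that logically implies the disjunction of the remaining literals. Also discard every conjunct that is logically implied by the others. The resulting formula is still equivalent to $\varphi$, so by hypothesis it is not non-collapsible. Hence some conjunct $\bigvee_{l \leq p_{i_0}} \psi_{i_0,l}$ contains a collapsible positive equality $t_1 = t_2$ that cannot be removed. Here collapsibility is judged relative to the prefix $\Delta$, so it carries over to the single-conjunct sentence $\Delta' \bigvee_{l} \psi_{i_0,l}$, where $\Delta'$ is $\Delta$ restricted to the variables that conjunct uses. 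Because universal quantifiers distribute over conjunction, the universal variables behave the same in $\Delta'$ as in $\Delta$.

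Third, use the witness structure. Arguing as in the proof of Lemma \ref{lemma non collapsible}, the single-conjunct sentence is not equivalent to a non-collapsible sentence. The compactness argument there then yields a model $\mathcal{A}$ together with an index set $I$ such that $t_1 = t_2$ is the only disjunct available to satisfy that conjunct, and that disjunct fails in $\mathfrak{C}(\mathcal{A}, I)$ because of the rank mismatch. Choose universal witnesses in $A$ for the universally bound variables of the side without constants or existential variables, and choose rank-$1$ elements elsewhere.

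We need $\mathcal{A}$ to satisfy the whole $\varphi$, not just one conjunct. To get this, add the other conjuncts to the consistent set $\Sigma$ built in that proof. Consistency is kept because conjuncts implied by the others were discarded. Then $\mathcal{A} \models \varphi$ but $\mathfrak{C}(\mathcal{A}, I) \not\models \varphi$, since its $i_0$-th conjunct fails at the chosen witnesses. Lemma \ref{lemma pga} transfers the failure of the remaining literals from $\mathcal{A}$ to $\mathfrak{C}(\mathcal{A}, I)$, exactly as in the proof of the lemma.

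The main obstacle is the third step. Adding the other conjuncts must not destroy the consistency of $\Sigma$, and the existential choices in the other conjuncts must not interfere with the chosen universal witnesses. I would handle this by induction on the quantifier prefix, using Lemma \ref{lemma non-collapsible preserve} (3) and (4) to keep collapsibility as quantifiers are stripped. I expect that the existentials can always be taken at the maximal rank, so that only the universals determine the mismatch.
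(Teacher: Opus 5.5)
Your third step fails. You claim that the irremovable collapsible equality $t_1 = t_2$ fails in $\mathfrak{C}(\mathcal{A}, I)$ ``because of the rank mismatch''. To refute $\mathfrak{C}(\mathcal{A}, I) \models \varphi$, however, you must defeat \emph{every} choice of existential witnesses, and each existential is chosen after the universals that precede it. So if an existential variable of $t_2$ is quantified after the universal variables of $t_1$, it can be taken in $A$ when those universals lie in $A$, and in $A^I$ otherwise. Then no mismatch ever arises. Your expectation that ``the existentials can always be taken at the maximal rank'' has the roles reversed: those witnesses are not yours to pick. The rank argument only works in the configuration of the paper's example $\exists \vec{x}\, \forall \vec{y}\, (t(\vec{x}) = s(\vec{y}))$, where the existentials come first. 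Likewise, the syntactic irremovability of $t_1 = t_2$ from its clause, established in your second step, does not yield the semantic non-preservation you need.

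This cannot be repaired, because the statement is false. Let $\sigma = \{f\}$ with $f$ unary, and let $\varphi = \forall x \exists y\, (f(y) = x)$.
\begin{itemize}
\item The equality is collapsible in the sense of Definition~\ref{def non collapsible}. Both sides contain variables, the universal $x$ does not occur on the left, and the side $x$ has neither an existential variable nor a constant.
\item $\varphi$ is preserved. In $\mathfrak{C}(\mathcal{A}, I)$, $f^+$ is $f$ on $A$ and $g \mapsto f \circ g$ on $A^I$, and both maps are onto when $f$ is.
\item $\varphi$ is not equivalent to any non-collapsible sentence. In this language every term has the form $f^n(v)$, so a positive equality $f^m(u) = f^n(v)$ is non-collapsible only if $u = v$ or both $u, v$ are existential.
\end{itemize}
For the last point, let $\mathcal{M} = (\mathbb{Z}, s)$, and let $\mathcal{N} = (\mathbb{Z} \cup \{*\}, s')$, where $s'$ extends $s$ by $s'(*) = 0$. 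The map $\pi \colon \mathcal{N} \to \mathcal{M}$ that fixes $\mathbb{Z}$ and sends $*$ to $-1$ is a homomorphism. Copy a winning strategy from $\mathcal{M}$ into $\mathcal{N}$ through $\pi$, taking existential witnesses inside $\mathbb{Z}$. Each literal type then transfers:
\begin{itemize}
\item negated equalities true in $\mathcal{M}$ stay true in $\mathcal{N}$, since $\pi$ is a homomorphism;
\item equalities between existential terms survive, since $\mathbb{Z}$ is a substructure;
\item a same-variable equality $f^m(u) = f^n(u)$ true in $\mathcal{M}$ forces $m = n$.
\end{itemize}
Hence every non-collapsible sentence true in $\mathcal{M}$ is true in $\mathcal{N}$. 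But $\varphi$ holds in $\mathcal{M}$ and fails in $\mathcal{N}$ at $*$.

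The paper's own proof does not escape this. Since this $\varphi$ has a single conjunct, the example refutes Lemma~\ref{lemma non collapsible} itself, on which the paper relies. The paper also asserts that preservation of $\varphi$ passes to each $\Delta \bigvee_l \psi_{i',l}$, which does not follow, because a model of one conjunct need not be a model of $\varphi$. Your insistence on building a model of all of $\varphi$ correctly addresses that second defect, but the first one is fatal.
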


\begin{proof}
Let $\Delta \bigwedge_{i \leq k} \bigvee_{l \leq p_i} \psi_{i, l}$ be the prenex conjunctive normal form of $\varphi$. If $\varphi$ is preserved, then for each and every $i' \leq k$, $\Delta \bigvee_{l \leq p_{i'}} \psi_{i', l}$ is preserved. But since $\Delta \bigwedge_{i \leq k} \bigvee_{l \leq p_i} \psi_{i, l}$ is not equivalent to a non-collapsible sentence, by Definition \ref{def non collapsible 2} we may see, for some $i''$, $\Delta \bigvee_{l \leq p_{i''}} \psi_{i'', l}$ must equally not be equivalent to a non-collapsible sentence, and therefore, by Lemma \ref{lemma non collapsible}, not preserved by cumulative powers.
\end{proof}

\begin{corollary}\label{corollary characterization cumulative 2}
Let $\varphi \in \mathcal{L}^\sigma$ be a sentence. Then:

\begin{itemize}
\item[\emph{(1)}] $\varphi$ is preserved by cumulative powers iff it is equivalent to a non-collapsible Weinstein sentence; 
\item[\emph{(2)}] $\varphi$ is preserved by the strongly cumulative power hierarchy iff it is equivalent to a constant-free non-collapsible Weinstein sentence.
\end{itemize}
\end{corollary}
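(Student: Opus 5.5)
The plan is to assemble the corollary from the three ingredients already available: Weinstein's characterization (Corollary \ref{corollary weinstein 2}), the positive preservation result (Theorem \ref{theorem preservation direct cumulative} together with Corollary \ref{corollary preservation direct cumulative 2}), and the negative result (Theorem \ref{theorem non collapsible}). For both (1) and (2), I would prove each direction separately. Throughout, I read ``preserved by cumulative powers'' as: whenever $\mathcal{A} \models \varphi$, then $\mathfrak{C}(\mathcal{A}, I) \models \varphi$ for every index set $I$, and correspondingly for every level of the relevant hierarchy.

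For the right-to-left direction of (1), suppose $\varphi$ is equivalent to a non-collapsible Weinstein sentence $\psi$, and let $\mathcal{A} \models \varphi$, so $\mathcal{A} \models \psi$. Since $\psi$ is a Weinstein sentence, Corollary \ref{corollary weinstein 2} says it is preserved by direct powers, so $\mathcal{A}^I \models \psi$. Since $\psi$ is non-collapsible, Theorem \ref{theorem preservation direct cumulative} gives $\mathfrak{C}(\mathcal{A}, I) \models \psi$, and hence $\mathfrak{C}(\mathcal{A}, I) \models \varphi$. For the whole cumulative power hierarchy I would argue by induction on $\beta$, as in Corollary \ref{corollary preservation direct cumulative 2}:
\begin{itemize}
\item at successor stages, apply the previous argument to $\mathfrak{C}_\beta(\mathcal{A}, \mathbf{I})$ in place of $\mathcal{A}$;
\item at limit stages, use that the direct limit of structures satisfying $\psi$ again satisfies it.
\end{itemize}
The limit step is the point I expect to be delicate. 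I would handle it by observing that the limit embeddings are constant-function embeddings, so the limit can be identified with a cumulative-power-like structure built over the earlier levels. If that identification fails, I would restrict the statement to successor levels, which is what ``cumulative powers'' literally refers to.

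For (2) the same argument applies with the strongly cumulative hierarchy. The only change is that constants are interpreted as predicates $\mathbf{c}_{i,\beta}$ rather than as elements. So I restrict to constant-free sentences and invoke the second half of Corollary \ref{corollary preservation direct cumulative 2}; here the limit case is trivial, since limit levels are unions.

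For the left-to-right direction, suppose $\varphi$ is preserved by cumulative powers. First, $\varphi$ must be equivalent to some non-collapsible sentence, since otherwise Theorem \ref{theorem non collapsible} shows it is not preserved. Second, I need $\varphi$ to be a direct power sentence. Given $\mathcal{A} \models \varphi$, preservation gives $\mathfrak{C}(\mathcal{A}, I) \models \varphi$, and I want to transfer this to $\mathcal{A}^I$. The available tool is the inverse correspondence: Lemma \ref{lemma inverse vartheta} for non-equality literals, together with the surjectivity of $\vartheta^{-1}$ onto $A^I$. I would argue that for the non-collapsible normal form, satisfaction in $\mathfrak{C}(\mathcal{A}, I)$ transfers to $\mathcal{A}^I$ by running the induction of Lemma \ref{lemma preservation direct cumulative} backwards. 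Positive equalities between elements of $A^I$ are exactly identities of functions, and these are reflected because such elements lie in $\mathfrak{C}(\mathcal{A}, I) \setminus A$ unchanged.

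This reverse transfer is the main obstacle. At universal steps the quantifier ranges over the larger domain of $\mathfrak{C}(\mathcal{A}, I)$, but the direct power only needs the instances in $A^I$, so those steps go through. Existential witnesses, however, may lie in $A$, and a witness $a \in A$ must be replaced by $\overline{a}$, under which equalities with functional elements can change truth value. To handle this, I would use conditions (ii) and (iii) of Definition \ref{def non collapsible} to show that rank-matching always allows choosing witnesses in $A^I$ when the remaining parameters are functional.

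With both facts in hand, $\varphi$ is preserved by direct powers, so by Corollary \ref{corollary weinstein 2} it is equivalent to a Weinstein sentence. It remains to obtain a single sentence that is both Weinstein and non-collapsible. My approach would be to check that the normal form $\psi \in \Phi^*_n$ given by Weinstein's procedure can be taken to be the non-collapsible equivalent; failing that, I would delete the redundant collapsible equalities as in the proof of Lemma \ref{lemma non collapsible}. The deletion preserves equivalence, and hence preserves $R(\psi,\psi,\psi)$ via Proposition \ref{proposition weinstein 2}.

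In case (2), the witness for non-preservation in the proof of Lemma \ref{lemma non collapsible} uses the rank-$1$ interpretation $\overline{\mathrm{c}_0}$ of constants. In the strongly cumulative hierarchy, a sentence containing constants fails preservation because $c$ has no element interpretation at all. That observation forces the constant-free restriction and finishes the characterization.
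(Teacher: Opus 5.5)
Your right-to-left direction is the paper's argument: a Weinstein sentence is preserved by direct powers (Corollary \ref{corollary weinstein 2}), then Theorem \ref{theorem preservation direct cumulative} applies, then induction on levels. Your caution about limit levels is warranted; the paper's ``easily given by induction'' glosses over it.

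For left-to-right, the paper only cites Theorem \ref{theorem non collapsible}. You rightly see that this gives at most a non-collapsible equivalent and says nothing about the Weinstein half. The step you propose to supply it fails, however. That step transfers $\mathfrak{C}(\mathcal{A},I)\models\varphi$ down to $\mathcal{A}^I\models\varphi$ by running Lemma \ref{lemma preservation direct cumulative} backwards. Since that induction never uses $\mathcal{A}\models\varphi$, it would prove a general transfer for non-collapsible sentences, and that is false. Take $\exists x_1\cdots\exists x_5\bigwedge_{i<j}x_i\neq x_j$. It has no positive equality, so it is non-collapsible. For $|A|=|I|=2$ it holds in $\mathfrak{C}(\mathcal{A},I)$, which has six elements, but fails in $\mathcal{A}^I$, which has four. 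The culprit is negative equalities: a witness $a\in A$ and its constant function $\overline{a}$ are distinct in $\mathfrak{C}(\mathcal{A},I)$ but identified by $\vartheta^{-1}$. Conditions (ii)--(iii) of Definition \ref{def non collapsible} constrain only positive equalities, so rank-matching gives you no control here.

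No other route can close the gap either, because the implication you need (preserved by cumulative powers $\Rightarrow$ preserved by direct powers) is false. Let $\varphi$ say ``there are not exactly $9$ elements''. Take the union $A^I\cup A$ disjoint, as the construction intends. If $|A|=n$ and $|I|=k$ are finite, $\mathfrak{C}(\mathcal{A},I)$ has $n+n^k$ elements, which is even since $n^k\equiv n \pmod 2$. Otherwise it has $2$ elements (when $n=1$) or is infinite. So $\varphi$ is preserved by cumulative powers, and by iteration at every successor level, limit levels being infinite. Yet $|A|=3$, $|I|=2$ gives $|A^I|=9$. So $\varphi$ is not a direct power sentence, and by Corollary \ref{corollary weinstein 2} it is not equivalent to any Weinstein sentence. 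The left-to-right direction of (1) is therefore false as stated, and neither your argument nor the paper's can prove it.

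The same example also undercuts the first half of your argument, which relies on Theorem \ref{theorem non collapsible}. In the empty signature, a non-collapsible sentence can have positive equalities only of the form $u=u$ or between existentially bound variables. Choosing existential witnesses through a section of a surjection $N\to M$ then shows such a sentence passes from $M$ to any larger $N$. Since $\varphi$ holds in an $8$-element set and fails in a $9$-element one, it is not equivalent to any non-collapsible sentence, although it is preserved. So Theorem \ref{theorem non collapsible} fails too.

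Independently, your final merging step is unjustified. $R$ is a syntactic predicate on members of $\Phi_n$ and is not invariant under logical equivalence. Proposition \ref{proposition weinstein 2} yields $R$ only for formulas already in some $\Phi^*_n$, and nothing makes that normal form non-collapsible.
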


\begin{proof}
(1) The left to right direction is given by Theorem \ref{theorem non collapsible}. For the other, suppose $\varphi$ is a non-collapsible Weinstein sentence, then for any $\mathcal{A}$ and index set $I$, if $\mathcal{A} \models \varphi$, then $\mathcal{A}^I \models \varphi$. By Theorem \ref{theorem preservation direct cumulative}, that means $\mathfrak{C}(\mathcal{A}, I) \models \varphi$. The preservation for further levels of the cumulative power hierarchy may be easily given by induction on the levels of the construction.

(2) Similarly, only using Corollary \ref{corollary preservation direct cumulative 2}.
\end{proof}

Therefore, any level of the cumulative power hierarchy inherits any first-order property from $\mathcal{A}$ characterized by non-collapsible Weinstein sentences; and any level of the strongly cumulative power hierarchy inherits any first-order property from $\mathcal{A}$ characterized by constant-free non-collapsible Weinstein sentences -- since in that construction, $\mathfrak{C}_\beta(\mathcal{A}, I)$ does not have an interpretation of the constants of $\sigma$. Not only that, those are precisely the classes of first-order sentences inherited by cumulative powers. In other words, just as the fragment preserved by reduced products is precisely the one given by Horn sentences, and the fragment preserved by direct products is precisely the one characterized by Weinstein, the fragment preserved by (finite) cumulative powers is precisely the one given by non-collapsible Weinstein sentences.

Notice the restrictions of the results on non-collapsible formulas, in comparison to the formulas preserved by direct powers, concern precisely equality. That is related to the definition of the extended functions. In our former example, for $j \in \mathbb{N}$, $\vartheta^1_j(\overline{3}) = \overline{3}(j) = 3$, $\vartheta^1_j(\overline{-1}) = -1$ and $\vartheta^1_j(2) = 2$, so $\forall j \in \mathbb{N} \big{(} \vartheta^1_j(\overline{3}) + \vartheta^1_j(\overline{-1}) = \vartheta^1_j(2) \big{)}$. However, $\mathfrak{C}(\mathbb{N}, \mathbf{I}) \not\models \overline{3} + \overline{-1} = 2$, as $\overline{3} + \overline{-1} = \overline{2} \neq 2$. The restriction has also a deeper meaning. It concerns the difference between the definition of other extended relations and the equality relation. While at each level $\beta$ every other relation is defined by using the $\vartheta$ function, and therefore take into account all of the levels lesser than $\beta$ -- so that, for example, if $\rho(a) < \rho(b) = \beta$ for a successor $\beta$, then $\mathrm{R}_\beta ab$ iff for each $j \in I_{\delta-1}$, $\mathrm{R}_{\beta-1} ab(j)$ --, the equality relation is interpreted as real identity, and thus holds between two objects iff they are the same object, regardless of how they are hereditarily related. In a few words, whereas every other relation is hereditarily defined, equality is not. 

In Section \ref{sec rings}, we shall see that once we also define equality hereditarily, the resulting structure is isomorphic to a direct power.

\begin{table}
\begin{tabular}{ |l|c| }

\hline
\multicolumn{1}{|c|}{Construction} & Fragment of first-order theory preserved \\
\hline
Reduced power $\mathcal{A}^I / F$ & Horn sentences \\ 
\hline
Direct power $\mathcal{A}^I$ & Weinstein sentences \\ 
\hline
Ultrapower $\mathcal{A}^I / \mathcal{U}$ & First-order language \\ 
\hline
Cumulative power $\mathfrak{C}(\mathcal{A}, I)$ & non-collapsible Weinstein sentences \\
\hline
\end{tabular}
\medskip
\caption{Preservation of first-order sentences by power constructions.}
\end{table}

\begin{proposition}
Depending on the choice of $\mathcal{A}$, cumulative powers of $\mathcal{A}$ may produce structures which are not isomorphic to any direct power, reduced power or ultrapower of $\mathcal{A}$.
\end{proposition}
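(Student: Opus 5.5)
The plan is to exhibit a single structure $\mathcal{A}$ and a first-order sentence $\theta$ that holds in every direct power, every reduced power and every ultrapower of $\mathcal{A}$, but fails in $\mathfrak{C}(\mathcal{A}, I)$ for some admissible index set ($|I| \geq 2$). Since isomorphic structures satisfy the same sentences, this gives the non-isomorphism at once. The easiest separating sentences are collapsible positive equalities that $\mathcal{A}$ satisfies, because the discussion before Lemma \ref{lemma non-collapsible preserve} shows that level-mismatch makes them fail in cumulative powers.

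For the structure, take $\mathcal{A} = \langle \mathbb{Z}, +, 0 \rangle$ (or $\mathbb{N}$ with addition), with $\theta := \exists x \forall y\, (x + y = y)$. This is a Horn sentence, so by Proposition \ref{proposition reduced products} it holds in every proper reduced power, and in particular in every direct power (trivial filter) and every ultrapower. By Łoś's theorem it also holds in ultrapowers directly. Hence every structure isomorphic to one of these powers satisfies $\theta$.

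It remains to check that $\mathfrak{C}(\mathcal{A}, I) \not\models \theta$, and I would do this using the rank bookkeeping of Observation \ref{observation hierarchy} (4). Suppose $a$ witnessed $\theta$. Choose $y \in A$, so $\rho(y) = 0$. Then $\rho(a + y) = \max\{\rho(a), 0\}$ must equal $0$, which forces $\rho(a) = 0$, i.e.\ $a \in \mathbb{Z}$. Next choose $y = f \in A^I$ non-constant, or indeed any $f$. By Definition \ref{def cumulative hierarchy}(i), $a + f$ is the function $j \mapsto a + f(j)$. This equals $f$ only if $a = 0$. So the only candidate is $a = 0 \in \mathbb{Z}$, and that candidate does work: $0 + y = y$ for all $y$.

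This shows $\theta$ is the wrong choice, since $\theta$ holds here and the actual identity $0$ of level $0$ works at every level. The fix is to use a sentence whose equality is collapsible in the sense of Definition \ref{def non collapsible}: each side needs a universally bound variable absent from the other side, and there must be no existential variable or constant on one side. The cleanest choice is $\theta' := \forall x \forall y\, (x + (-x) = y + (-y))$ in the group $\langle \mathbb{Z}, +, - \rangle$, written without constants. $\theta'$ is a universal Horn sentence, true in $\mathbb{Z}$, so it holds in all reduced powers and ultrapowers. In $\mathfrak{C}(\mathbb{Z}, I)$, take $x = 0 \in \mathbb{Z}$ and $y = \overline{1}$. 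Then $x + (-x) = 0$ has rank $0$, while $y + (-y) = \overline{0}$ has rank $1$. Since equality in $\mathfrak{C}$ is true identity and $0 \neq \overline{0}$, $\theta'$ fails.

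The main obstacle is making sure the chosen sentence really separates the structures. Sentences like $\theta$ above can accidentally survive because a level-$0$ witness works uniformly. So I would verify the failure concretely through ranks rather than relying on the general non-preservation Theorem \ref{theorem non collapsible}, whose proof is only sketched. The computation $0 \neq \overline{0}$ is exactly the phenomenon illustrated by $\overline{3} + \overline{-1} = \overline{2} \neq 2$ in the text, which keeps the argument fully elementary.
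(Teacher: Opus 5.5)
Your proof is correct, but it takes a different route from the paper. The paper argues abstractly. It takes a theory $\mathrm{T}$ containing collapsible Horn sentences and appeals to the characterization in Corollary~\ref{corollary characterization cumulative 2} (hence to Theorem~\ref{theorem non collapsible} and Lemma~\ref{lemma non collapsible}) for failure in the cumulative power, and to Horn preservation for the other three constructions. You instead fix $\mathcal{A} = \langle \mathbb{Z}, +, - \rangle$ and the collapsible identity $\forall x \forall y\,(x + (-x) = y + (-y))$, and check its failure by hand. For $x = 0$ and $y = \overline{1}$ the two sides evaluate to $0$ and $\overline{0}$, which are distinct objects of ranks $0$ and $1$. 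For the other side you only need that universal identities survive reduced powers and ultrapowers, and that isomorphisms preserve first-order truth. This gives an explicit, fully verified witness, which is exactly what ``depending on the choice of $\mathcal{A}$'' asks for. The paper's route is more general in spirit but less secure, for three reasons. First, non-preservation only yields failure in the cumulative power of \emph{some} model, so one must take $\mathcal{A}$ to be that model rather than an arbitrary $\mathcal{A} \models \mathrm{T}$. Second, being collapsible in form is not enough: the tautology $\forall x \forall y\,(x = y \vee \neg\, x = y)$ is a collapsible Horn sentence, so one really needs non-equivalence to any non-collapsible sentence. Third, everything rests on Lemma~\ref{lemma non collapsible}, whose proof is only sketched. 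Your computation sidesteps all of this, and the same argument works for every group, even the trivial one, since $e \neq \overline{e}$. In the write-up, drop the false start with $\exists x \forall y\,(x + y = y)$. That sentence surviving is exactly what Definition~\ref{def non collapsible}(ii) predicts, since its universal variable $y$ occurs on both sides.
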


\begin{proof}
Let $\mathrm{T}$ be a theory containing collapsible Horn sentences and $\mathcal{A} \models \mathrm{T}$. By Corollary \ref{corollary characterization cumulative 2}, a cumulative power of $\mathcal{A}$ will not preserve $\mathrm{T}$, whereas $\mathrm{T}$ is preserved by any direct power, reduced power or ultrapower of $\mathcal{A}$.
\end{proof}

So the theory of fields, for example, has cumulative power models which are not isomorphic to direct powers or ultrapowers. Therefore, cumulative powers offer a genuinely new way of constructing algebraic structures, so that theories characterized by non-collapsible Weinstein sentences may find cumulative power models not generated by either direct powers or ultrapowers.

At last, we may see embeddability can be lifted from the generating structures to their respective hierarchies.

\begin{theorem}\label{theorem embeddings 1}
Let $\mathcal{A}$ and $\mathcal{B}$ be $\sigma$-structures, and their (either strongly or non-strongly) cumulative power hierarchies be respectively generated by the families of index sets $\mathbf{I} = \{I_\alpha\}_{\alpha \in \mathbf{On}}$ and $\mathbf{J} = \{J_\alpha\}_{\alpha \in \mathbf{On}}$. If $\mathcal{A} \hookrightarrow \mathcal{B}$ and for each $\alpha \in \mathbf{On}$, $|I_\alpha| \leq |J_\alpha|$, then for any $\beta, \gamma \in \mathbf{On}$ such that $\beta \leq \gamma$, $\mathfrak{C}_\beta(\mathcal{A}, \mathbf{I}) \hookrightarrow \mathfrak{C}_\gamma(\mathcal{B}, \mathbf{J})$.
\end{theorem}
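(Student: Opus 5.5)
The plan is to split the claim into two parts. First, for every $\beta$ there is an embedding $h_\beta : \mathfrak{C}_\beta(\mathcal{A}, \mathbf{I}) \hookrightarrow \mathfrak{C}_\beta(\mathcal{B}, \mathbf{J})$. Second, for $\beta \leq \gamma$, $\mathfrak{C}_\beta(\mathcal{B}, \mathbf{J}) \hookrightarrow \mathfrak{C}_\gamma(\mathcal{B}, \mathbf{J})$. Composing the two gives the result.

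The second part is essentially already in the construction. In the strongly cumulative hierarchy it is the inclusion, which is an embedding by Observation \ref{observation hierarchy} (3), together with the analogous facts that $\mathrm{R}_\gamma$ and $\mathbf{c}_\gamma$ restrict to $\mathrm{R}_\beta$ and $\mathbf{c}_\beta$. In the non-strongly cumulative hierarchy I would use the maps $e^\beta_\gamma$ of Definition \ref{def hierarchy 1}. One checks by induction on $\gamma$ that each $x \mapsto \overline{x}$ is an embedding $\mathfrak{C}_\delta \to \mathfrak{C}_{\delta^+}$. This holds because, for constant functions, the hereditary clauses (i)--(iii) reduce pointwise to the level-$\delta$ operations, and this embedding sends $\mathrm{c}$ to $\overline{\mathrm{c}}$. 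The direct limit maps are embeddings by the standard direct-limit argument.

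For the first part, fix an embedding $h_0 : \mathcal{A} \hookrightarrow \mathcal{B}$ and, for each $\alpha$, an injection $\iota_\alpha : J_\alpha \to I_\alpha$... which is the wrong direction, so I would instead fix a surjection $\pi_\alpha : J_\alpha \to I_\alpha$. This exists since $|I_\alpha| \leq |J_\alpha|$ and $I_\alpha \neq \varnothing$. Define $h_\beta$ by recursion on $\beta$. Set $h_{\beta^+}(a) = h_\beta(a)$ if $a \in \mathcal{F}_\beta$, and $h_{\beta^+}(f) = h_\beta \circ f \circ \pi_\beta$ for $f \in \mathcal{F}^*_{\beta^+}$. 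Take unions at limit stages, or the induced map on direct limits in the non-strong case. Injectivity follows from injectivity of $h_\beta$ and surjectivity of $\pi_\beta$. The map is rank-preserving, and since $\pi_\beta$ is surjective, a function is constant exactly when its image is constant.

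The main step is showing that $h_{\beta^+}$ commutes with $\mathrm{F}$, $\mathrm{R}$ and the constants (or the predicates $\mathbf{c}$). The key identity is
\[
\vartheta_k\big(h_{\beta^+}(a)\big) = h_\beta\big(\vartheta_{\pi_\beta(k)}(a)\big) \quad \text{for } k \in J_\beta.
\]
This holds for functional $a$ by definition, and for lower-rank $a$ because both sides equal $h_\beta(a)$. It relies on rank preservation: $h$ must not turn a lower-rank element into a top-rank function. Using this identity, $\mathrm{R}$ holds of $h_{\beta^+}(\vec a)$ iff it holds at every $k$ of the $h_\beta$-images of the $\vartheta_{\pi_\beta(k)}(a_i)$. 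By the inductive hypothesis this is equivalent to $\mathrm{R}$ at every $\pi_\beta(k)$, and hence, by surjectivity of $\pi_\beta$, at every $j \in I_\beta$.

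Functions are handled the same way, using the rank condition in clause (i), together with Proposition \ref{corollary vartheta and functions} to compute $\vartheta$ of $\mathrm{F}$-values. Constants work because $h_{\beta^+}(\overline{\mathrm{c}}) = \overline{h_\beta(\mathrm{c})}$, and the $\mathbf{c}$-predicates are handled by the same induction. The obstacle I expect is the bookkeeping at limit stages and in the non-strong direct-limit case. There I would verify that $h$ commutes with the maps $e^\alpha_\beta$, that is, $h_{\beta^+}(\overline{x}) = \overline{h_\beta(x)}$, so that $h$ descends to the quotient by $\approx_\lambda$ and remains injective there.
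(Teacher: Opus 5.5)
Your proposal is correct and is essentially the paper's argument. The paper fixes injections $u_\alpha : I_\alpha \to J_\alpha$ and a default index $k \in u_\alpha[I_\alpha]$, and sets $e_{\alpha^+}(a)(j) = e_\alpha\big(a(u_\alpha^{-1}(j))\big)$ on $u_\alpha[I_\alpha]$ and $e_\alpha\big(a(u_\alpha^{-1}(k))\big)$ elsewhere. This is exactly your $h_\beta \circ f \circ \pi_\beta$ for the particular surjection $\pi_\alpha$ that is a left inverse of $u_\alpha$. The paper then takes unions at limits and composes with the level-to-level embeddings of $\mathcal{B}$'s hierarchy, as you do.

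Your version is more explicit than the paper's sketch on the points it calls ``easily checked'' or ``straightforward'':
\begin{itemize}
\item the treatment of lower-rank elements;
\item the identity $\vartheta_k(h_{\beta^+}(a)) = h_\beta(\vartheta_{\pi_\beta(k)}(a))$;
\item the compatibility $h_{\beta^+}(\overline{x}) = \overline{h_\beta(x)}$ needed at direct limits;
\item the use of $x \mapsto \overline{x}$ rather than inclusion in the non-strongly cumulative hierarchy, where inclusion does not send $\mathrm{c}$ to $\overline{\mathrm{c}}$.
\end{itemize}
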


\begin{proof}
Let $e : A \to B$ be the relevant embedding, and for each $\alpha \in \mathbf{On}$ let $u_\alpha : I_\alpha \to J_\alpha$ an injection. We inductively define the mappings $e_\alpha : \mathfrak{C}_\alpha(\mathcal{A}, \mathbf{I}) \to \mathfrak{C}_\alpha(\mathcal{B}, \mathbf{I})$ in the following way:

\begin{itemize}
\item $e_0 = e$;
\item for the inductive step, let $k \in u_\alpha[I_\alpha]$. Then, let $e_{\alpha^+}(a) = b$ iff 

\begin{center}
 $\forall j \in u_\alpha[I_\alpha] \Big{(} b(j) = e_\alpha \big{(} a(u^{-1}_\alpha(j)) \big{)} \Big{)}$ and $\forall j \in J_\alpha \setminus u_\alpha[I_\alpha] \Big{(} b(j) = e_\alpha \big{(} a(u^{-1}_\alpha(k)) \big{)} \Big{)}$.
\end{center}
\end{itemize}

\noindent
For a limit $\lambda$, we naturally let $e_\lambda(a) = b$ iff there is $\delta < \lambda$ such that $e_\delta(a) = b$. One might easily check $e_{\alpha^+}$ is injective, and preserves all the functions, relations and predicates $\mathbf{c}_{\alpha^+}$ of $\sigma$, so that for each $\alpha \in \mathbf{On}$, $e_\alpha$ is an embedding. From there, the conclusion is straightforward.
\end{proof}

Notice the embeddings are not unique, as we may see there are at least $|J_\beta \setminus u_\beta[I_\beta]|$ different embeddings from $\mathfrak{C}_{\beta^+}(\mathcal{A}, \mathbf{I})$ into $\mathfrak{C}_{\beta^+}(\mathcal{A}, \mathbf{J})$.

\section{Direct powers and cumulative powers}\label{sec rings}

As the past results show, both cumulative power hierarchies, as a whole, present a structure that resembles those of their generating structures, but nevertheless fail to instantiate the generating structure itself. How can we recover more of the first-order properties of the generating structures? A way of doing that is by restricting the elements of the hierarchies to those of the parallel hierarchy which is not cumulative -- in other words, a hierarchy of direct powers. Consider the following definition.

\begin{definition}[Direct power hierarchy]\label{def alt hierarchy}
Let $\mathbf{I} = \{I_\alpha\}_{\alpha \in \mathbf{On}}$ be a family of index sets. The \emph{direct power hierarchy generated by $\mathcal{A}$ and $\mathbf{I}$} is composed of the structures $\Pi_\beta(\mathcal{A}, \mathbf{I})$, where each $\Pi_{\beta^+}(\mathcal{A}, \mathbf{I})$ is the direct power of $\Pi_\beta(\mathcal{A}, \mathbf{I})$ with index set $I_\beta$, and for a limit $\lambda$, $\Pi_\lambda(\mathcal{A}, \mathbf{I})$ is the limit of embeddings of the preceding structures. More precisely:

\begin{itemize}
\item $\Pi_0(\mathcal{A}, \mathbf{I}) = \mathcal{A}$;
\item $\Pi_{\beta^+}(\mathcal{A}, \mathbf{I}) = \langle \Pi_\beta(\mathcal{A}, \mathbf{I}), \langle \mathrm{F}_{i, \beta^+} \rangle_{i \in J}, \langle \mathrm{R}_{i, \beta^+} \rangle_{i \in J}, \langle \mathrm{c}_{i, \beta^+} \rangle_{i \in K} \rangle$,\footnote{For simplicity, we use the same symbol to represent the functions, relations and constants of $\mathfrak{C}_\beta(\mathcal{A}, \mathbf{I})$. However, as we shall see, there is no problem in assuming they are the same, as those of $\mathfrak{C}_\beta(\mathcal{A}, \mathbf{I})$ coincide with those of $\Pi_\beta(\mathcal{A}, \mathbf{I})$ when restricted to the domain of the latter.}

\noindent 
where $\mathrm{F}_{i, \beta^+}$, $\mathrm{R}_{i, \beta^+}$ and $\mathrm{c}_{i, \beta^+}$ are inductively defined in the following way:

\begin{itemize}[align=parleft, labelsep=8mm,]

\item[(a)] $\mathrm{F}_{i, \beta^+}(a_0, ..., a_n) = b$ iff $\forall j \in I_\beta \Big{(} \mathrm{F}_{i, \beta} \big{(} a_0(j), ..., a_n(j) \big{)} = b(j) \Big{)}$;
\item[(b)] $\mathrm{R}_{i, \beta^+} a_0...a_n$ iff $\forall j \in I_\beta \big{(} \mathrm{R}_{i, \beta} a_0(j)...a_n(j) \big{)}$;
\item[(c)] $\mathrm{c}_{i, \beta^+} = \overline{\mathrm{c}_{i, \beta}}$;
\end{itemize}

\medskip

\item[•] $\Pi_\lambda(\mathcal{A}, \mathbf{I}) = \bigcup_{\alpha < \lambda} \Pi_\alpha(\mathcal{A}, \mathbf{I}) /\! \equiv_\lambda$, for a limit $\lambda$, is the direct limit of the system $\langle \{\Pi_\alpha(\mathcal{A}, \mathbf{I})\}_{\alpha < \lambda}, e^\alpha_\beta \rangle$ with the embeddings inductively defined by

\medskip

\begin{itemize}
\item $e_\alpha^\alpha$ is the identity function,
\item $e^\alpha_{\beta^+} : \Pi_\alpha(\mathcal{A}, \mathbf{I}) \to \Pi_{\beta^+}(\mathcal{A}, \mathbf{I}); x \mapsto \overline{e^\alpha_{\beta^+}(x)}_\beta$,
\item $e^\alpha_\lambda : \Pi_\alpha(\mathcal{A}, \mathbf{I}) \to \Pi_\lambda(\mathcal{A}, \mathbf{I}); x \mapsto \llbracket x \rrbracket_\lambda$, for a limit $\lambda$,
\end{itemize}
\end{itemize}

\medskip

\noindent
where $\llbracket \cdot \rrbracket_\lambda$ denotes the equivalence class defined by the relation $x \equiv_\lambda y$ iff there are $\alpha \leq \beta < \lambda$ such that either $e^\alpha_\beta(x) = y$ or $e^\alpha_\beta(y) = x$.
\end{definition}

By Corollary \ref{corollary weinstein 2}, we know what sentences are preserved by $\Pi_\beta(\mathcal{A}, \mathbf{I})$. Suppose we let the same family of index sets $\mathbf{I} = \{I_\alpha\}_{\alpha \in \mathbf{On}}$ define both a non-strongly cumulative power and a direct power hierarchy of $\mathcal{A}$. Then, clearly $\Pi_\beta(\mathcal{A}, \mathbf{I}) \hookrightarrow \mathfrak{C}_\beta(\mathcal{A}, \mathbf{I})$, which means there is a fragment of $\mathfrak{C}_\beta(\mathcal{A}, \mathbf{I})$ which is a proper extension of $\mathcal{A}$ that preserves any Weinstein sentence. Suppose, however, we would like to shape the cumulative power hierarchies themselves, with all of its cross-level operations, rather than a fragment of it, into a structure which preserves more first-order properties from its generating structure. That may be done in varying levels of faithfulness, in terms of satisfaction of $\mathcal{L}^\sigma$-sentences. The most faithful option, for successor levels, is to quotient each level $\mathfrak{C}_{\beta^+}(\mathcal{A}, \mathbf{I})$ by an adequate ultrafilter over $I_\beta$. But there is an intermediate level by taking a quotient of $\mathfrak{C}_\beta(\mathcal{A}, \mathbf{I})$ with an appropriate equivalence relation, the product of which is isomorphic to a direct power $\Pi_\beta(\mathcal{A}, \mathbf{I})$.

\begin{definition}\label{def coincidence}
Let $\beta \in \mathbf{On}$. For $a, b \in \mathfrak{C}_\beta(\mathcal{A}, \mathbf{I})$, let $\delta = \mathrm{max}\{\mathrm{l}(a), \mathrm{l}(b)\}$ (analogously,  $\delta = \mathrm{max}\{\rho(a), \rho(b)\}$ in the case of the strongly cumulative hierarchy). We inductively define the equivalence relation $\equiv_\beta$ on $\mathfrak{C}_\beta(\mathcal{A}, \mathbf{I})$ in the following way:

\begin{itemize}[align=parleft, labelsep=8mm,]
\item[(a)] $a \equiv_0 b$ iff $a = b$;
\item[(b)] $a \equiv_{\beta^+} b$ iff $\forall j \in I_{\delta-1} \big{(} \vartheta^\delta_j(a) \equiv_{\delta-1} \vartheta_j^\delta(b) \big{)}$;
\item[(c)] $a \equiv_\lambda b$ iff for some $\alpha < \lambda$, $a \equiv_\alpha b$, for a limit $\lambda$. 
\end{itemize}
\end{definition}

Notice the definition makes sense: $\delta \leq \beta$, which means $\delta-1 < \beta$, and for any $\beta \in \mathbf{On}$, if $\rho(a) = \beta$ (or $\mathrm{l}(a) = \beta$), then by our definition, $\beta$ is not a limit ordinal -- that is, at limit steps of the construction, no new element is introduced --, so that $\delta-1$ indeed exists.

Notice, furthermore, that following our considerations in Section \ref{sec modelling}, the above definition amounts to a hereditary definition of equality, that is, equality is defined modulo the $\vartheta$ function. It should therefore be of no surprise that the resulting structures are isomorphic to direct powers.

\begin{proposition}\label{theorem equiv}
$\equiv_\beta$ is an equivalence relation.
\end{proposition}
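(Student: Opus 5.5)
The plan is a transfinite induction on $\beta$, proving simultaneously for every $\beta$ that $\equiv_\beta$ is reflexive, symmetric and transitive on $\mathfrak{C}_\beta(\mathcal{A}, \mathbf{I})$. I will also carry a \emph{coherence} clause: for $\alpha \leq \beta$ and $a, b \in \mathfrak{C}_\alpha(\mathcal{A}, \mathbf{I})$, $a \equiv_\alpha b$ iff $a \equiv_\beta b$. This clause is needed because clause (b) of Definition \ref{def coincidence} refers to $\equiv_{\delta-1}$, where $\delta$ depends on the pair being compared rather than on $\beta$. For transitivity, three elements may determine three different values of $\delta$, so the relations have to be compared across levels.

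The base case $\beta = 0$ is immediate, since $\equiv_0$ is identity. At a limit $\lambda$, reflexivity and symmetry are inherited from the levels below. For transitivity, suppose $a \equiv_\alpha b$ and $b \equiv_{\alpha'} c$. Take $\gamma = \max\{\alpha, \alpha'\} < \lambda$; by coherence both relations hold at $\gamma$, and transitivity of $\equiv_\gamma$ finishes. Coherence at $\lambda$ follows from coherence below $\lambda$.

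At a successor $\beta^+$, reflexivity and symmetry are pointwise: $\vartheta^\delta_j(a) \equiv_{\delta-1} \vartheta^\delta_j(a)$ by the inductive hypothesis, and $\delta$ is symmetric in $a$ and $b$. Transitivity is the main obstacle. Suppose $a \equiv b$ with level $\delta_1$ and $b \equiv c$ with level $\delta_2$, and let $\delta = \max\{\rho(a), \rho(c)\}$ (or with $\mathrm{l}$). I will first prove an auxiliary lemma: \emph{if $\mu \geq \max\{\rho(x), \rho(y)\}$ is a successor, then $x \equiv y$ iff $\forall j \in I_{\mu-1}\,(\vartheta^\mu_j(x) \equiv_{\mu-1} \vartheta^\mu_j(y))$.} This lets me evaluate all three pairs at the common level $\mu = \max\{\rho(a), \rho(b), \rho(c)\}$, after which transitivity of $\equiv_{\mu-1}$, available by induction, gives $\vartheta^\mu_j(a) \equiv_{\mu-1} \vartheta^\mu_j(c)$ for every $j$, and the lemma returns $a \equiv c$.

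The auxiliary lemma itself follows from coherence. When $\mu$ exceeds both ranks, $\vartheta^\mu_j$ fixes $x$ and $y$, so the condition reduces to $x \equiv_{\mu-1} y$, which by coherence is the same as $x \equiv y$. In the non-strong hierarchy, the elements of a lower level are identified with constant functions $\overline{x}$ via $e^\alpha_\beta$. The same computation then holds because $\vartheta_j(\overline{x}) = x$ for every $j$. Coherence at $\beta^+$ follows from the observation that clause (b) only involves $\delta \leq$ the level at which $a$ and $b$ first appear, so it yields the same answer at $\beta^+$ as at any earlier level containing $a$ and $b$.
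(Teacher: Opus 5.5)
Your proof is correct and takes the same route as the paper, which only says the result follows by induction on $\beta$. Your coherence clause and the common-level lemma (evaluating all three pairs at $\mu=\max\{\rho(a),\rho(b),\rho(c)\}$) are the details that induction needs for transitivity, and they justify, without circularly assuming transitivity, the ``same level without loss of generality'' move the paper later makes via $\overline{a}\equiv a$. The only things to make explicit are the paper's standing assumption that index sets are nonempty, which the lemma uses to reduce $\forall j\in I_{\mu-1}(x\equiv_{\mu-1}y)$ to $x\equiv_{\mu-1}y$, and that clause (b) with $\delta=0$ must be read as identity.
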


\begin{proof}
One might easily check this by induction on $\beta$.
\end{proof}

\begin{proposition}\label{theorem coincidence existence}
For any $\beta^+$ and $a \in \mathfrak{C}_{\beta^+}(\mathcal{A}, \mathbf{I})$ there is $b \in \mathfrak{C}_{\beta^+}(\mathcal{A}, \mathbf{I})$ such that $\mathrm{l}(b) = \beta^+$ (analogously, $\rho(b) = \beta^+$) and $b \equiv_{\beta^+} a$.
\end{proposition}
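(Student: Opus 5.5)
The plan is to take $b$ to be the constant function of the appropriate level. Fix $a \in \mathfrak{C}_{\beta^+}(\mathcal{A}, \mathbf{I})$. If already $\mathrm{l}(a) = \beta^+$ (respectively $\rho(a) = \beta^+$), take $b = a$ and use reflexivity of $\equiv_{\beta^+}$ from Proposition \ref{theorem equiv}. Otherwise $\mathrm{l}(a) \leq \beta$, so $a$ is (the image of) an element of $\mathfrak{C}_\beta(\mathcal{A}, \mathbf{I})$. We then set
\begin{center}
$b = \overline{a}^{\beta^+} : I_\beta \to \mathfrak{C}_\beta(\mathcal{A}, \mathbf{I});\ j \mapsto a$.
\end{center}
In the non-strongly cumulative hierarchy this is $e^{\mathrm{l}(a)}_{\beta^+}$ applied to $a$, up to the identifications at limit stages. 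In the strongly cumulative hierarchy it is simply the constant function, which lies in $\mathcal{F}_\beta(A,\mathbf{I})^{I_\beta} \subseteq \mathcal{F}_{\beta^+}(A,\mathbf{I})$. First check that $b$ is genuinely new at level $\beta^+$, i.e.\ $b \notin \mathfrak{C}_\beta$. This holds by rank considerations: $b$ is a function whose values all have rank $\rho(a) \leq \beta$, so by Observation \ref{observation hierarchy}(5) read in reverse, $b$ cannot belong to any earlier $\mathcal{F}^*_\gamma$. A function with values of rank up to $\beta$ can only first appear at stage $\beta^+$, using well-foundedness of the construction. Hence $\rho(b) = \beta^+$ (resp.\ $\mathrm{l}(b)=\beta^+$).

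Next verify $b \equiv_{\beta^+} a$ via clause (b) of Definition \ref{def coincidence}. Here $\delta = \max\{\rho(a),\rho(b)\} = \beta^+$, so $\delta - 1 = \beta$. We must show that for every $j \in I_\beta$,
\begin{center}
$\vartheta^{\beta^+}_j(b) \equiv_\beta \vartheta^{\beta^+}_j(a)$.
\end{center}
Since $\rho(b) = \beta^+ \geq \beta^+$ and $j \in \mathrm{dom}(b)$, we have $\vartheta^{\beta^+}_j(b) = b(j) = a$. Since $\rho(a) < \beta^+$, we have $\vartheta^{\beta^+}_j(a) = a$. The requirement thus reduces to $a \equiv_\beta a$, which is reflexivity of $\equiv_\beta$, again by Proposition \ref{theorem equiv}.

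The main obstacle is bookkeeping in the non-strongly cumulative hierarchy rather than the equivalence itself. There, elements at and beyond limit stages are equivalence classes $[x]_\lambda$, so "$a \in \mathfrak{C}_\beta$" and "$\vartheta_j$" have to be read through the direct-limit identifications $e^\alpha_\beta$. I would handle this by choosing a representative of $a$ in $\mathfrak{C}_\beta(\mathcal{A},\mathbf{I})$ itself, which is possible since $\mathrm{l}(a)\le\beta$ and the $e^\alpha_\beta$ are embeddings. The constant-function argument is then carried out inside $\mathfrak{C}(\mathfrak{C}_\beta(\mathcal{A},\mathbf{I}), I_\beta)$, where $\vartheta_j$ of Definition \ref{def cumulative hierarchy} applies literally.

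For the strongly cumulative case, the remaining subtlety is that $a$ might have rank $\gamma<\beta$. In that case $b(j)=a$ still lies in $\mathcal{F}_\beta(A,\mathbf{I})$, so the construction and the computation go through unchanged.
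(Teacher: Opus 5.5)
Your witness is the paper's: its entire proof is ``consider $\overline{a}$; clearly $\overline{a}\equiv_{\beta^+}a$''. Your $\vartheta$-computation spells out that ``clearly''. Your case split for $\rho(a)=\beta^+$ also repairs a small omission, since $\overline{a}^{\beta^+}$ is only defined when $\rho(a)<\beta^+$.

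The gap is the rank claim $\rho(b)=\beta^+$. Observation~\ref{observation hierarchy}(5) only gives $\rho(b)>\rho(a)$, so your argument is valid exactly when $\rho(a)=\beta$. The assertion that a function with values of rank $\le\beta$ ``can only first appear at stage $\beta^+$'' is false. Elements of $\mathcal{F}_{\mathbf{On}}(A,\mathbf{I})$ are bare sets of pairs with no level tag. The constant map $j\mapsto a$ on $I_\beta$ is literally the same set as $\overline{a}^{\gamma^+}$ whenever $I_\gamma=I_\beta$. So if $\rho(a)\le\gamma<\beta$, your $b$ already lies in $\mathcal{F}_{\gamma^+}(A,\mathbf{I})$. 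For example, take a constant family $I_\alpha=I$, $a\in A$ and $\beta=1$: then $b=\overline{a}^2=\overline{a}^1$ has rank $1$, not $2$. This is exactly the case you dismissed in your last paragraph. The equivalence itself survives, since for $\rho(b)=\gamma^+$ clause (b) just reduces to $a\equiv_\gamma a$; only the rank claim fails.

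When $\beta$ is a successor, iterate instead. Assume inductively some $a'\equiv_\beta a$ with $\rho(a')=\beta$, and take $b=\overline{a'}^{\beta^+}$. Now (5) really does give $\rho(b)>\beta$, hence $\rho(b)=\beta^+$, and $b\equiv_{\beta^+}a$ reduces to $a'\equiv_\beta a$. In the non-strongly cumulative hierarchy, this iterated constant is what $e^{\mathrm{l}(a)}_{\beta^+}(a)=\overline{e^{\mathrm{l}(a)}_\beta(a)}$ actually is, so identifying it with $\overline{a}$ was inaccurate.

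For limit $\beta$ no repair is available in general. Take $I_\alpha=\{0,1\}$ for all $\alpha$. Every $b:\{0,1\}\to\mathcal{F}_\omega(A,\mathbf{I})$ has both values in some $\mathcal{F}_n(A,\mathbf{I})$, hence $b\in\mathcal{F}_{n+1}(A,\mathbf{I})$. So $\mathcal{F}^*_{\omega+1}(A,\mathbf{I})=\varnothing$, and the strongly cumulative version of the statement fails at $\beta=\omega$.

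Thus the statement, and the paper's one-line proof, tacitly assume that functions with domain $I_\beta$ are new at stage $\beta^+$, e.g.\ pairwise distinct index sets, as in the paper's $\mathbb{N}$ example. Under that assumption the right justification goes through domains, not through (5). Every element of $\mathcal{F}^*_{\gamma^+}(A,\mathbf{I})$ is a function with domain $I_\gamma$. So no function with domain $I_\beta$ lies in $\mathcal{F}_\beta(A,\mathbf{I})$, barring elements of $A$ that happen to be such functions.
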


\begin{proof}
Just consider $\overline{a} \in \mathfrak{C}_{\beta^+}(\mathcal{A}, \mathbf{I})$. Clearly, $\overline{a} \equiv_{\beta^+} a$.
\end{proof}

Thus, when comparing two arbitrary elements with respect to $\equiv_\beta$, we may consider them, without loss of generality, to be of the same level, as, by the above theorem, for any two elements of different levels there is always an element of the highest level between the two to which it is equivalent under $\equiv_\beta$.

Let $a \in \mathfrak{C}_\beta(A, \mathbf{I})$. We define the equivalence classes under $\equiv_\beta$ as usual, and denote that of $a$ by $[a]_\beta$. In the absence of ambiguity, we write simply $[a]$.

\begin{definition}\label{def quotient 1}
Define $\mathcal{C}_\beta(\mathcal{A}, \mathbf{I}) = \langle \mathfrak{C}_\beta(\mathcal{A}, \mathbf{I}) /\! \equiv_\beta, \mathrm{F}^\equiv_\beta, \mathrm{R}^\equiv_\beta, \mathrm{c}^\equiv_\beta \rangle$ as usual -- that is:

\begin{itemize}[align=parleft, labelsep=8mm,]
\item[(a)] $\mathrm{F}^\equiv_\beta : (\mathfrak{C}_\beta(A, \mathbf{I}) /\! \equiv_\beta)^n\ \to (\mathfrak{C}_\beta(A, \mathbf{I}) /\! \equiv_\beta); \langle \llbracket x_0 \rrbracket, ..., \llbracket x_n \rrbracket \rangle \mapsto \llbracket \mathrm{F}_\beta(x_0, ..., x_n) \rrbracket$;
\item[(b)] $\mathrm{c}^\equiv_\beta = \llbracket \mathrm{c} \rrbracket$;
\item[(c)] $\mathrm{R}^\equiv_\beta \llbracket a_0 \rrbracket ... \llbracket a_n \rrbracket$ iff $\exists b_0 , ..., b_n \in \mathfrak{C}_\beta(\mathcal{A}, \mathbf{I}) (a_0 \equiv_\beta b_0\ \&\ ...\ \&\ a_n \equiv_\beta b_n\ \&\ \mathrm{R}_\beta b_0...b_n)$.
\end{itemize}
\end{definition}

We may now easily check that:

\begin{theorem}\label{theorem isomorphism non cum hierarchy}
Let a direct power hierarchy and a (either strongly or non-strongly) cumulative power hierarchy of $\mathcal{A}$ both be generated by the family of index sets $\mathbf{I}$. Then, for any $\beta \in \mathbf{On}$, $\Pi_\beta(\mathcal{A}, \mathbf{I}) \cong \mathcal{C}_\beta(\mathcal{A}, \mathbf{I})$.
\end{theorem}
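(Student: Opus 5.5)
We prove the statement by transfinite induction on $\beta$, constructing simultaneously an isomorphism $h_\beta : \Pi_\beta(\mathcal{A}, \mathbf{I}) \to \mathcal{C}_\beta(\mathcal{A}, \mathbf{I})$ and a \emph{flattening} map $\pi_\beta : \mathfrak{C}_\beta(\mathcal{A}, \mathbf{I}) \to \Pi_\beta(\mathcal{A}, \mathbf{I})$. The map $\pi_\beta$ will be surjective and will satisfy $a \equiv_\beta b$ iff $\pi_\beta(a) = \pi_\beta(b)$. It will also commute with $\mathrm{F}$, $\mathrm{R}$ and the constants. We then set $h_\beta^{-1}(\llbracket a \rrbracket) = \pi_\beta(a)$; in other words, $\pi_\beta$ descends to the quotient, and the induced map is the desired isomorphism. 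This mirrors the role of $\vartheta^{-1}$ in Lemma \ref{lemma inverse vartheta}, iterated through the levels.

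\textbf{Definition of $\pi_\beta$.} Put $\pi_0 = \mathrm{id}_A$. For $a \in \mathfrak{C}_{\beta^+}(\mathcal{A}, \mathbf{I})$:
\begin{itemize}
\item if $\mathrm{l}(a) = \beta^+$ (respectively $\rho(a) = \beta^+$), let $\pi_{\beta^+}(a)$ be the function $j \mapsto \pi_\beta(a(j))$ on $I_\beta$;
\item if $a$ lies in a lower level, let $\pi_{\beta^+}(a) = \overline{\pi_\beta(a)}$, i.e. $\pi_{\beta^+}(a) = \pi_{\beta^+}(\overline{a})$, consistent with Proposition \ref{theorem coincidence existence}.
\end{itemize}
At limits $\lambda$, let $\pi_\lambda(a) = \llbracket \pi_\alpha(a) \rrbracket_\lambda$ for any $\alpha < \lambda$ with $a$ at level $\le \alpha$. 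We must check that this choice does not depend on $\alpha$. For that we need the compatibility $\pi_{\alpha'} \circ e = e^{\alpha}_{\alpha'} \circ \pi_\alpha$ between the level embeddings of the two hierarchies, which follows from the constant-function clause. In the non-strongly cumulative case the elements at limit levels are themselves $\approx_\lambda$-classes, so the same compatibility is what makes $\pi_\lambda$ well defined on them.

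\textbf{Surjectivity and kernel.} Surjectivity is immediate: every element of $\Pi_{\beta^+}$ is a function $I_\beta \to \Pi_\beta$, and it can be lifted pointwise using surjectivity of $\pi_\beta$, which requires choice. For the kernel, take $a, b$ with $\delta = \max\{\mathrm{l}(a), \mathrm{l}(b)\}$. By clause (b) of Definition \ref{def coincidence}, $a \equiv_{\beta^+} b$ iff $\vartheta^\delta_j(a) \equiv_{\delta-1} \vartheta^\delta_j(b)$ for all $j \in I_{\delta-1}$. By the induction hypothesis this holds iff $\pi_{\delta-1}(\vartheta^\delta_j(a)) = \pi_{\delta-1}(\vartheta^\delta_j(b))$ for all $j$. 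Since $\pi_\delta(a)(j) = \pi_{\delta-1}(\vartheta^\delta_j(a))$ in both the same-level and lower-level cases, this is iff $\pi_\delta(a) = \pi_\delta(b)$. Finally, by the compatibility above, this is iff $\pi_{\beta^+}(a) = \pi_{\beta^+}(b)$.

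\textbf{Preservation of structure.} The same computation, now using Proposition \ref{corollary vartheta and functions} and the hereditary clauses of Definitions \ref{def hierarchy 1} and \ref{def hierarchy}, gives
\[
\pi(\mathrm{F}_\beta(a_0, ..., a_n)) = \mathrm{F}_{\beta}(\pi(a_0), ..., \pi(a_n)),
\]
with the right-hand side computed in $\Pi_\beta(\mathcal{A}, \mathbf{I})$. The analogous argument shows that $\mathrm{R}_\beta a_0 \dots a_n$ holds iff $\mathrm{R}\,\pi(a_0) \dots \pi(a_n)$ holds in $\Pi_\beta(\mathcal{A}, \mathbf{I})$. For constants, $\pi(\overline{\mathrm{c}}) = \overline{\mathrm{c}}$, and in the strongly cumulative case we check that every element of $\mathbf{c}_\beta$ is mapped to the constant $\mathrm{c}_{\beta}$ of $\Pi_\beta(\mathcal{A}, \mathbf{I})$. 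Since $\pi$ preserves $\mathrm{R}$ in both directions and is constant on $\equiv_\beta$-classes, the existential clause (c) of Definition \ref{def quotient 1} agrees with $\mathrm{R}$ in $\Pi_\beta(\mathcal{A}, \mathbf{I})$. Hence the induced bijection is an isomorphism.

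\textbf{Main obstacle.} We expect the delicate part to be the bookkeeping of mixed-level tuples together with the limit stages. Concretely, we must verify that, for $\rho(a) < \delta$, applying $\pi_\delta$ really amounts to passing to the constant function — that is, that $\pi_\delta(a)(j) = \pi_{\delta-1}(a)$ whenever $\vartheta^\delta_j(a) = a$. We must also verify that the direct-limit identifications $\approx_\lambda$ and $\equiv_\lambda$ match up. We would settle these first, as a lemma $\pi_{\gamma} \circ e^{\alpha}_{\gamma} = e^{\alpha}_{\gamma} \circ \pi_{\alpha}$ proved by induction on $\gamma$; after that, each of the remaining steps reduces to unfolding one level of the hereditary definitions.
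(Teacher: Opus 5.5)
Your argument is correct (modulo conventions the paper also uses tacitly, such as treating the levels of $\mathcal{F}_\beta$ as disjoint), but it runs in the opposite direction from the paper's proof.

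\textbf{The paper's route.} It uses the map $\Pi_\beta(\mathcal{A},\mathbf{I})\to\mathcal{C}_\beta(\mathcal{A},\mathbf{I})$, $x\mapsto[x]_\beta$. This is inclusion of the ``pure'' direct-power elements into the cumulative power, followed by the quotient. For the strongly cumulative hierarchy, $\Pi_\beta$ is no longer literally contained in $\mathfrak{C}_\beta$ from $\omega$ on, so there the paper replaces this by a recursively defined $e_\beta$. It leaves injectivity, preservation of structure and surjectivity as ``straightforward''.

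\textbf{Your route.} You construct the flattening retraction $\pi_\beta:\mathfrak{C}_\beta\to\Pi_\beta$ and pass to the quotient by its kernel. The two isomorphisms are mutually inverse, because $\pi_\beta$ is a left inverse of the embedding of $\Pi_\beta$ into $\mathfrak{C}_\beta$. Your approach buys three things:
\begin{enumerate}
\item[(i)] It proves that $\equiv_\beta$ is a congruence for $\mathrm{F}_\beta$. Definition \ref{def quotient 1}(a) presupposes this, but the paper never checks it. It also proves that $\mathrm{R}_\beta$ is $\equiv_\beta$-invariant, so the existential clause (c) reduces to $\mathrm{R}_\beta$ itself.
\item[(ii)] Injectivity and surjectivity of the induced map both follow from the single identity $a\equiv_\beta b\iff\pi_\beta(a)=\pi_\beta(b)$. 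This identity is exactly the content hidden in the paper's claim that $x\mapsto[x]_\beta$ is a surjective embedding: each class contains exactly one pure element, namely $\pi_\beta(a)$.
\item[(iii)] The single compatibility lemma $\pi_\gamma\circ e^\alpha_\gamma=e^\alpha_\gamma\circ\pi_\alpha$ treats the limit stages of both hierarchies uniformly. This removes the paper's case split at $\omega$.
\end{enumerate}
The paper's direction, for its part, keeps in view that the direct power sits inside the cumulative power, which is the picture used in the surrounding discussion.

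A minor point: surjectivity of $\pi_\beta$ does not actually need choice. A section can be defined by recursion: the identity on pure elements, pointwise at successors, and the representative of least level at limits. This is essentially the paper's $e_\beta$.
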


\begin{proof}
In the case of the non-strongly cumulative hierarchy, we may just take the surjective embedding $e_\beta: \Pi_\beta(\mathcal{A}, \mathbf{I}) \to \mathcal{C}_\beta(\mathcal{A}, \mathbf{I}); x \mapsto [x]_\beta$. In the case of the strongly cumulative hierarchy, notice the previously defined $e$ is also an embedding for $\beta < \omega$. In the case of $\omega$, we may see $e_\omega: \Pi_\omega(\mathcal{A}, \mathbf{I}) \to \mathcal{C}_\omega(\mathcal{A}, \mathbf{I}); \llbracket x \rrbracket_\omega \mapsto [x]_\omega$ is a surjective embedding. For successor levels $\beta^+ \geq \omega$, we recursively define $e_{\beta^+} : \Pi_{\beta^+}(\mathcal{A}, \mathbf{I}) \to \mathcal{C}_{\beta^+}(\mathcal{A}, \mathbf{I}); x \mapsto [e_\beta(x)]_{\beta^+}$, and for a limit $\lambda$, $e_\lambda : \Pi_\lambda(\mathcal{A}, \mathbf{I}) \to \mathcal{C}_\lambda(\mathcal{A}, \mathbf{I}); \llbracket x \rrbracket_\lambda \mapsto [e_\gamma(x)]$, where $\gamma$ is the rank of $x$, that is, the level of the hierarchy of direct powers at which $x$ is generated. It is straightforward to check, by induction on $\beta \geq \omega$, that $e_\beta$ so defined is a surjective embedding.
\end{proof}

Therefore, just as ultrapowers may be seen as direct powers quotiented by a given equivalence relation which disregards the disagreement of values -- identifying elements which disagree in an ultrafilter-small set of indexes --, direct powers may be seen as cumulative powers quotiented by an equivalence relation which disregards real identity -- identifying elements which agree up to hereditary identity. Thus, cumulative powers are, arguably, a generalisation of direct powers.

\section{Ultrapowers and cumulative powers}

As we have previously mentioned, to shape the hierarchy into the same first-order structure of its generating structure, it suffices to adapt the ultrapower construction to the present case. That would entail taking a quotient of each level $\mathfrak{C}_{\beta^+}(\mathcal{A}, \mathbf{I})$ by an ultrafilter over $I_\beta$. The problem, of course, is that there are many elements in $\mathfrak{C}_{\beta^+}(A, \mathbf{I})$ which are not functions. More so, for a successor $\beta^+$, there are pairs of elements that are functions, but whose domains differ. So simply taking a quotient of $\mathfrak{C}_{\beta^+}(\mathcal{A}, \mathbf{I})$ by an ultrafilter over $I_\beta$ is not a valid construction. To remedy that, we shall define equivalence classes modulo an ultrafilter by using equality modulo the $\vartheta$ function, as in the construction of $\mathcal{C}_\beta(\mathcal{A}, \mathbf{I})$.

\begin{definition}\label{def sim relation}
Let $a, b \in \mathfrak{C}_\beta(\mathcal{A}, \mathbf{I})$. Consider a family $\{\mathcal{U}_\alpha\}_{\alpha \in \mathbf{On}}$ of ultrafilters, where each $\mathcal{U}_\beta$ is an ultrafilter over $I_\beta$. We inductively define a hierarchy of relations such that:

\begin{itemize}
\item[(a)] $a \sim_0 b$ iff $a = b$,
\end{itemize}

\noindent
For $\beta \geq 0$ and $\delta = \mathrm{max}\{\mathrm{l}(a), \mathrm{l}(b)\}$ (or $\delta = \mathrm{max}\{\rho(a), \rho(b)\}$),

\begin{itemize}
\item[(b)] $a \sim_{\beta^+} b$ iff $ \{j \in I_\beta \mid \vartheta_j^\delta(a) \sim_\beta \vartheta^\delta_j(b)\} \in \mathcal{U}_\beta $
\end{itemize}

\noindent
For a limit $\lambda$, just let $a \sim_\lambda b$ iff $a \sim_\alpha b$ for some $\alpha < \lambda$.\footnote{Notice, in the case of the strongly cumulative power hierarchy, $\sim_\lambda\ = \bigcup_{\alpha < \lambda} \sim_\alpha$.}
\end{definition}

\medskip

Thus, intuitively, $a \sim_{\beta^+} b$ when either $a \sim_\alpha b$ for $\alpha < \beta^+$, or when $a$ and $b$ differ modulo $\sim_\beta$ in an amount of points that are ultrafilter-small. Now, we may see that:

\begin{proposition}\label{corollary concidence sim}
For either cumulative power hierarchy and its associated relations $\equiv_\beta$ and $\sim_\beta$, for any $\beta \in \mathbf{On}$, $\equiv_\beta\ \subseteq\ \sim_\beta$.
\end{proposition}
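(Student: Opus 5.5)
The plan is a transfinite induction on $\beta$, proving at each stage that $a \equiv_\beta b$ implies $a \sim_\beta b$ for all $a, b \in \mathfrak{C}_\beta(\mathcal{A}, \mathbf{I})$. The argument is identical for both hierarchies once $\delta$ is read as $\mathrm{max}\{\mathrm{l}(a), \mathrm{l}(b)\}$ or as $\mathrm{max}\{\rho(a), \rho(b)\}$, so I will write it for one and note that it transfers verbatim.

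\emph{Base and limit cases.} The base case $\beta = 0$ is immediate, since both $\equiv_0$ and $\sim_0$ are identity by clause (a) of Definitions \ref{def coincidence} and \ref{def sim relation}. For a limit $\lambda$, suppose $a \equiv_\lambda b$. By clause (c) of Definition \ref{def coincidence} there is $\alpha < \lambda$ with $a \equiv_\alpha b$. The inductive hypothesis gives $a \sim_\alpha b$, and then $a \sim_\lambda b$ by the limit clause of Definition \ref{def sim relation}.

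\emph{Successor case.} Suppose $a \equiv_{\beta^+} b$ with $\delta$ the maximal level of $a, b$, so that $\delta \leq \beta^+$ and $\delta$ is a successor or $0$. By clause (b) of Definition \ref{def coincidence}, $\vartheta^\delta_j(a) \equiv_{\delta-1} \vartheta^\delta_j(b)$ for every $j \in I_{\delta-1}$. I split into two cases.

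\begin{itemize}
\item[(1)] If $\delta = \beta^+$, then $\delta - 1 = \beta$. The inductive hypothesis at $\beta$ gives $\vartheta^\delta_j(a) \sim_\beta \vartheta^\delta_j(b)$ for all $j \in I_\beta$. The set of such $j$ is therefore $I_\beta$ itself, which belongs to $\mathcal{U}_\beta$, so $a \sim_{\beta^+} b$ by clause (b) of Definition \ref{def sim relation}.
\item[(2)] If $\delta < \beta^+$, then $a, b \in \mathfrak{C}_\delta(\mathcal{A}, \mathbf{I})$. Read at level $\delta$, the same pointwise condition says $a \equiv_\delta b$ (or $a = b$ when $\delta = 0$). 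The inductive hypothesis gives $a \sim_\delta b$, and I then need a monotonicity fact: $\sim_\delta \subseteq \sim_{\beta^+}$ on elements of level at most $\delta$.
\end{itemize}

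\emph{Main obstacle.} The hard step is this monotonicity, which Definition \ref{def sim relation} leaves implicit. Clause (b) there quantifies over $j \in I_\beta$ with $\vartheta^\delta_j$, and for elements of level below $\beta^+$ the map $\vartheta^\delta_j$ returns the element itself. I would first establish a small lemma: for $\alpha \leq \gamma$ and $a, b$ of level at most $\alpha$, if $a \sim_\alpha b$ then $a \sim_\gamma b$. It is proved by induction on $\gamma$, with three steps.

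\begin{itemize}
\item[(1)] At a successor $\gamma^+$ with $\mathrm{max}$ level below $\gamma^+$, every $j \in I_\gamma$ satisfies $\vartheta_j(a) = a$ and $\vartheta_j(b) = b$. The index set of agreement is then either all of $I_\gamma$ or empty, according to whether $a \sim_\gamma b$, and the inductive hypothesis says it is all of $I_\gamma \in \mathcal{U}_\gamma$.
\item[(2)] At limits the lemma holds by definition.
\item[(3)] The parallel fact for $\equiv$ (that $a \equiv_{\beta^+} b$ with $\delta < \beta^+$ really means $a \equiv_\delta b$) is obtained by the same unwinding.
\end{itemize}

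This reading — that the paper's intended semantics makes $\sim_{\beta^+}$ extend $\sim_\beta$, as the remark after Definition \ref{def sim relation} asserts — is what I expect to require the most care. Once the lemma is in place, the successor case closes and the induction is complete.
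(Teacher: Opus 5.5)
\textbf{Gap in case (2).} Your base case, limit case and the case $\delta=\beta^+$ are fine. Case (2), however, rests on a monotonicity lemma for $\sim$ that is false under the paper's definition. Your argument for its successor step misreads $\vartheta$.

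In clause (b) of the definition of $\sim_{\gamma^+}$, the unfolding map is $\vartheta^\delta_j$, where $\delta$ is the maximal rank of the pair. By definition, $\vartheta^\delta_j(a)=a(j)$ whenever $\rho(a)=\delta$ and $j\in\mathrm{dom}(a)=I_{\delta-1}$. So for $\delta<\gamma^+$ it is not true that $\vartheta^\delta_j(a)=a$ for all $j\in I_\gamma$ once $I_\gamma\cap I_{\delta-1}\neq\varnothing$. That overlap is exactly the situation in the paper's own examples and applications, where the index sets are nested earlier levels.

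A concrete failure:
\begin{itemize}
\item Take $I_0=I_1=\mathbb{N}$ with ultrafilters $\mathcal{U}_0\neq\mathcal{U}_1$, and pick $X\in\mathcal{U}_0\setminus\mathcal{U}_1$.
\item Let $a,b\in A^{I_0}$ (with $|A|\geq 2$) agree exactly on $X$. Then $a\sim_1 b$.
\item For $a\sim_2 b$ we have $\delta=1$, and every $j\in I_1$ lies in $\mathrm{dom}(a)$. So the question is whether $\{j\in I_1\mid a(j)\sim_1 b(j)\}=X$ belongs to $\mathcal{U}_1$, and it does not.
\end{itemize}
Hence $\sim_1\not\subseteq\sim_2$. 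The remark after the definition that you rely on is not supported by the definition as written, and your lemma cannot be proved.

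\textbf{How to repair it.} You do not need monotonicity of $\sim$ at all. You only need the level-independence of $\equiv$ (your step (3)). That does hold, because clause (b) for $\equiv$ refers only to $\delta$ and not to the ambient level.

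Show directly that $a\equiv_{\beta^+}b$ implies $\vartheta^\delta_j(a)\equiv_\beta\vartheta^\delta_j(b)$ for every $j\in I_\beta$, whatever $\delta$ is:
\begin{itemize}
\item For $j\in I_\beta\cap I_{\delta-1}$, the two values are precisely those compared in the definition of $a\equiv_{\beta^+}b$. Hence they are $\equiv_{\delta-1}$-related, and therefore $\equiv_\beta$-related.
\item For $j\in I_\beta\setminus I_{\delta-1}$, which can only occur when $\delta\leq\beta$ (and covers all $j$ when $\delta=0$), the values are $a$ and $b$ themselves. Then $a\equiv_\beta b$ holds by level-independence.
\end{itemize}
The inductive hypothesis at $\beta$ then makes the set in clause (b) of $\sim_{\beta^+}$ equal to all of $I_\beta$, which lies in $\mathcal{U}_\beta$. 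This closes the successor step uniformly, without splitting on $\delta$. It is presumably the ``resemblance'' the paper's one-line proof has in mind.
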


\begin{proof}
To see so, just notice the resemblance between the inductive definition of both relations.
\end{proof}

Therefore, similarly to our previous consideration of Proposition \ref{theorem coincidence existence}, when proving results about $\sim_\beta$ we may consider different elements of the cumulative power hierarchy to be of the same level without loss of generality. More precisely, whenever we have $a \sim_{\beta^+} b$ for some arbitrary $a$ and $b$, we may let $\rho(a) = \rho(b) = \beta^+$ (respectively, $\mathrm{l}(a) = \mathrm{l}(b) = \beta^+$), which then means (for $\delta = max\{\rho(a), \rho(b)\}$, or $\delta = max\{\mathrm(a), \mathrm(b)\}$, and $j \in I_\beta$) $\vartheta^\delta_j(a) = a(j)$ and $\vartheta^\delta_j(b) = b(j)$, so that

\medskip

\begin{center}
$a \sim_{\beta^+} b$ iff $\{j \in I_\beta \mid a(j) \sim_\beta b(j)\} \in \mathcal{U}_\beta$.
\end{center}

\medskip

\begin{proposition}\label{theorem sim equivalence}
$\sim_\beta$ is an equivalence relation on $\mathfrak{C}_\beta(\mathcal{A}, \mathbf{I})$.
\end{proposition}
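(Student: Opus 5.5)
We argue by transfinite induction on $\beta$, following the same pattern as Proposition \ref{theorem equiv}, with the additional ingredient that an ultrafilter is closed under finite intersections and supersets. The induction hypothesis is that $\sim_\alpha$ is reflexive, symmetric and transitive on $\mathfrak{C}_\alpha(\mathcal{A}, \mathbf{I})$ for all $\alpha < \beta$.

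\textbf{Base and limit cases.} For $\beta = 0$, $\sim_0$ is identity, so there is nothing to prove. For a limit $\lambda$, $a \sim_\lambda b$ iff $a \sim_\alpha b$ for some $\alpha < \lambda$.
\begin{itemize}
\item Reflexivity and symmetry transfer immediately from a single $\sim_\alpha$.
\item For transitivity, suppose $a \sim_\alpha b$ and $b \sim_{\alpha'} c$. We need both relations at the common level $\max\{\alpha, \alpha'\}$.
\end{itemize}
So we first prove an auxiliary monotonicity claim: for $\alpha \leq \gamma$, $a \sim_\alpha b$ implies $a \sim_\gamma b$. This is proved simultaneously in the induction. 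Two inputs make it work:
\begin{itemize}
\item at a successor $\gamma^+ > \delta$, the map $\vartheta^\delta_j$ acts trivially (or on the same coordinates) for every $j$, so the relevant index set is all of $I_\gamma$, which lies in $\mathcal{U}_\gamma$;
\item Proposition \ref{corollary concidence sim}, together with the remark after it, lets us reduce to $a, b$ of the same level.
\end{itemize}
In the strongly cumulative case, the footnote $\sim_\lambda = \bigcup_{\alpha<\lambda}\sim_\alpha$ already presupposes this monotonicity, so establishing it is genuinely part of the work.

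\textbf{Successor case $\beta^+$.} Given $a, b, c \in \mathfrak{C}_{\beta^+}(\mathcal{A}, \mathbf{I})$, use the normalization discussed after Proposition \ref{corollary concidence sim}: replace each element by its constant lift $\overline{x}$ (Proposition \ref{theorem coincidence existence}) when its level is below $\beta^+$. Since $\vartheta^{\beta^+}_j(\overline{x}) = x$, this does not change the sets $\{j \mid \vartheta_j(\cdot) \sim_\beta \vartheta_j(\cdot)\}$. We may therefore assume all three elements have level $\beta^+$, so that
\[
a \sim_{\beta^+} b \iff \{j \in I_\beta \mid a(j) \sim_\beta b(j)\} \in \mathcal{U}_\beta .
\]
We must check that the pairwise $\delta$'s agree after the lift. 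If $\delta < \beta^+$, the definition instead uses $\vartheta^\delta$ with $\mathcal{U}_\beta$ over $I_\beta$. In that case we appeal to monotonicity, or to the observation that $\vartheta^\delta_j$ then returns the element itself, so the set is all of $I_\beta$ or empty, consistently with $\sim_\beta$. With this reduction in place, the three properties follow.
\begin{itemize}
\item \emph{Reflexivity:} $\{j \mid a(j) \sim_\beta a(j)\} = I_\beta \in \mathcal{U}_\beta$, by the induction hypothesis.
\item \emph{Symmetry:} the defining set is symmetric, by symmetry of $\sim_\beta$.
\item \emph{Transitivity:} write $X = \{j \mid a(j) \sim_\beta b(j)\}$ and $Y = \{j \mid b(j) \sim_\beta c(j)\}$. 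Transitivity of $\sim_\beta$ gives $X \cap Y \subseteq \{j \mid a(j) \sim_\beta c(j)\}$. Since $X \cap Y \in \mathcal{U}_\beta$ and ultrafilters are upward closed, the latter set is in $\mathcal{U}_\beta$.
\end{itemize}

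\textbf{Main obstacle.} The difficulty is not the ultrafilter argument but the level bookkeeping. The $\delta$ in clause (b) depends on the pair of elements, so $a \sim b$, $b \sim c$ and $a \sim c$ may be computed with different $\vartheta^\delta$'s. Making the same-level normalization rigorous, and hence proving monotonicity across levels, is where care is required. The approach is to prove, inside the same induction, that $x \sim_{\beta^+} \overline{x}$ and that replacing an argument by a $\sim$-equivalent one of higher level preserves $\sim$. Both statements follow from $\vartheta_j(\overline{x}) = x$ and reflexivity at level $\beta$.
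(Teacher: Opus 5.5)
Your route is the paper's: induction on $\beta$, reduce the successor case to three elements of level $\beta^+$, and conclude from $X\cap Y\subseteq\{j\mid a(j)\sim_\beta c(j)\}$ by upward closure of $\mathcal{U}_\beta$. The paper disposes of the reduction with ``as observed above, we may let $\rho(a)=\rho(b)=\rho(c)=\alpha^+$'' and calls the limit case straightforward. Your monotonicity and lifting claims are therefore exactly the content it leaves implicit.

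\textbf{The gap.} Your handling of the case $\delta<\beta^+$ is false for clause (b) as written. You claim that $\vartheta^\delta_j$ then returns the element itself for every $j\in I_\beta$, so the defining set is $I_\beta$ or $\varnothing$. But $\vartheta^\delta_j(a)=a(j)$ whenever $\rho(a)=\delta$ and $j\in\mathrm{dom}(a)=I_{\delta-1}$, so your claim needs $I_\beta\cap I_{\delta-1}=\varnothing$. For nested index sets such as $I_\alpha=\mathfrak{C}_\alpha(\mathcal{A},\mathbf{I})$, which the paper itself uses in Sections 2.1 and 4, both of your fallbacks (monotonicity, or ``the set is all or nothing'') break. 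Here is a counterexample:
\begin{itemize}
\item Take $I_0=A$ infinite and $I_1=\mathcal{F}_1(A,\mathbf{I})\supseteq I_0$.
\item Let $\mathcal{U}_0$ be non-principal and $\mathcal{U}_1$ principal at some $j_0\in I_0$.
\item Let $a,c\in A^{I_0}$ differ exactly at $j_0$.
\end{itemize}
Then $a\sim_1 c$. For the pair $(a,c)$ at level $2$, however, $\delta=1$ and $\vartheta^1_{j_0}(a)=a(j_0)\neq c(j_0)=\vartheta^1_{j_0}(c)$. Since $\sim_1$ is identity on $A$, this gives $a\not\sim_2 c$, so monotonicity fails. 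Now put $b=\overline{a}^2$. The defining sets for $a\sim_2 b$ and $b\sim_2 c$ are all of $I_1$, since they only ask $a\sim_1 a$ and $a\sim_1 c$. So $\sim_2$ is not even transitive. Any $\mathcal{U}_1\ni I_0$ whose trace on $I_0$ differs from $\mathcal{U}_0$ does the same. The paper's ``without loss of generality'' hides exactly this.

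\textbf{What is needed.} The statement requires either an extra hypothesis ($I_\beta\cap I_\gamma=\varnothing$ for $\gamma<\beta$) or a repaired clause (b). One repair is to use $\vartheta^{\beta^+}_j$ in place of $\vartheta^\delta_j$, which matches the paper's gloss ``either $a\sim_\alpha b$ for $\alpha<\beta^+$, or \ldots''. With that repair:
\begin{itemize}
\item $\sim_{\beta^+}$ restricted to $\mathfrak{C}_\beta(\mathcal{A},\mathbf{I})$ is $\sim_\beta$. This gives your monotonicity, and hence the limit step.
\item For every pair, the defining set is $\{j\in I_\beta\mid\vartheta^{\beta^+}_j(a)\sim_\beta\vartheta^{\beta^+}_j(b)\}$.
\end{itemize}
Your intersection argument then applies verbatim to $a,b,c$ of arbitrary levels, with no lifting at all. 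This also avoids a further problem in your normalization: $\overline{x}^{\beta^+}$ need not have rank $\beta^+$ when an index set repeats ($I_\beta=I_\gamma$ with $\rho(x)\le\gamma<\beta$).
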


\begin{proof}
By induction on $\beta$. The base and limit cases are straightforward, as is the inductive step with respect to reflexivity and symmetry. For transitivity, let $a \sim_{\alpha^+} b \sim_{\alpha^+} c$. As observed above, we may let $\rho(a) = \rho(b) = \rho(c) = \mathrm{max}\{\rho(a), \rho(b), \rho(c)\} = \alpha^+$ (analogously, $\mathrm{l}(a) = \mathrm{l}(b) = \mathrm{l}(c) = \mathrm{max}\{\mathrm{l}(a), \mathrm{l}(b), \mathrm{l}(c)\} = \alpha^+$). By Definition \ref{def sim relation}, that means

\begin{center}
$\{j \in I_\alpha \mid a(j) \sim_\alpha b(j)\}, \{j \in I_\alpha \mid b(j) \sim_\alpha c(j)\} \in \mathcal{U}_\alpha$,
\end{center}

\noindent
and therefore

\begin{center}
$\{j \in I_\alpha \mid a(j) \sim_\alpha b(j)\} \cap \{j \in I_\alpha \mid b(j) \sim_\alpha c(j)\} \in \mathcal{U}_\alpha$.
\end{center}

\noindent
But by induction hypothesis,

\begin{center}
$\{j \in I_\alpha \mid a(j) \sim_\alpha b(j)\} \cap \{j \in I_\alpha \mid b(j) \sim_\alpha c(j)\} \subseteq \{j \in I_\alpha \mid a(j) \sim_\alpha c(j)\}$,
\end{center}

\noindent
and therefore $a \sim_{\alpha^+} c$.
\end{proof}

Let $a \in \mathfrak{C}_\beta(\mathcal{A}, \mathbf{I})$. We define its equivalence class with respect to $\sim_\beta$ as usual, and denote it by $[a]_\beta$. We may omit the subscripts, writing $[a]$ and $\sim$, when the level of the respective construction is clear by context.

\begin{definition}\label{def quotient 2}
We define the structures $\mathbb{C}_\beta(\mathcal{A}, \mathbf{I}) = \langle \mathfrak{C}_\beta(\mathcal{A}, \mathbf{I}) /\! \sim_\beta, \mathrm{F}_\beta^\sim, \mathrm{R}_\beta^\sim, \mathrm{c}^\sim_\beta \rangle$ in the usual manner:

\begin{itemize}[align=parleft, labelsep=8mm,]
\item[(a)] $\mathrm{F}^\sim_\beta : (\mathfrak{C}_\beta(\mathcal{A}, \mathbf{I}) /\! \sim_\beta)^n \to (\mathfrak{C}_\beta(\mathcal{A}, \mathbf{I}) /\! \sim_\beta); \langle [x_0], ..., [x_n] \rangle \mapsto [\mathrm{F}_\beta(x_0, ..., x_n)]$;
\item[(b)] $\mathrm{c}^\sim_\beta = [\mathrm{c}]$;
\item[(c)] $\mathrm{R}^\sim_\beta [a_0]...[a_n]$ iff $\exists b_0 , ..., b_n \in \mathfrak{C}_\beta(\mathcal{A}, \mathbf{I}) (a_0 \sim_\beta b_0\ \&\ ...\ \&\ a_n \sim_\beta b_n\ \&\ \mathrm{R}_\beta b_0...b_n)$.
\end{itemize}
\end{definition}

\begin{lemma}\label{lemma relation quotient}
Let $a_0, ..., a_n, b \in \mathfrak{C}_{\beta^+}(\mathcal{A}, \mathbf{I})$, and $\mathcal{U}_\beta$ be the ultrafilter used for the definition of $\sim_{\beta^+}$. Then:

\begin{itemize}
\item[\emph{(a)}] \small $\mathrm{F}^\sim_{\beta^+}([a_0], ..., [a_n]) = [b]$ iff $\{j \in I_\beta \mid \mathrm{F}^\sim_\beta \big{(} [a_0(j)], ..., [a_n(j)] \big{)} = [b(j)]\} \in \mathcal{U}_\beta$\normalsize ;
\item[\emph{(b)}] $\mathrm{R}^\sim_{\beta^+} [a_0]...[a_n]$ iff $\{j \in 
I_\beta \mid \mathrm{R}^\sim_\beta [a_0(j)]...[a_n(j)]\} \in \mathcal{U}_\beta$.
\end{itemize} 
\end{lemma}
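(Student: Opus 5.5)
We would prove both parts by first reducing to elements of top level, then unfolding the definitions of $\mathrm{F}_{\beta^+}$, $\mathrm{R}_{\beta^+}$ and $\sim_{\beta^+}$. By Proposition \ref{theorem coincidence existence} and Proposition \ref{corollary concidence sim}, every $a \in \mathfrak{C}_{\beta^+}(\mathcal{A}, \mathbf{I})$ is $\sim_{\beta^+}$-equivalent to an element of level exactly $\beta^+$ (namely $\overline{a}$ if $a$ has lower level). Replacing each $a_i$ and $b$ by such a representative changes neither side of (a) or (b): the left sides are about classes, and on the right $\overline{a}(j) = a = \vartheta^{\delta}_j(a)$. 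So we may assume $a_0, ..., a_n, b$ all have level $\beta^+$. Then $\vartheta^{\beta^+}_j$ is evaluation at $j$, and $a \sim_{\beta^+} a'$ iff $\{j \in I_\beta \mid a(j) \sim_\beta a'(j)\} \in \mathcal{U}_\beta$.

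Before this, we would check that $\mathrm{F}^\sim$ and $\mathrm{R}^\sim$ are well defined, i.e. that $\sim_\beta$ is a congruence for $\mathrm{F}_\beta$. The proof is by induction on $\beta$. At successor steps we use Proposition \ref{corollary vartheta and functions} (1): the $j$-th coordinate of $\mathrm{F}_{\beta^+}(a_0, ..., a_n)$ is $\mathrm{F}_\beta(a_0(j), ..., a_n(j))$. If $a_i \sim_{\beta^+} a_i'$, intersecting the finitely many $\mathcal{U}_\beta$-large agreement sets and applying the inductive hypothesis gives a $\mathcal{U}_\beta$-large set on which $\mathrm{F}_\beta(a(j)) \sim_\beta \mathrm{F}_\beta(a'(j))$. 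Limit stages are immediate from $\sim_\lambda = \bigcup_{\alpha<\lambda} \sim_\alpha$.

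For (a), we have $\mathrm{F}^\sim_{\beta^+}([a_0], ..., [a_n]) = [b]$ iff $\mathrm{F}_{\beta^+}(a_0, ..., a_n) \sim_{\beta^+} b$. Both sides have level $\beta^+$ by Observation \ref{observation hierarchy} (4), so this holds iff $\{j \mid (\mathrm{F}_{\beta^+}(\vec a))(j) \sim_\beta b(j)\} \in \mathcal{U}_\beta$. By Proposition \ref{corollary vartheta and functions}, $(\mathrm{F}_{\beta^+}(\vec a))(j) = \mathrm{F}_\beta(a_0(j), ..., a_n(j))$, so the condition $\mathrm{F}_\beta(\vec a(j)) \sim_\beta b(j)$ is exactly $\mathrm{F}^\sim_\beta([a_0(j)], ..., [a_n(j)]) = [b(j)]$.

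Part (b) is the main obstacle. $\mathrm{R}^\sim$ is defined existentially over representatives, whereas $\mathrm{R}_{\beta^+}$ requires the relation at \emph{every} coordinate, while the right side only asks for $\mathcal{U}_\beta$-many coordinates.

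For the forward direction, take $b_i \sim_{\beta^+} a_i$ with $\mathrm{R}_{\beta^+} b_0 ... b_n$, and lift the $b_i$ to level $\beta^+$ as well (constant functions, which preserves $\mathrm{R}$ by the hereditary definition). Then $\mathrm{R}_\beta b_0(j)...b_n(j)$ holds for all $j$. On the $\mathcal{U}_\beta$-large intersection where $b_i(j) \sim_\beta a_i(j)$, we get $\mathrm{R}^\sim_\beta[a_0(j)]...[a_n(j)]$.

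For the converse, let $X \in \mathcal{U}_\beta$ be the set of coordinates where $\mathrm{R}^\sim_\beta[a_0(j)]...[a_n(j)]$ holds, and for each $j \in X$ choose witnesses $b_i^j \sim_\beta a_i(j)$ with $\mathrm{R}_\beta b_0^j ... b_n^j$ (this uses Choice). For $j \notin X$ we still need some tuple satisfying $\mathrm{R}_\beta$. Here we would fix one $j_0 \in X$ and set $b_i(j) = b_i^{j_0}$ for $j \notin X$. The resulting $b_i$ lie in $\mathfrak{C}_{\beta^+}$, agree with the $a_i$ modulo $\sim_\beta$ on $X$, and satisfy $\mathrm{R}_{\beta^+} b_0 ... b_n$ coordinatewise. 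Hence $\mathrm{R}^\sim_{\beta^+}[a_0]...[a_n]$. The edge case $X = \varnothing$ cannot occur, because $X$ belongs to an ultrafilter.
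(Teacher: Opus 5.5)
Your proposal is correct and follows essentially the same route as the paper. You reduce to representatives of level $\beta^+$, unfold $\sim_{\beta^+}$ together with the coordinatewise description of $\mathrm{F}_{\beta^+}$ for (a), and intersect finitely many $\mathcal{U}_\beta$-large sets for (b). Beyond that, your explicit converse for (b) (choosing witnesses on $X$ and padding off $X$ with a fixed witness tuple so that $\mathrm{R}_{\beta^+}$ holds at every coordinate) fills in a step the paper only asserts with an ``iff''. Your congruence check for $\mathrm{F}^\sim$ likewise makes explicit what the paper takes for granted.
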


\begin{proof}
(a): By definition, $\mathrm{F}^\sim_{\beta^+}([a_0], ..., [a_n]) = [b]$ iff $b \sim_{\beta^+} \mathrm{F}^\sim_{\beta^+}(a_0, ..., a_n)$, iff

\begin{center}
$\{j \in I_\beta \mid \mathrm{F}_\beta \big{(} a_0(j), ..., a_n(j) \big{)} \sim_\beta b(j)\} \in \mathcal{U}_\beta$,\footnote{Notice, in the context of our previous observations, we are under the assumption $\rho(a_i) = \rho(b_i) = \beta^+$.}
\end{center}

\noindent
but

\medskip

\begin{center}
$\{j \in I_\beta \mid \mathrm{F}_\beta \big{(} a_0(j), ..., a_n(j) \big{)} \sim_\beta b(j)\} =$\\ 
$\{j \in I_\beta \mid \big{[} \mathrm{F}_\beta \big{(} a_0(j), ..., a_n(j) \big{)} \big{]} = [b(j)]\} =$\\
$\{j \in I_\beta \mid \mathrm{F}^\sim_\beta \big{(} [a_0(j)], ..., [a_n(j)] \big{)} = [b(j)]\}$.
\end{center}

\medskip

(b): By definition, $\mathrm{R}^\sim_{\beta^+}[a_0] ... [a_n]$ iff there are $b_0, ..., b_n \in \mathfrak{C}_{\beta^+}(\mathcal{A}, \mathbf{I})$ such that $a_i \sim_{\beta^+} b_i$ and $\mathrm{R}_{\beta^+}b_0 ... b_n$. By Definitions \ref{def hierarchy 1} and \ref{def hierarchy}, that is the case iff

\begin{center}
$\{j \in I_\beta \mid \mathrm{R}_\beta b_0(j) ... b_n(j)\} = I_\beta \in \mathcal{U}_\beta$,
\end{center}

\noindent
and

\begin{center}
$\{j \in I_\beta \mid a_i(j) \sim_\beta b_i(j) \} \in \mathcal{U}_\beta$.
\end{center}

\noindent
Since $n < \omega$, that means the intersection of all of the above sets is in $\mathcal{U}_\beta$, so by definition, that is the case iff (since an ultrafilter is closed by containment)

\begin{center}
$\{j \in I_\beta \mid \mathrm{R}^\sim_\beta [a_0(j)] ... [a_n(j)] \} \in \mathcal{U}_\beta$.
\end{center}
\end{proof}

Then, it is straightforward to adapt \L o\'{s}'s theorem to the present case, so that we know:

\begin{theorem}\label{theorem los}
For any $\varphi \in \mathcal{L}^\sigma$ and $a_1, ..., a_n \in \mathfrak{C}_{\beta^+}(\mathcal{A}, \mathbf{I})$,

\begin{center}
$\mathbb{C}_{\beta^+}(\mathcal{A}, \mathbf{I}) \models \varphi \big{[} [a_1], ..., [a_n] \big{]}$ iff $\{j \in I_\beta \mid \mathbb{C}_\beta(\mathcal{A}, \mathbf{I}) \models \varphi \big{[} [a_1(j)], ..., [a_n(j)] \big{]} \} \in \mathcal{U}_\beta$.
\end{center}
\end{theorem}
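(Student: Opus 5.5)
We follow the usual proof of Łoś's theorem, with induction on the complexity of $\varphi$ in the basic language (atoms, $\neg$, $\wedge$, $\forall$ or equivalently $\exists$). Before starting we normalise the parameters. By Proposition \ref{theorem coincidence existence} and Proposition \ref{corollary concidence sim}, every $a_i$ is $\sim_{\beta^+}$-equivalent to an element of level $\beta^+$, namely $\overline{a_i}$ when $a_i$ comes from a lower level. So we may assume $\mathrm{l}(a_i) = \beta^+$ (respectively $\rho(a_i) = \beta^+$) for all $i$. Then each $a_i(j)$ is defined for $j \in I_\beta$, and $\vartheta^{\beta^+}_j(a_i) = a_i(j)$. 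We also check once that the right-hand side does not depend on the representatives. If $a \sim_{\beta^+} a'$, then $a(j) \sim_\beta a'(j)$ on a $\mathcal{U}_\beta$-large set, so $[a(j)]_\beta = [a'(j)]_\beta$ there. The sets of indices on which $\varphi$ holds therefore agree modulo $\mathcal{U}_\beta$.

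We first prove the term lemma: for every term $t$, $[t^{\mathfrak{C}_{\beta^+}}(a_1,\dots,a_n)]_{\beta^+}$ is $\sim_{\beta^+}$-equivalent to the element whose $j$-th coordinate is $t^{\mathfrak{C}_\beta}(a_1(j),\dots,a_n(j))$. This goes by induction on $t$, using Proposition \ref{corollary vartheta and functions} (1) and (3): evaluating $\mathrm{F}_{\beta^+}$ at $j$ commutes with $\vartheta_j$ once all arguments have level $\beta^+$. Constants need a separate check, since $\mathrm{c}^\sim = [\mathrm{c}]$ and $\overline{\mathrm{c}}(j) = \mathrm{c}$. In the strongly cumulative case there are no constants, and the predicates $\mathbf{c}_{\beta}$ are handled exactly as relations.

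For atomic formulas we have three cases. If $\varphi$ is $t = s$, the definition of $\sim_{\beta^+}$ in Definition \ref{def sim relation}(b), together with the term lemma, gives exactly the stated ultrafilter condition. If $\varphi$ is $\mathrm{R}t_1\dots t_k$, we apply Lemma \ref{lemma relation quotient}(b) to the values of the terms. Function atoms are covered by Lemma \ref{lemma relation quotient}(a). The connective steps are the standard ones. Negation uses that $\mathcal{U}_\beta$ is ultra: exactly one of $X$ and $I_\beta\setminus X$ belongs to it. Conjunction uses closure of $\mathcal{U}_\beta$ under finite intersections and supersets.

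The main obstacle is the quantifier step, for $\exists x\,\psi$. The forward direction is immediate: a witness $[b]$, which we take of level $\beta^+$, yields witnesses $[b(j)]$ on the large set by the induction hypothesis. For the converse, let $X \in \mathcal{U}_\beta$ be the set of indices $j$ for which $\mathbb{C}_\beta \models \exists x\,\psi[[a_1(j)],\dots]$. Using the axiom of choice, we pick for each $j \in X$ a witness $b_j \in \mathfrak{C}_\beta(\mathcal{A},\mathbf{I})$, and set $b_j$ arbitrary outside $X$. We must then verify that $b : I_\beta \to \mathfrak{C}_\beta(\mathcal{A},\mathbf{I})$, $j \mapsto b_j$, really is an element of $\mathfrak{C}_{\beta^+}$. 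In both hierarchies the new elements at level $\beta^+$ are exactly the functions $I_\beta \to \mathfrak{C}_\beta$, so this holds. (In the non-strong hierarchy at limit $\beta$ one takes representatives of direct-limit classes.) We must also verify that $b(j) = b_j$, i.e. that no $\vartheta$ collapse intervenes. This holds because $\rho(b) = \beta^+$ unless $b$ is constant; in that case we replace $b$ by an equivalent element via Proposition \ref{theorem coincidence existence}, which changes nothing modulo $\sim$. The induction hypothesis then gives $\mathbb{C}_{\beta^+} \models \psi[[a_1],\dots,[b]]$.
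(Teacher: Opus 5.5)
Your proposal is the paper's own argument, a \L o\'{s}-style induction on $\varphi$ resting on Definition \ref{def quotient 2} and Lemma \ref{lemma relation quotient}, and it correctly fills in the routine details the paper leaves out: normalising the parameters to level $\beta^+$, the term lemma, and choosing witnesses in the $\exists$ step. The one thing to drop is the exception ``unless $b$ is constant'' in the $\exists$ step: a constant function $\overline{c}$ is a level-$\beta^+$ element (your own normalisation via Proposition \ref{theorem coincidence existence} relies on $\overline{a_i}$ having level $\beta^+$), so under the paper's tacit assumption that functions on $I_\beta$ are new at level $\beta^+$ no case split or replacement is needed; the only real way $b$ could drop below level $\beta^+$ is if $I_\beta$ repeats an earlier index set and all chosen witnesses lie at that earlier level, a degenerate situation the paper also ignores.
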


\begin{proof}
Straightforward by Definition \ref{def quotient 2} and Lemma \ref{lemma relation quotient}, and by induction on the complexity of $\varphi$.
\end{proof}

We thus obtain:

\begin{corollary}\label{corollary los}
For any $\alpha \leq \beta$, $\mathbb{C}_\alpha(\mathcal{A}, \mathbf{I}) \preceq \mathbb{C}_\beta(\mathcal{A}, \mathbf{I})$, and therefore $\mathcal{A} \preceq \mathbb{C}_\beta(\mathcal{A}, \mathbf{I})$.
\end{corollary}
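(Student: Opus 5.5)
We prove the corollary by transfinite induction on $\beta$, fixing $\alpha$. Throughout, the embedding of $\mathbb{C}_\alpha(\mathcal{A}, \mathbf{I})$ into $\mathbb{C}_\beta(\mathcal{A}, \mathbf{I})$ is the map induced by the inclusions (in the strongly cumulative hierarchy) or by the embeddings $e^\alpha_\beta$ (in the non-strongly cumulative one), that is, $[a]_\alpha \mapsto [a]_\beta$, respectively $[a]_\alpha \mapsto [e^\alpha_\beta(a)]_\beta$. The first thing to check is that this map is well defined and injective. This amounts to showing that, for $a, b$ of level at most $\alpha$, $a \sim_\alpha b$ iff $a \sim_\beta b$. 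The proof goes by induction on $\beta$, using Definition \ref{def sim relation}. Take $\delta = \max\{\rho(a), \rho(b)\} \leq \alpha < \beta^+$. Replacing $a$ and $b$ by the constant functions $\overline{a}^{\beta^+}$ and $\overline{b}^{\beta^+}$ (allowed by Propositions \ref{theorem coincidence existence} and \ref{corollary concidence sim}), the set $\{j \in I_\beta \mid a \sim_\beta b\}$ is either all of $I_\beta$ or empty. So $a \sim_{\beta^+} b$ iff $a \sim_\beta b$. Limits are immediate from clause (c).

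For the successor step $\beta \to \beta^+$ it suffices to show $\mathbb{C}_\beta(\mathcal{A}, \mathbf{I}) \preceq \mathbb{C}_{\beta^+}(\mathcal{A}, \mathbf{I})$ and then compose with the inductive hypothesis $\mathbb{C}_\alpha \preceq \mathbb{C}_\beta$. Take $\varphi$ and $a_1, \ldots, a_n$ of level at most $\beta$, and use the constant representatives $\overline{a_i}$, which are $\sim_{\beta^+}$-equivalent to $a_i$. Theorem \ref{theorem los} gives that $\mathbb{C}_{\beta^+} \models \varphi[[\overline{a_1}], \ldots, [\overline{a_n}]]$ iff $\{j \in I_\beta \mid \mathbb{C}_\beta \models \varphi[[a_1], \ldots, [a_n]]\} \in \mathcal{U}_\beta$. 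This set is either $I_\beta$ or $\varnothing$, so the condition holds iff $\mathbb{C}_\beta \models \varphi[[a_1], \ldots, [a_n]]$. This is exactly the classical argument that the diagonal embedding into an ultrapower is elementary.

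For limit $\lambda$, $\mathbb{C}_\lambda(\mathcal{A}, \mathbf{I})$ is the union (direct limit) of the chain $\{\mathbb{C}_\gamma(\mathcal{A}, \mathbf{I})\}_{\gamma < \lambda}$, which by the inductive hypothesis is an elementary chain. We then apply the Tarski--Vaught elementary chain theorem to get $\mathbb{C}_\gamma \preceq \mathbb{C}_\lambda$ for every $\gamma < \lambda$, in particular for $\gamma = \alpha$. Taking $\alpha = 0$ and noting $\mathbb{C}_0(\mathcal{A}, \mathbf{I}) = \mathcal{A}$ (as $\sim_0$ is identity) yields $\mathcal{A} \preceq \mathbb{C}_\beta(\mathcal{A}, \mathbf{I})$.

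The main obstacle is the limit step. We must verify that the quotient at level $\lambda$ genuinely coincides with the direct limit of the quotients. This needs $\sim_\lambda = \bigcup_{\gamma<\lambda} \sim_\gamma$ on the relevant elements, and that $\mathrm{F}^\sim_\lambda$ and $\mathrm{R}^\sim_\lambda$ restrict to $\mathrm{F}^\sim_\gamma$ and $\mathrm{R}^\sim_\gamma$. For the functions this follows from Observation \ref{observation hierarchy}(3) and (4). For relations, the existential clause in Definition \ref{def quotient 2}(c) requires showing that witnesses at a higher level can be pushed down. I would handle this with the same constant-function argument together with Lemma \ref{lemma relation quotient}(b).
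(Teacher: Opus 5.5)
Your proposal is correct and matches the paper's intended argument. The paper presents Corollary~\ref{corollary los} as an immediate consequence of Theorem~\ref{theorem los}: the constant-function (diagonal) map $[a]_\beta \mapsto [\overline{a}]_{\beta^+}$ is elementary at each successor step. The paper leaves implicit both the elementary-chain argument at limits and the identification of $\mathbb{C}_\lambda(\mathcal{A}, \mathbf{I})$ with the direct limit of the $\mathbb{C}_\gamma(\mathcal{A}, \mathbf{I})$, and you make both explicit correctly.

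For the limit-stage relations you need not redo the constant-function argument. Witnesses for $\mathrm{R}^\sim_\lambda$ lie in some $\mathfrak{C}_{\gamma'}(\mathcal{A}, \mathbf{I})$ with $\gamma \le \gamma' < \lambda$. Your inductive hypothesis, that the natural map $\mathbb{C}_\gamma(\mathcal{A}, \mathbf{I}) \to \mathbb{C}_{\gamma'}(\mathcal{A}, \mathbf{I})$ is elementary, already reflects the relation back to level $\gamma$.
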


Therefore, taking a quotient of any stage of the hierarchy by the $\sim$ relation produces a structure elementarily equivalent to the one generating the hierarchy.

As we have noted, the present construction holds many similarities to ultrapowers. Indeed, just as we have shown $\mathcal{C}_\beta(\mathcal{A}, \mathbf{I})$ is isomorphic to a direct power, we shall now show $\mathbb{C}_\beta(\mathcal{A}, \mathbf{I})$ is isomorphic to an ultrapower. For that, we start with some definitions.

\begin{definition}[Ultrapower hierarchy]\label{def ultrapower hier}
Let $\mathcal{A}$ be a $\sigma$-structure, $\mathbf{I} = \{I_\alpha\}_{\alpha \in \mathbf{On}}$ a family of index sets, and $\{\mathcal{U}_\alpha\}_{\alpha \in \mathbf{On}}$ a family of ultrafilters, where each $\mathcal{U}_\alpha$ is an ultrafilter over $I_\alpha$. The \emph{ultrapower hierarchy} generated by those parameters is defined as (for each $\alpha^+, \lambda \in \mathbf{On}$):

\begin{itemize}[align=parleft, labelsep=8mm,]
\item[•] $\Upsilon_0(\mathcal{A}, \mathbf{I}) = \mathcal{A}$;
\item[•] $\Upsilon_{\alpha^+}(\mathcal{A}, \mathbf{I}) = \Upsilon_\alpha(\mathcal{A}, \mathbf{I})^{I_\alpha} / \mathcal{U}_\alpha$;
\item[•] $\Upsilon_\lambda(\mathcal{A}, \mathbf{I}) = \bigcup_{\alpha < \lambda} \Upsilon_\alpha(\mathcal{A}, \mathbf{I}) /\! \approx_\lambda$, for a limit $\lambda$, is the direct limit of the system $\langle \{\Upsilon_\alpha(\mathcal{A}, \mathbf{I})\}_{\alpha < \lambda}, e^\alpha_\beta \rangle$, with the embeddings inductively defined by

\medskip

\begin{itemize}
\item $e^\alpha_\alpha$ is the identity function,
\item $e^\alpha_{\beta^+} : \Upsilon_\alpha(\mathcal{A}, \mathbf{I}) \to \Upsilon_{\beta^+}(\mathcal{A}, \mathbf{I}); x \mapsto \llbracket \overline{e^\alpha_\beta(x)} \rrbracket_{\mathcal{U}_\beta}$,
\item $e^\alpha_\lambda : \Upsilon_\alpha(\mathcal{A}, \mathbf{I}) \to \Upsilon_\lambda(\mathcal{A}, \mathbf{I}); x \mapsto \llbracket x \rrbracket_\lambda$, for a limit $\lambda$,
\end{itemize}

\medskip

\noindent
where $\llbracket \cdot \rrbracket_{\mathcal{U}_\beta}$ denotes the equivalence class modulo $\mathcal{U}_\beta$ for $\beta^+$, and $\llbracket \cdot \rrbracket_\lambda$, for a limit $\lambda$, the equivalence class defined by the relation $\approx_\lambda$, given by $x \approx_\lambda y$ iff there are $\alpha \leq \beta < \lambda$ such that either $e^\alpha_\beta(x) = y$ or $e^\alpha_\beta(y) = x$. When the ordinal is unambiguous, we may write simply $\llbracket \cdot \rrbracket$.
\end{itemize}

\noindent
For $a \in \bigcup_{\alpha \leq \beta} \Upsilon_\alpha(\mathcal{A}, \mathbf{I})$, let $\mathrm{l}(a) = \mathrm{min}\{\alpha \mid \exists x \in \Upsilon_\alpha(\mathcal{A}) (x \approx a)\}$. We might recall the interpretation of the non-logical symbols in an ultrapower is given by (for $\delta = \mathrm{max}\{\mathrm{l}(a_1), ..., \mathrm{l}(a_n)\} = \linebreak min\{\alpha < \lambda \mid \forall x \in B (x \cap \Upsilon_\alpha(\mathcal{A}) \neq \varnothing)\}$):

\begin{itemize}[align=parleft, labelsep=8mm,]
\item[•] $\mathrm{F}_i^{\Upsilon_{\beta^+}(\mathcal{A}, \mathbf{I})}$ maps $\langle \llbracket a_0 \rrbracket, ..., \llbracket a_n \rrbracket \rangle \mapsto \llbracket f \rrbracket$, where $f(x) = \mathrm{F}_i^{\Upsilon_\beta(\mathcal{A}, \mathbf{I})} \big{(} a_0(x), ..., a_n(x) \big{)}$ for any $x \in \Upsilon_\beta(\mathcal{A}, \mathbf{I})$, and for a limit $\lambda$, $\mathrm{F}_i^{\Upsilon_\lambda(\mathcal{A}, \mathbf{I})}$ maps $\langle \llbracket a_0 \rrbracket, ..., \llbracket a_n \rrbracket \rangle \mapsto \big{\llbracket} \mathrm{F}_i^{\Upsilon_\delta(\mathcal{A}, \mathbf{I})} \big{(} e^{\rho(a_0)}_\delta(a_0), ..., e^{\rho(a_n)}_\delta(a_n) \big{)} \big{\rrbracket}$;
\item[•] $\mathrm{R}_j^{\Upsilon_{\beta^+}(\mathcal{A}, \mathbf{I})} \llbracket a_0 \rrbracket ... \llbracket a_n \rrbracket$ iff $\{x \in \Upsilon_\beta(\mathcal{A}, \mathbf{I}) \mid \mathrm{R}_j^{\Upsilon_\beta(\mathcal{A}, \mathbf{I})} a_0(x) ... a_n(x)\} \in \mathcal{U}_\beta$, and for a limit $\lambda$, \linebreak $\mathrm{R}_j^{\Upsilon_\lambda(\mathcal{A}, \mathbf{I})} \llbracket a_0 \rrbracket ... \llbracket a_n \rrbracket$ iff $\mathrm{R}_j^{\Upsilon_\delta(\mathcal{A}, \mathbf{I})} e^{\rho(a_0)}_\delta(a_0) ... e^{\rho(a_n)}_\delta(a_n)$;
\item[•] $\mathrm{c}_k^{\Upsilon_{\beta^+}(\mathcal{A}, \mathbf{I})} = \llbracket \overline{\mathrm{c}_k^{\Upsilon_\beta(\mathcal{A}, \mathbf{I})}} \rrbracket$, and for a limit $\lambda$, $\mathrm{c}_k^{\Upsilon_\lambda(\mathcal{A}, \mathbf{I})} = \llbracket \mathrm{c}^\mathcal{A} \rrbracket$.
\end{itemize}
\end{definition}

By replacing above $\beta$ for $\mathbf{On}$, we define the whole ultrapower hierarchy and the class $\Upsilon_\mathbf{On}(\mathcal{A}, \mathbf{I})$.\footnote{Notice $\Upsilon_\mathbf{On}(\mathcal{A}, \mathbf{I})$ will not be a proper class only if there is a cardinal equal to the number of ultrafilters $\mathcal{U}_\gamma$ in $\{\mathcal{U}_\alpha\}_{\alpha \in \mathbf{On}}$ such that $\mathcal{U}_\gamma$ is non-principal.} For any $\llbracket a \rrbracket \in \Upsilon_\mathbf{On}(\mathcal{A}, \mathbf{I})$, we may say $\llbracket a \rrbracket \in \Upsilon_\beta(\mathcal{A}, \mathbf{I})$ when $\llbracket a \rrbracket \cap \Upsilon_\beta(\mathcal{A}, \mathbf{I}) \neq \varnothing$. Similarly, for $X \subseteq \Upsilon_\mathbf{On}(\mathcal{A}, \mathbf{I})$, we may say $X \subseteq \Upsilon_\beta(\mathcal{A}, \mathbf{I})$ when for each $\llbracket a \rrbracket \in X$, $\llbracket a \rrbracket \cap \Upsilon_\beta(\mathcal{A}, \mathbf{I}) \neq \varnothing$.

We might note this is a stepwise, and sometimes direct, construction of iterated ultrapowers, and in fact when considering the finite stages of the construction, coincides with finitely iterated ultrapowers.\footnote{Developed for finite iterations by \cite{frayne}, and for infinite iterations by \cite{gaifman} (\cite{changkeisler}, Chapter 6).} Notice, however, that despite the use of proper classes for the definition of the ultrapower hierarchy as a whole, the present construction is somewhat less reliant on proper classes. Whereas in the analogous iterated ultrapower construction of the proper class sized $\mathcal{A}^{I_\mathbf{On}} / \mathcal{U}_\mathbf{On}$, where $I_\mathbf{On} = \bigtimes_{\alpha < \mathbf{On}} I_\alpha$ and $\mathcal{U}_\mathbf{On} = \bigtimes_{\alpha < \mathbf{On}} \mathcal{U}_\alpha$, any statement about its elements involves the consideration of its proper equivalence classes, any statement about a set sized $X \subseteq \Upsilon_\mathbf{On}(\mathcal{A}, \mathbf{I})$ is equivalent to a statement about $\Upsilon_\delta(\mathcal{A}, \mathbf{I})$, where $\delta = \mathrm{min}\{\alpha \in \mathbf{On} \mid \forall x \in X (x \cap \Upsilon_\alpha(\mathcal{A}, \mathbf{I}) \neq \varnothing)\}$ -- that is, the first stage at which each element of $X$ has a representative.

\begin{lemma}\label{lemma equivalent constructions}
Let $\lambda$ be a limit ordinal. If for all $\beta < \lambda$, $\mathbb{C}_\beta(\mathcal{A}, \mathbf{I}) \cong \Upsilon_\beta(\mathcal{A}, \mathbf{J})$, then $\mathbb{C}_\lambda(\mathcal{A}, \mathbf{I}) \cong \Upsilon_\lambda(\mathcal{A}, \mathbf{J})$.
\end{lemma}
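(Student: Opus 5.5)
We glue the given isomorphisms $f_\beta : \mathbb{C}_\beta(\mathcal{A}, \mathbf{I}) \to \Upsilon_\beta(\mathcal{A}, \mathbf{J})$, $\beta < \lambda$, into one map on the direct limits. The limit-stage structures are built the same way on both sides: in the non-strongly cumulative case $\mathfrak{C}_\lambda(\mathcal{A}, \mathbf{I})$ is the direct limit of the $\mathfrak{C}_\beta$ under the maps $e^\alpha_\beta$, and in the strongly cumulative case it is the union. Likewise $\sim_\lambda = \bigcup_{\alpha<\lambda} \sim_\alpha$ by Definition \ref{def sim relation}, and $\Upsilon_\lambda(\mathcal{A}, \mathbf{J})$ is the direct limit of the $\Upsilon_\beta$ under its own $e^\alpha_\beta$. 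So $\mathbb{C}_\lambda(\mathcal{A}, \mathbf{I})$ is, up to isomorphism, the direct limit of the system $\langle \mathbb{C}_\beta(\mathcal{A}, \mathbf{I}), \iota^\alpha_\beta \rangle_{\alpha \le \beta < \lambda}$, where $\iota^\alpha_\beta([x]_\alpha) = [e^\alpha_\beta(x)]_\beta$ (or $[x]_\beta$ in the strongly cumulative case). The first step is to check that the $\iota^\alpha_\beta$ are well defined embeddings. Well-definedness amounts to $\sim_\alpha \subseteq \sim_\beta$ on $\mathfrak{C}_\alpha$ and $x \sim_{\beta^+} \overline{x}$, both immediate by induction from Definition \ref{def sim relation}. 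That they are elementary embeddings is Corollary \ref{corollary los}.

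Second, I would not use arbitrary isomorphisms $f_\beta$ but replace the hypothesis by a coherent family: isomorphisms with $f_\beta \circ \iota^\alpha_\beta = e^\alpha_\beta \circ f_\alpha$ for all $\alpha \le \beta < \lambda$. Coherence is what makes the limit map well defined, since a direct limit of isomorphic systems need not be isomorphic unless the isomorphisms commute with the connecting maps. So the actual argument is a transfinite induction on $\beta \le \lambda$ building canonical $f_\beta$. Set $f_0 = \mathrm{id}_A$. At a successor stage, take $f_{\beta^+}([a]) = \llbracket j \mapsto f_\beta([\vartheta^{\beta^+}_j(a)]) \rrbracket_{\mathcal{U}_\beta}$. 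This is well defined and injective by the displayed characterisation $a \sim_{\beta^+} b$ iff $\{j \mid a(j) \sim_\beta b(j)\} \in \mathcal{U}_\beta$, together with Proposition \ref{theorem coincidence existence} to replace $a$ by an element of top level. It is surjective because every $g : I_\beta \to \Upsilon_\beta$ lifts pointwise through $f_\beta^{-1}$ and choice of representatives. It preserves the structure by Lemma \ref{lemma relation quotient}. Coherence is checked on constant functions: $f_{\beta^+}([\overline{x}]) = \llbracket \overline{f_\beta([x])} \rrbracket$. This also presupposes that $\mathbf{J}$ is $\mathbf{I}$ with the same ultrafilters, or equivalently that the given isomorphisms are of this canonical form; I would read the lemma that way.

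Third, at the limit stage, define $f_\lambda([x]_\lambda) = \llbracket f_\alpha([x]_\alpha) \rrbracket_\lambda$ for any $\alpha < \lambda$ with $x \in \mathfrak{C}_\alpha$, that is, $\alpha \ge \mathrm{l}(x)$. Coherence makes this independent of $\alpha$. It is injective because $x \sim_\lambda y$ iff $x \sim_\alpha y$ for some $\alpha$, and two elements of $\Upsilon_\lambda$ are $\approx_\lambda$-identified iff their images agree at some stage. It is surjective because every element of $\Upsilon_\lambda$ has a representative at some $\alpha < \lambda$ and $f_\alpha$ is onto. For the operations and relations, take finitely many arguments, move them all to a common stage $\delta < \lambda$ (the maximum of their levels, using the $e^{\mathrm{l}(a_i)}_\delta$ maps as in Definition \ref{def ultrapower hier}), and use that $f_\delta$ is an isomorphism. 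On the cumulative side this needs that $\mathrm{F}_\lambda$ and $\mathrm{R}_\lambda$ restricted to $\mathfrak{C}_\delta$ are computed at level $\delta$, which is Observation \ref{observation hierarchy}(2),(3). Constants go to constants since $\mathrm{c}$ is mapped to $\mathrm{c}^\mathcal{A}$ at stage $0$.

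The main obstacle is the coherence issue in the second step. The statement as written only assumes abstract isomorphisms at each $\beta < \lambda$, which by themselves do not yield an isomorphism of the limits. So the real content is to prove the coherent version simultaneously by induction, and to verify carefully that the successor-stage map commutes with the constant-function embeddings $\overline{\,\cdot\,}$, including the bookkeeping of $\vartheta^\delta_j$ for elements of lower level. Once coherence is secured, the limit step is a routine direct-limit argument.
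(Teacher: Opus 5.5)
Your argument is correct for the coherent reading you adopt. Its core is the same as the paper's: move finitely many elements to a common stage below $\lambda$ and transport them along the stage isomorphisms.

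The difference is exactly the point you single out. The paper's proof only writes down $\llbracket x\rrbracket \mapsto [\theta_\delta(e_\delta(\llbracket x\rrbracket_{\mathcal{U}_\delta}))]_\lambda$, with $\delta$ the generation stage of $x$ and $e_\delta$ the given isomorphisms, and checks nothing. Nothing forces that map to respect operations and relations between elements generated at different stages, or to be injective and surjective, unless the $e_\delta$ commute with the connecting embeddings. The paper never states this. Your version makes that hypothesis explicit and supplies it, so it proves what the paper's one-line argument presupposes.

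Your worry about the literal statement is justified. Assume CH, and take $\mathcal{A}$ the saturated dense linear order of size $\aleph_1$ and $I_n=\omega$.
\begin{itemize}
\item With nonprincipal ultrafilters, every finite stage is $\aleph_1$-saturated of size $\aleph_1$, hence $\cong\mathcal{A}$. The same holds for every stage, and the $\omega$-limit, of the hierarchy with principal ultrafilters.
\item In the nonprincipal hierarchy, you can pick nested $l_n<r_n$ at stage $n+1$ with no element of stage $n$ between them: use $\llbracket\langle c_{2k}\rangle_k\rrbracket<\llbracket\langle c_{2k+1}\rangle_k\rrbracket$ for an increasing $(c_k)$ in $(l_{n-1},r_{n-1})$.
\item Its $\omega$-limit therefore omits the countable type $\{l_n<x<r_n\}_{n<\omega}$, since every element lives at some stage $m$ and misses $(l_m,r_m)$. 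So that limit is not $\cong\mathcal{A}$.
\end{itemize}
Take the cumulative side with the nonprincipal ultrafilters and the ultrapower side with the principal ones. Theorem \ref{theorem equivalent constructions 1}, together with your coherent limit step, then gives isomorphic stages below $\omega$ but non-isomorphic $\omega$-limits.

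One adjustment: restricting to $\mathbf{J}=\mathbf{I}$ with the same ultrafilters is narrower than necessary, and it is not equivalent to coherence. The paper applies the lemma in Theorem \ref{theorem equivalent constructions 2}, where the two hierarchies have different index sets and ultrafilters. Your limit step uses only $f_\beta\circ\iota^\alpha_\beta=e^\alpha_\beta\circ f_\alpha$. So the right form of the lemma is that any coherent family of stage isomorphisms glues to an isomorphism at $\lambda$.

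The successor isomorphisms built in Theorems \ref{theorem equivalent constructions 1} and \ref{theorem equivalent constructions 2} are coherent: each sends $\llbracket\overline{y}\rrbracket$ to the class of the constant function with value $\theta(e(y))$. That form of the lemma therefore covers both applications, and your successor-stage construction belongs to those theorems rather than to the lemma itself.
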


\begin{proof}
Let $\theta_\beta : \mathbb{C}_\beta(\mathcal{A}, \mathbf{I}) \to \mathfrak{C}_\beta(A, \mathbf{I})$ be a choice function on the equivalence classes of $\mathbb{C}_\beta(\mathcal{A}, \mathbf{I})$, for each $\beta < \lambda$. In the case of either hierarchies, consider $e: \Upsilon_\lambda(\mathcal{A}, \mathbf{I}) \to \mathbb{C}_\lambda(\mathcal{A}, \mathbf{I}); \llbracket x \rrbracket_{\mathcal{U}_\lambda} \mapsto [\theta_\delta \big{(} e_\delta(\llbracket x \rrbracket_{\mathcal{U}_\delta}) \big{)}]_\lambda$, where $\delta$ is the stage of the ultrapower hierarchy at which $x$ is generated, that is, $x \in \Upsilon_{\delta-1}(\mathcal{A}, \mathbf{I})^{I_{\delta-1}}$, and $e_\delta$ is the isomorphism between $\Upsilon_\delta(\mathcal{A}, \mathbf{I})$ and $\mathbb{C}_\delta(\mathcal{A}, \mathbf{I})$. 
\end{proof}

\begin{theorem}\label{theorem equivalent constructions 1}
Let a cumulative power hierarchy be generated by $\mathcal{A}$ and the family of index sets $\mathbf{I} = \{I_\alpha\}_{\alpha \in \mathbf{On}}$. Let an ultrapower hierarchy similarly be generated by the same structure and family of index sets, and the family of ultrafilters $\{\mathcal{U}_\alpha\}_{\alpha \in \mathbf{On}}$ respectively over them. For each $\beta \in \mathbf{On}$, if for all $\beta < \gamma$, $\sim_{\beta^+}$ is defined by $\mathcal{U}_\beta$, then $\mathbb{C}_\gamma(\mathcal{A}, \mathbf{I}) \cong \Upsilon_\gamma(\mathcal{A}, \mathbf{I})$.
\end{theorem}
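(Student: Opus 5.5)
We prove the statement by transfinite induction on $\gamma$, constructing isomorphisms $h_\gamma : \Upsilon_\gamma(\mathcal{A}, \mathbf{I}) \to \mathbb{C}_\gamma(\mathcal{A}, \mathbf{I})$.

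\textbf{Base and limit cases.} For $\gamma = 0$, both structures are $\mathcal{A}$, since $\sim_0$ is identity. At a limit $\lambda$, we apply Lemma \ref{lemma equivalent constructions} directly. To be safe, we want the $h_\beta$ to be coherent with the embeddings $e^\alpha_\beta$ of the ultrapower hierarchy and with the maps $x \mapsto [\overline{x}]$ (resp.\ the inclusions) on the cumulative side. We therefore carry this commutation as part of the inductive hypothesis: $h_{\beta}(e^\alpha_\beta(x)) = [\iota^\alpha_\beta(\theta(h_\alpha(x)))]_\beta$, where $\iota$ is the canonical constant-function map. The limit map is then well defined on $\approx_\lambda$-classes.

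\textbf{Successor step: definition of the map.} Assume $h_\beta$ is an isomorphism. Define $h_{\beta^+}(\llbracket f \rrbracket_{\mathcal{U}_\beta}) = [g]_{\beta^+}$. Here $f : I_\beta \to \Upsilon_\beta$, and $g : I_\beta \to \mathfrak{C}_\beta(\mathcal{A}, \mathbf{I})$ is chosen with $g(j) \in h_\beta(f(j))$ for every $j$, using Choice. Note $g \in \mathfrak{C}_{\beta^+}$ with level $\beta^+$.

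\textbf{Successor step: properties.} The key fact is the displayed reduction after Proposition \ref{corollary concidence sim}: for elements of top level, $a \sim_{\beta^+} b$ iff $\{j \mid a(j) \sim_\beta b(j)\} \in \mathcal{U}_\beta$. We check the following in turn.
\begin{itemize}
\item \emph{Well-definedness and injectivity.} $f \equiv_{\mathcal{U}_\beta} f'$ iff $\{j \mid f(j) = f'(j)\} \in \mathcal{U}_\beta$ iff $\{j \mid g(j) \sim_\beta g'(j)\} \in \mathcal{U}_\beta$, since $h_\beta$ is a bijection. The latter holds iff $g \sim_{\beta^+} g'$. This also shows the choice of $g$ is irrelevant.
\item \emph{Surjectivity.} Every $a \in \mathfrak{C}_{\beta^+}$ is $\sim_{\beta^+}$-equivalent to one of top level $\beta^+$. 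We can take $\overline{a}$, using Proposition \ref{theorem coincidence existence} together with Proposition \ref{corollary concidence sim}. Then $j \mapsto h_\beta^{-1}([\overline{a}(j)])$ gives a preimage.
\item \emph{Preservation of $\mathrm{F}$ and $\mathrm{R}$.} This follows from Lemma \ref{lemma relation quotient}, compared with the \L o\'{s}-style clauses for $\mathrm{F}^{\Upsilon_{\beta^+}}$ and $\mathrm{R}^{\Upsilon_{\beta^+}}$ in Definition \ref{def ultrapower hier}, together with the inductive hypothesis that $h_\beta$ preserves $\mathrm{F}^\sim_\beta$ and $\mathrm{R}^\sim_\beta$. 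The two ultrafilter sets coincide pointwise.
\item \emph{Preservation of constants.} $\llbracket \overline{\mathrm{c}^{\Upsilon_\beta}} \rrbracket$ maps to $[\overline{\mathrm{c}_\beta}] = [\mathrm{c}_{\beta^+}]$.
\end{itemize}

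\textbf{The strongly cumulative case.} Here $\mathfrak{C}_\lambda$ is a plain union, so the limit step needs $h_\lambda(\llbracket x \rrbracket) = h_\alpha(x)$ to be compatible across $\alpha$. This is again the coherence clause. We also need that the sets $\mathbf{c}_{\beta}$ act as the constant in the quotient; they do, since every element of $\mathbf{c}_\beta$ is $\sim_\beta$-equivalent to $\mathrm{c}$.

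\textbf{Main obstacle.} We expect the hardest part to be this coherence bookkeeping at limits: showing that $h_{\beta^+}$ sends $\llbracket \overline{x} \rrbracket$ to the class of the cumulative element corresponding to $x$. This uses $a \sim_{\beta^+} \overline{a}$. The successor step itself is a routine transcription of \L o\'{s}.
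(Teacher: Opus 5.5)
Your proposal is correct and follows essentially the same route as the paper. Both argue by induction on $\gamma$, sending $\llbracket f\rrbracket_{\mathcal{U}_\beta}$ to the $\sim_{\beta^+}$-class of a pointwise choice of representatives of $h_\beta(f(j))$, which is the paper's $w(f)=\theta\circ e_\alpha\circ f$, and both check injectivity, surjectivity and preservation via the top-level reduction of $\sim_{\beta^+}$ and Lemma \ref{lemma relation quotient}. Your coherence clause is a genuine improvement: Lemma \ref{lemma equivalent constructions} assumes only levelwise isomorphisms, and its limit map is well defined on $\approx_\lambda$-classes only if the $h_\beta$ commute with the transition maps, which is exactly what you carry through the induction.
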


\begin{proof}
By induction on $\gamma$. The base case is trivial, and the limit case is covered by Lemma \ref{lemma equivalent constructions}. For the successor case, let $e_\alpha: \Upsilon_\alpha(\mathcal{A}, \mathbf{I}) \to \mathbb{C}_\alpha(\mathcal{A}, \mathbf{I})$ be the isomorphism. Let $\theta: \mathbb{C}_\alpha(\mathcal{A}, \mathbf{I}) \to \mathfrak{C}_\alpha(\mathcal{A}, \mathbf{I})$ be a choice function on the equivalence classes. Consider $w: \Upsilon_\alpha(\mathcal{A}, \mathbf{I})^{I_\alpha} \to \mathbb{C}_{\alpha^+}(\mathcal{A}, \mathbf{I})$ such that for each $f \in \Upsilon_\alpha(\mathcal{A}, \mathbf{I})^{I_\alpha}$, $w(f): x \mapsto \theta \big{(} e_\alpha(f(x)) \big{)}$. We now show that $e_{\alpha^+}: \Upsilon_{\alpha^+}(\mathcal{A}, \mathbf{I}) \to \mathbb{C}_{\alpha^+}(\mathcal{A}, \mathbf{I}); \llbracket x \rrbracket_{\mathcal{U}_\alpha} \mapsto [w(x)]_{\alpha^+}$ is an isomorphism. It is trivial to see that $e_{\alpha^+}$ preserves the interpretations of constants, that is, that for any constant $\mathrm{c}$, $e_{\alpha^+}(\mathrm{c}^{\Upsilon_{\alpha^+}(\mathcal{A}, \mathbf{I})}) = e_\alpha^+(\llbracket \overline{\mathrm{c}^{\Upsilon_\alpha(\mathcal{A}, \mathbf{I})}} \rrbracket_{\mathcal{U}_\alpha}) = [w(\overline{\mathrm{c}^{\Upsilon_\alpha(\mathcal{A}, \mathbf{I})}})]_{\alpha^+} = [\overline{\mathrm{c}^{\mathbb{C}_\alpha(\mathcal{A}, \mathbf{I})}}]_{\alpha^+} = \mathrm{c}^{\mathbb{C}_{\alpha^+}(\mathcal{A}, \mathbf{I})}$. Let $e_\alpha^+(\llbracket f \rrbracket_{\mathcal{U}_\alpha}) = [w(f)]_{\alpha^+} = [w(g)]_{\alpha^+} = e_{\alpha^+}(\llbracket g \rrbracket_{\mathcal{U}_\alpha})$, meaning $\{j \in I_\alpha \mid w(f)(j) \sim_\alpha w(g)(j)\} \in \mathcal{U}_\alpha$. We have $w(f)(j) \sim_\alpha w(g)(j)\}$ iff $\theta \big{(} e_\alpha(f(j)) \big{)} \sim_\alpha \theta \big{(} e_\alpha(g(j)) \big{)}$, so since $\theta$ is a fixed choice function on the equivalence classes, that is the case iff $e_\alpha(f(j)) = e_\alpha(g(j))$. Since, by assumption, $e_\alpha$ is injective, that is the case iff $f(j) = g(j)$. Therefore, $\{j \in I_\alpha \mid f(j) = g(j)\} \in \mathcal{U}_\alpha$, and thus $\llbracket f \rrbracket_{\mathcal{U}_\alpha} = \llbracket g \rrbracket_{\mathcal{U}_\alpha}$. Let now $\mathrm{R}^{\Upsilon_{\alpha^+}(\mathcal{A}, \mathbf{I})} \llbracket f_0 \rrbracket_{\mathcal{U}_\alpha} ... \llbracket f_n \rrbracket_{\mathcal{U}_\alpha}$. That is the case iff $\{j \in I_\alpha \mid \mathrm{R}^{\Upsilon_\alpha(\mathcal{A}, \mathbf{I})} f_0(j)...f_n(j)\} \in \mathcal{U}_\alpha$. Since $e_\alpha$ is an isomorphism, $\mathrm{R}^{\Upsilon_\alpha(\mathcal{A}, \mathbf{I})} f_0(j)...f_n(j)$ iff $\mathrm{R}^{\mathbb{C}_\alpha(\mathcal{A}, \mathbf{I})} e_\alpha(f_0(j)) ... e_\alpha(f_n(j))$, iff, since $\theta$ is a choice function on the equivalence classes, \linebreak $\mathrm{R}^{\mathbb{C}_\alpha(\mathcal{A}, \mathbf{I})} [\theta \big{(} e_\alpha(f_0(j)) \big{)}]_\alpha ... [\theta \big{(} e_\alpha(f_n(j)) \big{)}]_\alpha$, iff  $\mathrm{R}^{\mathbb{C}_\alpha(\mathcal{A}, \mathbf{I})} [w(f_0)(j)]_\alpha ... [w(f_n)(j)]_\alpha$. Thus, $\{j \in I_\alpha \mid \mathrm{R}^{\mathbb{C}_\alpha(\mathcal{A}, \mathbf{I})} [w(f_0)(j)]_\alpha ... [w(f_n)(j)]_\alpha\} \in \mathcal{U}_\alpha$, so by Lemma \ref{lemma relation quotient}, $\mathrm{R}^{\mathbb{C}_{\alpha^+}(\mathcal{A}, \mathbf{I})} [w(f_0)]_{\alpha^+} ... [w(f_n)]_{\alpha^+}$. By a similar argument, we may show $e_{\alpha^+}$ preserves the functions. At last, to see $e_{\alpha^+}$ is surjective, it suffices to show for any $f \in \mathfrak{C}_{\alpha^+}(\mathcal{A}, \mathbf{I})$ there is $f' \in \Upsilon_\alpha(\mathcal{A}, \mathbf{I})^{I_\alpha}$ such that $f \sim_{\alpha^+} w(f')$. To see that, consider $f'$ such that for any $j \in I_\alpha$, $f'(j) = e^{-1}_\alpha \big{(} [f(j)]_\alpha \big{)}$. Then, for any $j \in I_\alpha$ $w(f')(j) = \theta \big{(} e_\alpha(f'(j)) \big{)} =  \theta \big{(} e_\alpha \big{(} e^{-1}_\alpha \big{(} [f(j)]_\alpha \big{)} \big{)} \big{)} = \theta \big{(} [f(j)]_\alpha \big{)} \sim_\alpha f(j)$, which means $\{j \in I_\alpha \mid w(f')(j) \sim_\alpha f(j)\} \in \mathcal{U}_\alpha$, so that $w(f') \sim_{\alpha^+} f$.
\end{proof}

Therefore, just as it was the case with direct powers, an ultrapower may be seen to be a structure emerging from taking a quotient of a cumulative power by an appropriately defined equivalence relation.

\section{Real closed fields}\label{sec hyp sur}

In this section, we shall apply the hierarchical setting to show a new, and natural, construction of the surreal field $\mathbf{No}$ as the whole hierarchy of either adequate ultrapowers, or, equivalently, quotiented cumulative power -- that is, as a direct limit of those constructions.

For an ordered set $\mathcal{A}$ and $f, g \in A^A$, say $f$ \emph{dominates} $g$ if there is $j \in A$ such that for all $k > j$, $f(k) > g(k)$. Given an order $\langle A, < \rangle$ and $L, R \subseteq A$, we write $L < R$, as usual, to denote that every element of $L$ is less than every element of $R$. Given $a \in A$, for simplicity we write $a < L$ for $\{a\} < L$. Call $B \subseteq A^A$ \emph{unbounded} if there is no $h \in A^A$ such that $h$ dominates every element of $B$. Following \cite{vandouwen}, define $\mathfrak{b}_A$ to be the size of the smallest unbounded subset of $A^A$. \cite{hechler1975} has shown that $\aleph_0 < \mathfrak{b}_\omega$. We adapt here that proof to the more general case:

\begin{lemma}\label{lemma jump cardinality 1}
Let $B \subseteq A^A$ be unbounded and $\{B_\tau\}_{\tau < \mathrm{cof}(\mathcal{A})}$ be a decomposition of $B$. Then, for some $\tau$, $B_\tau$ is unbounded.
\end{lemma}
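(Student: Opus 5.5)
We argue by contraposition, adapting Hechler's diagonal argument. Suppose that every $B_\tau$ is bounded, so that for each $\tau < \mathrm{cof}(\mathcal{A})$ we may choose (by Choice) some $h_\tau \in A^A$ dominating every element of $B_\tau$. We will build a single $h \in A^A$ dominating all of $B$, contradicting the unboundedness of $B$. Note that $B = \bigcup_{\tau} B_\tau$, so each $f \in B$ lies in some $B_\tau$ and is dominated by $h_\tau$. By transitivity of domination (which holds for a linear order), it suffices to find $h$ dominating every $h_\tau$.

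To construct $h$, fix a cofinal sequence $\langle c_\tau \rangle_{\tau < \mathrm{cof}(\mathcal{A})}$ in $A$, chosen strictly increasing. For each $k \in A$, let $T(k) = \{\tau \mid c_\tau < k\}$. Then define $h(k)$ to be an element strictly above $\{h_\tau(k) \mid \tau \in T(k)\}$. Given $h$ so defined, for each fixed $\tau$ and all $k > c_\tau$ we have $\tau \in T(k)$, hence $h(k) > h_\tau(k)$. So $h$ dominates $h_\tau$ with witness $j = c_\tau$, and therefore $h$ dominates every element of $B$.

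The main obstacle is ensuring that $h(k)$ exists, i.e.\ that the set $\{h_\tau(k) \mid \tau \in T(k)\}$ is bounded in $A$. Since $T(k)$ is a proper initial segment of $\mathrm{cof}(\mathcal{A})$, it has cardinality less than $\mathrm{cof}(\mathcal{A})$. A subset of $A$ of size less than the cofinality cannot be cofinal, so it has an upper bound; we then need a strict upper bound. If $A$ has no maximum, a strict bound exists. If $A$ has a maximum, $\mathrm{cof}(\mathcal{A}) = 1$, the decomposition is trivial, and the claim holds immediately, so we treat this degenerate case separately at the start. We would also handle the case where $k$ lies below all $c_\tau$, where $T(k)$ is empty and any $h(k)$ will do.

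A secondary point is the meaning of "decomposition". We read it as a family $\{B_\tau\}$ whose union is $B$; disjointness is not needed anywhere in the argument.
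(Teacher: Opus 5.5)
Your proof is correct and follows essentially the paper's argument: assume each $B_\tau$ is bounded by some $h_\tau$, then diagonalize by choosing, at each point, a value strictly above fewer than $\mathrm{cof}(\mathcal{A})$ of the $h_\tau$. Your strictly increasing cofinal sequence $\langle c_\tau \rangle$ and the sets $T(k)$ make explicit what the paper leaves implicit (it evaluates $f_\tau(\kappa)$ and sets $f(x) = l_x$ as though the elements of $A$ were the ordinals below $\mathrm{cof}(\mathcal{A})$), and you rightly conclude that $B$, not $A^A$ as the paper writes, would be bounded.
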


\begin{proof}
Suppose the latter is false, so that for any $\kappa < \mathrm{cof}(\mathcal{A})$ there is $f_\kappa \in A^A$ such that for all $g \in B_\kappa$, $f_\kappa$ dominates $g$. Since each $\kappa < \mathrm{cof}(\mathcal{A})$, there is $l_\kappa \in A$ such that $l_\kappa > \{f_\tau(\kappa)\}_{\tau \leq \kappa}$. Consider $f: A \to A; x \mapsto l_x$. Then $f$ dominates every $f_\kappa$ for $\kappa < \mathrm{cof}(\mathcal{A})$, and therefore every element of $A^A$. That means $A^A$ is bounded.
\end{proof}

Since $A^A$ is unbounded iff $\mathcal{A}$ is unbounded, we obtain:

\begin{corollary}\label{corollary jump cardinality 1}
If $\mathcal{A}$ is unbounded, then $\mathrm{cof}(\mathcal{A}) < \mathfrak{b}_\mathcal{A}$.
\end{corollary}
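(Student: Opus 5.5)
The plan is to derive the corollary from Lemma \ref{lemma jump cardinality 1} by contradiction, after settling that $\mathfrak{b}_\mathcal{A}$ is well defined.

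First, since $\mathcal{A}$ is unbounded, $A^A$ itself is an unbounded subset of $A^A$. Indeed, if some $h$ dominated every element of $A^A$, then in particular $h$ would dominate the function $h' : x \mapsto h(x)$ composed with a strict increase. Unboundedness of $\mathcal{A}$ lets us choose, for each $x$, an element above $h(x)$. That yields some $g$ with $g(k) > h(k)$ for all $k$, which $h$ cannot dominate. So the family of unbounded subsets of $A^A$ is nonempty, and $\mathfrak{b}_\mathcal{A}$ is a well-defined cardinal.

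Second, suppose toward a contradiction that $\mathfrak{b}_\mathcal{A} \leq \mathrm{cof}(\mathcal{A})$. Fix an unbounded $B \subseteq A^A$ with $|B| = \mathfrak{b}_\mathcal{A}$, and enumerate it as $\{g_\tau\}_{\tau < \mathfrak{b}_\mathcal{A}}$. Define a decomposition $\{B_\tau\}_{\tau < \mathrm{cof}(\mathcal{A})}$ by setting $B_\tau = \{g_\tau\}$ for $\tau < \mathfrak{b}_\mathcal{A}$ and $B_\tau = \varnothing$ otherwise. By Lemma \ref{lemma jump cardinality 1}, some $B_\tau$ is unbounded.

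Third, I rule this out. Every singleton $\{g\}$ is bounded: using unboundedness of $\mathcal{A}$, pick $h(x) > g(x)$ for each $x$, so $h$ dominates $g$. The empty set is trivially bounded. This contradicts the conclusion of the previous step, hence $\mathrm{cof}(\mathcal{A}) < \mathfrak{b}_\mathcal{A}$.

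The main obstacle is that Lemma \ref{lemma jump cardinality 1} must tolerate empty pieces and an arbitrary partition indexed by $\mathrm{cof}(\mathcal{A})$. Its proof must also genuinely use a cofinal sequence of length $\mathrm{cof}(\mathcal{A})$, that is, index the diagonal construction along a cofinal subset of $A$ rather than along $A$ itself.

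If empty pieces are a concern, I would instead split $B$ into $\mathrm{cof}(\mathcal{A})$ nonempty pieces, each of size $<\mathfrak{b}_\mathcal{A}$, which is possible when $\mathfrak{b}_\mathcal{A} \leq \mathrm{cof}(\mathcal{A})$ by repeating elements. By minimality of $\mathfrak{b}_\mathcal{A}$, each such piece is bounded. This gives the same contradiction without relying on the singleton observation.
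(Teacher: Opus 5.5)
Your argument is correct and takes the same route as the paper: the corollary comes from Lemma \ref{lemma jump cardinality 1}, together with the fact that $A^A$ is unbounded when $\mathcal{A}$ is (so $\mathfrak{b}_\mathcal{A}$ is well defined), by covering an unbounded family of size $\mathfrak{b}_\mathcal{A}\leq\mathrm{cof}(\mathcal{A})$ with $\mathrm{cof}(\mathcal{A})$ bounded pieces. Your side remark is also apt: the paper's proof of that lemma indexes the diagonal function by ordinals $\kappa<\mathrm{cof}(\mathcal{A})$ as though they were points of $A$, and should instead run along a cofinal sequence of length $\mathrm{cof}(\mathcal{A})$.
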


\begin{definition}[Tails filter]
A filter $F$ over an ordered set $\langle I, \leq \rangle$ is a \emph{tails filter} if it contains every tail of the order, that is, if for every $k \in I$, $\{j \in I \mid k \leq j\} \in F$. 
\end{definition}

By the definition of $\mathfrak{b}_\mathcal{A}$, we may immediately see that:

\begin{lemma}\label{lemma jump cardinality 2}
If $\mathcal{A}$ is unbounded and $\mathcal{U}$ is a tails ultrafilter over $A$, then $\mathfrak{b}_\mathcal{A} \leq \mathrm{cof}(\mathcal{A}^A / \mathcal{U})$.
\end{lemma}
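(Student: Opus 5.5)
We need to show: $\mathcal{A}$ unbounded, $\mathcal{U}$ a tails ultrafilter over $A$, implies $\mathfrak{b}_\mathcal{A} \leq \mathrm{cof}(\mathcal{A}^A/\mathcal{U})$. The plan is to take a cofinal subset of the ultrapower of minimal size, lift it to a family of representatives in $A^A$, and show that family is unbounded in the dominance sense. Then the definition of $\mathfrak{b}_\mathcal{A}$ gives the inequality directly.

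Concretely, let $\kappa = \mathrm{cof}(\mathcal{A}^A/\mathcal{U})$, and fix a cofinal set $\{\llbracket f_\tau \rrbracket_{\mathcal{U}}\}_{\tau < \kappa}$ in $\mathcal{A}^A/\mathcal{U}$. Here each $f_\tau \in A^A$ is a chosen representative, using Choice. Let $B = \{f_\tau\}_{\tau<\kappa} \subseteq A^A$, so $|B| \leq \kappa$. It suffices to show $B$ is unbounded, since then $\mathfrak{b}_\mathcal{A} \leq |B| \leq \kappa$.

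Suppose instead some $h \in A^A$ dominates every $f_\tau$. We derive a contradiction in three steps.

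First, we bump $h$ up. Since $\mathcal{A}$ is unbounded, for each $k \in A$ pick $h'(k) > h(k)$. Then $h'(k) > h(k) > f_\tau(k)$ on a tail for every $\tau$, so $h'$ also dominates every $f_\tau$.

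Second, we pass to the ultrapower. For each $\tau$ there is $j_\tau$ with $f_\tau(k) < h(k)$ for all $k > j_\tau$. Hence
\[
\{k \in A \mid f_\tau(k) < h(k)\} \supseteq \{k \mid j_\tau < k\}.
\]
This strict tail lies in $\mathcal{U}$. To see this, pick $j' > j_\tau$ by unboundedness. The tail $\{k \mid j' \leq k\}$ is in $\mathcal{U}$ because $\mathcal{U}$ is a tails filter, and it is contained in the strict tail; filters are upward closed. By the definition of order in the ultrapower, $\llbracket f_\tau \rrbracket < \llbracket h \rrbracket < \llbracket h' \rrbracket$ for every $\tau$.

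Third, we contradict cofinality. The element $\llbracket h' \rrbracket$ lies strictly above every member of the cofinal family. But cofinality requires some $\llbracket f_\tau \rrbracket \geq \llbracket h' \rrbracket$, which is impossible. Hence $B$ is unbounded, and $\mathfrak{b}_\mathcal{A} \leq |B| \leq \kappa$.

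The main point needing care is the second step. Dominance is phrased with strict tails, while a tails filter is defined with non-strict tails. This mismatch is handled by the unboundedness of $\mathcal{A}$, as above, or alternatively by linearity. One should also note the implicit assumption that $\mathcal{A}$ is a linear order, so that the ultrapower order is linear. Cofinality then means every element is bounded above by some member of the family, and the third step applies exactly as stated.
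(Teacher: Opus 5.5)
Your proof is correct. It is the natural unpacking of the paper's remark that the lemma follows immediately from the definition of $\mathfrak{b}_\mathcal{A}$: lifting a cofinal family of $\mathcal{A}^A/\mathcal{U}$ gives an unbounded family in $A^A$, since by the tails property any dominating $h$ would make $\llbracket h \rrbracket$ exceed every member of the family. The bump from $h$ to $h'$ is harmless but unnecessary, because $\llbracket h \rrbracket$ already contradicts cofinality.
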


By the above lemma and Corollary \ref{corollary jump cardinality 1}, we have:

\begin{corollary}\label{corollary jump cardinality 2}
If $\mathcal{A}$ is unbounded and $\mathcal{U}$ is a tails ultrafilter over $A$, then $\mathrm{cof}(\mathcal{A}) < \mathrm{cof}(\mathcal{A}^A / \mathcal{U})$. Therefore, if also $|A| = \mathrm{cof}(\mathcal{A})$, then $|A| < |\mathcal{A}^A / \mathcal{U}|$.
\end{corollary}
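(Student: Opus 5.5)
The first claim follows by composing Lemma \ref{lemma jump cardinality 2} with Corollary \ref{corollary jump cardinality 1}. Since $\mathcal{A}$ is unbounded, Corollary \ref{corollary jump cardinality 1} gives $\mathrm{cof}(\mathcal{A}) < \mathfrak{b}_\mathcal{A}$. Since $\mathcal{U}$ is a tails ultrafilter over $A$, Lemma \ref{lemma jump cardinality 2} gives $\mathfrak{b}_\mathcal{A} \leq \mathrm{cof}(\mathcal{A}^A / \mathcal{U})$. Chaining the two inequalities yields $\mathrm{cof}(\mathcal{A}) < \mathrm{cof}(\mathcal{A}^A / \mathcal{U})$.

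To justify Lemma \ref{lemma jump cardinality 2} itself, I would argue as follows. Take a cofinal subset $X$ of $\mathcal{A}^A/\mathcal{U}$ and choose a representative $f_x \in A^A$ for each $x \in X$. Suppose $|X| < \mathfrak{b}_\mathcal{A}$. Then the family $\{f_x\}$ is bounded, so some $h$ dominates every $f_x$, hence dominates each $f_x$ on a tail. Tails lie in $\mathcal{U}$, so $\llbracket h\rrbracket > \llbracket f_x\rrbracket$ for every $x$. This contradicts the cofinality of $X$, because $\llbracket h \rrbracket$ is some element and a strictly larger element of $X$ would then exceed $\llbracket h \rrbracket$ and all $\llbracket f_x \rrbracket$ at once. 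One small check is needed: $\mathcal{A}^A/\mathcal{U}$ must have no maximum. This holds because it is elementarily equivalent to $\mathcal{A}$ by \L o\'s, and $\mathcal{A}$ is unbounded.

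For the second claim, use the general fact that the cofinality of a linear order is at most its cardinality, so $\mathrm{cof}(\mathcal{A}^A/\mathcal{U}) \leq |\mathcal{A}^A/\mathcal{U}|$. With the hypothesis $|A| = \mathrm{cof}(\mathcal{A})$, this gives
\[
|A| = \mathrm{cof}(\mathcal{A}) < \mathrm{cof}(\mathcal{A}^A/\mathcal{U}) \leq |\mathcal{A}^A/\mathcal{U}|.
\]

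There is no real obstacle here. The only subtlety is making sure both cited results apply, namely that $\mathcal{A}$ is unbounded, which is the given hypothesis, and that the ultrafilter contains all tails. I would simply verify both explicitly.
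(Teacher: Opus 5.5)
Your proof is correct and follows the paper's route: the paper obtains $\mathrm{cof}(\mathcal{A}) < \mathrm{cof}(\mathcal{A}^A/\mathcal{U})$ by chaining Corollary~\ref{corollary jump cardinality 1} with Lemma~\ref{lemma jump cardinality 2}, and the cardinality claim then follows because cofinality is at most cardinality. Your added justification of Lemma~\ref{lemma jump cardinality 2}, which the paper calls immediate from the definition of $\mathfrak{b}_\mathcal{A}$, is also sound; the one fine point is that the strict tail $\{k \mid k > j\}$ belongs to $\mathcal{U}$ because it contains the tail above some $j' > j$, and such a $j'$ exists since $\mathcal{A}$ is unbounded.
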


For $X \subseteq A$, let $[X]$ denote the image of the natural embedding of $X$ into the ultrapower $\mathcal{A}^I / \mathcal{U}$. Say $\mathcal{A}^I / \mathcal{U}$ \emph{dominates} $[X]$ if there is $a \in A^I$ such that $[a] > [\overline{x}]$ for all $x \in X$ -- that is, if there is an upper bound of $[X]$ in $\mathcal{A}^I / \mathcal{U}$.

\begin{lemma}\label{lemma tails ultrafilter order}
Let $\mathcal{A}$ be a totally ordered set. Then $\mathcal{A}^I / \mathcal{U}$ dominates $[A]$ iff there is a total order $\langle I, <' \rangle$ with the same cofinality as $\mathcal{A}$ over which $\mathcal{U}$ is a tails ultrafilter.
\end{lemma}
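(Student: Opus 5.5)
Write $\kappa = \mathrm{cof}(\mathcal{A})$ and fix a cofinal strictly increasing sequence $\langle d_\xi \rangle_{\xi < \kappa}$ in $A$; we may assume $A$ has no maximum, since otherwise $[A]$ has a largest element and the statement degenerates. The plan is to prove the two directions separately, using the function $a \in A^I$ witnessing domination to induce the order on $I$, and conversely using the tails order to build such a function.

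\emph{($\Leftarrow$)} Suppose $\langle I, <' \rangle$ is a total order with $\mathrm{cof}(I, <') = \kappa$ and $\mathcal{U}$ contains every tail $\{j \mid k \leq' j\}$. Fix a cofinal increasing sequence $\langle i_\xi \rangle_{\xi<\kappa}$ in $I$. Define $a \in A^I$ by setting $a(j) = d_\xi$, where $\xi$ is least with $j <' i_\xi$; such a $\xi$ exists because $I$ has no maximum, which follows from the cofinality being a limit ordinal. Given $x \in A$, pick $\eta$ with $d_\eta > x$. For every $j$ in the tail $\{j \mid i_\eta \leq' j\}$, the least $\xi$ with $j <' i_\xi$ satisfies $\xi > \eta$, so $a(j) = d_\xi > d_\eta > x$. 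This tail lies in $\mathcal{U}$, hence $\{j \mid a(j) > x\} \in \mathcal{U}$. By Łoś's theorem this gives $[a] > [\overline{x}]$ for every $x \in A$, so $\mathcal{A}^I/\mathcal{U}$ dominates $[A]$.

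\emph{($\Rightarrow$)} Suppose $[a] > [\overline{x}]$ for all $x \in A$, so that $S_x = \{j \mid a(j) > x\} \in \mathcal{U}$ for every $x$. Define the rank $r(j) = $ the least $\xi$ with $a(j) \leq d_\xi$, which lies below $\kappa$. Let $<'$ be the lexicographic order: first compare $r(j)$, then break ties by a fixed well-order of $I$. This is a total order. Each tail $\{j \mid k \leq' j\}$ contains $\{j \mid r(j) > r(k)\} \supseteq S_{d_{r(k)}}$, which lies in $\mathcal{U}$, so $\mathcal{U}$ is a tails ultrafilter.

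The remaining task in this direction is the cofinality. Every $S_{d_\eta}$ is in $\mathcal{U}$, hence nonempty, so the set of ranks actually attained is unbounded in $\kappa$. A cofinal subset of $I$ is obtained by choosing one $j$ per attained rank, and this has order type at most $\kappa$, so $\mathrm{cof}(I,<') = \mathrm{cof}(\kappa) = \kappa$. This holds because $\kappa$ is a regular cardinal, being a cofinality.

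I expect the main obstacle to be keeping the degenerate cases and the precise reading of "same cofinality" straight. The delicate point is making sure that within each rank block the tie-breaking does not spoil the cofinality. This is fine, since cofinality is determined by the unbounded sequence of blocks and not by the order inside any one block.
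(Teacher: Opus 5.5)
Your proof is correct and follows the paper's route: for ($\Leftarrow$) a non-decreasing map from $\langle I,<'\rangle$ into a cofinal sequence of $A$, and for ($\Rightarrow$) an order on $I$ pulled back along the dominating function. Your rank blocks with a well-order tie-break, together with the unboundedness of the attained ranks, make rigorous what the paper only sketches: its induced relation is merely a preorder, and its remark that $|I| = \mathrm{cof}(\mathcal{A})$ is neither needed nor true in general. One correction to your framing: if $A$ has a maximum, the equivalence does not merely degenerate but fails for principal $\mathcal{U}$ (order $I$ with the principal point on top), so the absence of a maximum is a genuine hypothesis that the lemma, like its neighbours, tacitly requires.
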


\begin{proof}
($\Leftarrow$) Just let $X \subseteq A$ be cofinal with $A$ and consider an order preserving $f : I \to X$. Then $[\overline{n}] < [f]$ for any $n \in A$. 

($\Rightarrow$) Let $f \in A^X$ with $[f] > [\overline{x}]$ for all $x \in A$. Equivalently, for all such $x$, $\{j \in A \mid x < f(j)\} \in \mathcal{U}$. Then, it is straightforward to see $I$ must be of the same cardinality as the cofinality of $\mathcal{A}$. Notice if $m, n \in A$ and $m < n$, then

\begin{center}
$\{j \in A \mid n < f(j)\} \subset \{j \in A \mid m < f(j)\}$.
\end{center}

\noindent
Then, $\big{\{} \{j \in A \mid n < f(j)\} \big{\}}_{n \in A}$ can be linearly ordered by supersets. Define

\begin{center}
$l <' k$ iff\\ 
$\big{\{} n \in A \mid k \in \{j \in A \mid n < f(j)\} \big{\}} \subset \big{\{} n \in A \mid l \in \{j \in A \mid n < f(j)\} \big{\}}$
\end{center}

\noindent
Then $\langle I, <' \rangle$ is a total order with the same cofinality as $\mathcal{A}$, and one might easily check $\mathcal{U}$ is a tails ultrafilter over it.
\end{proof}

\begin{definition}[Saturation]
A structure $\mathcal{A}$ is \emph{$\kappa$-saturated} if for any $X \subseteq A$ with $|X| < \kappa$, $\mathcal{A}$ realizes any complete type over $X$.\footnote{A type over $X \subseteq A$ is a set $\Phi$ of formulas with $n$ free variables and parameters in $X$ such that any finite subset of $\Phi$ is satisfiable in $\mathcal{A}$. A type is realized by $\mathcal{A}$ if there is an $n$-tuple in $A^n$ satisfying $\Phi$, and complete if for any $\varphi$ with parameters in $X$, either $\varphi \in \Phi$ or $\neg \varphi \in \Phi$.}
\end{definition}

Due to Hausdorff \cite{hausdorff}, we also define the following:

\begin{definition}[$\eta_\beta$-set]
For $\beta \in \mathbf{On}$, an ordered set $A$ is an \emph{$\eta_\beta$-set} if for any $L, R \subseteq A$ with $|L|, |R| < \aleph_\beta$ and $L < R$ there is $a \in A$ such that $L < a < R$. Likewise, $A$ is an \emph{$\eta_\mathbf{On}$-class} if it is a proper class that is a \emph{$\eta_\beta$-class} for every $\beta \in \mathbf{On}$.\footnote{That is, such that for any sets $X, Y \subset A$ such that $X < Y$ there is $a \in A$ such that $X < a < Y$.}
\end{definition}

\begin{proposition}[\cite{changkeisler}, p. 343]\label{proposition saturation eta}
An unbounded densely ordered set is a \emph{$\eta_\beta$-set} iff it is $\aleph_\beta$-saturated.
\end{proposition}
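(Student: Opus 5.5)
We prove both directions by comparing the two notions through dense linear order types, using only the standard facts that $\mathrm{DLO}$ without endpoints is complete, admits quantifier elimination, and is $\omega$-categorical.

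\emph{($\eta_\beta$-set $\Rightarrow$ $\aleph_\beta$-saturated).} Let $\mathcal{A}$ be an unbounded dense linear order, so $\mathcal{A}\models \mathrm{DLO}$. Fix $X \subseteq A$ with $|X| < \aleph_\beta$ and a complete type $\Phi(x_1,\dots,x_n)$ over $X$. We realize $\Phi$ one variable at a time. By quantifier elimination, a complete $1$-type over $X$ is determined by its atomic part. Either it contains some formula $x = a$ with $a\in X$, and is realized by $a$. Or it determines the cut $(L,R)$ with
\[
L=\{a\in X \mid x>a \in \Phi\}, \qquad R=\{a\in X\mid x<a\in\Phi\}.
\]
Here $L<R$ by finite satisfiability and $|L|,|R|<\aleph_\beta$, so the $\eta_\beta$ property gives some $c$ with $L<c<R$, which realizes the type.

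For $n$-types we induct on $n$. Realize the restriction of $\Phi$ to $x_1$ by some $c_1$. Substituting $c_1$ for $x_1$ yields a complete type over $X\cup\{c_1\}$, still of size $<\aleph_\beta$ when $\beta>0$. When $\beta=0$ the sets are finite, and the argument goes through because the order is dense and unbounded.

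The main obstacle is checking that this substitution really gives a consistent complete type over the enlarged parameter set. This follows because $\Phi$ is complete and $\mathcal{A}$ is a model of a complete theory, so $\exists x_2\dots x_n\,\bigwedge\Phi_0$ holds of $c_1$ for each finite $\Phi_0\subseteq\Phi$, since $c_1$ realizes the projection.

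\emph{($\aleph_\beta$-saturated $\Rightarrow$ $\eta_\beta$-set).} Suppose $L<R$ with $|L|,|R|<\aleph_\beta$, and consider
\[
\Sigma(x)=\{a<x \mid a\in L\}\cup\{x<b\mid b\in R\}.
\]
It is finitely satisfiable: a finite piece asks for a point above finitely many elements of $L$ and below finitely many of $R$. Since $L<R$, this amounts to finding a point strictly between the maximum of the finite part of $L$ and the minimum of the finite part of $R$. Density supplies such a point, and unboundedness covers the cases where one side is empty.

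Extend $\Sigma$ to a complete type over $L\cup R$, using that $\mathrm{Th}(\mathcal{A}, a)_{a\in L\cup R}$ together with $\Sigma$ is consistent. By $\aleph_\beta$-saturation this type is realized by some $c$, and then $L<c<R$. This direction is routine; the only care needed is that the definition of saturation in the paper concerns complete types, hence the completion step.
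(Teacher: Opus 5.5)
Your proof is correct. The paper gives no argument of its own and only cites Chang--Keisler, and yours is the standard proof of that cited result: quantifier elimination makes a complete $1$-type over $X$ either $x=a$ for some $a\in X$ or the cut $(L,R)$ of $X$; $n$-types are reduced to $1$-types by realizing one coordinate at a time; and in the converse direction $\Sigma(x)$ is completed by compactness.

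Two cosmetic points. First, $|X\cup\{c_1\}|<\aleph_\beta$ holds for every $\beta$, including $\beta=0$, so no case split is needed there. Second, the substitution step only uses that $\Phi$ is complete and finitely satisfiable in $\mathcal{A}$, which places $\exists x_2\dots x_n\bigwedge\Phi_0$ in $\Phi$. It does not use the completeness or $\omega$-categoricity of $\mathrm{DLO}$, and your argument never actually invokes either.
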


It is now straightforward to check that:

\begin{lemma}\label{lemma saturation 1}
If $\mathcal{A}$ is an $\eta_0$-set, then $\mathcal{A}^I / \mathcal{U}$ is an $\eta_0$-set.
\end{lemma}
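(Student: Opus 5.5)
Since $\eta_0$ means that for finite $L < R$ there is $a$ with $L < a < R$ (with $L$ or $R$ possibly empty), the property is expressible by first-order sentences in the language of orders. The plan is therefore to reduce to \L o\'{s}' theorem. For each pair $m, n < \omega$ consider the sentence
\begin{center}
$\theta_{m,n} := \forall x_1 \dots \forall x_m \forall y_1 \dots \forall y_n \big( \bigwedge_{i \leq m, k \leq n} x_i < y_k \rightarrow \exists z (\bigwedge_{i \leq m} x_i < z \wedge \bigwedge_{k \leq n} z < y_k) \big)$.
\end{center}
A linear order is an $\eta_0$-set exactly when it satisfies $\theta_{m,n}$ for all $m, n$. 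The cases $m = 0$ and $n = 0$ express unboundedness below and above, and the case $m = n = 1$ expresses density; for a linear order, the general case reduces to these by taking the maximum of $L$ and the minimum of $R$.

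First, note that $\mathcal{A}^I / \mathcal{U}$ is again a total order, because being a linear order is first-order. Second, since $\mathcal{A} \models \theta_{m,n}$ for every $m, n$, \L o\'{s}' theorem (the classical one cited in the introduction, or equivalently Theorem \ref{theorem los} / Corollary \ref{corollary los} applied to one successor step, which yields $\mathcal{A} \preceq \mathcal{A}^I / \mathcal{U}$) gives $\mathcal{A}^I / \mathcal{U} \models \theta_{m,n}$ for every $m, n$. Third, we conclude that $\mathcal{A}^I / \mathcal{U}$ is an $\eta_0$-set.

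For a more hands-on version, fix finite $L = \{[f_1], \dots, [f_m]\} < R = \{[g_1], \dots, [g_n]\}$. Then the set $J = \{ j \in I \mid f_i(j) < g_k(j)$ for all $i, k \}$ is a finite intersection of sets in $\mathcal{U}$, so $J \in \mathcal{U}$. For $j \in J$, choose $h(j)$ with $\{f_i(j)\}_i < h(j) < \{g_k(j)\}_k$ using the $\eta_0$ property of $\mathcal{A}$ (this uses choice), and define $h$ arbitrarily off $J$. Then $L < [h] < R$.

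The only point needing care is the boundary convention, namely whether empty $L$ or $R$ is allowed. Under the convention that it is, unboundedness of $\mathcal{A}$ transfers by the same argument, taking the $m = 0$ or $n = 0$ instances. There is no real obstacle beyond this.
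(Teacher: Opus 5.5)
Your argument is correct, but it takes a more direct route than the paper. The paper's one-line proof goes through Proposition \ref{proposition saturation eta}, the equivalence (for unbounded dense orders) between being an $\eta_\beta$-set and being $\aleph_\beta$-saturated, together with general properties of ultrapowers. You bypass saturation entirely. You note that the $\eta_0$ condition is axiomatized by the first-order sentences $\theta_{m,n}$, which transcribe the definition verbatim, so linearity is not even needed for this step. You then transfer these sentences with the classical \L o\'{s} theorem. Your coordinatewise construction of $h$ on $J \in \mathcal{U}$ is simply the existential step of the proof of \L o\'{s}' theorem, unwound for these sentences.

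Your version is self-contained and shows where the content lies. For $\beta = 0$ the saturation detour adds nothing: for a linear order, ``unbounded and dense'' already means ``$\eta_0$'', and what must be checked is exactly the \L o\'{s} transfer you carry out.

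The paper's framing, on the other hand, is the one that scales. For $\beta \geq 1$ the $\eta_\beta$ property quantifies over infinite $L, R$ and is not first-order. Neither form of your argument survives there, since an infinite intersection of members of $\mathcal{U}$ need not lie in $\mathcal{U}$. The saturation characterization is what lets the paper later pass from Proposition \ref{proposition good saturation} to $\eta_\beta$-sets, as in the proof of Corollary \ref{corollary universal 1}.

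A minor remark: invoking Theorem \ref{theorem los} or Corollary \ref{corollary los} as an equivalent alternative is roundabout. Those results concern the quotients $\mathbb{C}_\beta(\mathcal{A}, \mathbf{I})$ and only reach $\mathcal{A}^I / \mathcal{U}$ through the isomorphism of Theorem \ref{theorem equivalent constructions 1}. The classical theorem is the right citation.
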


\begin{proof}
By Proposition \ref{proposition saturation eta} and the properties of ultrapowers.
\end{proof}

\begin{lemma}\label{lemma saturation 2}
Let $\mathcal{A}$ be an $\eta_0$-set. If $\mathcal{U}$ is a tails ultrafilter over a total order $\langle I, <' \rangle$ with the same cofinality as $\mathcal{A}$, then for every $L, R \subseteq A$ such that $L < R$ there is $c \in \mathcal{A}^I / \mathcal{U}$ such that $[L] < c < [R]$.
\end{lemma}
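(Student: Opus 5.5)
The plan is to split according to whether the cut $(L, R)$ is already filled in $\mathcal{A}$. If some $a \in A$ satisfies $L < a < R$, take $c = [\overline{a}]$; since the natural embedding is an order embedding, $[L] < c < [R]$. This covers several cases:
\begin{itemize}
\item $L = R = \varnothing$;
\item $L$ has a maximum and $R$ has a minimum, using density of the $\eta_0$-set;
\item one of $L, R$ is empty and the other has an extremum, using unboundedness;
\item more generally, whenever $L$ has a maximum or $R$ has a minimum that is not ``glued'' to the other side.
\end{itemize}
So the real content is the case where $(L,R)$ is a genuine gap: $L$ has no maximum (or is empty), $R$ has no minimum (or is empty), and no element of $A$ lies strictly between them.

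For the gap case with $R = \varnothing$, so that $L$ is cofinal in $A$, I would invoke Lemma \ref{lemma tails ultrafilter order} directly. Since $\mathcal{U}$ is a tails ultrafilter over $\langle I, <' \rangle$ with the cofinality of $\mathcal{A}$, the ultrapower dominates $[A]$, hence $[L]$. The case $L = \varnothing$ with $R$ coinitial is the mirror image, obtained by reversing the order. I would note that reversing the order requires a tails ultrafilter for the coinitiality, which is a point to check.

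For a two-sided gap, the construction is as follows. Fix a strictly increasing sequence $\langle l_\xi \rangle$ cofinal in $L$, and a cofinal subset $\{j_\xi\}$ of $\langle I, <'\rangle$. Define $f : I \to L$ by sending each $j$ to $l_\xi$, where $\xi$ is least with $j \leq' j_\xi$; this makes $f$ monotone and cofinal in $L$. Then the two inequalities are checked separately.
\begin{itemize}
\item For each $r \in R$, we have $f(j) < r$ for every $j$, because $f$ takes values in $L$. Hence $[f] < [\overline{r}]$.
\item For each $l \in L$, cofinality of $f$ gives some $k$ with $l < f(j)$ for all $j \geq' k$. This set is a tail, so it lies in $\mathcal{U}$, and therefore $[\overline{l}] < [f]$.
\end{itemize}
This is exactly the \L o\'s-style computation used in the ($\Leftarrow$) direction of Lemma \ref{lemma tails ultrafilter order}.

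I expect the main obstacle to be the existence of a monotone cofinal $f : I \to L$. This requires $\mathrm{cof}(L)$ to match $\mathrm{cof}(\langle I,<'\rangle) = \mathrm{cof}(\mathcal{A})$. If $\mathrm{cof}(L)$ is strictly smaller and regular, no such map lets every bounded piece of $L$ be eventually exceeded along tails.

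My first attempt at that point would be to use unboundedness of $\mathcal{A}$ and Lemma \ref{lemma jump cardinality 1}-style bookkeeping to reduce to gaps of the same cofinality as $\mathcal{A}$. Failing that, I would restrict the claim to cuts whose lower side (or, symmetrically, upper side) has cofinality $\mathrm{cof}(\mathcal{A})$, which is what the later saturation argument actually needs. I would flag this cofinality-matching step as the place where the statement may require an extra hypothesis.
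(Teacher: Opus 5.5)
\textbf{The step you flag cannot be completed, because the lemma as stated is false.} Your instinct that an extra hypothesis is needed is right, and no reduction via Lemma~\ref{lemma jump cardinality 1} can rescue it.

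Here is a counterexample. Let $A$ be $\omega_1$ copies of $\mathbb{Q}$ followed by $\omega_1^*$ copies of $\mathbb{Q}$, ordered lexicographically. Let $L$ be the first part and $R$ the second.
\begin{itemize}
\item $A$ is an $\eta_0$-set with $\mathrm{cof}(\mathcal{A})=\omega$.
\item Any non-principal $\mathcal{U}$ on $\langle\omega,<\rangle$ is a tails ultrafilter over an order of that cofinality.
\item However, $\mathrm{cof}(L)=\mathrm{coi}(R)=\omega_1$.
\end{itemize}
Given $f:\omega\to A$, put $X=\{n\mid f(n)\in L\}$. If $X\in\mathcal{U}$, the countable set $f[X]$ lies below some $l\in L$, so $[f]<[\overline{l}]$. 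Otherwise $f[\omega\setminus X]$ lies above some $r\in R$, so $[\overline{r}]<[f]$. Hence no $c$ with $[L]<c<[R]$ exists.

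There is an even simpler failure. If $\mathrm{coi}(\mathcal{A})=\omega_1$ and $\mathrm{cof}(\mathcal{A})=\omega$, then already $L=\varnothing$, $R=A$ fails. This is exactly the ``point to check'' you raised about the mirror-image case.

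\textbf{The paper's proof has the same hole; it does not contain a missing idea you overlooked.}
\begin{itemize}
\item It posits an order-reversing $v:L\to R$ with coinitial image. Such a $v$ exists only when $\kappa_L=\kappa_R$, not merely when $\kappa_L\ge\kappa_R$.
\item It then posits a monotone $f:I\to\theta\big(\bigcup_{l\in L}\{(l,v(l))\}\big)$ with cofinal image. This again requires the cofinality of that set to equal $\mathrm{cof}(I)$, which is precisely the matching you isolated.
\item Moreover, $\theta$ may pick points of $(l,v(l))$ lying in $R$, so $[f]<[R]$ can fail even when cofinalities match. Your choice of values inside $L$ avoids this.
\end{itemize}

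\textbf{What your construction does prove is the corrected statement.} A suitable $c$ exists in each of these cases:
\begin{itemize}
\item some $a\in A$ satisfies $L<a<R$;
\item $L$ is non-empty without maximum and $\mathrm{cof}(L)=\mathrm{cof}(I)$;
\item $R$ is non-empty without minimum and $\mathrm{coi}(R)=\mathrm{cof}(I)$.
\end{itemize}
The last case uses a decreasing coinitial map $I\to R$. No ``reversed'' tails ultrafilter is needed; the condition is simply $\mathrm{coi}(R)=\mathrm{cof}(I)$.

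\textbf{Two smaller remarks.}
\begin{itemize}
\item Your case split omits the case where $L$ has a maximum $M$ and $R$ is coinitial in $\{x\in A\mid M<x\}$ (the paper's ``cut''). This is neither realized in $A$ nor a two-sided gap. It is covered by the $R$-side version, under the same cofinality proviso.
\item The restricted lemma is not what Theorem~\ref{theorem initial fragment} uses. There it is applied to arbitrary $L,R\subseteq\Upsilon_\gamma(\mathcal{A},\mathbf{I})$ of size $<\aleph_\lambda$, whose cofinalities need not equal $\mathrm{cof}(\Upsilon_\gamma(\mathcal{A},\mathbf{I}))$.
\end{itemize}
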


\begin{proof}
Without loss of generality, we may assume $(L, R)$ is either a gap or a cut, for otherwise by Lemma \ref{lemma saturation 1} we may take an element between the last element of $L$ and the first element of $R$. Suppose first the former. Let $\kappa_L$ and $\kappa_R$ be the cofinality and coinitiality of $L$ and $R$, respectively. Let first $\kappa_L \geq \kappa_R$. Then there is an order reversing mapping $v : L \to R$ with an image coinitial in its codomain. By the definition of $L$ and $R$, for any $l \in L$, $l < v(l)$, so by Lemma \ref{lemma saturation 1} $\big{(} l, v(l) \big{)}$ is a non-empty interval. That means we may take a choice function $\theta$ on the set of open intervals $\bigcup_{l \in L} \big{\{} \big{(} l, v(l) \big{)} \big{\}}$. Let $f : I \to \theta \big{(} \bigcup_{l \in L} \big{\{} \big{(} l, v(l) \big{)} \big{\}} \big{)}$ with an image cofinal in its codomain and such that if $a <' b$ then $f(a) \leq f(b)$. Let now $c \in L$. Then, we have

\begin{center}
$[\overline{c}] <_\mathcal{U} [f]$ iff $\{j \in I \mid c < f(j)\} \in \mathcal{U}$.
\end{center}

\noindent
Since the image of $f$ is cofinal in its codomain, there is $d \in I$ such that $f(d) \geq \theta \big{(} \big{(} c, v(c) \big{)} \big{)}$. Consider $k \in I$ such that $k >' d$. Then, by our definitions, $f(k) \geq f(d) \geq \theta \big{(} \big{(} c, v(c) \big{)} \big{)} > c$. Since $\mathcal{U}$ is a tails ultrafilter on $\langle I, <' \rangle$, by the arbitrariness of $k$ we conclude $[\overline{c}] <_\mathcal{U} [f]$. By the arbitrariness of $c$, we may see $[L] <_\mathcal{U} [f]$. By similar considerations, we may also see $[f] <_\mathcal{U} [R]$. Let now $\kappa_R \geq \kappa_L$. We proceed analogously: we take an order preserving mapping $v : R \to L$ with an image cofinal in its codomain. Similarly to the previous case, for any $r \in R$, $\big{(} v(r), r \big{)}$ is a non-empty interval, so we may take a choice function $\theta$ on $\bigcup_{r \in R} \big{\{} \big{(} v(r), r \big{)} \big{\}}$, and let $f : I \to \theta \big{(} \bigcup_{r \in R} \big{\{} \big{(} v(r), r \big{)} \big{\}} \big{)}$ with an image cofinal in its codomain such that if $a <' b$ then $f(a) \leq f(b)$. We then proceed with the same argument as the previous one, and arrive to the same conclusion. Suppose now $(L, R)$ is a cut. Then either $L$ has a last element or $R$ has a first element. Suppose the former, and let $M \in L$ be the last element. Similarly to the previous case, for any $r \in R$, by Lemma \ref{lemma saturation 1} $(M, r)$ is non-empty, so we take a choice function $\theta$ on the set $\bigcup_{r \in R} \{ (M, r) \}$ and let $f : I \to \theta \big{(} \bigcup_{r \in R} \big{\{} \big{(} M, r \big{)} \big{\}} \big{)}$ have an image cofinal in its codomain, and be such that if $a <' b$ then $f(a) \leq f(b)$. By the same argument as before, we obtain $[L] <_\mathcal{U} [f] <_\mathcal{U} [R]$. If, on the other hand, $R$ has a first element $m$, an analogous argument may be employed by taking a choice function on $\bigcup_{l \in L} \{ (l, m) \}$. 
\end{proof}

\begin{theorem}\label{theorem initial fragment}
Let $\lambda$ be a limit ordinal, $\mathcal{A}$ be an $\eta_0$-set, and its ultrapower hierarchy be generated by taking $\mathbf{I} = \{\Upsilon_\alpha(\mathcal{A}, \mathbf{I})\}_{\alpha \in \mathbf{On}}$ as the index sets. Let $\aleph_\lambda$ be strongly inaccessible.\footnote{As we recall, despite the appearance from the notation, defining the family of index set as such is non-circular.} For each $\alpha < \lambda$, suppose there are $\beta, \gamma < \lambda$ such that $|\Upsilon_\beta(\mathcal{A}, \mathbf{I})| > |\Upsilon_\alpha(\mathcal{A}, \mathbf{I})|$ and $\Upsilon_\gamma(\mathcal{A}, \mathbf{I})$ dominates $[\Upsilon_\alpha(\mathcal{A}, \mathbf{I})]$. Then, $\Upsilon_\lambda(\mathcal{A}, \mathbf{I})$ is an $\eta_\lambda$-set (equivalently, $\aleph_\lambda$-saturated).
\end{theorem}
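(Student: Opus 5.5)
To show that $\Upsilon_\lambda(\mathcal{A}, \mathbf{I})$ is an $\eta_\lambda$-set, take $L, R \subseteq \Upsilon_\lambda(\mathcal{A}, \mathbf{I})$ with $L < R$ and $|L|, |R| < \aleph_\lambda$; we must find $c$ with $L < c < R$. By Corollary \ref{corollary los} every $\Upsilon_\alpha(\mathcal{A}, \mathbf{I})$ is an elementary extension of $\mathcal{A}$, so each stage is a dense unbounded order, and so is the direct limit. Hence, by Proposition \ref{proposition saturation eta}, it suffices to prove the $\eta_\lambda$ property directly. The plan has two steps. First we localise $L \cup R$ to a single stage below $\lambda$. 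Then we use one successor ultrapower over that stage to realise the gap, via Lemma \ref{lemma saturation 2}.

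\emph{Localisation.} Each element of $L \cup R$ has a representative at some stage below $\lambda$. Strong inaccessibility of $\aleph_\lambda$ gives $\mathrm{cf}(\lambda) = \lambda = \aleph_\lambda$. Since $|L \cup R| < \aleph_\lambda$, the set of stages at which representatives first appear is bounded in $\lambda$. Let $\alpha < \lambda$ be its supremum, as in the remark that any set-sized $X \subseteq \Upsilon_\mathbf{On}(\mathcal{A}, \mathbf{I})$ lives in $\Upsilon_\delta(\mathcal{A}, \mathbf{I})$. The embeddings $e^\alpha_\beta$ are elementary, so order relations among $L \cup R$ may be computed inside $\Upsilon_\alpha(\mathcal{A}, \mathbf{I})$, and we may regard $L, R \subseteq \Upsilon_\alpha(\mathcal{A}, \mathbf{I})$.

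\emph{Realising the gap.} Now apply Lemma \ref{lemma saturation 2} with $\Upsilon_\alpha(\mathcal{A}, \mathbf{I})$ in place of $\mathcal{A}$; it is an $\eta_0$-set by Lemma \ref{lemma saturation 1} and induction. The stage $\Upsilon_{\alpha^+}(\mathcal{A}, \mathbf{I}) = \Upsilon_\alpha(\mathcal{A}, \mathbf{I})^{I_\alpha}/\mathcal{U}_\alpha$ has index set $I_\alpha = \Upsilon_\alpha(\mathcal{A}, \mathbf{I})$. For the lemma we need $\mathcal{U}_\alpha$ to be a tails ultrafilter over an order on $I_\alpha$ of the same cofinality as $\Upsilon_\alpha(\mathcal{A}, \mathbf{I})$. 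By Lemma \ref{lemma tails ultrafilter order} this is equivalent to $\Upsilon_{\alpha^+}(\mathcal{A}, \mathbf{I})$ dominating $[\Upsilon_\alpha(\mathcal{A}, \mathbf{I})]$.

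The hypothesis does not give this directly. It only gives some $\gamma < \lambda$ such that $\Upsilon_\gamma(\mathcal{A}, \mathbf{I})$ dominates $[\Upsilon_\alpha(\mathcal{A}, \mathbf{I})]$. So the first thing to try is to rerun the proof of Lemma \ref{lemma saturation 2} at stage $\gamma$ instead of $\alpha^+$. One uses a dominating element of $\Upsilon_\gamma(\mathcal{A}, \mathbf{I})$, pulled back along the elementary embeddings, to build a monotone cofinal "index" map. This map witnesses that the relevant ultrafilter behaves like a tails filter. The choice-function construction of that lemma then produces $c \in \Upsilon_\gamma(\mathcal{A}, \mathbf{I})$ with $[L] < c < [R]$, and $e^\gamma_\lambda(c)$ is the desired element.

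The size hypothesis (some $\beta$ with $|\Upsilon_\beta(\mathcal{A}, \mathbf{I})| > |\Upsilon_\alpha(\mathcal{A}, \mathbf{I})|$) enters through Corollary \ref{corollary jump cardinality 2}: cofinalities and cardinalities strictly increase cofinally below $\lambda$. This matters when $L$ or $R$ is empty, i.e. for unboundedness and coinitiality of the limit. It also guarantees that gaps of every small cofinality/coinitiality pair $(\kappa_L, \kappa_R)$ appear at some stage and can be handled there, because the index sets eventually exceed $\kappa_L, \kappa_R$.

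\textbf{Main obstacle.} The expected difficulty is the transfer step: the domination hypothesis is given for an arbitrary later stage $\gamma$, not for the immediate successor where Lemma \ref{lemma saturation 2} applies verbatim. I would handle it by strengthening Lemma \ref{lemma saturation 2} to iterated ultrapowers. This amounts to checking that a dominating element at stage $\gamma$ lets one define a monotone, cofinal-image map from the relevant index set into $\bigcup_{l \in L}(l, v(l))$ that is eventually above each $l$ modulo the iterated ultrafilter. Once that works, the remaining cases (gap versus cut, and $\kappa_L \geq \kappa_R$ versus the reverse) follow exactly as in Lemma \ref{lemma saturation 2}.
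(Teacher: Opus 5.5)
Your localisation step is the same as the paper's, but the step that actually produces $c$ is not proved. You defer it to a hypothetical strengthening of Lemma~\ref{lemma saturation 2} to iterated ultrapowers. That strengthening would need a monotone cofinal map on the index set of the iterated stage $\gamma$, with the iterated ultrafilter behaving like a tails filter, and the argument stops at ``once that works''. That extension is neither established nor evidently true. Lemma~\ref{lemma saturation 2} uses one index set carrying a total order of cofinality $\mathrm{cof}(\mathcal{A})$ over which a single ultrafilter is tails. You have not shown that $\Upsilon_\gamma(\mathcal{A},\mathbf{I})$ arises from $\Upsilon_\alpha(\mathcal{A},\mathbf{I})$ in that form. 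So the proof has a hole exactly at its main step.

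The missing idea is that you need not keep $\Upsilon_\alpha(\mathcal{A},\mathbf{I})$ as the base. Since $L,R$ embed into every later stage, you may apply Lemma~\ref{lemma saturation 2} verbatim with a later $\Upsilon_\varepsilon(\mathcal{A},\mathbf{I})$ in place of $\mathcal{A}$, provided the one-step ultrafilter $\mathcal{U}_\varepsilon$ is tails. This is the paper's route: from the domination hypothesis and Lemma~\ref{lemma tails ultrafilter order} it takes a stage with $\mathcal{U}_\gamma$ tails and gets $c\in\Upsilon_{\gamma^+}(\mathcal{A},\mathbf{I})$ directly.

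Such a stage exists, as follows. Let $\delta\le\gamma$ be least such that $\Upsilon_\delta(\mathcal{A},\mathbf{I})$ dominates $[\Upsilon_\alpha(\mathcal{A},\mathbf{I})]$.
\begin{enumerate}
\item $\delta$ is not a limit: every element of a direct limit comes from an earlier stage, and the embeddings reflect the order.
\item $\delta>\alpha$, because the stages are unbounded. Hence $\delta=\varepsilon^+$ with $\alpha\le\varepsilon$.
\item By minimality, $[\Upsilon_\alpha(\mathcal{A},\mathbf{I})]$ is cofinal in $\Upsilon_\varepsilon(\mathcal{A},\mathbf{I})$. So an element of $\Upsilon_{\varepsilon^+}(\mathcal{A},\mathbf{I})$ above $[\Upsilon_\alpha(\mathcal{A},\mathbf{I})]$ lies above all of $[\Upsilon_\varepsilon(\mathcal{A},\mathbf{I})]$.
\item Thus the single ultrapower $\Upsilon_\varepsilon(\mathcal{A},\mathbf{I})^{I_\varepsilon}/\mathcal{U}_\varepsilon$ dominates the image of its base. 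Lemma~\ref{lemma tails ultrafilter order} then makes $\mathcal{U}_\varepsilon$ a tails ultrafilter over an order of the right cofinality.
\item Lemma~\ref{lemma saturation 2} yields $c\in\Upsilon_{\varepsilon^+}(\mathcal{A},\mathbf{I})$ with $L<c<R$, and $\varepsilon^+<\lambda$.
\end{enumerate}
With this, no iterated-ultrapower version of the lemma is needed. Also, the cardinality hypothesis plays no role in filling the gap; the paper uses it only to note $\aleph_\lambda\le|\Upsilon_\lambda(\mathcal{A},\mathbf{I})|$.
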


\begin{proof}
By the first supposition, $\aleph_\lambda \leq |\Upsilon_\lambda(\mathcal{A}, \mathbf{I})|$. Let $L, R \subseteq A_\lambda$ with $|L|, |R| < \aleph_\lambda$ and $L <_\lambda R$. Since $\aleph_\lambda$ is strongly inaccessible, there is some $\beta < \lambda$ such that $L, R \subseteq \Upsilon_\beta(\mathcal{A}, \mathbf{I})$. By the second supposition and Lemma \ref{lemma tails ultrafilter order}, there is $\gamma > \beta$ such that $\mathcal{U}_\gamma$ is a tails ultrafilter over the totally ordered set $\langle \Upsilon_\gamma(\mathcal{A}, \mathbf{I}), <' \rangle$ with the same cofinality as $\Upsilon_\gamma(\mathcal{A}, \mathbf{I})$. By Lemma \ref{lemma saturation 2}, there is $c \in \Upsilon_{\gamma^+}(\mathcal{A}, \mathbf{I})$ such that $L <_{\gamma^+} c <_{\gamma^+} R$. Once again, $\lambda$ is a limit, so that $\gamma^+ < \lambda$, and therefore $L <_\lambda c <_\lambda R$.
\end{proof}

In other words, starting with an $\eta_0$-set $\mathcal{A}$, if there are enough increases in cardinality throughout the ultrapower hierarchy up to a limit $\lambda$, and if one may always saturate the order of a given level at a subsequent level that is lower than $\lambda$, \emph{and} given $\aleph_\lambda$ is regular, then $\Upsilon_\lambda(\mathcal{A}, \mathbf{I})$ is saturated in every cardinality lower than $\aleph_\lambda$. Notice, of course, if $|A| = \aleph_\lambda$, then the increases in cardinality are needless, and the theorem follows without that assumption. We therefore might see that:

\begin{theorem}\label{theorem initial fragment 2}
Let $\mathcal{A}$ be an $\eta_0$-set, and for $\beta < \lambda$, let each successor stage $\beta^+$ of an ultrapower hierarchy be constructed by taking the predecessor level $\Upsilon_\beta(\mathcal{A}, \mathbf{I})$ as the index set and a tails ultrafilter over it. If $\aleph_\lambda$ is strongly inaccessible, then, $\Upsilon_\lambda(\mathcal{A}, \mathbf{I})$ is an $\eta_\lambda$-set.
\end{theorem}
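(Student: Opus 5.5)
The natural route is to reduce Theorem \ref{theorem initial fragment 2} to Theorem \ref{theorem initial fragment} by checking its two hypotheses for the specific hierarchy in which $I_\beta = \Upsilon_\beta(\mathcal{A}, \mathbf{I})$ and $\mathcal{U}_\beta$ is a tails ultrafilter over $\Upsilon_\beta(\mathcal{A}, \mathbf{I})$. The order used for the tails filter is the induced total order of $\Upsilon_\beta(\mathcal{A}, \mathbf{I})$. First I would record, by induction on $\beta < \lambda$, that every $\Upsilon_\beta(\mathcal{A}, \mathbf{I})$ is an unbounded dense linear order, indeed an $\eta_0$-set. At successor stages this follows from Lemma \ref{lemma saturation 1}, or simply from \L o\'{s}'s theorem, since being an unbounded dense linear order is first-order. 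At limit stages a direct limit of elementary extensions preserves it; this also follows from Corollary \ref{corollary los} together with Theorem \ref{theorem equivalent constructions 1}.

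Next comes the domination hypothesis. Fix $\alpha < \lambda$ and take $\gamma = \alpha^+$. Since $\mathcal{U}_\alpha$ is a tails ultrafilter on the total order $\Upsilon_\alpha(\mathcal{A}, \mathbf{I})$, which trivially has the same cofinality as itself, Lemma \ref{lemma tails ultrafilter order} ($\Leftarrow$) shows that $\Upsilon_{\alpha^+}(\mathcal{A}, \mathbf{I}) = \Upsilon_\alpha(\mathcal{A}, \mathbf{I})^{\Upsilon_\alpha(\mathcal{A}, \mathbf{I})}/\mathcal{U}_\alpha$ dominates $[\Upsilon_\alpha(\mathcal{A}, \mathbf{I})]$. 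Concretely, the class of the identity map bounds every constant. Because $\lambda$ is a limit, $\gamma < \lambda$.

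Then the cardinality hypothesis. By Corollary \ref{corollary jump cardinality 2}, applied with $\mathcal{A}$ replaced by $\Upsilon_\alpha(\mathcal{A}, \mathbf{I})$, we get $\mathrm{cof}(\Upsilon_\alpha) < \mathrm{cof}(\Upsilon_{\alpha^+})$. Cofinality alone does not give a jump in cardinality, so I would take $\beta = \alpha^+$ and argue directly. The ultrapower of a dense order of size $\kappa$ by a tails (hence non-principal, uniform on cofinal sets) ultrafilter over an index set of size $\kappa$ has size at least $2^{\kappa}$. One way to see this is to use an independent family of functions modulo $\mathcal{U}_\alpha$. If that is too heavy, an alternative is to iterate: the cofinalities strictly increase along $\alpha, \alpha^+, \alpha^{++}, \dots$, and cofinalities are regular cardinals bounded by the size. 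After $|\Upsilon_\alpha|^+$ many steps the cofinality must therefore exceed $|\Upsilon_\alpha|$, while still remaining below $\lambda$, since $\aleph_\lambda$ is a limit cardinal and $\lambda$ is a limit ordinal. I expect this step to be the main obstacle, specifically ensuring that the index reached stays below $\lambda$. I would handle it by noting that strong inaccessibility of $\aleph_\lambda$ forces $\lambda = \aleph_\lambda$, which makes room for $\beta = \alpha + |\Upsilon_\alpha|^+ < \lambda$. This requires $|\Upsilon_\alpha| < \aleph_\lambda$, which I would prove by induction using $|\Upsilon_{\alpha^+}| \le |\Upsilon_\alpha|^{|\Upsilon_\alpha|}$ and the strong limit property.

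With both hypotheses verified, and $\aleph_\lambda$ strongly inaccessible, Theorem \ref{theorem initial fragment} yields that $\Upsilon_\lambda(\mathcal{A}, \mathbf{I})$ is an $\eta_\lambda$-set. By Proposition \ref{proposition saturation eta} it is equivalently $\aleph_\lambda$-saturated.
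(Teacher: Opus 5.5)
Your plan is essentially the paper's route: verify the two hypotheses of Theorem~\ref{theorem initial fragment} and invoke it. The paper reruns that theorem's interpolation argument via Lemma~\ref{lemma saturation 2}. Your domination step is correct: every level is unbounded, so the class of the identity in $\Upsilon_\alpha^{\Upsilon_\alpha}/\mathcal{U}_\alpha$ lies above all constants. The gap is the cardinality hypothesis. You rightly flag it as the obstacle, but you do not establish it.

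(i) A tails ultrafilter need not give an ultrapower of size $2^\kappa$. It only has to contain the final segments, so it may contain a cofinal set of size $\mu=\mathrm{cof}(\Upsilon_\alpha)$, and then the ultrapower has size at most $\kappa^{\mu}$. For example, under CH take an $\eta_0$-set of size $\aleph_2$ with countable cofinality. A tails ultrafilter containing a countable cofinal set gives an ultrapower of size $\aleph_2^{\aleph_0}=\aleph_2<2^{\aleph_2}$.

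(ii) Your fallback breaks at limit stages. Corollary~\ref{corollary jump cardinality 2} gives a jump only at successor steps. At a limit $\mu$, the images of the identity classes form a strictly increasing cofinal $\mu$-sequence, so $\mathrm{cof}(\Upsilon_\mu)=\mathrm{cf}(\mu)$; for instance $\mathrm{cof}(\Upsilon_{\alpha+\omega})=\omega$. So there is no monotone sequence of cofinalities of length $|\Upsilon_\alpha|^+$ for the pigeonhole argument.

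(iii) Your induction for $|\Upsilon_\alpha|<\aleph_\lambda$ starts from $|A|<\aleph_\lambda$, which is not assumed. When $|A|\ge\aleph_\lambda$, the per-level increase demanded by Theorem~\ref{theorem initial fragment} can actually fail. Take $|A|=2^{\aleph_\lambda}$, $\mathrm{cof}(\mathcal{A})=\omega$, and each $\mathcal{U}_\delta$ containing a cofinal set of size $\mathrm{cof}(\Upsilon_\delta)$. Then all cofinalities stay below $\aleph_\lambda$ and every level has size $2^{\aleph_\lambda}$. So the black-box reduction cannot cover that case.

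Points (i) and (ii) are fixed by the fact you already proved: each successor step adds an element above everything before it. So for $\kappa=|\Upsilon_\alpha|<\aleph_\lambda=\lambda$, the level $\Upsilon_{\alpha+\kappa^+}$ contains a strictly increasing chain of length $\kappa^+$; equivalently, its cofinality is $\mathrm{cf}(\alpha+\kappa^+)=\kappa^+$. Moreover $\alpha+\kappa^+<\lambda$.

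Point (iii) disappears once you stop treating Theorem~\ref{theorem initial fragment} as a black box. Its cardinality hypothesis serves only to get $|\Upsilon_\lambda|\ge\aleph_\lambda$ and plays no role in the interpolation, which is why the paper simply reruns the interpolation. In any case, an $\eta_\lambda$-set automatically has size at least $\aleph_\lambda$: take $R=\varnothing$ and $L$ the whole set.

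The paper gets its size bound from the cofinality jumps via $\mathrm{cof}(\Upsilon_\lambda)=\sup_{\alpha<\lambda}\mathrm{cof}(\Upsilon_\alpha)$. That equation has the same limit-stage defect as your fallback. What is true is $\mathrm{cof}(\Upsilon_\lambda)=\mathrm{cf}(\lambda)=\aleph_\lambda$, by regularity of $\aleph_\lambda$.
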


\begin{proof}
Since each $\Upsilon_{\beta^+}(\mathcal{A}, \mathbf{I}) = \Upsilon_\beta(\mathcal{A}, \mathbf{I})^{\Upsilon_\beta(\mathcal{A}, \mathbf{I})} / \mathcal{U}_\beta$, where $\mathcal{U}_\beta$ is a tails ultrafilter over $\Upsilon_\beta(\mathcal{A}, \mathbf{I})$, by Corollary \ref{corollary jump cardinality 2}, $\mathrm{cof} \big{(} \Upsilon_\beta(\mathcal{A}, \mathbf{I}) \big{)} < \mathrm{cof} \big{(} \Upsilon_{\beta^+}(\mathcal{A}, \mathbf{I}) \big{)}$, and so $|\Upsilon_\lambda(\mathcal{A}, \mathbf{I})| \geq \mathrm{cof}\big{(} \Upsilon_\lambda(\mathcal{A}, \mathbf{I}) \big{)} = \mathrm{sup}\{\mathrm{cof}\big{(} \Upsilon_\alpha(\mathcal{A}, \mathbf{I}) \big{)} \mid \alpha < \lambda\} \geq \aleph_\lambda$. If we now let $L, R \subseteq \Upsilon_\lambda(\mathcal{A})$ with $L <_\lambda R$ and $|L|, |R| < \aleph_\lambda$, by Lemma \ref{lemma saturation 2} (using the same argument we have for Theorem \ref{theorem initial fragment}), we may see $\Upsilon_\lambda(\mathcal{A}, \mathbf{I})$ is an $\eta_\lambda$-set.
\end{proof}

Notice particularly that if $|A| = \aleph_0$, under the assumption of GCH, $|\Upsilon_\lambda(\mathcal{A}, \mathbf{I})| = \aleph_\lambda$, and is therefore unique up to order isomorphism (\cite{hausdorff}, pp. 180--185).\footnote{One might see, since we have GCH, that by induction, $|A_0| = |A| = \aleph_0  < \mathrm{cof}(A_1) \leq |A^A| = 2^{\aleph_0} = \aleph_1$.}

The above results imply:

\begin{corollary}\label{corollary initial fragment}
Let $\mathcal{A}$ be an $\eta_0$-set and each successor level $\Upsilon_{\beta^+}(\mathcal{A}, \mathbf{I})$ be constructed by taking the predecessor $\Upsilon_\beta(\mathcal{A}, \mathbf{I})$ as the index set (that is, $\mathbf{I} = \{\Upsilon_\alpha(\mathcal{A}, \mathbf{I})\}_{\alpha \in \mathbf{On}}$) and a tails ultrafilter over it. Then, $\Upsilon_\mathbf{On}(\mathcal{A}, \mathbf{I})$ is an $\eta_\mathbf{On}$-class (equivalently, saturated in every cardinal).
\end{corollary}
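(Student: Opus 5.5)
The plan is to reduce the class case to the set case. If $X, Y \subseteq \Upsilon_\mathbf{On}(\mathcal{A}, \mathbf{I})$ are sets with $X < Y$, we look for a strongly inaccessible level where Theorem \ref{theorem initial fragment 2} applies. Since $X \cup Y$ is a set, the remarks after Definition \ref{def ultrapower hier} give an ordinal
\[
\delta = \mathrm{min}\{\alpha \in \mathbf{On} \mid \forall x \in X \cup Y\, (x \cap \Upsilon_\alpha(\mathcal{A}, \mathbf{I}) \neq \varnothing)\}.
\]
Hence $X \cup Y \subseteq \Upsilon_\delta(\mathcal{A}, \mathbf{I})$. It then suffices to find some $\gamma$ with an element $c \in \Upsilon_\gamma(\mathcal{A}, \mathbf{I})$ such that $X < c < Y$ in $\Upsilon_\gamma(\mathcal{A}, \mathbf{I})$. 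The embeddings $e^\gamma_\mu$, and so the embedding into the direct limit $\Upsilon_\mathbf{On}(\mathcal{A}, \mathbf{I})$, preserve order, so the inequality transfers to the whole class.

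To get such a $c$ without assuming large cardinals, I will not use the inaccessibility hypothesis of Theorem \ref{theorem initial fragment 2}. Instead I will argue directly with Lemma \ref{lemma saturation 2}. First, every level $\Upsilon_\beta(\mathcal{A}, \mathbf{I})$ is an $\eta_0$-set: at successor levels this follows from Lemma \ref{lemma saturation 1}, and at limit levels a direct limit of $\eta_0$-sets under order embeddings is again an $\eta_0$-set. Take $\beta = \delta$. By hypothesis, $\mathcal{U}_\delta$ is a tails ultrafilter over $\Upsilon_\delta(\mathcal{A}, \mathbf{I})$ with its own order. This order trivially has the same cofinality as $\Upsilon_\delta(\mathcal{A}, \mathbf{I})$, so Lemma \ref{lemma saturation 2} applies with $A = I = \Upsilon_\delta(\mathcal{A}, \mathbf{I})$, $L = X$ and $R = Y$. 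It yields $c \in \Upsilon_{\delta^+}(\mathcal{A}, \mathbf{I})$ with $[X] < c < [Y]$. Here $[X]$ and $[Y]$ are the images under the natural embedding $e^\delta_{\delta^+}$, which is exactly the embedding used in the hierarchy.

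Lemma \ref{lemma saturation 2} has no cardinality restriction on $L$ and $R$, so this gives $X < c < Y$ for arbitrary sets. The hierarchy is then an $\eta_\beta$-class for every $\beta$, which is the $\eta_\mathbf{On}$ property. For the parenthetical equivalence with saturation, I will apply Proposition \ref{proposition saturation eta} levelwise. This requires the class to be unbounded and dense. Density follows from the $\eta_0$ property. Unboundedness follows from applying the result just proved with $Y = \varnothing$ (or $X = \varnothing$).

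The main obstacle I expect is the boundary cases of Lemma \ref{lemma saturation 2}. The lemma assumes $L$ and $R$ form a gap or a cut, and its proof constructs $f$ using a cofinal or coinitial map from $I$. When $L$ or $R$ is empty, or both are bounded in a trivial way, I will handle this separately. Any $f$ whose values tend cofinally (respectively coinitially) along the tails order lies above (respectively below) all of $[A]$ modulo the tails ultrafilter. This is essentially the ($\Leftarrow$) direction of Lemma \ref{lemma tails ultrafilter order}. A second subtlety is checking that the choice of $f$ in Lemma \ref{lemma saturation 2} is compatible with the order $<'$ of $\mathcal{U}_\delta$. I will take $<'$ to be the order of $\Upsilon_\delta(\mathcal{A}, \mathbf{I})$ itself. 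Then the required monotone map with cofinal image exists whenever the cofinality of $L$ (or the coinitiality of $R$) is at most $\mathrm{cof}(\Upsilon_\delta(\mathcal{A}, \mathbf{I}))$. If that bound fails, I will first pass to a higher level $\delta' > \delta$. Corollary \ref{corollary jump cardinality 2} says cofinalities strictly increase along the hierarchy, so $\mathrm{cof}(\Upsilon_{\delta'}(\mathcal{A}, \mathbf{I}))$ eventually exceeds $|X| + |Y|$, and I will run the argument there.
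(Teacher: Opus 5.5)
There is a genuine gap. Your reduction to a single set-sized level $\delta$ is fine. The step that carries the proof is not: ``Lemma \ref{lemma saturation 2} has no cardinality restriction on $L$ and $R$, so this gives $X < c < Y$ for arbitrary sets'' fails, because that lemma is false as stated. Let $\mathcal{A} = (\omega_1 \times \mathbb{Q}) + (\omega_1 \times \mathbb{Q})^{*} + \mathbb{Q}$ under the lexicographic order. This is a dense order without endpoints, of cofinality $\omega$. Take $I = \omega$ with any nonprincipal $\mathcal{U}$, which is a tails ultrafilter, let $L$ be the first summand and $R$ the rest. For any $f \in A^{\omega}$, $\mathcal{U}$-almost all of its countably many values lie in $L$ or in $R$. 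Since $\mathrm{cof}(L) = \mathrm{coi}(R) = \omega_1$, those values are bounded above by some $l \in L$ or below by some $r \in R$, so no $[f]$ lies between $[L]$ and $[R]$.

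Nothing in the corollary's hypotheses excludes this. A tails ultrafilter on an order of cofinality $\mu$ may be carried by a cofinal subset of order type $\mu$, and then it fills no gap whose two sides both have cofinality or coinitiality above $\mu$. Your repair also points the wrong way. A nondecreasing map from an order of regular cofinality $\mu$ into $L$ with $\mathrm{cof}(L) < \mu$ has bounded image, so ``at most'' must be ``equal''. What is really needed is that the ultrafilter be $\kappa$-decomposable for $\kappa = \mathrm{cof}(X)$ or $\kappa = \mathrm{coi}(Y)$. Passing to a level of larger cofinality $\mu$ does not supply this: if $\mu$ is measurable, a $\mu$-complete tails ultrafilter is allowed, and it fills no gap whose two sides both have types below $\mu$. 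Finally, cofinalities do not increase along the hierarchy. Corollary \ref{corollary jump cardinality 2} concerns only a single ultrapower step, and at a limit level $\gamma$ the cofinality drops to $\mathrm{cf}(\gamma)$.

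That drop at limit levels is exactly what makes the corollary provable. Since $\mathcal{U}_\alpha$ contains all tails and $\Upsilon_\alpha(\mathcal{A}, \mathbf{I})$ has no maximum, $[\mathrm{id}]_{\mathcal{U}_\alpha} \in \Upsilon_{\alpha^+}(\mathcal{A}, \mathbf{I})$ lies above the image of $\Upsilon_\alpha(\mathcal{A}, \mathbf{I})$. These elements form an increasing cofinal sequence in every limit level $\gamma$, so $\mathrm{cof}(\Upsilon_\gamma(\mathcal{A}, \mathbf{I})) = \mathrm{cf}(\gamma)$.

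Now take $X < Y$ at level $\delta$ with $X$ nonempty and without maximum.
\begin{itemize}
\item Put $\kappa = \mathrm{cof}(X)$ and $\gamma = \delta + \kappa$, and identify $X, Y$ with their images at level $\gamma$.
\item Fix $\langle d_\xi \rangle_{\xi < \kappa}$ increasing and cofinal in $\Upsilon_\gamma(\mathcal{A}, \mathbf{I})$, and $\langle x_\xi \rangle_{\xi < \kappa}$ increasing and cofinal in $X$.
\item Let $g(j) = \min\{\xi \mid j < d_\xi\}$ and $f(j) = x_{g(j)}$.
\end{itemize}
Then $\{j \mid g(j) > \zeta\} = \{j \mid d_\zeta \leq j\} \in \mathcal{U}_\gamma$, so $[f] > [\overline{x_\zeta}]$ for every $\zeta$. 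Also $[f] < [\overline{y}]$ for all $y \in Y$, because $f$ takes values in $X$. Hence $c = [f] \in \Upsilon_{\gamma^+}(\mathcal{A}, \mathbf{I})$ works.

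If $X$ is empty or has a maximum but $Y$ is nonempty without minimum, argue symmetrically with $\mathrm{coi}(Y)$. In the remaining case each of $X$, $Y$ is empty or has an extreme element, and the $\eta_0$ property of level $\delta$ suffices. This argument needs neither Lemma \ref{lemma saturation 2} nor inaccessible cardinals. The paper's own route, through Theorem \ref{theorem initial fragment 2}, rests on the same lemma and needs the same repair.
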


Given Theorem \ref{theorem equivalent constructions 1}, the above results imply that if $\mathcal{A}$ is an $\eta_0$-set, $\{\Upsilon_\alpha(\mathcal{A}, \mathbf{I})\}_{\alpha \in \mathbf{On}}$ is the family of index sets generating the strongly cumulative power hierarchy, and each ultrafilter $\{\mathcal{U}_\alpha\}_{\alpha \in \mathbf{On}}$ defining $\sim_{\alpha^+}$ is a tails ultrafilter, then for each strongly inaccessible $\aleph_\lambda$, $\mathbb{C}_\lambda(\mathcal{A}, \mathbf{I})$ is an $\eta_\lambda$-set, and $\mathbb{C}_\mathbf{On}(\mathcal{A}, \mathbf{I})$ is an $\eta_\mathbf{On}$-class. However, we may eliminate the dependence on ultrapowers of that sort of construction by showing that, for any ultrapower hierarchy constructed by taking each successor level as an ultrapower having the preceding level as its index set -- as constructed above --, there is a corresponding hierarchy of cumulative powers constructed by taking each preceding level as the index set of its successor such that, once quotiented by the equivalence relations $\sim$ defined by appropriate choices of ultrafilter, its structures are isomorphic to their ultrapower counterparts.

\begin{lemma}\label{lemma existence covarying filter}
Let $\equiv$ be an equivalence relation on a set $A$. Then every ultrafilter $\mathcal{U}$ over $A /\! \equiv$ induces a filter base $F$ over $A$ such that $\{x \in A \mid [x] \in X\} \in F$ iff $X \in \mathcal{U}$.
\end{lemma}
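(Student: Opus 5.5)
The construction is to pull $\mathcal{U}$ back along the quotient map. Let $\pi : A \to A/\!\equiv$ be the canonical projection $x \mapsto [x]$. For each $X \in \mathcal{U}$ put $\pi^{-1}[X] = \{x \in A \mid [x] \in X\}$, and define $F = \{\pi^{-1}[X] \mid X \in \mathcal{U}\}$. The proof then consists of checking two things: that $F$ is a filter base (in fact a filter, indeed an ultrafilter, on the $\pi$-saturated subsets), and that the displayed biconditional holds.

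For the filter base properties, everything follows because $\pi^{-1}$ commutes with the Boolean operations.

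\begin{itemize}
\item $\varnothing \notin F$: since $\pi$ is surjective, $\pi^{-1}[X] = \varnothing$ forces $X = \varnothing$, and $\varnothing \notin \mathcal{U}$.
\item Closure under finite intersections: $\pi^{-1}[X] \cap \pi^{-1}[Y] = \pi^{-1}[X \cap Y]$, and $X \cap Y \in \mathcal{U}$ whenever $X, Y \in \mathcal{U}$.
\item Upward closure: the upward closure of $F$ in $\mathcal{P}(A)$ is the filter generated by $F$. If only saturated sets are considered, $F$ is already upward closed, because $\pi^{-1}[X] \subseteq \pi^{-1}[Y]$ implies $X \subseteq Y$ by surjectivity.
\end{itemize}

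For the biconditional, the direction from $X \in \mathcal{U}$ to $\pi^{-1}[X] \in F$ holds by definition. For the converse, suppose $\pi^{-1}[X] \in F$. Then $\pi^{-1}[X] = \pi^{-1}[Y]$ for some $Y \in \mathcal{U}$. Since $\pi$ is surjective, $\pi[\pi^{-1}[Z]] = Z$ for every $Z \subseteq A/\!\equiv$, so $X = \pi[\pi^{-1}[X]] = \pi[\pi^{-1}[Y]] = Y \in \mathcal{U}$.

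The one delicate point is exactly this injectivity of $X \mapsto \pi^{-1}[X]$. It is needed so that membership in $F$ depends only on $X$ and not on the chosen preimage representation, and it is guaranteed by the surjectivity of the projection. Once that is in place, the rest is routine. If one also wants $F$ to behave like an ultrafilter relative to saturated sets, that follows as well: for any $X$, either $X$ or its complement lies in $\mathcal{U}$, and the complement of $\pi^{-1}[X]$ is $\pi^{-1}[(A/\!\equiv) \setminus X]$.
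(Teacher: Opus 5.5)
Your proof is correct and follows the paper's approach: both take $F$ to be the family of preimages $\{x \in A \mid [x] \in X\}$ for $X \in \mathcal{U}$, and verify the filter-base conditions via $\pi^{-1}[X] \cap \pi^{-1}[Y] = \pi^{-1}[X \cap Y]$ and $\varnothing \notin \mathcal{U}$. The paper justifies the converse direction of the biconditional only loosely, saying it follows from the definition and from $\mathcal{U}$ being an ultrafilter; your observation that surjectivity of the projection makes $X \mapsto \pi^{-1}[X]$ injective is the precise argument for it.
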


\begin{proof}
Let $F' = \big{\{} \{x \in A \mid [x] \in X\} \mid X \in \mathcal{U} \big{\}}$. Since $A /\! \equiv\ \in \mathcal{U}$ and $\varnothing \not\in \mathcal{U}$, we have $A \in F$ and $\varnothing \not\in F$. Let now $X, Y \in F$. Then, $\{x \in A \mid [x] \in X\}, \{x \in A \mid [x] \in Y\} \in \mathcal{U}$, which means $\{x \in A \mid [x] \in X\} \cap \{x \in A \mid [x] \in Y\} = \{x \in A \mid [x] \in X \cap Y\} \in \mathcal{U}$. Thus, $X \cap Y \in F$. That $F$ has the described property follows from its definition and the fact $\mathcal{U}$ is an ultrafilter.
\end{proof}

\begin{theorem}\label{theorem equivalent constructions 2}
Let an ultrapower hierarchy be generated by $\mathcal{A}$ and the families of index sets $\{\Upsilon_\alpha(\mathcal{A}, \mathbf{I})\}_{\alpha \in \mathbf{On}}$ and ultrafilters $\{\mathcal{U}_\alpha\}_{\alpha \in \mathbf{On}}$ respectively over them. Then, there is a family of ultrafilters $\{\mathcal{U}'_\alpha\}_{\alpha \in \mathbf{On}}$ respectively over $\{\mathfrak{C}_\alpha(A, \mathbf{I})\}_{\alpha \in \mathbf{On}}$ such that for each $\gamma \in \mathbf{On}$, if for all $\delta < \gamma$, $\sim_{\delta^+}$ is defined by $\mathcal{U}'_\delta$, then $\mathbb{C}_\gamma(\mathcal{A}, \mathbf{I}) \cong \Upsilon_\gamma(\mathcal{A}, \mathbf{I})$.
\end{theorem}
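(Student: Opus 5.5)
We build the ultrafilters $\mathcal{U}'_\alpha$ and the isomorphisms $e_\alpha : \Upsilon_\alpha(\mathcal{A}, \mathbf{I}) \to \mathbb{C}_\alpha(\mathcal{A}, \mathbf{I})$ simultaneously, by transfinite recursion on $\alpha$. The construction mimics the proof of Theorem \ref{theorem equivalent constructions 1}. The new feature is that the index set of the cumulative hierarchy at stage $\alpha$ is $\mathfrak{C}_\alpha(A, \mathbf{I})$, whereas the ultrapower hierarchy uses $\Upsilon_\alpha(\mathcal{A}, \mathbf{I})$. So the ultrafilter $\mathcal{U}_\alpha$ must be transported along the composite surjection
\[
\pi_\alpha : \mathfrak{C}_\alpha(A, \mathbf{I}) \to \mathfrak{C}_\alpha(A, \mathbf{I}) /\! \sim_\alpha \to \Upsilon_\alpha(\mathcal{A}, \mathbf{I}), \qquad x \mapsto e_\alpha^{-1}([x]_\alpha).
\]
Here $\sim_\alpha$ is already determined by the $\mathcal{U}'_\delta$ for $\delta < \alpha$, and $e_\alpha$ is given by the inductive hypothesis.

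At $\alpha = 0$ everything is the identity. At limits, $\sim_\lambda$ is the union of the earlier relations, and Lemma \ref{lemma equivalent constructions} supplies $e_\lambda$ from the $e_\beta$ with $\beta < \lambda$. The real work is the successor step.

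Given $e_\alpha$, define
\[
\mathcal{U}'_\alpha = \{ Y \subseteq \mathfrak{C}_\alpha(A, \mathbf{I}) \mid \pi_\alpha[Y] \ldots \}.
\]
More precisely, take $\mathcal{U}'_\alpha = \{\pi_\alpha^{-1}[X] \mid X \in \mathcal{U}_\alpha\}$ and close upward. By Lemma \ref{lemma existence covarying filter}, the preimages form a filter base with $\pi_\alpha^{-1}[X] \in \mathcal{U}'_\alpha$ iff $X \in \mathcal{U}_\alpha$. Its upward closure is in fact an ultrafilter. For any $Y$, let $X = \{ u \mid \pi_\alpha^{-1}(u) \subseteq Y\}$; either $X$ or its complement lies in $\mathcal{U}_\alpha$, giving $Y$ or $\mathfrak{C}_\alpha \setminus Y$ a member of the base. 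Even without this, any extension to an ultrafilter would suffice for what follows, provided it only matters on $\sim_\alpha$-saturated sets. Use $\mathcal{U}'_\alpha$ to define $\sim_{\alpha^+}$.

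Next, define $w$ and $e_{\alpha^+}$. Fix a choice function $\theta$ on the $\sim_\alpha$-classes. For $f \in \Upsilon_\alpha(\mathcal{A}, \mathbf{I})^{\Upsilon_\alpha(\mathcal{A}, \mathbf{I})}$, let $w(f) \in \mathfrak{C}_{\alpha^+}$ be the function on $\mathfrak{C}_\alpha(A, \mathbf{I})$ given by
\[
x \mapsto \theta\big(e_\alpha(f(\pi_\alpha(x)))\big),
\]
and set $e_{\alpha^+}(\llbracket f \rrbracket_{\mathcal{U}_\alpha}) = [w(f)]_{\alpha^+}$. The key observation is that, for any property $P$ of elements of $\Upsilon_\alpha$, the set $\{x \mid P(\pi_\alpha(x))\}$ is the $\pi_\alpha$-preimage of $\{u \mid P(u)\}$. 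Hence it lies in $\mathcal{U}'_\alpha$ iff $\{u \mid P(u)\} \in \mathcal{U}_\alpha$.

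With that, the verifications run as in Theorem \ref{theorem equivalent constructions 1}. Well-definedness and injectivity hold because $w(f)(x) \sim_\alpha w(g)(x)$ iff $f(\pi_\alpha x) = g(\pi_\alpha x)$. Preservation of relations and functions follows from Lemma \ref{lemma relation quotient}, again pulling back $\mathcal{U}_\alpha$-large sets. Constants are sent to constants.

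The main obstacle is surjectivity, together with the compatibility issue behind it. An arbitrary $g \in \mathfrak{C}_{\alpha^+}$ of top level is a function on $\mathfrak{C}_\alpha(A, \mathbf{I})$ that need not be constant on $\sim_\alpha$-classes, so it is not literally of the form $w(f)$. I would choose $f(u) = e_\alpha^{-1}([g(\theta'(u))]_\alpha)$ for a choice function $\theta'$ picking a point of each fibre $\pi_\alpha^{-1}(u)$. Then I would need $\{x \mid g(x) \sim_\alpha g(\theta'(\pi_\alpha x))\} \in \mathcal{U}'_\alpha$. This fails for the filter generated by saturated sets alone. So I would instead first choose, in each nonempty fibre, a representative $r(u)$, and let $\mathcal{U}'_\alpha$ be the image ultrafilter $\{Y \mid \{u \mid r(u) \in Y\} \in \mathcal{U}_\alpha\}$. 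This still satisfies the covarying property of Lemma \ref{lemma existence covarying filter} on saturated sets, and it is concentrated on the transversal $\mathrm{ran}(r)$. On that transversal $g$ is determined by $f(u) = e_\alpha^{-1}([g(r(u))]_\alpha)$, and $w(f) \sim_{\alpha^+} g$ follows.

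Elements of lower level are handled via Proposition \ref{theorem coincidence existence} and the remark after Proposition \ref{corollary concidence sim}, replacing them by constant functions.
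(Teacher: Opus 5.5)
Your final construction is essentially the paper's. The paper takes $\mathcal{U}'_\alpha$ to be any ultrafilter extending the preimage base $F$ together with the transversal $\theta[\mathbb{C}_\alpha(\mathcal{A}, \mathbf{I})]$, which forces it to be exactly the pushforward of $\mathcal{U}_\alpha$ along $\theta\circ e_\alpha$, i.e.\ your image ultrafilter with $r=\theta\circ e_\alpha$. It then uses the same $w$ and proves surjectivity by the same transversal argument.

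Do discard your earlier claim that the upward closure of the saturated preimages is already an ultrafilter. The complement case of your dichotomy only yields fibres not contained in $Y$, not fibres disjoint from $Y$. So a $Y$ meeting every $\sim_\alpha$-class in a proper nonempty subset is decided by neither, and this is exactly why the transversal is needed.
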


\begin{proof}
The structure of the proof is similar to that of Theorem \ref{theorem equivalent constructions 1}, by induction on $\gamma$. The base case is trivial (just take the same ultrafilter), and the limit case is covered by Lemma \ref{lemma equivalent constructions}. For the successor case, let $e: \Upsilon_\alpha(\mathcal{A}, \mathbf{I}) \to \mathbb{C}_\alpha(\mathcal{A}, \mathbf{I})$ be the isomorphism, and $\mathcal{U}' = \{e[X] \mid X \in \mathcal{U}_\alpha\}$. One might check $\mathcal{U}'$ is an ultrafilter over $\mathbb{C}_\alpha(\mathcal{A}, \mathbf{I})$. Consider then $F = \big{\{} \{x \in \mathfrak{C}_\alpha(A, \mathbf{I}) \mid [x] \in X\} \mid X \in \mathcal{U}' \big{\}}$. By Lemma \ref{lemma existence covarying filter}, $F$ is a filter base over $\mathfrak{C}_\alpha(A, \mathbf{I})$, and $\{x \in \mathfrak{C}_\alpha(A, \mathbf{I}) \mid [x] \in X\} \in F$ iff $X \in \mathcal{U}'$. Let $\theta: \mathbb{C}_\alpha(\mathcal{A}, \mathbf{I}) \to \mathfrak{C}_\alpha(A, \mathbf{I})$ be a choice function on the equivalence classes. By the definition of $F$, $\mathfrak{C}_\alpha(A, \mathbf{I}) \setminus \theta[\mathbb{C}_\alpha(\mathcal{A}, \mathbf{I})] \not\in F$, and we may also see $\theta[\mathbb{C}_\alpha(\mathcal{A}, \mathbf{I})] \cap X \neq \varnothing$ for any $X \in F$. Thus, $F' = F \cup \{\theta[\mathbb{C}_\alpha(\mathcal{A}, \mathbf{I})]\}$ is also a filter base. Let $\mathcal{U}'_\alpha \supseteq F'$ be an ultrafilter and let it define $\sim_{\alpha^+}$. Consider $w: \Upsilon_\alpha(\mathcal{A}, \mathbf{I})^{\Upsilon_\alpha(\mathcal{A}, \mathbf{I})} \to \mathfrak{C}_{\alpha^+}(A, \mathbf{I})$ such that for each $f \in \Upsilon_\alpha(\mathcal{A}, \mathbf{I})^{\Upsilon_\alpha(\mathcal{A}, \mathbf{I})}$, $w(f): x \mapsto \theta \big{(} e \big{(} f(e^{-1}([x]_\alpha)) \big{)} \big{)}$. For clarity, let us abbreviate $e \big{(} f(e^{-1}([j]_\alpha)) \big{)}$ as $f^e(j)$. We now show that $e': \Upsilon_{\alpha^+}(\mathcal{A}, \mathbf{I}) \to \mathbb{C}_{\alpha^+}(\mathcal{A}, \mathbf{I}); [x]_{\mathcal{U}_\alpha} \mapsto [w(x)]_{\alpha^+}$ is an isomorphism. It is trivial to see that $e'$ preserves the interpretations of constants, that is, that for any constant $\mathrm{c}$, $e'(\mathrm{c}^{\Upsilon_{\alpha^+}(\mathcal{A}, \mathbf{I})}) = e'([\overline{\mathrm{c}^{\Upsilon_\alpha(\mathcal{A}, \mathbf{I})}}]_{\mathcal{U}_\alpha}) = [w(\overline{\mathrm{c}^{\Upsilon_\alpha(\mathcal{A}, \mathbf{I})}})]_{\alpha^+} = [\overline{\mathrm{c}^{\mathbb{C}_\alpha(\mathcal{A}, \mathbf{I})}}]_{\alpha^+} = \mathrm{c}^{\mathbb{C}_{\alpha^+}(\mathcal{A}, \mathbf{I})}$. Let $e'([f]_{\mathcal{U}_\alpha}) = [w(f)]_{\alpha^+} = [w(g)]_{\alpha^+} = e'([g]_{\mathcal{U}_\alpha})$, meaning $\{j \in \mathfrak{C}_\alpha(A, \mathbf{I}) \mid w(f)(j) \sim_\alpha w(g)(j)\} \in \mathcal{U}'_\alpha$. Since $\mathcal{U}'_\alpha$ is closed upwards, we have

\begin{center}
$\{j \in \mathfrak{C}_\alpha(A, \mathbf{I}) \mid \exists k \big{(} j \sim_\alpha k\ \&\ w(f)(k) \sim_\alpha w(g)(k) \big{)}\} \in \mathcal{U}'_\alpha$.
\end{center}

\noindent
By the definition of $\mathcal{U}'_\alpha$, one might check that is the case only if

\begin{center}
$\{j \in \mathfrak{C}_\alpha(A, \mathbf{I}) \mid \exists k \big{(} j \sim_\alpha k\ \&\ w(f)(k) \sim_\alpha w(g)(k) \big{)}\} \in F$.
\end{center}

\noindent
Thus, by the definition of $F$, we have

\begin{center}
$\{[j]_\alpha \in \mathbb{C}_\alpha(A, \mathbf{I}) \mid w(f)(j) \sim_\alpha w(g)(j)\} \in \mathcal{U}'$.
\end{center}

\noindent
Since $w(f)(j) = \theta \big{(} f^e(j) \big{)} \sim_\alpha \theta \big{(} g^e(j) \big{)} = w(g)(j)$ iff $f^e(j) = e \big{(} f(e^{-1}([j]_\alpha)) \big{)} = e \big{(} g (e^{-1}([j]_\alpha)) \big{)} = g^e(j)$, we have

\begin{center}
$\{[j]_\alpha \in \mathbb{C}_\alpha(A, \mathbf{I}) \mid f^e(j) = g^e(j)\} \in \mathcal{U}'$.
\end{center}

\noindent
By the definition of $\mathcal{U}'$, that is the case only if

\begin{center}
$\{e^{-1}([j]_\alpha) \in \Upsilon_\alpha(\mathcal{A}, \mathbf{I}) \mid f^e(j) = g^e(j)\} \in \mathcal{U}_\alpha$,
\end{center}

\noindent
that is,

\begin{center}
$\{e^{-1}([j]_\alpha) \in \Upsilon_\alpha(\mathcal{A}, \mathbf{I}) \mid e \big{(} f(e^{-1}([j]_\alpha)) \big{)} = e \big{(} g(e^{-1}([j]_\alpha)) \big{)} \} \in \mathcal{U}_\alpha$.
\end{center}

\noindent
Since $e$ is an isomorphism, its inverse is surjective, and therefore we may see

\begin{center}
$\{j \in \Upsilon_\alpha(\mathcal{A}, \mathbf{I}) \mid e \big{(} f(j) \big{)} = e \big{(} g(j) \big{)}\} \in \mathcal{U}_\alpha$.
\end{center}

\noindent
But since $e$ is injective, $e \big{(} f(j) \big{)} = e \big{(} g(j) \big{)}$ iff $f(j) = g(j)$, so that

\begin{center}
$\{j \in \Upsilon_\alpha(\mathcal{A}, \mathbf{I}) \mid f(j) = g(j)\} \in \mathcal{U}_\alpha$,
\end{center}

\noindent
and therefore $[f]_{\mathcal{U}_\alpha} = [g]_{\mathcal{U}_\alpha}$, so $e'$ is injective. Let now $\mathrm{R}^{\Upsilon_{\alpha^+}(\mathcal{A}, \mathbf{I})} [f_0]_{\mathcal{U}_\alpha} ... [f_n]_{\mathcal{U}_\alpha}$. That is the case iff

\begin{center}
$\{j \in \Upsilon_\alpha(\mathcal{A}, \mathbf{I}) \mid \mathrm{R}^{\Upsilon_\alpha(\mathcal{A}, \mathbf{I})} f_0(j)...f_n(j)\} \in \mathcal{U}_\alpha$, iff\\
$\{e(j) \in \mathbb{C}_\alpha(\mathcal{A}, \mathbf{I}) \mid \mathrm{R}^{\Upsilon_\alpha(\mathcal{A}, \mathbf{I})} f_0(j)...f_n(j)\} \in \mathcal{U}'$.
\end{center}

\noindent
Since $e$ is surjective, $j = e^{-1}([k]_\alpha)$ for some $k \in \mathfrak{C}_\alpha(A, \mathbf{I})$, so that is the same as

\begin{center}
$\{[k]_\alpha \in \mathbb{C}_\alpha(\mathcal{A}, \mathbf{I}) \mid \mathrm{R}^{\Upsilon_\alpha(\mathcal{A}, \mathbf{I})} f_0 \big{(} e^{-1}([k]_\alpha) \big{)}...f_n \big{(} e^{-1}([k]_\alpha) \big{)} \} \in \mathcal{U}'$.
\end{center}

\noindent
Since $e$ is an isomorphism, that is the same as

\begin{center}
$\{[k]_\alpha \in \mathbb{C}_\alpha(\mathcal{A}, \mathbf{I}) \mid \mathrm{R}^\sim_\alpha f_0^e(k)...f_n^e (k)\} \in \mathcal{U}'$.
\end{center}

\noindent
By the definition of $F$ and $\mathcal{U}'_\alpha$, that is the case iff

\begin{center}
$\{k \in \mathfrak{C}_\alpha(A, \mathbf{I}) \mid \exists j \in \mathfrak{C}_\alpha(A, \mathbf{I}) \big{(} k \sim_\alpha j\ \&\ \mathrm{R}^\sim_\alpha f_0^e(j)...f_n^e(j) \big{)}\} \in \mathcal{U}'_\alpha$.
\end{center}

\noindent
Since $f_i^e(k) = [\theta \big{(} f_i^e(k) \big{)}]_\alpha = [w(f_i)(k)]_\alpha$, that is the same as

\begin{center}
$\{k \in \mathfrak{C}_\alpha(A, \mathbf{I}) \mid \exists j \in \mathfrak{C}_\alpha(A, \mathbf{I}) \big{(} k \sim_\alpha j\ \&\ \mathrm{R}^\sim_\alpha [w(f_0)(j)]_\alpha ...  [w(f_n)(j)]_\alpha \big{)}\} \in \mathcal{U}'_\alpha$.
\end{center}

\noindent
However, if $k \sim_\alpha j$, then $e^{-1}([k]_\alpha) = e^{-1}([j]_\alpha)$. Thus, by the definition of $w$, for any $f \in \Upsilon_\alpha(\mathcal{A}, \mathbf{I})^{\Upsilon_\alpha(\mathcal{A}, \mathbf{I})}$, if $k \sim_\alpha j$, then $f^e(k) = f^e(j)$, and so $w(f)(k) = w(f)(j)$. Therefore, the above is the case iff

\begin{center}
$\{k \in \mathfrak{C}_\alpha(A, \mathbf{I}) \mid \mathrm{R}^\sim_\alpha [w(f_0)(k)]_\alpha ... [w(f_n)(k)]_\alpha\} \in \mathcal{U}'_\alpha$,
\end{center}

\noindent
so by Lemma \ref{lemma relation quotient}, that is the case iff $\mathrm{R}^\sim_{\alpha^+} [w(f_0)]_{\alpha^+} ...  [w(f_n)]_{\alpha^+}$, that is, $\mathrm{R}^\sim_{\alpha^+} e([f_0]_{\mathcal{U}_\alpha}) ...  e([f_n]_{\mathcal{U}_\alpha})$. By a similar argument, we may show $e'$ preserves the functions. At last, to see $e'$ is surjective, it suffices to show for any $f \in \mathfrak{C}_{\alpha^+}(A, \mathbf{I})$ there is $f' \in \Upsilon_\alpha(\mathcal{A}, \mathbf{I})^{\Upsilon_\alpha(\mathcal{A}, \mathbf{I})}$ such that $f \sim_{\alpha^+} w(f')$. To see that, consider $f'$ such that for any $j \in \Upsilon_\alpha(\mathcal{A}, \mathbf{I})$, $f'(j) = e^{-1} \big{(} [f \big{(} \theta (e(j)) \big{)}]_\alpha \big{)}$. Then, for any $j \in \mathfrak{C}_\alpha(A, \mathbf{I})$,

\begin{center}
$w(f')(j) = \theta \big{(} e \big{(} f'(e^{-1}([j]_\alpha)) \big{)} \big{)} = \theta \big{(} e \big{(} e^{-1} \big{(} [f \big{(} \theta(e( e^{-1}([j]_\alpha))) \big{)}]_\alpha \big{)} \big{)} \big{)} = \theta \big{(} [f(\theta([j]_\alpha))] \big{)} \sim_\alpha f \big{(} \theta([j]_\alpha) \big{)}$.
\end{center}

\noindent
Therefore,

\begin{center}
$\{j \in \mathfrak{C}_\alpha(A, \mathbf{I}) \mid w(f')(j) \sim_\alpha f(j)\} \supseteq \theta[\mathbb{C}_\alpha(\mathcal{A}, \mathbf{I})] \in \mathcal{U}'_\alpha$,
\end{center}

\noindent
so since $\mathcal{U}'_\alpha$ is closed by supersets, $w(f') \sim_{\alpha^+} f$.
\end{proof}

Notice that, under a suitable assumption, the proof of the above result essentially goes both ways -- that is, to show an ultrapower hierarchy isomorphic to the hierarchy of $\mathbb{C}_\beta(\mathcal{A}, \mathbf{I})$'s may be constructed:

\begin{theorem}\label{theorem equivalent constructions 3}
Let $\{\mathbb{C}_\alpha(\mathcal{A}, \mathbf{I})\}_{\alpha \leq \beta}$ be generated by a family of ultrafilters $\{\mathcal{U}_\alpha\}_{\alpha < \beta}$ respectively over $\{\mathfrak{C}_\alpha(A, \mathbf{I})\}_{\alpha < \beta}$. If for each $\alpha < \beta$ there is a choice function $\theta : \mathbb{C}_\alpha(\mathcal{A}, \mathbf{I}) \to \mathfrak{C}_\alpha(A, \mathbf{I})$ on the equivalence classes such that $\theta[\mathbb{C}_\alpha(\mathcal{A}, \mathbf{I})] \in \mathcal{U}_\alpha$, then there is a family of ultrafilters $\{\mathcal{U}'_\alpha\}_{\alpha < \beta}$ respectively over the index sets $\{\Upsilon_\alpha(\mathcal{A}, \mathbf{I})\}_{\alpha < \beta}$ such that if an ultrapower hierarchy $\{\Upsilon_\alpha(\mathcal{A}, \mathbf{I})\}_{\alpha \leq \beta}$ is generated by those parameters, then for any $\gamma \leq \beta$, $\Upsilon_\gamma(\mathcal{A}, \mathbf{I}) \cong \mathbb{C}_\gamma(\mathcal{A}, \mathbf{I})$.
\end{theorem}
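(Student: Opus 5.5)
We argue by induction on $\gamma \leq \beta$, building the ultrafilters $\mathcal{U}'_\alpha$ and isomorphisms $e_\gamma : \Upsilon_\gamma(\mathcal{A}, \mathbf{I}) \to \mathbb{C}_\gamma(\mathcal{A}, \mathbf{I})$ simultaneously, reversing the construction in the proof of Theorem \ref{theorem equivalent constructions 2}. At $\gamma = 0$ both structures are $\mathcal{A}$ and $e_0$ is the identity. At a limit $\gamma$ we apply Lemma \ref{lemma equivalent constructions} to the isomorphisms already built. All the work is in the successor step $\gamma = \alpha^+$.

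Suppose $e = e_\alpha : \Upsilon_\alpha(\mathcal{A}, \mathbf{I}) \to \mathbb{C}_\alpha(\mathcal{A}, \mathbf{I})$ is an isomorphism. Let $\theta$ be the choice function from the hypothesis, with $T = \theta[\mathbb{C}_\alpha(\mathcal{A}, \mathbf{I})] \in \mathcal{U}_\alpha$. The map $\pi = e^{-1} \circ [\cdot]_\alpha$ restricts to a bijection $T \to \Upsilon_\alpha(\mathcal{A}, \mathbf{I})$. We set
\begin{center}
$\mathcal{U}'_\alpha = \{X \subseteq \Upsilon_\alpha(\mathcal{A}, \mathbf{I}) \mid \pi^{-1}[X] \cap T \in \mathcal{U}_\alpha\}$.
\end{center}
Since $T \in \mathcal{U}_\alpha$, the trace $\mathcal{U}_\alpha \restriction T$ is an ultrafilter on $T$. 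Transporting it along the bijection $\pi$ therefore gives an ultrafilter on $\Upsilon_\alpha(\mathcal{A}, \mathbf{I})$. The hypothesis $T \in \mathcal{U}_\alpha$ replaces the filter-base extension step ($F' = F \cup \{\theta[\cdots]\}$) used in Theorem \ref{theorem equivalent constructions 2}. Because of it, every $\mathcal{U}_\alpha$-large set may be intersected with $T$ at no cost.

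Next, define $w : \Upsilon_\alpha(\mathcal{A}, \mathbf{I})^{\Upsilon_\alpha(\mathcal{A}, \mathbf{I})} \to \mathfrak{C}_{\alpha^+}(A, \mathbf{I})$ exactly as in Theorem \ref{theorem equivalent constructions 2}:
\begin{center}
$w(f)(x) = \theta\big(e(f(\pi(x)))\big)$.
\end{center}
Then put $e_{\alpha^+}(\llbracket f \rrbracket_{\mathcal{U}'_\alpha}) = [w(f)]_{\alpha^+}$. The verification proceeds as follows.
\begin{itemize}
\item \emph{Well-definedness and injectivity.} Reduce the condition $\{x \mid w(f)(x) \sim_\alpha w(g)(x)\} \in \mathcal{U}_\alpha$ to the same set intersected with $T$. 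On $T$, $\theta$ picks one representative per class, so $w(f)(x) \sim_\alpha w(g)(x)$ iff $f(\pi(x)) = g(\pi(x))$, using injectivity of $e$. Pushing forward along $\pi$ gives exactly $\{j \mid f(j) = g(j)\} \in \mathcal{U}'_\alpha$.
\item \emph{Relations and functions.} Use Lemma \ref{lemma relation quotient}. The sets $\{x \in T \mid \mathrm{R}^\sim_\alpha [w(f_0)(x)] \ldots [w(f_n)(x)]\}$ correspond under $\pi$ to $\{j \mid \mathrm{R}^{\Upsilon_\alpha} f_0(j) \ldots f_n(j)\}$, since $e$ is an isomorphism. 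Functions are handled in the same way.
\item \emph{Constants.} As before, $w$ sends constant functions to constant functions.
\item \emph{Surjectivity.} Given $f \in \mathfrak{C}_{\alpha^+}(A, \mathbf{I})$, which we may take to have level $\alpha^+$ by Proposition \ref{theorem coincidence existence} and Proposition \ref{corollary concidence sim}, define $f'(j) = e^{-1}\big([f(\theta(e(j)))]_\alpha\big)$. For $x \in T$ we have $\theta(e(\pi(x))) = x$, hence $w(f')(x) \sim_\alpha f(x)$ on all of $T$, and so $w(f') \sim_{\alpha^+} f$.
\end{itemize}

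The main obstacle is bookkeeping on elements outside $T$. On such $x$, $w(f)(x)$ is determined by the class of $x$ rather than by $x$ itself, so the agreement sets computed in $\mathfrak{C}_\alpha$ and in $\Upsilon_\alpha$ differ off $T$. Every step must therefore explicitly intersect with $T$ and use $T \in \mathcal{U}_\alpha$. A secondary subtlety is the quantification in the statement: the $\mathcal{U}'_\alpha$ must be chosen along the induction, each depending on $e_\alpha$, so the family is built recursively and its existence uses the choice of the $e_\alpha$ at each stage. The limit stages also require care in applying Lemma \ref{lemma equivalent constructions}, whose hypothesis is stated with the same index sets; I would note that its proof only uses the coherent system of isomorphisms built so far.
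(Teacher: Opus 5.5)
Your proposal is correct and is essentially the paper's proof. The paper's proof consists only of the remark that the argument for Theorem \ref{theorem equivalent constructions 2} goes through again; you run that argument in reverse, pushing $\mathcal{U}_\alpha$ forward along $\pi = e^{-1}\circ[\cdot]_\alpha$ instead of pulling back and adjoining $\theta[\mathbb{C}_\alpha(\mathcal{A},\mathbf{I})]$ to a filter base, with the same $w$, the same $e_{\alpha^+}$ and the same checks.

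One small refinement: since $w(f)(x)$ depends only on $[x]_\alpha$, your $\mathcal{U}'_\alpha$ equals the plain pushforward $\{X \mid \pi^{-1}[X] \in \mathcal{U}_\alpha\}$, and the agreement sets are exactly $\pi^{-1}[\{j \mid f(j) = g(j)\}]$. So the hypothesis $T \in \mathcal{U}_\alpha$ is needed only in the surjectivity step, not throughout as your last paragraph suggests.
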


\begin{proof}
The same argument as for Theorem \ref{theorem equivalent constructions 2}.
\end{proof}

The above result allows for a narrower condition for the construction of isomorphic ultrapowers.

\begin{lemma}\label{lemma filters equivalences 2}
Let $A$ be a set, $\equiv$ an equivalence relation over it, $\kappa$ be the size of the smallest equivalence class of $A /\! \equiv$, and $\mathcal{U}$ be an ultrafilter over $A$. If for every choice function $\theta: (A /\! \equiv) \to A$, $\theta[A /\! \equiv] \not\in \mathcal{U}$, then $\mathcal{U}$ is $\kappa$-incomplete.
\end{lemma}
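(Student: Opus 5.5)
The plan is to argue by contraposition in a strong form. Assuming that no transversal of $\equiv$ belongs to $\mathcal{U}$, I will produce a family of at most $\kappa$ members of $\mathcal{U}$ whose intersection is not in $\mathcal{U}$. The basic observation is this: if $S \subseteq A$ meets every $\equiv$-class in \emph{at most} one point (a partial transversal), then $S$ can be extended to a full transversal $\theta[A/\!\equiv]$ for some choice function $\theta$. Since $\theta[A/\!\equiv] \notin \mathcal{U}$ and $\mathcal{U}$ is closed upwards, $S \notin \mathcal{U}$, so $A \setminus S \in \mathcal{U}$. Thus every partial transversal is $\mathcal{U}$-small.

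First, using Choice, I fix for every class $C \in A/\!\equiv$ an injection $\iota_C : \kappa \to C$; this is possible since $|C| \geq \kappa$ by the choice of $\kappa$. For $\xi < \kappa$ put $S_\xi = \{\iota_C(\xi) \mid C \in A/\!\equiv\}$. Each $S_\xi$ is a transversal, so $A \setminus S_\xi \in \mathcal{U}$. Their intersection is $R = A \setminus \bigcup_{\xi<\kappa} S_\xi$, the set of points not hit by any $\iota_C$. If $R \notin \mathcal{U}$, the family $\{A \setminus S_\xi\}_{\xi<\kappa}$ witnesses the failure of completeness at $\kappa$, and we are done. Here I read ``$\kappa$-incomplete'' as failure of closure under intersections of $\kappa$ many sets, which seems to be the paper's intended sense; if the strict ``fewer than $\kappa$'' reading is meant, the indexing must be adjusted, and I would flag this.

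The main obstacle is the case $R \in \mathcal{U}$, which can occur when some classes are strictly larger than $\kappa$. In this case I would pass to the restricted ultrafilter $\mathcal{U} \restriction R$ and the restricted relation ${\equiv} \restriction R$. The hypothesis is inherited: a transversal of ${\equiv} \restriction R$ is a partial transversal of $\equiv$, hence not in $\mathcal{U}$ by the basic observation. I then repeat the construction on $R$, taking $\kappa'$ to be the minimum size of the nonempty classes of $R$. This gives a transfinite recursion that strips off $\kappa$-many transversals at each stage. Each class $C$ is exhausted after a number of stages bounded by its order type once $C$ is well-ordered, so at some stage $\gamma$ the remainder $R_\gamma$ is empty and in particular not in $\mathcal{U}$. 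At the first stage where the remainder leaves $\mathcal{U}$, the complements of the transversals removed so far, together with the earlier remainders, form a family of members of $\mathcal{U}$ whose intersection lies outside $\mathcal{U}$. The delicate step is bounding the size of this family. To keep it at most $\kappa$, I would first try to choose the $\iota_C$ so that they enumerate all of $C$ cofinally, that is, as surjections from $|C|$, interleaving so that the points of every class are distributed among the first $\kappa$ transversals as evenly as possible. I expect this bookkeeping to be where the argument needs the most care.
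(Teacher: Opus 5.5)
Your first step is the paper's argument. The paper enumerates each class, takes the $\kappa$ choice functions $\theta_\tau$ that pick the $\tau$-th element, and concludes that $\bigcap_{\tau<\kappa}(A\setminus\theta_\tau[A/\!\equiv])=\varnothing\notin\mathcal{U}$. That emptiness claim is your $R=\varnothing$. It holds only when every class has exactly $\kappa$ elements, which the paper tacitly assumes.

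You are right to flag the meaning of ``$\kappa$-incomplete''. The paper's proof needs your reading (failure under $\kappa$ many intersections). Under the standard ``fewer than $\kappa$'' convention, which the paper itself uses for countable incompleteness, the lemma fails even when all classes have equal size. For example, take all classes countable and $\mathcal{U}$ nonprincipal and containing one class.

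\textbf{The gap.} The genuine gap is your case $R\in\mathcal{U}$. No bookkeeping can bound the family by $\kappa$ there, because when class sizes differ the lemma is false. Take $A=\{p,q\}\cup\omega$ with classes $\{p,q\}$ and $\omega$, and let $\mathcal{U}$ be a nonprincipal ultrafilter containing $\omega$. Every transversal is a two-element set, so none lies in $\mathcal{U}$, and $\kappa=2$. Yet an ultrafilter is closed under finite intersections, so no family of two members can have intersection outside it. If a measurable $\mu$ exists, a countable class together with a class of size $\mu$ carrying a $\mu$-complete nonprincipal ultrafilter gives a counterexample with $\kappa=\aleph_0$.

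The structural reason is simple. A partial transversal meets each class in at most one point, so $\kappa$ of them cannot exhaust a class of size greater than $\kappa$. Redistributing points among the first $\kappa$ transversals does not change this.

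\textbf{What your recursion proves.} At best it gives incompleteness at $\mu=\sup_C|C|$, and that needs no recursion. Enumerate each class as $\{x^C_\xi:\xi<|C|\}$ and use the partial transversals $S_\xi=\{x^C_\xi: \xi<|C|\}$ for $\xi<\mu$. These cover $A$, so their complements are $\mu$ members of $\mathcal{U}$ with empty intersection.

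The statement therefore has to be repaired in one of two ways:
\begin{itemize}
\item assume all classes have size $\kappa$, in which case your first step is already a complete proof and coincides with the paper's; or
\item replace the minimum class size by the supremum.
\end{itemize}
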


\begin{proof}
For each equivalence class $a \in A /\! \equiv$, let $\langle x^a_i \rangle_{i < |a|}$ be an enumeration of its elements. Then there are $\kappa$ choice functions $\theta_\tau$ such that $\theta_\tau ; a \mapsto x^a_\tau$. Since $\theta_\tau[A /\! \equiv] \not\in \mathcal{U}$, we have $A \setminus \theta_\tau[A /\! \equiv] \in \mathcal{U}$, but $\bigcap_{\tau < \kappa} (A \setminus \theta_\tau[A /\! \equiv]) = \varnothing \not\in \mathcal{U}$.
\end{proof}

\begin{proposition}\label{lemma equivalence classes size}
Let $\mathcal{A}$ be an infinite structure, $I$ an index set, and $\mathcal{U}$ an ultrafilter over it. For any $f, g \in A^I$, $|[f]| = |[g]|$, that is, every equivalence class in $\mathcal{A}^I / \mathcal{U}$ is equally sized.
\end{proposition}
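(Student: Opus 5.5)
The plan is to exhibit, for any $f, g \in A^I$, an explicit bijection $\Phi : A^I \to A^I$ that maps the class $[f]$ onto the class $[g]$. Equal cardinality then follows at once, with no cardinal arithmetic involved. Here $[f] = \{h \in A^I \mid \{j \in I \mid h(j) = f(j)\} \in \mathcal{U}\}$, as usual for the ultrapower $\mathcal{A}^I / \mathcal{U}$.

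The construction works coordinatewise. For each $j \in I$, let $\pi_j : A \to A$ be the transposition exchanging $f(j)$ and $g(j)$, taken to be the identity when $f(j) = g(j)$. Each $\pi_j$ is a permutation of $A$, it is its own inverse, and $\pi_j(f(j)) = g(j)$. Define $\Phi(h)(j) = \pi_j(h(j))$ for $h \in A^I$ and $j \in I$. Since $\pi_j \circ \pi_j = \mathrm{id}_A$ for every $j$, we get $\Phi \circ \Phi = \mathrm{id}_{A^I}$, so $\Phi$ is a bijection of $A^I$. No choice is needed here, because the $\pi_j$ are determined outright by $f$ and $g$.

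The key step is that $\Phi$ preserves agreement sets in the right way. For any $h \in A^I$, injectivity of each $\pi_j$ gives
\begin{center}
$\{j \in I \mid \Phi(h)(j) = g(j)\} = \{j \in I \mid \pi_j(h(j)) = \pi_j(f(j))\} = \{j \in I \mid h(j) = f(j)\}$.
\end{center}
Hence $h \in [f]$ iff $\Phi(h) \in [g]$. So $\Phi$ restricts to a map $[f] \to [g]$, and since $\Phi$ is an involution it also restricts to a map $[g] \to [f]$, inverse to the first. Therefore $|[f]| = |[g]|$.

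I do not expect a genuine obstacle. The only point needing care is to verify $h \in [f] \Leftrightarrow \Phi(h) \in [g]$ in both directions, rather than just the forward inclusion; the displayed equality of index sets gives both at once. I also note that the infiniteness assumption on $\mathcal{A}$ plays no role in this argument. It is presumably included with later uses in mind, such as comparing the class sizes with $\kappa$ in Lemma \ref{lemma filters equivalences 2}.
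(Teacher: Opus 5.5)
Your proof is correct, and it takes a different, more direct route than the paper.

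The paper's argument goes as follows. It stratifies $[f]$ by the agreement set $\{i \in I \mid h(i) = f(i)\}$, using a total order on $\mathcal{P}(I)$ and a preorder on $[f]$. It enumerates each stratum and sends it injectively into the set $S_\beta$ of functions whose agreement set with $g$ is that same set of indices. The needed inequality $|c_\beta| \leq |S_\beta|$ is justified only by the remark that the elements ``correspond to analogous permutations''. The paper then appeals to an unspecified ``adequately modified construction'' for an injection $[g] \to [f]$, so that Cantor--Schr\"oder--Bernstein closes the argument.

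Your coordinatewise involution $\Phi$ does all of this at once. The identity $\{j \in I \mid \Phi(h)(j) = g(j)\} = \{j \in I \mid h(j) = f(j)\}$ shows that $\Phi$ carries each of the paper's strata for $f$ bijectively onto the corresponding stratum for $g$. That is exactly the cardinality claim the paper only gestures at. Because $\Phi$ is its own inverse, you obtain a bijection $[f] \to [g]$ outright, with no enumerations, no use of choice, and no Schr\"oder--Bernstein step.

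Your argument also shows the hypotheses are stronger than needed. The infiniteness of $\mathcal{A}$ is never used, and the paper's argument does not genuinely use it either. Nothing about $\mathcal{U}$ matters beyond membership of the common agreement set, so the same map shows that the equivalence classes of any reduced power $\mathcal{A}^I / F$ all have the same size.
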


\begin{proof}
If $|A| = 1$ the result is trivial, so suppose $|A| > 1$. We may build an injection from $[f]$ to $[g]$, and the other way around, in the following manner. Let $\langle \mathcal{P}(I), \leq \rangle$ be a total order. Let now $\langle [f], \preceq \rangle$ be a preorder on the members of the equivalence class $[f]$ such that $f$ is the least element, and for $p, q \neq f$, $p \preceq q$ iff $\{i \in I \mid p(i) = f(i)\} \leq \{i \in I \mid q(i) = f(i)\}$. Define $p \approx q$ iff $p \preceq q$ and $q \preceq p$, and write $\llbracket p \rrbracket$ for the equivalence class under $\approx$ of $p$. Let $|\{\llbracket p \rrbracket\}_{p \in |[f]|}| = \eta$, and $\langle \{\llbracket p \rrbracket\}_{p \in |[f]|}, < \rangle$ be the resulting total order on the equivalence classes under $\approx$, which we may then denote by the enumeration $\langle c_\alpha \rangle_{\alpha < \eta}$ (where each $c_\alpha$ is an equivalence class under $\approx$). For each $0 < \beta < \eta$, let $\langle d_{\beta, \alpha} \rangle_{\alpha < |c_\beta|}$ be an enumeration of the equivalence class $c_\beta$. For each $0 < \beta < \eta$ and $\alpha < |c_\beta|$, let $S_\beta$ be the collection of permutations in $A^I$ such that for each $h \in S$, $h(j) = g(j)$ if $j \in \{i \in I \mid f(i) = d_{\beta, \alpha}(i)\}$, and $h(j) \neq g(j)$ otherwise. Then, let $e : [f] \to [g]$ be a mapping such that $e(f) = g$, and which maps each $d_{\beta, \alpha}$ to a distinct element of $S_\beta$. Notice such mapping is possible since the elements $d_{\beta, \alpha}$ themselves correspond to analogous permutations, that is, $|c_\alpha| \leq |S_\beta|$. Notice also, by definition, $e(d_{\beta, \alpha}) \in [g]$ for any $0 < \beta < \eta$ and $0 \leq \alpha < |c_\beta|$. We now show $e$ is an injection. For $d_{\beta, \alpha}, d_{\delta, \gamma} \in [f]$, if $\alpha \neq \delta$, then $\{i \in I \mid f(i) = d_{\beta, \alpha}(i)\} \neq \{i \in I \mid f(i) = d_{\delta, \gamma}(i)\}$, so without loss of generality, we may assume there is $k \in I$ such that $f(k) = d_{\beta, \alpha}(k)$ but $f(k) \neq d_{\delta, \gamma}(k)$. By definition, that means $e(d_{\beta, \alpha})(k) = g(k)$ and $e_(d{\delta, \gamma})(k) \neq g(k)$, and so $e(d_{\beta, \alpha}) \neq e(d_{\delta, \gamma})$. If $\alpha = \delta$, then by definition $e(d_{\beta, \alpha})$ and $e(d_{\delta, \gamma})$ correspond to different permutations, and so are likewise distinct. Using an adequately modified construction we may build a similar injection in the other direction.
\end{proof}

\begin{corollary}\label{corollary equivalent constructions 2}
Let $\{\mathbb{C}_\alpha(\mathcal{A}, \mathbf{I})\}_{\alpha \leq \beta}$ be generated by a family of ultrafilters $\{\mathcal{U}_\alpha\}_{\alpha < \beta}$ respectively over $\{\mathfrak{C}_\alpha(A, \mathbf{I})\}_{\alpha < \beta}$, and $\kappa_\alpha$ be the size of the equivalence classes of $\mathbb{C}_\alpha(\mathcal{A}, \mathbf{I})$. If, for each $\alpha < \beta$, $\mathcal{U}_\alpha$ is $\kappa_\alpha$-complete, then there is a family of ultrafilters $\{\mathcal{U}'_\alpha\}_{\alpha < \beta}$ respectively over the index sets $\{\Upsilon_\alpha(\mathcal{A}, \mathbf{I})\}_{\alpha < \beta}$ such that if an ultrapower hierarchy $\{\Upsilon_\alpha(\mathcal{A}, \mathbf{I})\}_{\alpha \leq \beta}$ is generated by those parameters, then for any $\gamma \leq \beta$, $\Upsilon_\gamma(\mathcal{A}, \mathbf{I}) \cong \mathbb{C}_\gamma(\mathcal{A}, \mathbf{I})$.
\end{corollary}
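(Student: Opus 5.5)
The plan is to reduce the statement to Theorem \ref{theorem equivalent constructions 3}. That theorem needs, for each $\alpha < \beta$, a choice function $\theta : \mathbb{C}_\alpha(\mathcal{A}, \mathbf{I}) \to \mathfrak{C}_\alpha(A, \mathbf{I})$ on the $\sim_\alpha$-classes with $\theta[\mathbb{C}_\alpha(\mathcal{A}, \mathbf{I})] \in \mathcal{U}_\alpha$. Once such functions are in hand, Theorem \ref{theorem equivalent constructions 3} produces the family $\{\mathcal{U}'_\alpha\}_{\alpha < \beta}$ over $\{\Upsilon_\alpha(\mathcal{A}, \mathbf{I})\}_{\alpha < \beta}$ and the isomorphisms $\Upsilon_\gamma(\mathcal{A}, \mathbf{I}) \cong \mathbb{C}_\gamma(\mathcal{A}, \mathbf{I})$ for all $\gamma \leq \beta$. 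All the work therefore goes into showing the choice functions exist, and this is a contrapositive use of Lemma \ref{lemma filters equivalences 2}.

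Fix $\alpha < \beta$ and apply Lemma \ref{lemma filters equivalences 2} with $A = \mathfrak{C}_\alpha(A, \mathbf{I})$, $\equiv\ = \sim_\alpha$ and $\mathcal{U} = \mathcal{U}_\alpha$. The lemma's $\kappa$ is the size of the smallest equivalence class. To identify it with $\kappa_\alpha$, I would argue that all $\sim_\alpha$-classes have the same size. For successor $\alpha$, I would transport Proposition \ref{lemma equivalence classes size} through the isomorphism of the quotient with an ultrapower, obtained inductively from the lower levels; alternatively, I would read the hypothesis ``$\kappa_\alpha$ is the size of the equivalence classes'' as already building in this uniformity. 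Suppose no choice function has range in $\mathcal{U}_\alpha$. Then Lemma \ref{lemma filters equivalences 2} makes $\mathcal{U}_\alpha$ $\kappa_\alpha$-incomplete, contradicting the hypothesis. So for each $\alpha$ there is some $\theta_\alpha$ with $\theta_\alpha[\mathbb{C}_\alpha(\mathcal{A}, \mathbf{I})] \in \mathcal{U}_\alpha$.

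The main obstacle is the completeness convention. Lemma \ref{lemma filters equivalences 2} intersects $\kappa$ many sets, so it refutes closure under intersections of $\kappa$-sized families. That is ``$\kappa^+$-complete'' under the usual convention, not merely ``$\kappa$-complete''. I would settle this by reading $\kappa_\alpha$-completeness as closure under intersections of $\kappa_\alpha$ many members, matching the family $\{A \setminus \theta_\tau[A/\!\equiv]\}_{\tau < \kappa}$ used in the lemma's proof. I would note explicitly that this is the reading needed.

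A secondary point is that Theorem \ref{theorem equivalent constructions 3} is stated for the whole family simultaneously. The choices $\theta_\alpha$ are made independently for each $\alpha$, using AC over $\alpha < \beta$, so the whole family can be chosen at once and plugged into it directly.
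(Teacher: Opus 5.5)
Your proposal is correct and is the paper's proof: the contrapositive of Lemma~\ref{lemma filters equivalences 2} gives, for each $\alpha<\beta$, a choice function $\theta$ with $\theta[\mathbb{C}_\alpha(\mathcal{A}, \mathbf{I})] \in \mathcal{U}_\alpha$, and Theorem~\ref{theorem equivalent constructions 3} finishes. Your remark that the lemma's argument only refutes closure under $\kappa_\alpha$-sized intersections, so ``$\kappa_\alpha$-complete'' must be read that way, is accurate and is not addressed in the paper; the one thing to discard is the option of transporting Proposition~\ref{lemma equivalence classes size} through $\mathbb{C}_\alpha(\mathcal{A}, \mathbf{I}) \cong \Upsilon_\alpha(\mathcal{A}, \mathbf{I})$, because that isomorphism only matches $\sim_\alpha$-classes with points and says nothing about the sizes of the classes themselves, which are subsets of $\mathfrak{C}_\alpha(A, \mathbf{I})$ mixing ranks (at level $1$ the class of $a \in A$ is $\{a\} \cup \{f \in A^{I_0} \mid \{j \mid f(j) = a\} \in \mathcal{U}_0\}$) rather than ultrapower classes, so rely instead on reading uniformity into the hypothesis, as the paper implicitly does, or run the lemma with $\kappa_\alpha$ the supremum of the class sizes and surjective enumerations of length $\kappa_\alpha$, which makes uniformity unnecessary.
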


\begin{proof}
Since $\mathcal{U}_\alpha$ is $\kappa_\alpha$-complete, by Lemma \ref{lemma filters equivalences 2}, there is a choice function $\theta : \mathbb{C}_\alpha(\mathcal{A}, \mathbf{I}) \to \mathfrak{C}_\alpha(A, \mathbf{I})$ such that $\theta[\mathbb{C}_\alpha(\mathcal{A}, \mathbf{I})] \in \mathcal{U}_\alpha$. The rest follows from Theorem \ref{theorem equivalent constructions 3}.
\end{proof}

Therefore, the above result shows that if an ultrapower hierarchy of a structure is such that each successor level is constructed by having the preceding level as its index set, then there is a hierarchy of cumulative powers with each successor level also constructed by having the preceding level as its index set which is isomorphic to it. Likewise, if the strongly cumulative power hierarchy generated with such choice of index sets quotiented by $\sim$ is such that the equivalence relation of each successor level is constructed by taking an ultrafilter $\mathcal{U}$ such that there is a choice function $\theta$ on the equivalence classes of $\mathbb{C}_\beta(\mathcal{A}, \mathbf{I})$ with $\theta[\mathbb{C}_\beta(\mathcal{A}, \mathbf{I})] \in \mathcal{U}$, then there is an ultrapower hierarchy isomorphic to it -- that is, there is an equivalent construction of the so quotiented strongly cumulative power hierarchy by the ultrapower hierarchy. The above results  thus pose the following question:

\medskip

\begin{itemize}
\item[] \textbf{Open Problem:} Let $\{\mathbb{C}_\alpha(\mathcal{A}, \mathbf{I})\}_{\alpha \leq \beta}$ be generated by a family of ultrafilters $\{\mathcal{U}_\alpha\}_{\alpha < \beta}$ respectively over $\{\mathfrak{C}_\alpha(A, \mathbf{I})\}_{\alpha < \beta}$, and $\kappa_\alpha$ be the size of the equivalence classes of $\mathbb{C}_\alpha(\mathcal{A}, \mathbf{I})$. Let also $\{\Upsilon_\alpha(\mathcal{A}, \mathbf{I})\}_{\alpha \leq \beta}$ be generated by a family of ultrafilters $\{\mathcal{U}'_\alpha\}_{\alpha < \beta}$ respectively over the index sets $\{\Upsilon_\alpha(\mathcal{A}, \mathbf{I})\}_{\alpha < \beta}$. Can there be $\gamma < \beta$ such that $\mathcal{U}'_\gamma$ is $\kappa_\gamma$-incomplete, while $\mathbb{C}_\beta(\mathcal{A}, \mathbf{I}) \cong \Upsilon_\beta(\mathcal{A}, \mathbf{I})$?
\end{itemize}

\medskip

\noindent
We conjecture that, in fact, such a restriction on the ultrafilters is not necessary, and so that the similarities between both constructions are such that the quotiented $\mathcal{F}$-hierarchies and ultrapower hierarchies constructed by having the previous levels as the index sets of the successor levels are equivalent, so that any structure obtained by one of the constructions may be obtained by the other in a straightforward manner.

Now, given an ultrapower hierarchy, notice regardless of the choice of family of index sets and ultrafilters, by  \L o\'{s}'s theorem we immediately have:

\begin{proposition}\label{proposition los}
For any $\alpha \leq \beta \in \mathbf{On}$, $\Upsilon_\alpha(\mathcal{A}, \mathbf{I}) \preceq \Upsilon_\beta(\mathcal{A}, \mathbf{I})$, and therefore $\mathcal{A} \preceq \Upsilon_\beta(\mathcal{A})$.
\end{proposition}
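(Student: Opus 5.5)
The plan is to prove the elementary chain property level by level, by transfinite induction on $\beta$, and then compose. Concretely, I would show that for every $\alpha \leq \beta$ the canonical map $e^\alpha_\beta : \Upsilon_\alpha(\mathcal{A}, \mathbf{I}) \to \Upsilon_\beta(\mathcal{A}, \mathbf{I})$ of Definition \ref{def ultrapower hier} is an elementary embedding. Identifying $\Upsilon_\alpha(\mathcal{A}, \mathbf{I})$ with its image, as the paper does, this yields $\Upsilon_\alpha(\mathcal{A}, \mathbf{I}) \preceq \Upsilon_\beta(\mathcal{A}, \mathbf{I})$. The final claim $\mathcal{A} \preceq \Upsilon_\beta(\mathcal{A}, \mathbf{I})$ is then the special case $\alpha = 0$, since $\Upsilon_0(\mathcal{A}, \mathbf{I}) = \mathcal{A}$.

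The successor step is the classical one. For $\beta = \gamma^+$, the map $e^\gamma_{\gamma^+}$ sends $x$ to $\llbracket \overline{x} \rrbracket_{\mathcal{U}_\gamma}$, which is exactly the natural embedding of $\Upsilon_\gamma(\mathcal{A}, \mathbf{I})$ into its ultrapower $\Upsilon_\gamma(\mathcal{A}, \mathbf{I})^{I_\gamma}/\mathcal{U}_\gamma$. \L o\'{s}'s theorem \cite{los} then gives, for any formula $\varphi$ and parameters $x_1, \ldots, x_n$, that $\Upsilon_{\gamma^+}(\mathcal{A}, \mathbf{I}) \models \varphi[\llbracket\overline{x_1}\rrbracket, \ldots, \llbracket\overline{x_n}\rrbracket]$ holds iff $\{j \in I_\gamma \mid \Upsilon_\gamma(\mathcal{A}, \mathbf{I}) \models \varphi[x_1, \ldots, x_n]\} \in \mathcal{U}_\gamma$. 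That index set is either all of $I_\gamma$ or empty, so the natural embedding is elementary.

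Since elementary embeddings compose, $e^\alpha_{\gamma^+} = e^\gamma_{\gamma^+} \circ e^\alpha_\gamma$ is elementary whenever $e^\alpha_\gamma$ is. For this I need the embeddings to be coherent, i.e. $e^\alpha_{\gamma^+}(x) = \llbracket \overline{e^\alpha_\gamma(x)}\rrbracket$, which is exactly the inductive clause defining them.

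The limit step is where the real work lies. Here $\Upsilon_\lambda(\mathcal{A}, \mathbf{I})$ is the direct limit of a directed system in which, by the induction hypothesis, every connecting map $e^\alpha_\gamma$ with $\alpha \leq \gamma < \lambda$ is elementary. I would invoke, or reprove by induction on $\varphi$, the elementary chain / direct limit theorem: each $e^\alpha_\lambda$ into the direct limit is elementary.

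The proof of that theorem has three parts:
\begin{itemize}
\item \emph{Atomic formulas.} These follow from the definition of the interpretations at $\lambda$, where symbols are evaluated at the stage $\delta$ determined by the levels of the arguments.
\item \emph{Connectives.} These are routine.
\item \emph{Existential quantifier.} Suppose $\Upsilon_\lambda(\mathcal{A}, \mathbf{I}) \models \exists y \psi[\llbracket \bar a\rrbracket]$, with witness $\llbracket b \rrbracket$. Since the witness and parameters have representatives at some common stage $\delta < \lambda$, the induction hypothesis on $\psi$ pulls the witness down to $\Upsilon_\delta(\mathcal{A}, \mathbf{I})$. Elementarity of $e^\alpha_\delta$ then transfers the existential statement back to $\Upsilon_\alpha(\mathcal{A}, \mathbf{I})$.
\end{itemize}

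The main obstacle I anticipate is bookkeeping rather than mathematics. I must check that the limit interpretations in Definition \ref{def ultrapower hier} are well defined, i.e. independent of the chosen stage $\delta$ and representatives. This reduces to the embeddings $e^\alpha_\gamma$ being homomorphisms compatible with $\approx_\lambda$, which the successor step already gives.

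The case $\beta = \mathbf{On}$ is handled the same way. Any sentence with parameters involves only set-many elements, all living in some $\Upsilon_\delta(\mathcal{A}, \mathbf{I})$, so the statement reduces to the set-sized levels.
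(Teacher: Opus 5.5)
Your proposal is correct and follows the paper's route. The paper simply appeals to \L o\'{s}'s theorem, which is your successor step: the natural embedding $x \mapsto \llbracket \overline{x} \rrbracket_{\mathcal{U}_\gamma}$ is elementary, and this is composed along the coherent maps $e^\alpha_\beta$. Your limit-stage argument via the elementary chain (direct limit) theorem supplies a step the paper leaves implicit, since \L o\'{s}'s theorem by itself says nothing about the direct limits $\Upsilon_\lambda(\mathcal{A}, \mathbf{I})$.
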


Of course, the properties of each level of this hierarchy depend on the choice of generating structure and ultrafilter of each level. Consider the case we want saturation of the structures.

For a set $X$, let $\mathcal{P}^\omega(X)$ denote the set of cofinite subsets of $X$.

\begin{definition}[$\kappa$-good ultrafilter]
Let $\mathcal{U}$ be an ultrafilter and $\kappa$ a cardinal. $\mathcal{U}$ is said to be \emph{$\kappa$-good} if for every $\tau < \kappa$ and monotonic function $H : \mathcal{P}^\omega(\tau) \to \mathcal{U}$ there is a multiplicative function $H' : \mathcal{P}^\omega(\tau) \to \mathcal{U}$ such that for any $x \in \mathcal{P}^\omega(\tau)$, $H'(x) \subseteq H(x)$.
\end{definition}

A filter is \emph{countably incomplete} if it is not $\aleph_1$-complete. In the context of saturation, countably incomplete $\kappa$-good ultrafilters are of interest for the following result.

\begin{proposition}[\cite{keisler1970}, p. 180]\label{proposition good saturation}
Let $\mathcal{U}$ be a countably incomplete $\kappa^+$-good ultrafilter over $I$. Then, for any structure $\mathcal{A}$, $|\mathcal{A}^I / \mathcal{U}| = 2^\kappa$ and $\mathcal{A}^I / \mathcal{U}$ is $\kappa^+ $-saturated.
\end{proposition}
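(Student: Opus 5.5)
The plan is to follow Keisler's classical argument, adapted to the conventions of this paper. Throughout I read $\mathcal{P}^\omega(\tau)$ as the set of \emph{finite} subsets of $\tau$, which is what the goodness definition needs. I read ``monotonic'' as anti-monotone ($x \subseteq y$ implies $H(y) \subseteq H(x)$), and ``multiplicative'' as $H'(x \cup y) = H'(x) \cap H'(y)$.

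First step: fix the countable incompleteness. Since $\mathcal{U}$ is not $\aleph_1$-complete, there is a decreasing sequence $I = I_0 \supseteq I_1 \supseteq \cdots$ of members of $\mathcal{U}$ with $\bigcap_n I_n = \varnothing$.

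Second step: saturation. Take $X \subseteq A^I/\mathcal{U}$ with $|X| \leq \kappa$ and a type $\Phi(x)$ over $X$. Since the language is finite, we may enumerate $\Phi$ as $\{\varphi_\alpha\}_{\alpha < \tau}$ with $\tau \leq \kappa < \kappa^+$, fixing representatives for the finitely many parameters of each $\varphi_\alpha$. For $s \in \mathcal{P}^\omega(\tau)$ define
\begin{center}
$H(s) = I_{|s|} \cap \{ i \in I \mid \mathcal{A} \models \exists x \bigwedge_{\alpha \in s} \varphi_\alpha(x, \text{params}(i)) \}$.
\end{center}
The set $\{\varphi_\alpha\}_{\alpha \in s}$ is finitely satisfiable in the ultrapower, so by \L o\'{s}' theorem the second set lies in $\mathcal{U}$, and hence $H(s) \in \mathcal{U}$. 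Moreover $H$ is anti-monotone. By $\kappa^+$-goodness there is a multiplicative $H' \leq H$.

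For each $i \in I$ let $s(i) = \{\alpha \mid i \in H'(\{\alpha\})\}$. Every finite $t \subseteq s(i)$ satisfies $i \in H'(t) \subseteq I_{|t|}$, and since $\bigcap_n I_n = \varnothing$ this bounds $|t|$. So $s(i)$ is finite, and $i \in H'(s(i)) \subseteq H(s(i))$. We can therefore choose $f(i) \in A$ witnessing $\bigwedge_{\alpha \in s(i)} \varphi_\alpha$ at coordinate $i$; if $s(i) = \varnothing$, take $f(i)$ arbitrary. Then for every $\alpha$ we have $\{ i \mid \mathcal{A} \models \varphi_\alpha(f(i), \dots) \} \supseteq H'(\{\alpha\}) \in \mathcal{U}$, so by \L o\'{s} $[f]$ realizes $\Phi$. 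This gives $\kappa^+$-saturation.

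Third step: cardinality, which I expect to be the main obstacle. For the lower bound I would use countable incompleteness together with goodness to produce a $\kappa$-regular family in $\mathcal{U}$, that is, sets $\{X_\xi\}_{\xi<\kappa}$ such that each $i$ lies in only finitely many of them. I would obtain this by applying goodness to $H(s) = I_{|s|}$ and taking $X_\xi = H'(\{\xi\})$, which is exactly the finiteness argument above. Standard encoding then gives $|A^I/\mathcal{U}| \geq |A|^\kappa \geq 2^\kappa$ for $|A| \geq 2$: send $g \in 2^\kappa$ (or $A^\kappa$) to the function $i \mapsto g \restriction \{\xi \mid i \in X_\xi\}$, viewed as an element of a structure of size $|A|^{<\omega} = |A|$.

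The upper bound $|A^I/\mathcal{U}| \leq |A|^{|I|}$ only yields $2^\kappa$ when $|A| \leq 2^\kappa$ and $|I| = \kappa$. So the equality as stated must be read under Keisler's standing hypotheses ($|I| = \kappa$ and $\mathcal{A}$ infinite of size at most $2^\kappa$), and I would state them explicitly. Alternatively one can use saturation: a $\kappa^+$-saturated infinite structure realizes $2^\kappa$ types over a countable-by-$\kappa$ parameter set, which gives the lower bound independently.
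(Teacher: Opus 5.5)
The paper gives no proof of this proposition, only the citation to Keisler, and your argument is the classical one behind that citation. Both main steps are correct: the saturation step (anti-monotone $H$ built from the sets $I_{|s|}$, finiteness of $s(i)$ from $\bigcap_n I_n=\varnothing$, coordinatewise choice of witnesses) and the extraction of a $\kappa$-regular family from goodness applied to $s\mapsto I_{|s|}$. Reading $\mathcal{P}^\omega(\tau)$ as the finite subsets is also right. You are right that the statement as printed is too strong. It needs $|I|=\kappa$, $\aleph_0\le |A|\le 2^\kappa$, and a language of size at most $\kappa$. The last is needed because for larger languages saturation genuinely fails, so your ``since the language is finite'' is a real hypothesis, not a convenience.

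Two claims in your third step need correcting.

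First, the bound $|A^I/\mathcal{U}|\ge |A|^\kappa\ge 2^\kappa$ is false for finite $A$ with $|A|\ge 2$, since then $\mathcal{A}^I/\mathcal{U}\cong\mathcal{A}$. Your encoding works only because $|A^{<\omega}|=|A|$ (or because $2^{<\omega}$ injects into $A$), and that requires $A$ infinite. So the range should read $|A|\ge\aleph_0$, in line with your final paragraph.

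Second, the alternative route via saturation rests on a false principle: a $\kappa^+$-saturated infinite structure need not have $2^\kappa$ elements. In the empty language a set of size $\kappa^+$ is $\kappa^+$-saturated, because there are only $\kappa$ complete $1$-types over $\kappa$ parameters, and $\kappa^+<2^\kappa$ is consistent. The route can be repaired, since the universe of $A^I/\mathcal{U}$ does not depend on the structure. Expand $A$ to a random graph; its ultrapower is $\kappa^+$-saturated by your second step, so it realizes all $2^\kappa$ adjacency patterns over any $\kappa$ vertices. However, your regularity argument already gives the lower bound, so the simplest fix is to drop the alternative.
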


Chang and Keisler proved in \cite{changkeisler} (p. 387), for any set $I$ of cardinality $\kappa \geq \aleph_0$, the existence of a countably incomplete $\kappa^+$-good ultrafilter over $I$ under the assumption of the Generalized Continuum Hypothesis (GCH). Later, Kunen improved the result by showing the existence of such an ultrafilter without GCH.

\begin{proposition}[\cite{kunen}, p. 304]\label{proposition good ultrafilter}
For any $I$ with $|I| = \kappa \geq \aleph_0$ there is a countably incomplete $\kappa^+$-good ultrafilter over $I$.
\end{proposition}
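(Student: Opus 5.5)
The plan is to follow Kunen's combinatorial proof, which builds the ultrafilter by a transfinite recursion of length $2^\kappa$ and uses a large independent family of functions as a supply of fresh ``coordinates''. Fix $I$ with $|I| = \kappa \geq \aleph_0$; without loss of generality $I$ is the set of pairs $\langle s, h \rangle$ with $s \in \mathcal{P}^\omega(\kappa)$ finite and $h$ a function from finite subsets of $\mathcal{P}(s)$ into $\kappa$, which has cardinality $\kappa$.

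\emph{Step 1: an independent family.} Using this coding (the Engelking--Kar\l owicz construction), I will produce a family $\mathcal{G}$ of $2^\kappa$ functions $g : I \to \kappa$ that is \emph{independent}. This means that for any finitely many distinct $g_1, \dots, g_n \in \mathcal{G}$ and any $\xi_1, \dots, \xi_n < \kappa$, the set $\{i \in I \mid g_1(i) = \xi_1, \dots, g_n(i) = \xi_n\}$ is nonempty. I will then fix a filter $D_0$ that is countably incomplete, for instance generated by $\{I \setminus I_n\}_{n<\omega}$ for a partition $\{I_n\}_{n<\omega}$ of $I$ into countably many pieces. I must check that $\mathcal{G}$ remains independent modulo $D_0$, meaning every such finite intersection meets every member of $D_0$. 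I expect to arrange this by making the coding uniform across the pieces $I_n$.

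\emph{Step 2: the recursion.} Enumerate in length $2^\kappa$ all subsets of $I$, and all pairs $(\tau, H)$ with $\tau < \kappa^+$ and $H : \mathcal{P}^\omega(\tau) \to \mathcal{P}(I)$ monotonic (that is, order reversing). There are $(2^\kappa)^\kappa = 2^\kappa$ of these, with each task repeated cofinally often. I will build increasing filters $D_\alpha$ and decreasing families $\mathcal{G}_\alpha \subseteq \mathcal{G}$ maintaining the following invariants: $\mathcal{G}_\alpha$ is independent modulo $D_\alpha$, $|\mathcal{G} \setminus \mathcal{G}_\alpha| \leq |\alpha| \cdot \kappa$, and each $D_\alpha$ is generated by $D_0$ together with $\leq |\alpha| \cdot \kappa$ sets. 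At limits, take unions.

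At a successor stage handling a subset $X$, add $X$ or its complement. One of the two keeps $\mathcal{G}_\alpha$ independent modulo the new filter, possibly after discarding at most $\kappa$ functions; this is the standard lemma, proved by considering the filter generated by $D_\alpha$ together with all sets $g^{-1}(\xi)$ for $g$ in a small subfamily.

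At a stage handling $(\tau, H)$ with $\operatorname{ran}(H) \subseteq D_\alpha$, pick one unused $g \in \mathcal{G}_\alpha$ (if $\tau = \kappa$, pick $\kappa$ of them, one per coordinate). Define
\begin{center}
$H'(x) = \bigcap_{\eta \in x} \{i \in I \mid \eta \in \text{the set coded by } g(i)\} \cap H(x)$,
\end{center}
so that $H'$ is multiplicative by construction. Then add to the filter the sets ``$g(i)$ codes a finite set $s$ with $i \in H(s)$'' that guarantee $H'(x) \in D_{\alpha+1}$, and remove $g$ from $\mathcal{G}$. I will verify independence modulo $D_{\alpha+1}$ of the remaining $\mathcal{G}_{\alpha+1}$ by using the independence of $g$ from the others.

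\emph{Step 3: conclusion.} Let $D = \bigcup_\alpha D_\alpha$. Every set was decided, so $D$ is an ultrafilter. It extends $D_0$, so it is countably incomplete. Every monotonic $H$ into $D$ has range contained in some $D_\alpha$ by regularity considerations ($\operatorname{cf}(2^\kappa) > \kappa$), so $H$ was refined at a later stage. Hence $D$ is $\kappa^+$-good.

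The main obstacle is the goodness step in Step 2. One must show that adding the multiplicative refinement does not destroy independence of the remaining $\mathcal{G}_{\alpha+1}$ modulo the enlarged filter, and that the bookkeeping keeps $\mathcal{G}_\alpha$ nonempty through all $2^\kappa$ stages. I would first attempt the case where $H$ is coded by a single function $g$ with values coding finite subsets of $\tau$, and reduce $\tau = \kappa$ to that case.
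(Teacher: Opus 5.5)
Your plan is Kunen's own argument. The paper gives no proof of this statement, only the citation, so the comparison is with Kunen's proof: an Engelking--Kar\l owicz independent family of $2^\kappa$ functions $I \to \kappa$, a countably incomplete base filter modulo which the family stays independent, and a recursion of length $2^\kappa$ that decides every set and refines every monotone map while spending few functions.

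The surrounding bookkeeping is sound. The deciding step in fact costs only finitely many functions: if $Y \cap X \cap \bigcap_{k \le n} g_k^{-1}(\xi_k) = \varnothing$ for some $Y \in D_\alpha$, discard $g_1, \dots, g_n$ and add $I \setminus X$. Nonemptiness of $\mathcal{G}_\alpha$ is immediate from $|\alpha| \cdot \kappa < 2^\kappa$. However, the refinement step, which is the heart of the proof, fails as you wrote it.

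Write $s_\zeta$ for the finite set coded by $\zeta < \kappa$. Your $H'(x) = \{i \mid x \subseteq s_{g(i)}\} \cap H(x)$ is not multiplicative. The set $H'(x) \cap H'(y)$ carries the factor $H(x) \cap H(y)$, while $H'(x \cup y)$ carries $H(x \cup y)$, and monotonicity only gives $H(x \cup y) \subseteq H(x) \cap H(y)$. Any $i$ with $x \cup y \subseteq s_{g(i)}$ and $i \in (H(x) \cap H(y)) \setminus H(x \cup y)$ lies in $H'(x) \cap H'(y)$ but not in $H'(x \cup y)$, and goodness requires exact equality.

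The missing trick is to evaluate $H$ at the coded set rather than at $x$. Put $Z = \{i \mid i \in H(s_{g(i)})\}$, the very set you planned to add to the filter, and define $H'(x) = \{i \in Z \mid x \subseteq s_{g(i)}\}$; this is just your $H'(x) \cap Z$, with the factor $H(x)$ now redundant. It is multiplicative on the nose. It also satisfies $H'(x) \subseteq H(x)$, since $x \subseteq s_{g(i)}$ gives $H(s_{g(i)}) \subseteq H(x)$. Then let $D_{\alpha+1}$ be generated by $D_\alpha$, $Z$ and the sets $\{i \mid \eta \in s_{g(i)}\}$ for $\eta < \tau$.

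The independence check you deferred is then one line. Take $Y \in D_\alpha$, a finite $t \subseteq \tau$, distinct $g_1, \dots, g_n \in \mathcal{G}_\alpha \setminus \{g\}$ with values $\xi_1, \dots, \xi_n$, and $\zeta$ with $s_\zeta = t$. The set $Y \cap H(t) \cap g^{-1}(\zeta) \cap \bigcap_k g_k^{-1}(\xi_k)$ is nonempty because $H(t) \in D_\alpha$. Each of its points lies in $Z$ and in $\{i \mid \eta \in s_{g(i)}\}$ for every $\eta \in t$.

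Finally, a single $g$ handles every $\tau < \kappa^+$, including $\tau = \kappa$, because $|\mathcal{P}^\omega(\tau)| \le \kappa$ and you can code through a surjection of $\kappa$ onto $\mathcal{P}^\omega(\tau)$. The ``one function per coordinate'' device is therefore unnecessary, and you did not indicate how it would produce multiplicativity.
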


Take now the hierarchy previously presented, and let $|A| \geq \aleph_0$. By Propositions \ref{proposition good ultrafilter} and Proposition \ref{proposition good saturation}, we may therefore obtain the following:

\begin{corollary}\label{corollary saturation}
Let $|A| = \aleph_\alpha$, and for each $\alpha \in \mathbf{On}$ let $\Upsilon_{\alpha^+}(\mathcal{A}, \mathbf{I})$ be constructed by taking a countably incomplete $|\Upsilon_\alpha(\mathcal{A}, \mathbf{I})|^+$-good ultrafilter over $\Upsilon_\alpha(\mathcal{A}, \mathbf{I})$. Then, for each $\beta > 0$, $|\Upsilon_{\beta^+}(\mathcal{A}, \mathbf{I})| = \beth_{\beta^+}(|A|)$ and $\Upsilon_{\beta^+}(\mathcal{A}, \mathbf{I})$ is $\aleph_{\alpha+\beta^+}$-saturated, and for a limit $\lambda$, $|\Upsilon_\lambda(\mathcal{A}, \mathbf{I})| = sup\{2^{|\Upsilon_\alpha(\mathcal{A})|} \mid \alpha < \lambda\}$ and $\Upsilon_\lambda(\mathcal{A}, \mathbf{I})$ is $\aleph_{\alpha+\lambda}$-saturated. Particularly, if $A$ is countable, that means $\Upsilon_\beta(\mathcal{A}, \mathbf{I})$ is $\aleph_\beta$-saturated and $|\Upsilon_\beta(\mathcal{A}, \mathbf{I})| = \beth_\beta$.
\end{corollary}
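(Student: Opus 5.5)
We argue by transfinite induction on the level $\beta$, tracking two quantities simultaneously: the cardinality $|\Upsilon_\beta(\mathcal{A}, \mathbf{I})|$ and the degree of saturation of $\Upsilon_\beta(\mathcal{A}, \mathbf{I})$. The engine of the successor step is the combination of Proposition \ref{proposition good ultrafilter} and Proposition \ref{proposition good saturation}. At a successor stage, set $\kappa_\beta = |\Upsilon_\beta(\mathcal{A}, \mathbf{I})|$, which is infinite since $|A| \geq \aleph_0$ and the levels only grow by Proposition \ref{proposition los}. Proposition \ref{proposition good ultrafilter} supplies a countably incomplete $\kappa_\beta^+$-good ultrafilter $\mathcal{U}_\beta$ over the index set $\Upsilon_\beta(\mathcal{A}, \mathbf{I})$. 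Proposition \ref{proposition good saturation} then yields at once $|\Upsilon_{\beta^+}(\mathcal{A}, \mathbf{I})| = 2^{\kappa_\beta}$ and that $\Upsilon_{\beta^+}(\mathcal{A}, \mathbf{I})$ is $\kappa_\beta^+$-saturated. If inductively $\kappa_\beta = \beth_\beta(|A|)$, this gives $\beth_{\beta^+}(|A|)$ for the cardinality.

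For the saturation index, we will show inductively that $\kappa_\beta \geq \aleph_{\alpha+\beta}$, where $|A| = \aleph_\alpha$. This follows since $\beth_\beta(\aleph_\alpha) \geq \aleph_{\alpha+\beta}$, by a routine induction using $2^\kappa \geq \kappa^+$ at successors and suprema at limits. Hence $\kappa_\beta^+ \geq \aleph_{\alpha+\beta^+}$, so $\kappa_\beta^+$-saturation implies $\aleph_{\alpha+\beta^+}$-saturation, as stated.

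At limit $\lambda$, $\Upsilon_\lambda(\mathcal{A}, \mathbf{I})$ is the direct limit along the elementary chain of Proposition \ref{proposition los}. Its cardinality is therefore the supremum of the cardinalities of the earlier levels, which is $\mathrm{sup}\{2^{|\Upsilon_\gamma(\mathcal{A}, \mathbf{I})|} \mid \gamma < \lambda\}$, because successor levels are cofinal below $\lambda$.

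The saturation claim at limits is the step I expect to be the main obstacle. A type over a parameter set $X$ with $|X| < \aleph_{\alpha+\lambda}$ need not live in any earlier level when $\mathrm{cf}(\lambda)$ is small. The plan is to treat the case in which $X \subseteq \Upsilon_\gamma(\mathcal{A}, \mathbf{I})$ for some $\gamma < \lambda$; this holds automatically when $|X| < \mathrm{cf}(\lambda)$, and in general one may need regularity of $\aleph_{\alpha+\lambda}$, as in Theorem \ref{theorem initial fragment}. In that case, pick $\gamma' \geq \gamma$ with $|X| < \aleph_{\alpha+\gamma'^+}$. The type $p$ over $X$ in $\Upsilon_\lambda$ is then also a type over $X$ in $\Upsilon_{\gamma'^+}$, by elementarity of the chain (Proposition \ref{proposition los}). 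It is realized in $\Upsilon_{\gamma'^+}$ by the successor step, and the realization persists into $\Upsilon_\lambda$ again by elementarity.

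Where this cofinality issue cannot be avoided, I would either read the limit clause as $\aleph_{\alpha+\gamma}$-saturation for every $\gamma < \lambda$, or add the regularity hypothesis. The countable case follows by taking $\alpha = 0$, which gives $\beth_\beta$ and $\aleph_\beta$.
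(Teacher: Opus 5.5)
Your successor step and your cardinality computation are correct, and they follow the paper's route exactly. The paper gives no argument beyond applying Proposition \ref{proposition good ultrafilter} and Proposition \ref{proposition good saturation} level by level. Your estimate $\beth_\beta(\aleph_\alpha)\geq\aleph_{\alpha+\beta}$ supplies the step, left implicit in the paper, that turns $\kappa_\beta^+$-saturation into $\aleph_{\alpha+\beta^+}$-saturation. (Keisler's theorem also needs $|\sigma|\leq\kappa_\beta$; both you and the paper leave this tacit.)

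\textbf{The gap is at limit stages, and it cannot be closed as stated.} Your suspicion there is justified, and the paper asserts the limit clause without argument. In fact the clause is false whenever $\aleph_{\alpha+\lambda}$ is singular.

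\emph{Counterexample.} Take $\mathcal{A}=(\mathbb{Q},<)$ and $\lambda=\omega$.
\begin{enumerate}
\item Since $\Upsilon_{n^+}(\mathcal{A},\mathbf{I})$ is $\kappa_n^+$-saturated, it contains some $b_n$ above the image of $\Upsilon_n(\mathcal{A},\mathbf{I})$.
\item Every element of the direct limit comes from some $\Upsilon_n(\mathcal{A},\mathbf{I})$, so $\langle b_n\rangle_{n<\omega}$ is cofinal in $\Upsilon_\omega(\mathcal{A},\mathbf{I})$.
\item Hence the finitely satisfiable type $\{x>b_n\mid n<\omega\}$ over a countable set is omitted.
\end{enumerate}
So $\Upsilon_\omega(\mathcal{A},\mathbf{I})$ is not even $\aleph_1$-saturated. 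This refutes both the limit clause and the ``particularly'' clause at $\beta=\omega$. More generally, for any ordered $\mathcal{A}$ without a maximum, $\Upsilon_\lambda(\mathcal{A},\mathbf{I})$ has cofinality $\mathrm{cf}(\lambda)=\mathrm{cf}(\aleph_{\alpha+\lambda})$. It is therefore $\aleph_{\alpha+\lambda}$-saturated only if $\aleph_{\alpha+\lambda}$ is regular.

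\emph{Your first fallback is no weakening.} We have $\aleph_{\alpha+\lambda}=\sup_{\gamma<\lambda}\aleph_{\alpha+\gamma}$, and $\kappa$-saturation only concerns parameter sets of size $<\kappa$. So being $\aleph_{\alpha+\gamma}$-saturated for every $\gamma<\lambda$ is literally the same as being $\aleph_{\alpha+\lambda}$-saturated, and it fails in the same example. What your limit argument actually proves is weaker: every type over a parameter set contained in a single earlier level is realized. Hence $\Upsilon_\lambda(\mathcal{A},\mathbf{I})$ is $\mathrm{cf}(\lambda)$-saturated.

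\emph{Your second fallback is the correct repair.} If $\aleph_{\alpha+\lambda}$ is regular, then $\mathrm{cf}(\lambda)=\aleph_{\alpha+\lambda}$. So every $X$ with $|X|<\aleph_{\alpha+\lambda}$ lies in some $\Upsilon_\gamma(\mathcal{A},\mathbf{I})$ with $|X|\leq\kappa_\gamma$. The type is then realized in $\Upsilon_{\gamma^+}(\mathcal{A},\mathbf{I})$, and by elementarity in $\Upsilon_\lambda(\mathcal{A},\mathbf{I})$. You should therefore not hedge. State the limit clause either under the regularity hypothesis, or with $\mathrm{cf}(\lambda)$ in place of $\aleph_{\alpha+\lambda}$, and say explicitly that the unrestricted version is false.
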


\begin{corollary}
(GCH) If $A$ is countable, for each $\beta > 0$, $\Upsilon_\beta(\mathcal{A}, \mathbf{I})$ is $\aleph_\beta$-saturated and $|\Upsilon_\beta(\mathcal{A}, \mathbf{I})| = \aleph_\beta$.
\end{corollary}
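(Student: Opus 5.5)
The plan is to read this as the specialization of Corollary \ref{corollary saturation} under GCH, and to prove it by transfinite induction on $\beta$. Throughout I use the ultrapower hierarchy of that corollary: $\Upsilon_{\alpha^+}(\mathcal{A}, \mathbf{I})$ is built from a countably incomplete $|\Upsilon_\alpha(\mathcal{A}, \mathbf{I})|^+$-good ultrafilter over $\Upsilon_\alpha(\mathcal{A}, \mathbf{I})$. Such ultrafilters exist by Proposition \ref{proposition good ultrafilter}. Since $A$ is countable, Corollary \ref{corollary saturation} already gives that $\Upsilon_\beta(\mathcal{A}, \mathbf{I})$ is $\aleph_\beta$-saturated and that $|\Upsilon_\beta(\mathcal{A}, \mathbf{I})| = \beth_\beta$. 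The remaining task is therefore the cardinal computation: under GCH, $\beth_\beta = \aleph_\beta$.

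For that identity I would argue by induction on $\beta$, directly on the hierarchy, so as not to lean on the compressed statement of the corollary.
\begin{itemize}
\item Base and first step: $|\Upsilon_0| = \aleph_0$, and $|\Upsilon_1| = 2^{\aleph_0} = \aleph_1$ by Proposition \ref{proposition good saturation} together with GCH.
\item Successor step: if $|\Upsilon_\beta| = \aleph_\beta$, then Proposition \ref{proposition good saturation} applied with $\kappa = \aleph_\beta$ gives $|\Upsilon_{\beta^+}| = 2^{\aleph_\beta} = \aleph_{\beta+1}$. The same proposition gives $\aleph_{\beta+1}$-saturation.
\item Limit step: $\Upsilon_\lambda$ is the direct limit of the $\Upsilon_\alpha$ for $\alpha < \lambda$. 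Its cardinality is at most $\sum_{\alpha<\lambda} \aleph_\alpha = \aleph_\lambda$, and at least $\sup_{\alpha<\lambda}\aleph_\alpha = \aleph_\lambda$, because the embeddings $e^\alpha_\lambda$ are injective.
\end{itemize}

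Saturation at a limit $\lambda$ is the main obstacle. Take a type over a parameter set $X$ with $|X| < \aleph_\lambda$. I would try to find some $\alpha < \lambda$ whose image $\Upsilon_\alpha$ contains $X$ and which is $|X|^+$-saturated, realize the type there, and then transfer it to $\Upsilon_\lambda$. The transfer works because $\Upsilon_\alpha \preceq \Upsilon_\lambda$ by Proposition \ref{proposition los}, so a realization in $\Upsilon_\alpha$ remains a realization in $\Upsilon_\lambda$.

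Finding a single level containing $X$ is the delicate point. If $\mathrm{cf}(\lambda) > |X|$, this holds, since each element of $X$ appears at some stage below $\lambda$ and fewer than $\mathrm{cf}(\lambda)$ stages are involved. When $\mathrm{cf}(\lambda)$ is small, $X$ may be cofinal in the levels. In that case I would fall back on the saturation bound exactly as stated in Corollary \ref{corollary saturation} and simply cite it for the limit case.

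Concretely, then, the proof will be a short citation of Corollary \ref{corollary saturation} for the saturation clause, followed by the GCH cardinal induction above to replace $\beth_\beta$ by $\aleph_\beta$.
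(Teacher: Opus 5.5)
Your route is the one the paper intends. The statement is given without proof, as Corollary \ref{corollary saturation} specialised by GCH so that $\beth_\beta=\aleph_\beta$. Your cardinal induction is fine, reading ``countable'' as countably infinite (for finite $A$ every level is isomorphic to $\mathcal{A}$). The genuine gap is the saturation clause at limit levels. You located the difficulty correctly: when $\mathrm{cf}(\lambda)<\aleph_\lambda$, a parameter set of size $<\aleph_\lambda$ can be cofinal in the levels. But you then close that case by citing the limit clause of Corollary \ref{corollary saturation}, which fails for exactly this reason. So the step cannot be repaired: the statement is false at such limits whenever $\mathcal{A}$ is, say, densely ordered.

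Here is a counterexample. Let $\mathcal{A}$ be a countable dense linear order without endpoints (e.g.\ $(\mathbb{Q},<)$, or the field of real algebraic numbers), take $\beta=\omega$, and identify each $\Upsilon_n$ with its image in $\Upsilon_\omega$. Pick $a_0<b_0$ in $\Upsilon_1$. Given $a_n<b_n$ in $\Upsilon_{n+1}$, the type $\{a_n<x\}\cup\{x<d : d\in\Upsilon_{n+1},\ d>a_n\}$ is finitely satisfiable and has $|\Upsilon_{n+1}|$ parameters. Since $\Upsilon_{n+2}$ is $|\Upsilon_{n+1}|^+$-saturated, some $c\in\Upsilon_{n+2}$ realizes it. Choose $c'\in\Upsilon_{n+2}$ with $a_n<c'<c$ and set $a_{n+1}=c'$, $b_{n+1}=c$. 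Then $a_n<a_{n+1}<b_{n+1}<b_n$, and no element of $\Upsilon_{n+1}$ lies in $(a_{n+1},b_{n+1})$.

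Every element of $\Upsilon_\omega$ lies in some $\Upsilon_m$ with $m\geq 1$, hence outside $(a_m,b_m)$. So the countable, finitely satisfiable type $\{a_n<x<b_n : n<\omega\}$ is omitted, and $\Upsilon_\omega$ is not even $\aleph_1$-saturated. This is an instance of the general fact that no dense linear order of singular cardinality $\kappa$ is $\kappa$-saturated, and under GCH $|\Upsilon_\omega|=\aleph_\omega$. Running the same construction along a cofinal sequence of length $\mathrm{cf}(\lambda)$ shows that $\Upsilon_\lambda$ is not $\aleph_\lambda$-saturated whenever $\mathrm{cf}(\lambda)<\aleph_\lambda$.

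What your argument does establish is the statement in two cases:
\begin{itemize}
\item successor $\beta$, directly from Proposition \ref{proposition good saturation};
\item limits $\lambda$ with $\mathrm{cf}(\lambda)=\aleph_\lambda$, i.e.\ $\aleph_\lambda$ weakly (and under GCH strongly) inaccessible.
\end{itemize}
In the second case every $X$ with $|X|<\aleph_\lambda$ lies in a single level, and your realize-and-transfer argument via Proposition \ref{proposition los} goes through. That is precisely why Theorem \ref{theorem initial fragment} has to assume inaccessibility.
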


Once again, given Theorem \ref{theorem equivalent constructions 1}, we have:

\begin{corollary}\label{corollary saturation 2}
Let $|A| = \aleph_\alpha$. If each ultrafilter in $\{\mathcal{U}_\alpha\}_{\alpha \in \mathbf{On}}$ defining $\sim_{\alpha^+}$ is a $|\Upsilon_\alpha(\mathcal{A}, \mathbf{I})|^+$-good ultrafilter respectively over $\{\Upsilon_\alpha(\mathcal{A}, \mathbf{I})\}_{\alpha \in \mathbf{On}}$, then for each $\beta > 0$, $\mathbb{C}_\beta(\mathcal{A}, \mathbf{I})$ is $\aleph_{\alpha+\beta^+}$-saturated, and for a limit $\lambda$, $|\mathbb{C}_\lambda(\mathcal{A}, \mathbf{I})| = sup\{2^{|\mathbb{C}_\alpha(\mathcal{A}, \mathbf{I})|} \mid \alpha < \lambda\}$ and $\mathbb{C}_\lambda(\mathcal{A}, \mathbf{I})$ is $\aleph_{\alpha+\lambda}$-saturated.
\end{corollary}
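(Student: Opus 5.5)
The plan is to transfer Corollary \ref{corollary saturation} from the ultrapower hierarchy to the quotiented cumulative powers through the isomorphisms established earlier. First fix the family of index sets as $\mathbf{I} = \{\Upsilon_\alpha(\mathcal{A}, \mathbf{I})\}_{\alpha \in \mathbf{On}}$; this is non-circular, as noted before Theorem \ref{theorem initial fragment}. For each $\alpha$, take $\mathcal{U}_\alpha$ to be a countably incomplete $|\Upsilon_\alpha(\mathcal{A}, \mathbf{I})|^+$-good ultrafilter over $\Upsilon_\alpha(\mathcal{A}, \mathbf{I})$. Such ultrafilters exist by Proposition \ref{proposition good ultrafilter}, since $|A| = \aleph_\alpha \geq \aleph_0$ makes every level infinite. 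Let this same family both generate the ultrapower hierarchy $\{\Upsilon_\beta(\mathcal{A}, \mathbf{I})\}$ and define the relations $\sim_{\alpha^+}$.

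Second, show by induction on $\gamma$ that $\mathbb{C}_\gamma(\mathcal{A}, \mathbf{I}) \cong \Upsilon_\gamma(\mathcal{A}, \mathbf{I})$. This is exactly the content of Theorem \ref{theorem equivalent constructions 1}:
\begin{itemize}
\item at successor stages, the isomorphism $e_{\alpha^+}$ comes from the choice-function map $w$;
\item at limit stages, Lemma \ref{lemma equivalent constructions} applies.
\end{itemize}
One point must be checked here. Theorem \ref{theorem equivalent constructions 1} needs $\sim_{\alpha^+}$ to be defined by an ultrafilter over the index set $I_\alpha$ of the cumulative hierarchy. Since $I_\alpha = \Upsilon_\alpha(\mathcal{A}, \mathbf{I})$ is literally the same set in both hierarchies, the hypothesis is met. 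If the intended reading instead puts the ultrafilters over $\mathfrak{C}_\alpha(A, \mathbf{I})$, I would invoke Theorem \ref{theorem equivalent constructions 2}, which transports each $\mathcal{U}_\alpha$ to a suitable $\mathcal{U}'_\alpha$ while preserving the isomorphism.

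Third, saturation and cardinality are invariant under isomorphism, so the conclusions of Corollary \ref{corollary saturation} transfer directly:
\begin{itemize}
\item for $\beta > 0$, $\mathbb{C}_{\beta^+}(\mathcal{A}, \mathbf{I})$ is $\aleph_{\alpha+\beta^+}$-saturated;
\item for limit $\lambda$, $\mathbb{C}_\lambda(\mathcal{A}, \mathbf{I})$ is $\aleph_{\alpha+\lambda}$-saturated and has cardinality $\sup\{2^{|\Upsilon_\delta(\mathcal{A}, \mathbf{I})|} \mid \delta < \lambda\}$.
\end{itemize}
Rewriting $|\Upsilon_\delta(\mathcal{A}, \mathbf{I})|$ as $|\mathbb{C}_\delta(\mathcal{A}, \mathbf{I})|$ via the isomorphisms gives the stated formula. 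The successor cardinality comes from Proposition \ref{proposition good saturation}: $|\Upsilon_{\delta^+}| = 2^{|\Upsilon_\delta|}$, since $\mathcal{U}_\delta$ is $|\Upsilon_\delta|^+$-good and countably incomplete over an index set of size $|\Upsilon_\delta|$.

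The main obstacle is the limit case of the saturation claim, which Corollary \ref{corollary saturation} asserts rather than proves. A type over fewer than $\aleph_{\alpha+\lambda}$ parameters in $\Upsilon_\lambda$ need not have all its parameters at a single earlier level unless the relevant cofinality allows it. I would first try to bound the parameter set inside some $\Upsilon_\delta$ with $\delta < \lambda$, using that each level is $\aleph_{\alpha+\delta^+}$-saturated and each level embeds elementarily in $\Upsilon_\lambda$ (Proposition \ref{proposition los}). I would then realize the type at level $\delta^+$ and push the witness up along the direct-limit embeddings. Where cofinality prevents this, I would accept the corollary's claim as given, since only transport along the isomorphism is required here.
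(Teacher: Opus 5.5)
Your proposal follows the paper's own argument: the corollary comes from transporting Corollary~\ref{corollary saturation} along the isomorphisms $\mathbb{C}_\gamma(\mathcal{A},\mathbf{I}) \cong \Upsilon_\gamma(\mathcal{A},\mathbf{I})$ of Theorem~\ref{theorem equivalent constructions 1} with $I_\alpha = \Upsilon_\alpha(\mathcal{A},\mathbf{I})$, and your adjustments (requiring countable incompleteness, and reading the successor clause as a statement about $\mathbb{C}_{\beta^+}(\mathcal{A},\mathbf{I})$) are exactly what that transport needs. Your unease about the limit clause is justified---for ordered $\mathcal{A}$ without endpoints, picking $x_k \in \Upsilon_{k+1}(\mathcal{A},\mathbf{I})$ above all of $\Upsilon_k(\mathcal{A},\mathbf{I})$ gives a countable cofinal subset of $\Upsilon_\omega(\mathcal{A},\mathbf{I})$, so it is not even $\aleph_1$-saturated---but this defect lies in Corollary~\ref{corollary saturation} itself, which the paper cites just as you do, and the clause can only hold when $\mathrm{cf}(\lambda)$ is large, as under the inaccessibility hypothesis of Theorem~\ref{theorem initial fragment}.
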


More specifically, if $\mathcal{A}$ is a real closed field, $|A| = \aleph_\alpha$, and for each $\mathcal{U}_\beta$ over $\Upsilon_\beta(\mathcal{A}, \mathbf{I})$ is countably incomplete and $|\Upsilon_\beta(\mathcal{A})|^+$-good, then each $\Upsilon_{\beta^+}(\mathcal{A}, \mathbf{I})$ -- and its corresponding $\mathbb{F}_{\beta^+}(\mathcal{A})$ -- is a $\aleph_{\alpha+\beta^+}$-saturated hyperreal field. Notice, however, that by Theorem \ref{theorem equivalent constructions 2} we may see there is always a straightforward choice of ultrafilter over $\mathfrak{C}_\beta(A, \mathbf{I})$ that makes $\mathbb{C}_{\beta^+}(\mathcal{A}, \mathbf{I})$ a hyperreal field; and furthermore, that by Theorem \ref{theorem initial fragment 2} and Corollary \ref{corollary initial fragment}, it is straightforward to make $\mathbb{C}_\lambda(\mathcal{A}, \mathbf{I})$ into a $\aleph_\lambda$-saturated hyperreal field, and likewise, $\mathbb{C}_\mathbf{On}(\mathcal{A}, \mathbf{I})$ into a maximally saturated real closed field without the need for countably incomplete good ultrafilters. In fact, consider the following notion.

\begin{definition}[Universally embedding]
An ordered field (or ordered abelian group, or ordered class) $\mathcal{A}$ is \emph{$\kappa$-universally embedding} if for each subfield (or abelian subgroup, or subclass) $\mathcal{B} \subseteq \mathcal{A}$ and extension field (or abelian group, or class) $\mathcal{B}' \supseteq \mathcal{B}$ such that $|B|, |B'| < \kappa$, there is $\mathcal{B}'' \subseteq \mathcal{A}$ such that $\mathcal{B}' \cong \mathcal{B}''$ and the isomorphism is an extension of identity on $\mathcal{A}$. 
\end{definition}

Introduced by Conway \cite{conway2000}, the surreal field $\mathbf{No}$ is a universally embedding real closed field that is a proper class. Besides the operations of multiplication and addition, the operations of division and exponentiation may be further defined \cite{conway2000}, \cite{driesehrlich2001}. The following results make the assumption of Global Choice, which we shall denote by writing its acronym \emph{GC}.

\begin{proposition}[\cite{conway2000}, p. 43]\label{proposition ehrlich 1}
(GC) $\mathbf{No}$ is the unique (up to isomorphism) $\aleph_\mathbf{On}$-universally embedding ordered field.
\end{proposition}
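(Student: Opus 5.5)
We prove the two halves separately: first that $\mathbf{No}$ is an $\aleph_\mathbf{On}$-universally embedding ordered field, then that any two such fields are isomorphic. Throughout we work in NBG with Global Choice, which gives a well-ordering of the whole universe (and hence of any proper class) in order type $\mathbf{On}$.

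\emph{Existence.} Take Conway's sign-expansion presentation of $\mathbf{No}$ with its simplicity ordering. Let $\mathcal{B} \subseteq \mathbf{No}$ be a set-sized subfield and $\mathcal{B}' \supseteq \mathcal{B}$ a set-sized ordered field extension.
\begin{itemize}
\item Well-order $B' \setminus B$ and embed its elements one at a time over $B$, taking unions at limit stages.
\item At a stage where the current image is the set-sized subfield $\mathcal{D}$, let $x \in B'$ be the next element. It determines a cut $(L,R)$ in $\mathcal{D}$.
\item Send $x$ to the simplest surreal $\{L \mid R\}$. This is possible because every pair of subsets $L<R$ of $\mathbf{No}$ is filled, i.e.\ $\mathbf{No}$ is an $\eta_\mathbf{On}$-class.
\end{itemize}
If $x$ is transcendental over $\mathcal{D}$, its order type over $\mathcal{D}$ is determined by the cut, so the field generated by $\mathcal{D}$ and $x$ embeds. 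If $x$ is algebraic over $\mathcal{D}$, use real closedness of $\mathbf{No}$ instead. First map $\mathcal{D}$ into its real closure inside $\mathbf{No}$. Then $x$ lies in the real closure of $\mathcal{D}$ (with the order induced from $\mathcal{B}'$). Uniqueness of the real closure over an ordered field supplies the embedding of $x$.

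\emph{Uniqueness.} Let $\mathcal{K}$ and $\mathcal{K}'$ both be $\aleph_\mathbf{On}$-universally embedding ordered fields. Using Global Choice, enumerate both in type $\mathbf{On}$ and run a back-and-forth argument through set-sized subfields.
\begin{itemize}
\item At each successor stage we have an isomorphism $f:\mathcal{D}\to\mathcal{D}'$ between set-sized subfields.
\item To add the next element $x$ of $\mathcal{K}$, take the field $\mathcal{D}(x)\supseteq\mathcal{D}$. Transport it along $f$ to an abstract extension of $\mathcal{D}'$.
\item Universal embedding of $\mathcal{K}'$ realizes this extension over $\mathcal{D}'$, extending $f$.
\item Do the same in the other direction with the roles of $\mathcal{K}$ and $\mathcal{K}'$ swapped.
\end{itemize}
At limit stages take unions of the chain of isomorphisms. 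Every element of either field is eventually reached, so the union over all of $\mathbf{On}$ is a class isomorphism.

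The main obstacle is the existence step. It rests on the claim that the simplest element of a cut is compatible with the field operations, i.e.\ that a cut in a set-sized subfield determines the isomorphism type of a transcendental extension. We will cite Conway's theorem for this fact rather than reprove the surreal arithmetic. The back-and-forth is routine once Global Choice supplies the enumerations.
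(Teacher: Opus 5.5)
The paper gives no argument for this proposition beyond the citation, so your proposal has to stand on its own. The uniqueness half is the standard back-and-forth and is fine. An $\aleph_\mathbf{On}$-universally embedding field is automatically a proper class: a set-sized $\mathcal{K}$ would have to embed $\mathcal{K}(t)$ into itself over $\mathcal{K}$, which is impossible. So Global Choice gives the enumerations in type $\mathbf{On}$, and it also lets you select the realizing embedding at each step.

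\textbf{The gap is in the transcendental case of the existence half.} Your claim that the cut $x$ makes in $D$ determines the ordered field $D(x)$ is false when $D$ is not real closed. The same goes for the claim that the simplest surreal in that cut is compatible with the field operations, and no theorem of Conway says this. Here is a counterexample. Take $B=\mathbb{Q}\subseteq\mathbf{No}$ and $B'=\mathbb{Q}(x)$ with $x=\sqrt{2}+t\in\mathbb{R}(t)$, where $t$ is a positive infinitesimal.
\begin{itemize}
\item The element $x$ is transcendental over $\mathbb{Q}$ and realizes the cut of $\sqrt{2}$ in $\mathbb{Q}$.
\item The simplest element of that cut in $\mathbf{No}$ is $\sqrt{2}$ itself, since everything born before day $\omega$ is a dyadic rational. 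So $x\mapsto\sqrt{2}$ does not extend to a field embedding of $\mathbb{Q}(x)$.
\item No other element of the same cut rescues the step. For instance, $\sqrt{2}-1/\omega$ gives $x^2-2$ the wrong sign.
\end{itemize}

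\textbf{The missing idea is to keep the embedded part real closed at every stage.}
\begin{itemize}
\item Replace $B'$ by its real closure.
\item First extend the inclusion of $B$ to the real closure of $B$. This uses two facts: the relative algebraic closure of $B$ in the real closed field $\mathbf{No}$ is a real closure of $B$, and real closures are unique up to a unique isomorphism over the base.
\item Now suppose the current domain $D_0$, with image $D$, is real closed. Every remaining $x$ is then transcendental over $D_0$.
\item Every polynomial over a real closed field is a product of linear factors and factors $(X-b)^2+c^2$ with $c\neq 0$. Hence the sign of $p(x)$ depends only on the position of $x$ relative to elements of $D_0$, that is, only on the cut.
\item Any $y\in\mathbf{No}$ filling the image cut, for example the simplest one, lies outside $D$. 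Since $D$ is real closed, $y$ is transcendental over $D$. Therefore $D_0(x)\cong D(y)$ as ordered fields, extending the given map.
\item Pass to real closures again on both sides, and take unions at limit stages; a union of a chain of real closed fields is real closed.
\end{itemize}
With this change your separate algebraic case disappears, and your choice of the simplest element becomes legitimate, and even canonical.
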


Alling similarly notes that $\mathbf{No}$ also generalizes $\eta_\beta$-sets in being an $\eta_\mathbf{On}$-class, and thus maximally saturated \cite{alling1985}. Given the saturation properties that may be obtained by either ultrapower or quotiented strongly cumulative power hierarchies previously observed, we may therefore investigate the similarities between $\mathbf{No}$, $\Upsilon_\mathbf{On}(\mathcal{A}, \mathbf{I})$, and $\mathbb{C}_\mathbf{On}(\mathcal{A}, \mathbf{I})$.

\begin{proposition}[\cite{ehrlich1988}, p. 10]\label{proposition ehrlich 2}
(GC) For $\beta \in \mathbf{On}$, $\mathcal{A}$ is an $\aleph_\beta$-universally embedding ordered field (or ordered abelian group, or ordered class) iff $\mathcal{A}$ is a real-closed field (or ordered abelian group, or ordered class, respectively) that is a $\eta_\beta$-set. Any such structure is unique up to isomorphism.
\end{proposition}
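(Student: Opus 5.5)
Since Proposition \ref{proposition ehrlich 2} is Ehrlich's theorem and is cited rather than proved here, I would reconstruct its proof along classical Hausdorff-style back-and-forth lines. I treat the field case; the group and ordered-class cases follow the same pattern with "real closed field" replaced by "divisible ordered abelian group" or "linear order" respectively.

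\emph{Right-to-left.} Let $\mathcal{A}$ be a real closed $\eta_\beta$-field. Let $\mathcal{B} \subseteq \mathcal{A}$ and $\mathcal{B}' \supseteq \mathcal{B}$ be ordered fields of size $<\aleph_\beta$. I would build the embedding of $\mathcal{B}'$ over $\mathcal{B}$ by transfinite recursion on an enumeration of $B' \setminus B$, adjoining one element $b$ at a time.

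\begin{itemize}
\item At a stage with current image field $\mathcal{K} \subseteq \mathcal{A}$, $|K| < \aleph_\beta$, first pass to the real closure of $\mathcal{K}$ inside $\mathcal{A}$. This is possible because $\mathcal{A}$ is real closed, and real closures are unique over $\mathcal{K}$.
\item If $b$ is algebraic over $\mathcal{K}$, it is already handled by that uniqueness.
\item If $b$ is transcendental, then over a real closed $\mathcal{K}$ the order type of $\mathcal{K}(b)$ is determined by the cut $(L,R)$ that $b$ realizes in $\mathcal{K}$. Here $|L|,|R| < \aleph_\beta$, so the $\eta_\beta$ property supplies $a \in A$ with $L < a < R$.
\item Then $b \mapsto a$ extends to an order isomorphism $\mathcal{K}(b) \cong \mathcal{K}(a)$. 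This uses the standard fact that the sign of a polynomial in a transcendental over a real closed field depends only on its cut, since the polynomial factors into linear and quadratic factors.
\end{itemize}

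At limit stages I take unions. Cardinalities stay below $\aleph_\beta$ provided $\aleph_\beta$ has enough room, and GC is needed only to choose witnesses uniformly when $\mathcal{A}$ is a proper class.

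\emph{Left-to-right.} Suppose $\mathcal{A}$ is $\aleph_\beta$-universally embedding.

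\begin{itemize}
\item \emph{Real closed:} for a finite algebraic extension that is ordered, embed the real closure of a finitely generated subfield over itself. Roots of odd-degree polynomials and square roots of positive elements then exist in $\mathcal{A}$.
\item \emph{$\eta_\beta$:} given $L < R$ in $A$ with $|L|,|R| < \aleph_\beta$, let $\mathcal{B}$ be the real closed subfield generated by $L \cup R$, which still has size $<\aleph_\beta$. Adjoin a transcendental $t$ filling the cut determined by $(L,R)$ in $\mathcal{B}$, and order $\mathcal{B}(t)$ accordingly. Universal embedding places the image of $t$ strictly between $L$ and $R$.
\end{itemize}

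\emph{Uniqueness.} Run a back-and-forth between two such structures using the extension step above. For sets of size $\aleph_\beta$ this is Hausdorff's argument; for classes ($\beta=\mathbf{On}$) one uses GC to well-order both universes.

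The main obstacle is the cardinality bookkeeping. Uniqueness requires $|\mathcal{A}| = \aleph_\beta$ with $\aleph_\beta$ regular, as in Hausdorff's setting. I would therefore impose these hypotheses explicitly for the set case and treat the class case as the intended one.
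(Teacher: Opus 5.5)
The paper gives no proof of this proposition; it simply cites Ehrlich, so there is no route to compare against. Your reconstruction is the classical one: the ordering of a simple transcendental extension of a real closed field is fixed by a cut, combined with a Hausdorff-style back-and-forth. For $\beta \geq 1$ it is sound.

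You are right that the uniqueness clause as printed is false without a cardinality restriction. $\aleph_1$-saturated real closed fields exist in arbitrarily large cardinalities, and all of them are $\eta_1$. Under $|\mathcal{A}| = |\mathcal{A}'| = \aleph_\beta$ your back-and-forth goes through. Regularity need not be assumed separately, since an $\eta_\beta$-set has cofinality at least $\aleph_\beta$.

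Two small repairs are needed in the right-to-left direction:
\begin{itemize}
\item Replace $\mathcal{B}'$ by its real closure at the outset. The cut of $b$ in a field that is not real closed does not determine the ordering of the extension: take $b$ infinitesimally above versus below $\sqrt{2}$ over $\mathbb{Q}$. So the domain side, not only the image, must be real closed.
\item No extra ``room'' is needed. Every intermediate field lies inside a copy of the real closure of $\mathcal{B}'$, which has size at most $|B'| + \aleph_0 < \aleph_\beta$.
\end{itemize}

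The one genuine gap is that you never exclude $\beta = 0$, where the field and group versions of the statement are false. Ordered fields are infinite and the only finite ordered abelian group is trivial, so $\aleph_0$-universal embedding holds vacuously. Thus $\mathbb{Q}$ is $\aleph_0$-universally embedding but not real closed, and $\mathbb{Z}$ is $\aleph_0$-universally embedding but not an $\eta_0$-set.

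Your uniqueness claim also fails there under your own hypotheses. $\aleph_0$ is regular, yet $\mathbb{R}_{\mathrm{alg}}$ and the real closure of $\mathbb{Q}(\pi)$ are countable real closed $\eta_0$-fields of different transcendence degree over $\mathbb{Q}$, hence non-isomorphic.

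Your arguments break exactly here. The finitely generated subfield used for real closedness, the real closed subfield generated by $L \cup R$, and the partial isomorphisms in the back-and-forth are all infinite, so never of size $< \aleph_0$. State the field and group cases for $\beta \geq 1$ only. The linear-order case is fine at $\beta = 0$ by Cantor's theorem.
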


\begin{corollary}\label{corollary universal 1}
(GC) If $\mathcal{A}$ is an $\eta_0$ real-closed field (or ordered abelian group, or ordered class) and for each $\alpha < \beta$, $\Upsilon_{\alpha^+}(\mathcal{A}, \mathbf{I})$ is constructed by taking a countably incomplete $|\Upsilon_\alpha(\mathcal{A}, \mathbf{I})|^+$-good ultrafilter over $\Upsilon_\alpha(\mathcal{A}, \mathbf{I})$, then $\Upsilon_\beta(\mathcal{A}, \mathbf{I})$ is the unique (up to isomorphism) $\aleph_\beta$-universally embedding field (or ordered abelian group, or ordered class, respectively).
\end{corollary}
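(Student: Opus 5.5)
The plan is to combine three earlier results: Proposition~\ref{proposition ehrlich 2}, which characterizes $\aleph_\beta$-universally embedding structures as real closed $\eta_\beta$-sets; Proposition~\ref{proposition los} for the algebraic part; and Corollary~\ref{corollary saturation} together with Proposition~\ref{proposition saturation eta} for the order-theoretic part. Concretely, I would show that $\Upsilon_\beta(\mathcal{A}, \mathbf{I})$ is \emph{(a)} a real closed field (respectively an ordered abelian group, or an ordered class) and \emph{(b)} an $\eta_\beta$-set. Proposition~\ref{proposition ehrlich 2} then gives, under GC, both that it is $\aleph_\beta$-universally embedding and that it is unique up to isomorphism.

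For \emph{(a)}, note that being a real closed field (or a divisible ordered abelian group, or a dense unbounded linear order) is axiomatized by first-order sentences. By Proposition~\ref{proposition los}, $\mathcal{A} \preceq \Upsilon_\beta(\mathcal{A}, \mathbf{I})$, so $\Upsilon_\beta(\mathcal{A}, \mathbf{I})$ satisfies the same theory. This covers successor and limit stages uniformly, since the direct limit of an elementary chain is an elementary extension of each member.

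For \emph{(b)}, I would use Corollary~\ref{corollary saturation}. Its hypotheses are met: $\mathcal{A}$, being an $\eta_0$ real closed field, is infinite, and each step uses a countably incomplete $|\Upsilon_\alpha|^+$-good ultrafilter. The corollary then gives that $\Upsilon_\beta(\mathcal{A}, \mathbf{I})$ is at least $\aleph_\beta$-saturated. If $|A| = \aleph_\alpha$ with $\alpha > 0$, the saturation index $\aleph_{\alpha+\beta}$ is at least $\aleph_\beta$, and $\kappa$-saturation implies $\kappa'$-saturation for $\kappa' \le \kappa$. Since $\Upsilon_\beta(\mathcal{A}, \mathbf{I})$ is a dense unbounded order by \emph{(a)}, Proposition~\ref{proposition saturation eta} converts $\aleph_\beta$-saturation into the $\eta_\beta$ property.

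The main obstacle is the bookkeeping in step \emph{(b)}. Corollary~\ref{corollary saturation} is stated for $\beta > 0$ and separately for successor and limit stages, so I would check each case explicitly.
\begin{itemize}
\item For $\beta = 0$, the structure is $\mathcal{A}$ itself, which is $\eta_0$ by hypothesis.
\item For a successor $\beta = \gamma^+$, $\Upsilon_\beta(\mathcal{A}, \mathbf{I})$ is a single ultrapower by a $|\Upsilon_\gamma|^+$-good ultrafilter. By Proposition~\ref{proposition good saturation} it is $|\Upsilon_\gamma|^+$-saturated, and $|\Upsilon_\gamma|^+ \ge \aleph_\beta$ because the cardinalities grow at least by one step per level (the $\beth$ computation in Corollary~\ref{corollary saturation}).
\item For a limit $\beta$, a set $L \cup R$ of size $< \aleph_\beta$ might fail to lie inside a single earlier level if $\aleph_\beta$ is singular. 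In that case I would instead use that $\mathrm{cof}(\beta)$-many pieces of the type can be realized at the stage given by the limit clause of Corollary~\ref{corollary saturation}. I expect to simply cite that limit clause, which asserts $\aleph_{\alpha+\lambda}$-saturation.
\end{itemize}
Once $\aleph_\beta$-saturation is secured at every $\beta$, the conclusion follows directly from Proposition~\ref{proposition ehrlich 2}.
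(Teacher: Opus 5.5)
Your route is the same as the paper's: real closedness from Proposition~\ref{proposition los}, the $\eta_\beta$ property from Corollary~\ref{corollary saturation} and Proposition~\ref{proposition saturation eta}, and then Proposition~\ref{proposition ehrlich 2}. Your successor bookkeeping is sound. By induction $|\Upsilon_\gamma(\mathcal{A},\mathbf{I})| \geq \aleph_\gamma$, since cardinalities jump to $2^{|\Upsilon_\gamma(\mathcal{A},\mathbf{I})|}$ at successors and are suprema at limits. So by Proposition~\ref{proposition good saturation} each $\Upsilon_{\gamma^+}(\mathcal{A},\mathbf{I})$ is $\aleph_{\gamma^+}$-saturated, hence an $\eta_{\gamma^+}$-set.

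The genuine gap is the limit case. You correctly flagged it, then closed it by citing the limit clause of Corollary~\ref{corollary saturation}, which is the same citation the paper's proof rests on. That step cannot be carried out, because the conclusion fails at every limit $\beta$ with $\mathrm{cf}(\beta) < \aleph_\beta$ (that is, unless $\beta = \aleph_\beta$ is regular), for instance $\beta = \omega$.
\begin{itemize}
\item Each $\Upsilon_{\gamma^+}(\mathcal{A},\mathbf{I})$ is $|\Upsilon_\gamma(\mathcal{A},\mathbf{I})|^+$-saturated, so it contains some $a_\gamma$ above the image of $\Upsilon_\gamma(\mathcal{A},\mathbf{I})$.
\item Take a cofinal sequence $\langle \gamma_\xi \rangle_{\xi < \mathrm{cf}(\beta)}$ in $\beta$. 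The set $\{a_{\gamma_\xi}\}_{\xi < \mathrm{cf}(\beta)}$ is cofinal in the direct limit $\Upsilon_\beta(\mathcal{A},\mathbf{I})$ and has size $< \aleph_\beta$.
\item With $L = \{0\}$ and $R = \{a_{\gamma_\xi}^{-1}\}_{\xi < \mathrm{cf}(\beta)}$, nothing lies strictly between $L$ and $R$.
\end{itemize}
So $\Upsilon_\beta(\mathcal{A},\mathbf{I})$ is not an $\eta_\beta$-set; for $\beta = \omega$ it is not even an $\eta_1$-set. By Proposition~\ref{proposition ehrlich 2} it is therefore not $\aleph_\beta$-universally embedding. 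Your idea of realizing $\mathrm{cf}(\beta)$-many pieces of a type at different stages cannot help, since no single stage realizes them together.

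What your argument actually proves is the statement for $\beta = 0$, for successor $\beta$, and for limit $\beta$ with $\mathrm{cf}(\beta) = \aleph_\beta$. In that last case any $L \cup R$ of size $< \aleph_\beta$ lies in some $\Upsilon_\gamma(\mathcal{A},\mathbf{I})$ with $\gamma < \beta$, and the cut is filled in $\Upsilon_{\gamma^+}(\mathcal{A},\mathbf{I})$; compare the inaccessibility hypothesis in Theorem~\ref{theorem initial fragment}.

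The uniqueness clause you import from Proposition~\ref{proposition ehrlich 2} also needs a cardinality restriction. $\Upsilon_{\beta^+}(\mathcal{A},\mathbf{I})$ is likewise $\aleph_\beta$-universally embedding but strictly larger than $\Upsilon_\beta(\mathcal{A},\mathbf{I})$. So uniqueness can only be asserted among fields of power $\aleph_\beta$, and you would need $|\Upsilon_\beta(\mathcal{A},\mathbf{I})| = \aleph_\beta$ (e.g.\ GCH with $A$ countable).
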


\begin{proof}
By Proposition \ref{proposition los}, $\Upsilon_\beta(\mathcal{A}, \mathbf{I})$ is a real-closed field. By Corollary \ref{corollary saturation}, $\Upsilon_\beta(\mathcal{A}, \mathbf{I})$ is $\aleph_\beta$-saturated, so by Proposition \ref{proposition saturation eta}, it is an $\eta_\beta$-set. By Proposition \ref{proposition ehrlich 2}, $\Upsilon_\beta(\mathcal{A}, \mathbf{I})$ is then an $\aleph_\beta$-universally embedding real-closed field. The argument is similar for the other cases.
\end{proof}

Given the above result, with Proposition \ref{proposition ehrlich 1} we therefore obtain the following broader result:

\begin{corollary}\label{corollary universal 2}
(GC) Let $\mathcal{A}$ be an $\eta_0$ real-closed field (or ordered abelian group, or ordered class) and for each $\beta \in \mathbf{On}$:

\begin{itemize}
\item[\emph{(1)}] let each $\Upsilon_{\beta^+}(\mathcal{A}, \mathbf{I})$ be constructed by taking $\Upsilon_\beta(\mathcal{A}, \mathbf{I})$ as the index set and a tails ultrafilter over it. Then:

\begin{itemize}
\item[\emph{(1.a)}] if $\aleph_\lambda$ is strongly inaccessible then $\Upsilon_\lambda(\mathcal{A}, \mathbf{I})$ is the unique (up to isomorphism) $\aleph_\lambda$-universally embedding field (or ordered abelian group, or ordered class, respectively);
\item[\emph{(1.b)}] $\Upsilon_\mathbf{On}(\mathcal{A}, \mathbf{I})$ is the unique (up to isomorphism) $\aleph_\mathbf{On}$-universally embedding field (or ordered abelian group, or ordered class, respectively).
\end{itemize}
\item[\emph{(2)}] let each $\mathbb{C}_{\beta^+}(\mathcal{A}, \mathbf{I})$ be constructed by taking an appropriate choice of ultrafilter over $\mathfrak{C}_\beta(\mathcal{A}, \mathbf{I})$.\footnote{That is, such as constructed in Theorem \ref{theorem equivalent constructions 2}}. Then:

\begin{itemize}
\item[\emph{(2.a)}] if $\aleph_\lambda$ is strongly inaccessible then $\mathbb{C}_\lambda(\mathcal{A}, \mathbf{I})$ is the unique (up to isomorphism) $\aleph_\lambda$-universally embedding field (or ordered abelian group, or ordered class, respectively);
\item[\emph{(2.b)}] $\mathbb{C}_\mathbf{On}(\mathcal{A}, \mathbf{I})$ is the unique (up to isomorphism) $\aleph_\mathbf{On}$-universally embedding field (or ordered abelian group, or ordered class, respectively).
\end{itemize}
\end{itemize}
\end{corollary}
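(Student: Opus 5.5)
The plan is to treat part (1) first and then transfer it to part (2) through the isomorphism results of Section 4.

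For (1.a), fix a strongly inaccessible $\aleph_\lambda$. The ultrapower hierarchy is built with $\mathbf{I} = \{\Upsilon_\alpha(\mathcal{A}, \mathbf{I})\}_{\alpha \in \mathbf{On}}$ as index sets and a tails ultrafilter at each successor stage, so Theorem \ref{theorem initial fragment 2} applies directly and makes $\Upsilon_\lambda(\mathcal{A}, \mathbf{I})$ an $\eta_\lambda$-set. Next I check that $\Upsilon_\lambda(\mathcal{A}, \mathbf{I})$ is still a real closed field (respectively an ordered abelian group or ordered class). At successor stages this follows from Proposition \ref{proposition los}. At limit stages the direct limit of an elementary chain is an elementary extension of each member, so $\mathcal{A} \preceq \Upsilon_\lambda(\mathcal{A}, \mathbf{I})$ and the real closed field axioms transfer. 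Proposition \ref{proposition ehrlich 2} then makes $\Upsilon_\lambda(\mathcal{A}, \mathbf{I})$ an $\aleph_\lambda$-universally embedding field, unique up to isomorphism.

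For (1.b), Corollary \ref{corollary initial fragment} shows that $\Upsilon_\mathbf{On}(\mathcal{A}, \mathbf{I})$ is an $\eta_\mathbf{On}$-class. It is a real closed field, since it is the union of an elementary chain of real closed fields and any first-order sentence concerns only finitely many elements, all of which appear at some set stage. I then need to show it is $\aleph_\beta$-universally embedding for every $\beta$, i.e.\ $\aleph_\mathbf{On}$-universally embedding. Given a set-sized subfield $\mathcal{B}$ and an extension $\mathcal{B}' \supseteq \mathcal{B}$, both sit below some $\aleph_\beta$. Since the class is an $\eta_\beta$-set, Proposition \ref{proposition ehrlich 2} under Global Choice yields the required embedding over $\mathcal{B}$. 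Uniqueness up to isomorphism then comes from Proposition \ref{proposition ehrlich 1}, so the structure is isomorphic to $\mathbf{No}$.

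For (2), I choose the ultrafilters $\mathcal{U}'_\alpha$ over $\mathfrak{C}_\alpha(A, \mathbf{I})$ as in Theorem \ref{theorem equivalent constructions 2}, starting from the tails ultrafilters of part (1). That theorem gives $\mathbb{C}_\gamma(\mathcal{A}, \mathbf{I}) \cong \Upsilon_\gamma(\mathcal{A}, \mathbf{I})$ for every $\gamma$, including limit stages via Lemma \ref{lemma equivalent constructions}. Isomorphism preserves being a real closed $\eta_\lambda$-set and being universally embedding, so (2.a) follows from (1.a).

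For (2.b), the whole classes $\mathbb{C}_\mathbf{On}(\mathcal{A}, \mathbf{I})$ and $\Upsilon_\mathbf{On}(\mathcal{A}, \mathbf{I})$ must be isomorphic. I plan to take the union of the stagewise isomorphisms, which needs them to be coherent with the embeddings $e^\alpha_\beta$. I expect this to be the main obstacle, since the isomorphisms are built from arbitrary choice functions $\theta$. My first attempt is to use Global Choice to fix all the $\theta$'s at once, so that at every successor stage $\theta$ sends the class of a constant function $\overline{x}$ to $\overline{\theta(x)}$. Induction then gives $e_{\beta}\circ e^\alpha_\beta = e^\alpha_\beta \circ e_\alpha$. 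If this coherence fails, I have a fallback that avoids it: verify directly that $\mathbb{C}_\mathbf{On}(\mathcal{A}, \mathbf{I})$ is an $\eta_\mathbf{On}$ real closed class. Any set-sized pair $L < R$ lies in some $\mathbb{C}_\beta(\mathcal{A}, \mathbf{I}) \cong \Upsilon_\beta(\mathcal{A}, \mathbf{I})$ and can be filled at stage $\beta^+$ by Lemma \ref{lemma saturation 2} transported along the isomorphism. Proposition \ref{proposition ehrlich 1} then again gives uniqueness.
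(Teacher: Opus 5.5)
Your proposal is correct and follows the paper's own, essentially implicit, argument: Theorem~\ref{theorem initial fragment 2} and Corollary~\ref{corollary initial fragment} combined with Propositions~\ref{proposition ehrlich 2} and~\ref{proposition ehrlich 1} give (1), which is then transferred to (2) through the ultrafilters and isomorphisms of Theorem~\ref{theorem equivalent constructions 2}. The coherence you flag for (2.b) is a fair point the paper glosses over, and it is already needed at every limit stage in Lemma~\ref{lemma equivalent constructions}; however, it holds automatically for the inductively built isomorphisms, because $w(\overline{y}) = \overline{\theta(e(y))}$ and $\overline{z} \sim_{\alpha^+} z$ give $e'([\overline{y}]_{\mathcal{U}_\alpha}) = [\theta(e(y))]_{\alpha^+}$, the natural image of $e(y)$, for any choice of $\theta$, so no constraint on $\theta$ is needed, and your fallback does not actually avoid coherence, since transporting Lemma~\ref{lemma saturation 2} along the isomorphism requires exactly this one-step compatibility.
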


That is, if $\mathcal{A}$ is an $\eta_0$ real-closed field, then there are simple choices of ultrafilters defining the ultrapower or quotiented cumulative power such that $\mathbb{C}_\mathbf{On}(\mathcal{A}, \mathbf{I}) \cong \mathbf{No} \cong \Upsilon_\mathbf{On}(\mathcal{A}, \mathbf{I})$. Particularly, that means the ultrapower, or cumulative power, hierarchy generated by the reals or the algebraic numbers quotiented in the described manner is a surreal field.

\bibliography{hypersurreals}
\bibliographystyle{plain}

\end{document}